\pdfoutput=1
\documentclass[a4paper,11pt]{amsart}
\usepackage[hmarginratio={1:1},vmarginratio={1:1},lmargin=60.0pt,tmargin=60.0pt]{geometry}

\allowdisplaybreaks

\usepackage[numbers]{natbib}
\usepackage[utf8]{inputenc}
\usepackage{latexsym,exscale,mathtools}
\usepackage{amssymb,amsmath,amsthm,amsfonts,enumitem}
\usepackage[table]{xcolor}
\usepackage{booktabs,array}
\usepackage{etoolbox,needspace}

\setlist[enumerate]{itemsep=0.15cm,label=\emph{\upshape(\alph*)}}
\setlist[enumerate,2]{itemsep=0.15cm,label=\emph{\upshape(\roman*)}}

\usepackage{tikz}
\usetikzlibrary{arrows.meta}
\definecolor{orchid}{RGB}{143,40,194}

\definecolor{lava}{RGB}{207,16,32}
\definecolor{mydarkblue}{RGB}{10,10,170}
\definecolor{mred}{RGB}{255,10,10}
\definecolor{treegray}{gray}{0.88}
\definecolor{treegraylight}{gray}{0.96}
\usepackage{aliascnt}

\let\emph\relax
\DeclareTextFontCommand{\emph}{\bfseries\em}
\tikzset{
  anchorbase/.style={baseline={([yshift=#1]current bounding box.center)}},
  anchorbase/.default={-0.5ex},
  every picture/.append style={line cap=round,line join=round,anchorbase},
  a/.style={draw=black,line width=1pt},
  m/.style={draw=mred,line width=2.5pt},
  dot/.style={circle,fill=black,inner sep=1.35pt},
  coupon/.style={draw=black,fill=treegray!45,inner sep=2.6pt,
    rounded corners=1pt,font=\scriptsize},
  over/.style={preaction={
    draw=white,
    line width=6pt,
    shorten <=1.2mm,
    shorten >=1.2mm
  }},
  region/.style={fill=treegray,draw=none},
  arr/.style={-{Stealth[length=1.7mm]},line width=.8pt}
}
\newcommand{\C}{\mathbb C}
\newcommand{\Z}{\mathbb Z}
\newcommand{\id}{\operatorname{id}}
\newcommand{\Hom}{\operatorname{Hom}}
\newcommand{\End}{\operatorname{End}}
\newcommand{\FPdim}{\operatorname{FPdim}}
\newcommand{\Mat}{\operatorname{Mat}}
\newcommand{\TY}{\operatorname{TY}}
\renewcommand{\Vec}{\operatorname{Vec}}
\newcommand{\Add}{\operatorname{Add}}
\newcommand{\ev}{\operatorname{ev}}
\newcommand{\coev}{\operatorname{coev}}
\newcommand{\tr}{\operatorname{tr}}
\newcommand{\Out}{\operatorname{Out}}
\newcommand{\D}{\mathcal D(A,\chi,\tau)}
\def\NewTheorem#1#2{%
\newaliascnt{#1}{equation}%
\newtheorem{#1}[#1]{#2}%
\aliascntresetthe{#1}%
\expandafter\def\csname #1autorefname\endcsname{#2}%
}
\def\equationautorefname~#1\null{(#1)\null}

\numberwithin{equation}{subsection}

\theoremstyle{plain}
\NewTheorem{proposition}{Proposition}
\NewTheorem{theorem}{Theorem}
\NewTheorem{corollary}{Corollary}
\AtEndEnvironment{corollary}{\null\hfill$\square$}%
\NewTheorem{lemma}{Lemma}
\theoremstyle{definition}
\NewTheorem{definition}{Definition}
\AtEndEnvironment{definition}{\null\hfill$\Diamond$}%
\NewTheorem{example}{Example}
\AtEndEnvironment{example}{\null\hfill$\Diamond$}%
\theoremstyle{remark}
\NewTheorem{remark}{Remark}
\AtEndEnvironment{remark}{\null\hfill$\Diamond$}%

\usepackage[hypertexnames=false]{hyperref}
\usepackage{bookmark}
\hypersetup{
pdftoolbar=true,
pdfmenubar=true,
pdffitwindow=false,
pdfstartview={FitH},
pdftitle={The Tambara--Yamagami calculus and finite semigroups},
pdfauthor={Super Cao and Daniel Tubbenhauer},
pdfsubject={},
pdfcreator={Super Cao and Daniel Tubbenhauer},
pdfproducer={Super Cao and Daniel Tubbenhauer},
pdfkeywords={},
pdfnewwindow=true,
colorlinks=true,
linkcolor=mydarkblue,
citecolor=teal,
filecolor=magenta,
urlcolor=orchid,
linkbordercolor=lava,
citebordercolor=teal,
urlbordercolor=orchid,
linktocpage=true
}
\expandafter\def\csname subjclassname@2020\endcsname{2020 Mathematics Subject Classification}

\BeforeBeginEnvironment{definition}{\begin{samepage}}
\AfterEndEnvironment{definition}{\end{samepage}}
\BeforeBeginEnvironment{lemma}{\begin{samepage}}
\AfterEndEnvironment{lemma}{\end{samepage}}
\BeforeBeginEnvironment{proposition}{\begin{samepage}}
\AfterEndEnvironment{proposition}{\end{samepage}}
\BeforeBeginEnvironment{theorem}{\begin{samepage}}
\AfterEndEnvironment{theorem}{\end{samepage}}

\newcommand{\edge}[2]{\ifboolexpr{test {\ifstrequal{#1}{m}} or test {\ifstrequal{#1}{m^*}} or test {\ifstrequal{#1}{m^{**}}}}{\draw[m] #2;}{\ifstrequal{#1}{1}{\draw[gray,densely dotted] #2;}{\draw[a] #2;}}}
\newcommand{\wire}[1]{\begin{tikzpicture}[scale=.7]
 \edge{#1}{(0,0)--(0,1.15)}\node[below,font=\scriptsize] at (0,0){$#1$};\end{tikzpicture}}
\newcommand{\fuse}[3]{\begin{tikzpicture}[scale=.78,every node/.style={font=\scriptsize}]
 \edge{#1}{(-.48,0)--(0,.52)}\edge{#2}{(.48,0)--(0,.52)}\edge{#3}{(0,.52)--(0,1)}
 \node[dot] at (0,.52){};\node[below] at (-.48,0){$#1$};\node[below] at (.48,0){$#2$};\node[above] at (0,1){$#3$};\end{tikzpicture}}
\newcommand{\splitv}[3]{\begin{tikzpicture}[scale=.78,every node/.style={font=\scriptsize}]
 \edge{#1}{(-.48,1)--(0,.48)}\edge{#2}{(.48,1)--(0,.48)}\edge{#3}{(0,.48)--(0,0)}
 \node[dot] at (0,.48){};\node[above] at (-.48,1){$#1$};\node[above] at (.48,1){$#2$};\node[below] at (0,0){$#3$};\end{tikzpicture}}
\newcommand{\lefttree}[5]{\begin{tikzpicture}[scale=.8,every node/.style={font=\scriptsize}]
 \edge{#1}{(-.9,0)--(-.45,.5)}\edge{#2}{(0,0)--(-.45,.5)}
 \edge{#4}{(-.45,.5)--(0,1.05)}\edge{#3}{(.9,0)--(0,1.05)}\edge{#5}{(0,1.05)--(0,1.5)}
 \node[dot] at (-.45,.5){};\node[dot] at (0,1.05){};
 \node[below] at (-.9,0){$#1$};\node[below] at (0,0){$#2$};\node[below] at (.9,0){$#3$};
 \node[left] at (-.25,.78){$#4$};\node[above] at (0,1.5){$#5$};\end{tikzpicture}}
\newcommand{\righttree}[5]{\begin{tikzpicture}[scale=.8,every node/.style={font=\scriptsize}]
 \edge{#2}{(0,0)--(.45,.5)}\edge{#3}{(.9,0)--(.45,.5)}
 \edge{#4}{(.45,.5)--(0,1.05)}\edge{#1}{(-.9,0)--(0,1.05)}\edge{#5}{(0,1.05)--(0,1.5)}
 \node[dot] at (.45,.5){};\node[dot] at (0,1.05){};
 \node[below] at (-.9,0){$#1$};\node[below] at (0,0){$#2$};\node[below] at (.9,0){$#3$};
 \node[right] at (.25,.78){$#4$};\node[above] at (0,1.5){$#5$};\end{tikzpicture}}
\newcommand{\leftsplit}[5]{\begin{tikzpicture}[scale=.8,every node/.style={font=\scriptsize}]
 \edge{#1}{(-.9,1.5)--(-.45,1)}\edge{#2}{(0,1.5)--(-.45,1)}
 \edge{#4}{(-.45,1)--(0,.45)}\edge{#3}{(.9,1.5)--(0,.45)}\edge{#5}{(0,.45)--(0,0)}
 \node[dot] at (-.45,1){};\node[dot] at (0,.45){};
 \node[above] at (-.9,1.5){$#1$};\node[above] at (0,1.5){$#2$};\node[above] at (.9,1.5){$#3$};
 \node[left] at (-.25,.75){$#4$};\node[below] at (0,0){$#5$};\end{tikzpicture}}
\newcommand{\rightsplit}[5]{\begin{tikzpicture}[scale=.8,every node/.style={font=\scriptsize}]
 \edge{#2}{(0,1.5)--(.45,1)}\edge{#3}{(.9,1.5)--(.45,1)}
 \edge{#4}{(.45,1)--(0,.45)}\edge{#1}{(-.9,1.5)--(0,.45)}\edge{#5}{(0,.45)--(0,0)}
 \node[dot] at (.45,1){};\node[dot] at (0,.45){};
 \node[above] at (-.9,1.5){$#1$};\node[above] at (0,1.5){$#2$};\node[above] at (.9,1.5){$#3$};
 \node[right] at (.25,.75){$#4$};\node[below] at (0,0){$#5$};\end{tikzpicture}}
\newcommand{\channel}[3]{\begin{tikzpicture}[scale=.72,every node/.style={font=\scriptsize}]
 \edge{#1}{(-.48,0)--(0,.45)}\edge{#2}{(.48,0)--(0,.45)}\edge{#3}{(0,.45)--(0,1.05)}
 \edge{#1}{(0,1.05)--(-.48,1.5)}\edge{#2}{(0,1.05)--(.48,1.5)}
 \node[dot] at (0,.45){};\node[dot] at (0,1.05){};\node[right] at (0,.75){$#3$};
 \node[below] at (-.48,0){$#1$};\node[below] at (.48,0){$#2$};\end{tikzpicture}}
\newcommand{\crossing}[2]{\begin{tikzpicture}[scale=.8,every node/.style={font=\scriptsize}]
 \edge{#2}{(.5,0)--(-.5,1.1)}
 \ifboolexpr{test {\ifstrequal{#1}{m}} or test {\ifstrequal{#1}{m^*}} or test {\ifstrequal{#1}{m^{**}}}}{\draw[m,over](-.5,0)--(.5,1.1);}{\draw[a,over](-.5,0)--(.5,1.1);}
 \node[below] at (-.5,0){$#1$};\node[below] at (.5,0){$#2$};
 \node[above] at (-.5,1.1){$#2$};\node[above] at (.5,1.1){$#1$};\end{tikzpicture}}
\newcommand{\resolvedcross}[3]{\begin{tikzpicture}[scale=.72,every node/.style={font=\scriptsize}]
 \edge{#1}{(-.5,0)--(0,.45)}\edge{#2}{(.5,0)--(0,.45)}\edge{#3}{(0,.45)--(0,1.05)}
 \edge{#2}{(0,1.05)--(-.5,1.5)}\edge{#1}{(0,1.05)--(.5,1.5)}
 \node[dot] at (0,.45){};\node[dot] at (0,1.05){};\node[right] at (0,.75){$#3$};
 \node[below] at (-.5,0){$#1$};\node[below] at (.5,0){$#2$};
 \node[above] at (-.5,1.5){$#2$};\node[above] at (.5,1.5){$#1$};\end{tikzpicture}}
\newcommand{\twistpic}[1]{%
\begin{tikzpicture}[scale=.8]
 \edge{#1}{(0,0)--(0,1.35)}
 \node[coupon,minimum width=.48cm,minimum height=.47cm]
   at (0,.68){$\theta$};
 \node[below,font=\scriptsize] at (0,0){$#1$};
\end{tikzpicture}}
\newcommand{\doublecross}[2]{\begin{tikzpicture}[scale=.8,every node/.style={font=\scriptsize}]
 \edge{#1}{(-.45,0)--(.45,.8)--(-.45,1.6)}\edge{#2}{(.45,0)--(-.45,.8)--(.45,1.6)}
 \ifboolexpr{test {\ifstrequal{#1}{m}} or test {\ifstrequal{#1}{m^*}} or test {\ifstrequal{#1}{m^{**}}}}{\draw[m,over](-.45,0)--(.45,.8);}{\draw[a,over](-.45,0)--(.45,.8);}
 \ifboolexpr{test {\ifstrequal{#2}{m}} or test {\ifstrequal{#2}{m^*}} or test {\ifstrequal{#2}{m^{**}}}}{\draw[m,over](-.45,.8)--(.45,1.6);}{\draw[a,over](-.45,.8)--(.45,1.6);}
 \node[below] at (-.45,0){$#1$};\node[below] at (.45,0){$#2$};\end{tikzpicture}}
\newcommand{\braidthree}[1]{\begin{tikzpicture}[scale=.72]
 \foreach \k [count=\h from 0] in {#1}{
  \pgfmathsetmacro{\yy}{.65*\h}\pgfmathsetmacro{\xx}{.65*(\k-1)}
  \foreach \j in {0,1,2}{\ifnum\j=\numexpr\k-1\relax\else\ifnum\j=\k\relax\else
   \draw[m](.65*\j,\yy)--(.65*\j,\yy+.65);\fi\fi}
  \draw[m](\xx+.65,\yy)--(\xx,\yy+.65);
  \draw[m,over](\xx,\yy)--(\xx+.65,\yy+.65);}
 \foreach \j in {0,1,2}{\node[below,font=\scriptsize] at (.65*\j,0){$m$};}\end{tikzpicture}}
\newcommand{\closedtwobraid}[1]{\begin{tikzpicture}[scale=.63]
 \pgfmathsetmacro{\hh}{.55*#1+.3}
 \draw[m](.7,\hh) to[out=90,in=90](1.4,\hh)--(1.4,0)
    to[out=-90,in=-90](.7,0);
 \draw[m](0,\hh) to[out=90,in=90](2.1,\hh)--(2.1,0)
    to[out=-90,in=-90](0,0);
 \draw[m](0,0)--(0,.3);\draw[m](.7,0)--(.7,.3);
 \foreach \j in {1,...,#1}{
  \pgfmathsetmacro{\yy}{.3+.55*(\j-1)}
  \draw[m](.7,\yy)--(0,\yy+.55);
  \draw[m,over](0,\yy)--(.7,\yy+.55);}
 \node[coupon,inner sep=1.6pt] at (0,.14){$\rho$};
 \node[coupon,inner sep=1.6pt] at (.7,.14){$\rho$};
\end{tikzpicture}}
\newcommand{\ztlid}[1]{\begin{tikzpicture}[scale=.68]
 \foreach \j in {1,...,#1}{\draw[m](.7*\j,0)--(.7*\j,1.2);}
 \end{tikzpicture}}
\newcommand{\ztl}[2]{\begin{tikzpicture}[scale=.68]
 \foreach \k [count=\h from 0] in {#2}{
  \pgfmathsetmacro{\yy}{.85*\h}\pgfmathsetmacro{\xx}{.7*\k}
  \foreach \j in {1,...,#1}{\ifnum\j=\k\relax\else
   \ifnum\j=\numexpr\k+1\relax\else
    \draw[m](.7*\j,\yy)--(.7*\j,\yy+.85);\fi\fi}
  \draw[m](\xx,\yy) to[out=90,in=180](\xx+.35,\yy+.3)
    to[out=0,in=90](\xx+.7,\yy);
  \draw[m](\xx,\yy+.85) to[out=-90,in=180](\xx+.35,\yy+.55)
    to[out=0,in=-90](\xx+.7,\yy+.85);}
 \end{tikzpicture}}
\newcommand{\zlocal}[2]{\begin{tikzpicture}[scale=.68]
 \foreach \k/\lab [count=\h from 0] in {#2}{
  \pgfmathsetmacro{\yy}{.82*\h}
  \foreach \j in {1,...,#1}{\draw[m](.7*\j,\yy)--(.7*\j,\yy+.82);}
  \node[coupon,minimum width=.68cm,minimum height=.28cm] at (.7*\k+.35,\yy+.41){$\lab$};}
 \end{tikzpicture}}
\newcommand{\zchannels}[2]{\begin{tikzpicture}[scale=.68]
 \foreach \k/\lab [count=\h from 0] in {#2}{
  \pgfmathsetmacro{\yy}{1.25*\h}\pgfmathsetmacro{\xx}{.7*\k}
  \foreach \j in {1,...,#1}{\ifnum\j=\k\relax\else
   \ifnum\j=\numexpr\k+1\relax\else
    \draw[m](.7*\j,\yy)--(.7*\j,\yy+1.25);\fi\fi}
  \draw[m](\xx,\yy)--(\xx+.35,\yy+.35)--(\xx+.7,\yy);
  \ifnum\pdfstrcmp{\lab}{1}=0
   \draw[gray,densely dotted](\xx+.35,\yy+.35)--(\xx+.35,\yy+.9);
  \else\draw[a](\xx+.35,\yy+.35)--(\xx+.35,\yy+.9);\fi
  \draw[m](\xx,\yy+1.25)--(\xx+.35,\yy+.9)--(\xx+.7,\yy+1.25);
  \node[dot] at (\xx+.35,\yy+.35){};
  \node[dot] at (\xx+.35,\yy+.9){};
  \node[right,font=\scriptsize] at (\xx+.35,\yy+.625){$\lab$};}
 \end{tikzpicture}}
\newcommand{\zmatrixunit}[2]{\begin{tikzpicture}[scale=.7,every node/.style={font=\scriptsize}]
 \draw[m](-.8,0)--(-.4,.45)--(0,0);
 \edge{#2}{(-.4,.45)--(0,.95)}
 \draw[m](.8,0)--(0,.95)--(0,1.45)--(.8,2.4);
 \edge{#1}{(0,1.45)--(-.4,1.95)}
 \draw[m](-.8,2.4)--(-.4,1.95)--(0,2.4);
 \foreach \p in {(-.4,.45),(0,.95),(0,1.45),(-.4,1.95)}{\node[dot] at \p{};}
 \node[left] at (-.23,.68){$#2$};\node[left] at (-.23,1.72){$#1$};
 \end{tikzpicture}}
\newcommand{\zloop}[1]{\begin{tikzpicture}[scale=.7]
 \draw[m](0,.65) ellipse (.45 and .65);
 \ifstrequal{#1}{raw}{}{\node[coupon,inner sep=2pt] at (-.45,.65){$#1$};}
 \end{tikzpicture}}
\newcommand{\ztrace}[3]{\begin{tikzpicture}[scale=.59]
 \pgfmathtruncatemacro{\laststrand}{#1-1}
 \foreach \j in {0,...,\laststrand}{
  \pgfmathsetmacro{\xx}{.7*\j}
  \pgfmathsetmacro{\rr}{.7*(2*#1-1-\j)}
  \draw[m](\xx,0)--(\xx,1.65) to[out=90,in=90](\rr,1.65)
   --(\rr,0) to[out=-90,in=-90](\xx,0);
  \node[coupon,inner sep=1.4pt] at (\xx,.2){$#3$};}
 \pgfmathsetlengthmacro{\ww}{.7*(#1-1)*.59cm+.22cm}
 \pgfmathsetmacro{\cc}{.35*(#1-1)}
 \node[coupon,minimum width=\ww] at (\cc,.95){$#2$};
 \end{tikzpicture}}
\newcommand{\zsixstate}[4]{\begin{tikzpicture}[scale=.53,every node/.style={font=\scriptsize}]
 \begin{scope}[shift={(0,3.05)},yscale=-1]
  \draw[m](0,0)--(.45,.65)--(.9,0);
  \edge{#1}{(.45,.65)--(1.3,1.15)}
  \draw[m](1.8,0)--(1.3,1.15)--(2.15,1.65)--(2.7,0);
  \edge{#2}{(2.15,1.65)--(3,2.15)}
  \draw[m](3.6,0)--(3,2.15)--(3.85,2.65)--(4.5,0);
  \edge{#3}{(3.85,2.65)--(3.85,3.05)}
  \foreach \p in {(.45,.65),(1.3,1.15),(2.15,1.65),(3,2.15),(3.85,2.65)}{\node[dot] at \p{};}
  \node[below left,inner sep=1pt] at (.875,.9){$#1$};
  \node[below left,inner sep=1pt] at (2.575,1.9){$#2$};
  \node[below] at (3.85,3.05){$#3$};
 \end{scope}
 \ifstrequal{#4}{none}{}{\foreach \k [count=\h from 0] in {#4}{
  \pgfmathsetmacro{\yy}{3.05+.8*\h}
  \foreach \j in {0,...,5}{\draw[m](.9*\j,\yy)--(.9*\j,\yy+.8);}
  \pgfmathsetmacro{\cc}{.9*(\k-1)+.45}
  \node[coupon,minimum width=.66cm] at (\cc,\yy+.4){$s$};}}
 \end{tikzpicture}}
\newcommand{\zpairprojector}[1]{\begin{tikzpicture}[scale=.63,every node/.style={font=\scriptsize}]
 \foreach \x in {0,1.5}{
  \draw[m](\x,0)--(\x+.35,.35)--(\x+.7,0);
  \draw[gray,densely dotted](\x+.35,.35)--(\x+.35,.95);
  \draw[m](\x,1.3)--(\x+.35,.95)--(\x+.7,1.3);
  \node[dot] at (\x+.35,.35){};\node[dot] at (\x+.35,.95){};
  \node[right] at (\x+.35,.65){$1$};}
 \draw[m](3,0)--(3.35,.35)--(3.7,0);
 \edge{#1}{(3.35,.35)--(3.35,.95)}
 \draw[m](3,1.3)--(3.35,.95)--(3.7,1.3);
 \node[dot] at (3.35,.35){};\node[dot] at (3.35,.95){};
 \node[right] at (3.35,.65){$#1$};
 \end{tikzpicture}}
\newcommand{\zinversecross}{\begin{tikzpicture}[scale=.8]
 \draw[m](-.5,0)--(.5,1.1);
 \draw[m,over](.5,0)--(-.5,1.1);
 \end{tikzpicture}}
\newcommand{\zsignedbraid}[1]{\begin{tikzpicture}[scale=.7]
 \foreach \k [count=\h from 0] in {#1}{
  \pgfmathsetmacro{\yy}{.8*\h}
  \ifnum\k=1
   \draw[m](.8,\yy)--(0,\yy+.8);
   \draw[m,over](0,\yy)--(.8,\yy+.8);
  \else
   \draw[m](0,\yy)--(.8,\yy+.8);
   \draw[m,over](.8,\yy)--(0,\yy+.8);
  \fi}
 \end{tikzpicture}}
\newcommand{\zbraidedstate}[1]{\begin{tikzpicture}[scale=.7,every node/.style={font=\scriptsize}]
 \draw[m](-.9,1.5)--(-.45,1)--(0,1.5);
 \edge{#1}{(-.45,1)--(0,.45)}
 \draw[m](.9,1.5)--(0,.45)--(0,0);
 \node[dot] at (-.45,1){};\node[dot] at (0,.45){};
 \node[left] at (-.25,.75){$#1$};\node[below] at (0,0){$m$};
 \foreach \h in {0,1}{\pgfmathsetmacro{\yy}{1.5+.75*\h}
  \draw[m](-.9,\yy)--(-.9,\yy+.75);
  \draw[m](.9,\yy)--(0,\yy+.75);
  \draw[m,over](0,\yy)--(.9,\yy+.75);}
 \end{tikzpicture}}

\newcommand{\cftree}[8]{\begin{tikzpicture}[scale=.55,
 every node/.style={font=\scriptsize}]
 \ifcase#1
  \edge{#2}{(-1.2,1.9)--(-.8,1.45)}\edge{#3}{(-.4,1.9)--(-.8,1.45)}
  \edge{#6}{(-.8,1.45)--(-.4,.9)}\edge{#4}{(.4,1.9)--(-.4,.9)}
  \edge{#7}{(-.4,.9)--(0,.35)}\edge{#5}{(1.2,1.9)--(0,.35)}
  \node[left] at (-.63,1.15){$#6$};\node[left] at (-.2,.62){$#7$};
  \node[dot] at (-.8,1.45){};\node[dot] at (-.4,.9){};
 \or
  \edge{#3}{(-.4,1.9)--(0,1.45)}\edge{#4}{(.4,1.9)--(0,1.45)}
  \edge{#6}{(0,1.45)--(-.4,.9)}\edge{#2}{(-1.2,1.9)--(-.4,.9)}
  \edge{#7}{(-.4,.9)--(0,.35)}\edge{#5}{(1.2,1.9)--(0,.35)}
  \node[right] at (-.16,1.18){$#6$};\node[left] at (-.2,.62){$#7$};
  \node[dot] at (0,1.45){};\node[dot] at (-.4,.9){};
 \or
  \edge{#3}{(-.4,1.9)--(0,1.45)}\edge{#4}{(.4,1.9)--(0,1.45)}
  \edge{#6}{(0,1.45)--(.4,.9)}\edge{#5}{(1.2,1.9)--(.4,.9)}
  \edge{#7}{(.4,.9)--(0,.35)}\edge{#2}{(-1.2,1.9)--(0,.35)}
  \node[left] at (.16,1.18){$#6$};\node[right] at (.2,.62){$#7$};
  \node[dot] at (0,1.45){};\node[dot] at (.4,.9){};
 \or
  \edge{#4}{(.4,1.9)--(.8,1.45)}\edge{#5}{(1.2,1.9)--(.8,1.45)}
  \edge{#6}{(.8,1.45)--(.4,.9)}\edge{#3}{(-.4,1.9)--(.4,.9)}
  \edge{#7}{(.4,.9)--(0,.35)}\edge{#2}{(-1.2,1.9)--(0,.35)}
  \node[right] at (.63,1.15){$#6$};\node[right] at (.2,.62){$#7$};
  \node[dot] at (.8,1.45){};\node[dot] at (.4,.9){};
 \or
  \edge{#2}{(-1.2,1.9)--(-.8,1.3)}\edge{#3}{(-.4,1.9)--(-.8,1.3)}
  \edge{#4}{(.4,1.9)--(.8,1.3)}\edge{#5}{(1.2,1.9)--(.8,1.3)}
  \edge{#6}{(-.8,1.3)--(0,.35)}\edge{#7}{(.8,1.3)--(0,.35)}
  \node[left] at (-.47,.88){$#6$};\node[right] at (.47,.88){$#7$};
  \node[dot] at (-.8,1.3){};\node[dot] at (.8,1.3){};
 \fi
 \edge{#8}{(0,.35)--(0,0)}\node[dot] at (0,.35){};
 \node[above] at (-1.2,1.9){$#2$};\node[above] at (-.4,1.9){$#3$};
 \node[above] at (.4,1.9){$#4$};\node[above] at (1.2,1.9){$#5$};
 \node[below] at (0,0){$#8$};
\end{tikzpicture}}
\newcommand{\cpentagon}[1]{\begingroup
 \ifcase#1
  \def\ctA{\cftree{0}{a}{b}{m}{c}{ab}{m}{m}}
  \def\ctB{\cftree{1}{a}{b}{m}{c}{m}{m}{m}}
  \def\ctC{\cftree{2}{a}{b}{m}{c}{m}{m}{m}}
  \def\ctD{\cftree{3}{a}{b}{m}{c}{m}{m}{m}}
  \def\ctE{\cftree{4}{a}{b}{m}{c}{ab}{m}{m}}
  \def\ceA{1}\def\ceB{\chi(a,c)}\def\ceC{\chi(b,c)}
  \def\ceD{\chi(ab,c)}\def\ceE{1}
 \or
  \def\ctA{\cftree{0}{m}{a}{m}{m}{m}{x}{m}}
  \def\ctB{\cftree{1}{m}{a}{m}{m}{m}{x}{m}}
  \def\ctC{\cftree{2}{m}{a}{m}{m}{m}{ay}{m}}
  \def\ctD{\cftree{3}{m}{a}{m}{m}{y}{ay}{m}}
  \def\ctE{\cftree{4}{m}{a}{m}{m}{m}{y}{m}}
  \def\ceA{d(a,x)}\def\ceB{T_{ay,x}}\def\ceC{1}
  \def\ceD{T_{y,x}}\def\ceE{1}
 \or
  \def\ctA{\cftree{0}{m}{m}{a}{m}{x}{xa}{m}}
  \def\ctB{\cftree{1}{m}{m}{a}{m}{m}{xa}{m}}
  \def\ctC{\cftree{2}{m}{m}{a}{m}{m}{y}{m}}
  \def\ctD{\cftree{3}{m}{m}{a}{m}{m}{y}{m}}
  \def\ctE{\cftree{4}{m}{m}{a}{m}{x}{m}{m}}
  \def\ceA{1}\def\ceB{T_{y,xa}}\def\ceC{d(a,y)}
  \def\ceD{1}\def\ceE{T_{y,x}}
 \or
  \def\ctA{\cftree{0}{a}{b}{c}{m}{ab}{abc}{m}}
  \def\ctB{\cftree{1}{a}{b}{c}{m}{bc}{abc}{m}}
  \def\ctC{\cftree{2}{a}{b}{c}{m}{bc}{m}{m}}
  \def\ctD{\cftree{3}{a}{b}{c}{m}{m}{m}{m}}
  \def\ctE{\cftree{4}{a}{b}{c}{m}{ab}{m}{m}}
  \def\ceA{\omega(a,b,c)}
  \def\ceB{u(a,bc)}
  \def\ceC{u(b,c)}
  \def\ceD{u(ab,c)}
  \def\ceE{u(a,b)}
 \fi
 \begin{tikzpicture}[
  every node/.style={font=\small}]
  \node (A) at (-3,.5){\ctA};\node (B) at (0,2.5){\ctB};
  \node (C) at (3,.5){\ctC};\node (D) at (1.9,-1.9){\ctD};
  \node (E) at (-1.9,-1.9){\ctE};
  \draw[arr](A)--node[above left]{$\ceA$}(B);
  \draw[arr](B)--node[above right]{$\ceB$}(C);
  \draw[arr](C)--node[right]{$\ceC$}(D);
  \draw[arr](A)--node[left]{$\ceD$}(E);
  \draw[arr](E)--node[below]{$\ceE$}(D);
 \end{tikzpicture}\endgroup}
\newcommand{\coverlap}[2]{\begin{tikzpicture}[scale=.65,
 every node/.style={font=\scriptsize}]
 \draw[m](0,0)--(0,.35);
 \edge{#1}{(0,.35)--(-.55,.95)}
 \draw[m](-.55,.95)--(-1.1,1.45);\draw[m](-.55,.95)--(0,1.45);
 \draw[m](0,.35)--(1.1,1.45)--(.55,1.95);
 \draw[m](0,1.45)--(.55,1.95);
 \edge{#2}{(.55,1.95)--(0,2.55)}
 \draw[m](-1.1,1.45)--(0,2.55)--(0,2.9);
 \foreach \p in {(0,.35),(-.55,.95),(.55,1.95),(0,2.55)}{\node[dot] at \p{};}
 \node[left] at (-.3,.65){$#1$};\node[right] at (.3,2.25){$#2$};
 \node[below] at (0,0){$m$};\node[above] at (0,2.9){$m$};
\end{tikzpicture}}

\newcommand{\overedge}[2]{\ifboolexpr{test {\ifstrequal{#1}{m}} or test {\ifstrequal{#1}{m^*}} or test {\ifstrequal{#1}{m^{**}}}}{\draw[m,over] #2;}{\draw[a,over] #2;}}
\newcommand{\cupcap}[4][none]{\begin{tikzpicture}[scale=.8,
 every node/.style={font=\scriptsize}]
 \ifstrequal{#2}{cup}{
  \edge{#3}{(-.55,.9) to[out=-90,in=180](0,.2)}
  \edge{#4}{(0,.2) to[out=0,in=-90](.55,.9)}
  \node[above] at (-.55,.9){$#3$};\node[above] at (.55,.9){$#4$};
 }{
  \edge{#3}{(-.55,0) to[out=90,in=180](0,.7)}
  \edge{#4}{(0,.7) to[out=0,in=90](.55,0)}
  \node[below] at (-.55,0){$#3$};\node[below] at (.55,0){$#4$};}
 \ifstrequal{#1}{none}{}{\node[coupon] at (.5,.45){$#1$};}
 \end{tikzpicture}}
\newcommand{\bubble}[4]{\begin{tikzpicture}[scale=.73,
 every node/.style={font=\scriptsize}]
 \edge{#3}{(0,0)--(0,.4)}\edge{#4}{(0,1.4)--(0,1.8)}
 \edge{#1}{(0,.4) to[out=145,in=-145](0,1.4)}
 \edge{#2}{(0,.4) to[out=35,in=-35](0,1.4)}
 \node[dot] at (0,.4){};\node[dot] at (0,1.4){};
 \node[left] at (-.43,.9){$#1$};\node[right] at (.43,.9){$#2$};
 \node[below] at (0,0){$#3$};\node[above] at (0,1.8){$#4$};
 \end{tikzpicture}}
\newcommand{\pivloop}[2]{\begin{tikzpicture}[scale=.72,
 every node/.style={font=\scriptsize}]
 \edge{#1}{(0,.7) ellipse (.48 and .7)}
 \node[left] at (-.48,.7){$#1$};
 \ifstrequal{#1}{m}{\node[right] at (.48,.7){$m^*$};}{\node[right] at (.48,.7){$#1^{-1}$};}
 \ifstrequal{#2}{none}{}{\node[coupon] at (0,1.4){$#2$};}
 \end{tikzpicture}}
\newcommand{\treepair}[5]{\begin{tikzpicture}[scale=.72,
 every node/.style={font=\scriptsize}]
 \path[region](-.8,.25)--(.8,.25)--(0,.95)--cycle;
 \path[fill=treegraylight](-.8,2.3)--(.8,2.3)--(0,1.6)--cycle;
 \foreach \x in {-.7,0,.7}{\draw[a](\x,0)--(\x,.25)--(0,.95);
  \draw[a](0,1.6)--(\x,2.3)--(\x,2.55);}
 \edge{#3}{(0,.95)--(0,1.6)}
 \node[right] at (0,1.28){$#3$};
 \node[fill=treegray,inner sep=1pt] at (0,.48){$#4$};
 \node[fill=treegraylight,inner sep=1pt] at (0,2.07){$#5$};
 \node[below] at (0,0){$#1$};\node[above] at (0,2.55){$#2$};
 \end{tikzpicture}}
\newcommand{\matrixunit}[2]{\zmatrixunit{#1}{#2}}
\newcommand{\stateaction}[7]{\begin{tikzpicture}[scale=.7,
 every node/.style={font=\scriptsize}]
 \edge{#1}{(-.9,1.5)--(-.45,1)}\edge{#2}{(0,1.5)--(-.45,1)}
 \edge{#4}{(-.45,1)--(0,.45)}\edge{#3}{(.9,1.5)--(0,.45)}
 \edge{#5}{(0,.45)--(0,0)}
 \node[dot] at (-.45,1){};\node[dot] at (0,.45){};
 \node[left] at (-.27,.75){$#4$};\node[below] at (0,0){$#5$};
 \edge{#1}{(-.9,1.5)--(-.9,2.3)}\edge{#2}{(0,1.5)--(0,2.3)}
 \edge{#3}{(.9,1.5)--(.9,2.3)}
 \node[coupon,minimum width=.76cm] at ({.9*(#6-1)-.45},1.9){$#7$};
 \node[above] at (-.9,2.3){$#1$};\node[above] at (0,2.3){$#2$};
 \node[above] at (.9,2.3){$#3$};\end{tikzpicture}}
\newcommand{\pairfusion}[3]{\begin{tikzpicture}[scale=.72,
 every node/.style={font=\scriptsize}]
 \draw[m](-1.2,0)--(-.8,.5)--(-.4,0);
 \draw[m](.4,0)--(.8,.5)--(1.2,0);
 \edge{#1}{(-.8,.5)--(0,1.35)}\edge{#2}{(.8,.5)--(0,1.35)}
 \edge{#3}{(0,1.35)--(0,1.8)}
 \foreach \p in {(-.8,.5),(.8,.5),(0,1.35)}{\node[dot] at \p{};}
 \node[left] at (-.5,.9){$#1$};\node[right] at (.5,.9){$#2$};
 \node[above] at (0,1.8){$#3$};
 \foreach \x in {-1.2,-.4,.4,1.2}{\node[below] at (\x,0){$m$};}
 \end{tikzpicture}}
\newcommand{\quotlabel}[2]{\ifstrequal{#1}{#2}{1}{\ifstrequal{#1}{1}{#2}{#1^{-1}#2}}}
\newcommand{\quotedge}[3]{\ifstrequal{#1}{#2}{\edge{1}{#3}}{\ifstrequal{#1}{1}{\edge{#2}{#3}}{\edge{a}{#3}}}}
\newcommand{\fourunit}[3]{\begin{tikzpicture}[scale=.61,
 every node/.style={font=\scriptsize}]
 \foreach \h in {0,1}{\begin{scope}[shift={(0,3.1*\h)},yscale={1-2*\h}]
  \draw[m](-1.2,0)--(-.8,.45)--(-.4,0);
  \draw[m](.4,0)--(.8,.45)--(1.2,0);
  \ifnum\h=0
   \edge{#2}{(-.8,.45)--(0,1.2)}\quotedge{#2}{#3}{(.8,.45)--(0,1.2)}
   \node[left] at (-.55,.78){$#2$};\node[right] at (.55,.78){$\quotlabel{#2}{#3}$};
  \else
   \edge{#1}{(-.8,.45)--(0,1.2)}\quotedge{#1}{#3}{(.8,.45)--(0,1.2)}
   \node[left] at (-.55,.78){$#1$};\node[right] at (.55,.78){$\quotlabel{#1}{#3}$};
  \fi
  \foreach \p in {(-.8,.45),(.8,.45),(0,1.2)}{\node[dot] at \p{};}
 \end{scope}}
 \edge{#3}{(0,1.2)--(0,1.9)}\node[right] at (0,1.55){$#3$};
 \end{tikzpicture}}
\newcommand{\channeltrace}[3]{\begin{tikzpicture}[scale=.72,
 every node/.style={font=\scriptsize}]
 \draw[m](0,-.25)--(0,0)--(.45,.5)--(.9,0);
 \edge{#2}{(.45,.5)--(.45,1.2)}
 \draw[m](0,2.05)--(0,1.75)--(.45,1.2)--(.9,1.75);
 \node[dot] at (.45,.5){};\node[dot] at (.45,1.2){};
 \node[right] at (.45,.85){$#2$};
 \draw[m](.9,1.75) to[out=90,in=90](1.8,1.75)--(1.8,0)
  to[out=-90,in=-90](.9,0);
 \node[coupon,inner sep=1.6pt] at (1.38,2.02){$#3$};
 \node[right] at (1.8,.85){$m^*$};
 \ifnum#1=1
  \draw[m](0,2.05) to[out=90,in=90](2.65,2.05)--(2.65,-.25)
   to[out=-90,in=-90](0,-.25);
  \node[coupon,inner sep=1.6pt] at (.1,2.29){$#3$};
 \else
  \node[below] at (0,-.25){$m$};\node[above] at (0,2.05){$m$};
 \fi\end{tikzpicture}}
\newcommand{\partialchannel}[2]{\channeltrace{0}{#1}{#2}}
\newcommand{\closedchannel}[2]{\channeltrace{1}{#1}{#2}}
\newcommand{\rotatedcup}[1]{\begin{tikzpicture}[scale=.65,
 every node/.style={font=\scriptsize}]
 \draw[m](1.6,2.45)--(1.6,1.05) to[out=-90,in=-90](.8,1.05)
  --(.8,1.85) to[out=90,in=90](0,1.85)--(0,.25)
  to[out=-90,in=-90](2.4,.25)--(2.4,2.45);
 \node[coupon] at (2.4,1.75){$#1$};
 \node[above] at (.4,2.19){$\ev_m$};
 \node[below] at (1.2,-.5){$\coev_{m^*}$};
 \node[below] at (1.2,.76){$i_1$};
 \node[left] at (0,1.1){$m^*$};\node[right] at (2.4,1.1){$m^{**}$};
 \node[above] at (1.6,2.45){$m$};\node[above] at (2.4,2.45){$m$};
 \end{tikzpicture}}
\newcommand{\hexdraw}[6]{%
 \ifnum#1=0
  \edge{#3}{(1,0)--(1.5,.5)}\edge{#4}{(2,0)--(1.5,.5)}
  \edge{#5}{(1.5,.5)--(1.5,.75)}\edge{#2}{(0,0)--(0,.75)}
  \ifnum#6=0
   \edge{#5}{(1.5,.75)--(.5,1.65)}\overedge{#2}{(0,.75)--(2,1.65)}
  \else
   \edge{#2}{(0,.75)--(2,1.65)}\overedge{#5}{(1.5,.75)--(.5,1.65)}\fi
  \edge{#5}{(.5,1.65)--(.5,2.2)}\edge{#2}{(2,1.65)--(2,2.2)}
  \node[dot] at (1.5,.5){};\node[right] at (1.5,.58){$#5$};
 \else
  \ifnum#6=0
   \edge{#3}{(1,0)--(0,.8)}\overedge{#2}{(0,0)--(1,.8)}
   \edge{#4}{(2,0)--(2,.8)--(1,1.4)}\overedge{#2}{(1,.8)--(2,1.4)}
  \else
   \edge{#2}{(0,0)--(1,.8)}\overedge{#3}{(1,0)--(0,.8)}
   \edge{#2}{(1,.8)--(2,1.4)}\overedge{#4}{(2,0)--(2,.8)--(1,1.4)}\fi
  \edge{#3}{(0,.8)--(0,1.4)--(.5,1.9)}
  \edge{#4}{(1,1.4)--(.5,1.9)}\edge{#5}{(.5,1.9)--(.5,2.2)}
  \edge{#2}{(2,1.4)--(2,2.2)}\node[dot] at (.5,1.9){};
 \fi
 \node[below] at (0,0){$#2$};\node[below] at (1,0){$#3$};
 \node[below] at (2,0){$#4$};\node[above] at (.5,2.2){$#5$};
 \node[above] at (2,2.2){$#2$};}
\newcommand{\hexmove}[6]{\begin{tikzpicture}[scale=.67,
 every node/.style={font=\scriptsize}]
 \ifnum#1=0\hexdraw{#2}{#3}{#4}{#5}{#6}{0}
 \else\begin{scope}[xscale=-1]\hexdraw{#2}{#5}{#4}{#3}{#6}{1}\end{scope}\fi
 \end{tikzpicture}}
\newcommand{\fourtree}[4]{\cftree{#1}{m}{m}{m}{m}{#2}{#3}{#4}}
\newcommand{\pentagonpic}{\begin{tikzpicture}[every node/.style={font=\scriptsize}]
 \node (A) at (-3,.8){\fourtree{0}{a}{m}{x}};
 \node (B) at (0,2.6){\fourtree{1}{b}{m}{x}};
 \node (C) at (3,.8){\fourtree{2}{b}{m}{x}};
 \node (D) at (1.9,-1.4){\fourtree{3}{c}{m}{x}};
 \node (E) at (-1.9,-1.4){\fourtree{4}{a}{c}{x}};
 \draw[arr](A)--node[above left]{$F_{b,a}$}(B);
 \draw[arr](B)--node[above right]{$\chi(b,x)$}(C);
 \draw[arr](C)--node[right]{$F_{c,b}$}(D);
 \draw[arr](A)--node[left]{$1$}(E);
 \draw[arr](E)--node[below]{$1$}(D);
 \end{tikzpicture}}

\newcommand{\oppoverlap}[2]{\begin{tikzpicture}[scale=.65,xscale=-1,
 every node/.style={font=\scriptsize}]
 \draw[m](0,0)--(0,.35);
 \edge{#1}{(0,.35)--(-.55,.95)}
 \draw[m](-.55,.95)--(-1.1,1.45);\draw[m](-.55,.95)--(0,1.45);
 \draw[m](0,.35)--(1.1,1.45)--(.55,1.95);
 \draw[m](0,1.45)--(.55,1.95);
 \edge{#2}{(.55,1.95)--(0,2.55)}
 \draw[m](-1.1,1.45)--(0,2.55)--(0,2.9);
 \foreach \p in {(0,.35),(-.55,.95),(.55,1.95),(0,2.55)}{\node[dot] at \p{};}
 \node[left] at (-.3,.65){$#1$};\node[right] at (.3,2.25){$#2$};
 \node[below] at (0,0){$m$};\node[above] at (0,2.9){$m$};
\end{tikzpicture}}

\newcommand{\sameoverlap}[2]{\begin{tikzpicture}[scale=.65,
 every node/.style={font=\scriptsize}]
 \draw[m](0,0)--(0,.35);\edge{#1}{(0,.35)--(-.55,.95)}
 \draw[m](-1.1,1.45)--(-.55,.95)--(0,1.45);
 \draw[m](0,.35)--(1.1,1.45)--(0,2.55)--(0,2.9);
 \draw[m](-1.1,1.45)--(-.55,1.95)--(0,1.45);
 \edge{#2}{(-.55,1.95)--(0,2.55)}
 \foreach \p in {(0,.35),(-.55,.95),(-.55,1.95),(0,2.55)}{\node[dot] at \p{};}
 \node[left] at (-.3,.65){$#1$};\node[left] at (-.3,2.25){$#2$};
 \node[below] at (0,0){$m$};\node[above] at (0,2.9){$m$};\end{tikzpicture}}

\newcommand{\partialbox}[2]{\begin{tikzpicture}[scale=.85]
  \draw[m](0,-.3)--(0,1.8);\draw[m](.7,0)--(.7,1.5);
  \draw[m](.7,1.5) to[out=90,in=90](1.6,1.5)--(1.6,0)
     to[out=-90,in=-90](.7,0);
  \node[coupon,minimum width=1.1cm] at (.35,.75){$#1$};
  \node[coupon] at (.7,1.3){$#2$};
  \node[right,font=\scriptsize] at (1.6,.75){$m^*$};
  \node[above,font=\scriptsize] at (1.15,1.84){$\ev_{m^*}$};
  \node[below,font=\scriptsize] at (1.15,-.32){$\coev_m$};
  \node[below,font=\scriptsize] at (0,-.3){$m$};
  \node[above,font=\scriptsize] at (0,1.8){$m$};
 \end{tikzpicture}}

\newcommand{\drinfeldpic}{\begin{tikzpicture}[scale=.8,every node/.style={font=\scriptsize}]
  \draw[m](0,0)--(0,.65);
  \draw[m](.8,.65) to[out=-90,in=-90](1.6,.65)--(1.6,2.25);
  \draw[m](.8,.65)--(0,1.3);\draw[m,over](0,.65)--(.8,1.3);
  \draw[m](0,1.3) to[out=90,in=90](.8,1.3);
  \node[above] at (.4,1.82){$\ev_m$};\node[below] at (1.2,.18){$\coev_{m^*}$};
  \node[right] at (.8,.61){$m^*$};
  \node[below] at (0,0){$m$};\node[above] at (1.6,2.25){$m^{**}$};
 \end{tikzpicture}}

\newcommand{\positivecurl}{\begin{tikzpicture}[scale=.8,every node/.style={font=\scriptsize}]
  \draw[m](0,0)--(0,.7);
  \draw[m](.8,.7) to[out=-90,in=-90](1.6,.7)--(1.6,1.75);
  \draw[m](.8,.7)--(0,1.35)--(0,2.5);
  \draw[m,over](0,.7)--(.8,1.35);
  \draw[m](.8,1.35)--(.8,1.75) to[out=90,in=90](1.6,1.75);
  \node[coupon] at (.8,1.57){$\rho$};
  \node[right] at (1.6,1.13){$m^*$};
  \node[above] at (1.2,2.12){$\ev_{m^*}$};
  \node[below] at (1.2,.23){$\coev_m$};
  \node[below] at (0,0){$m$};\node[above] at (0,2.5){$m$};
 \end{tikzpicture}}

\newcommand{\rightabsorptionpic}{\begin{tikzpicture}[scale=.75,every node/.style={font=\scriptsize}]
  \draw[m](-.8,0)--(-.8,.8)--(-.2,1.15);
  \draw[a](0,0)--(.4,.55);\draw[m](.8,0)--(.4,.55)--(.4,.8)--(-.2,1.15);
  \draw[a](-.2,1.15)--(-.2,1.6);
  \draw[m](-.8,2.1)--(-.2,1.6)--(.4,2.1);
  \node[dot] at (.4,.55){};\node[dot] at (-.2,1.15){};\node[dot] at (-.2,1.6){};
  \node[right] at (-.2,1.38){$x$};
  \node[below] at (-.8,0){$m$};\node[below] at (0,0){$a$};\node[below] at (.8,0){$m$};
  \node[above] at (-.8,2.1){$m$};\node[above] at (.4,2.1){$m$};
 \end{tikzpicture}}

\newcommand{\leftabsorptionpic}{\begin{tikzpicture}[scale=.75,every node/.style={font=\scriptsize}]
  \draw[m](-.8,0)--(-.55,.55)--(-.55,2.1);
  \draw[a](0,0)--(-.55,.55);\draw[m](.8,0)--(.8,2.1);
  \node[dot] at (-.55,.55){};
  \node[below] at (-.8,0){$m$};\node[below] at (0,0){$a$};\node[below] at (.8,0){$m$};
  \node[above] at (-.55,2.1){$m$};\node[above] at (.8,2.1){$m$};
 \end{tikzpicture}}

\newcommand{\hopfclosurepic}{\begin{tikzpicture}[scale=1,every node/.style={font=\scriptsize}]
  \draw[a](0,0)--(0,1.5);\draw[a](.65,0)--(.65,1.5);
  \draw[a](.65,1.5) to[out=90,in=90](1.35,1.5)--(1.35,0)
      to[out=-90,in=-90](.65,0);
  \draw[a](0,1.5) to[out=90,in=90](2.05,1.5)--(2.05,0)
      to[out=-90,in=-90](0,0);
  \node[coupon,minimum width=1.2cm,minimum height=.5cm] at (.325,.75){$D_{X,Y}$};
  \node[coupon] at (.65,1.3){$j_Y$};\node[coupon] at (0,1.3){$j_X$};
  \node[left] at (0,.17){$X$};\node[left] at (.65,.17){$Y$};
  \node[right] at (1.35,.75){$Y^*$};\node[right] at (2.05,.75){$X^*$};
 \end{tikzpicture}}

\newcommand{\balancedfusionpic}{\begin{tikzpicture}[scale=.8,every node/.style={font=\scriptsize}]
 \draw[a](-.6,0)--(-.6,.55);\draw[a](.6,0)--(.6,.55);
 \draw[a](.6,.55)--(-.6,1.15);\draw[a,over](-.6,.55)--(.6,1.15);
 \draw[a](.6,1.15)--(-.6,1.75);\draw[a,over](-.6,1.15)--(.6,1.75);
 \draw[a](-.6,1.75)--(0,2.2)--(.6,1.75);\draw[a](0,2.2)--(0,2.6);
 \node[coupon] at (-.6,.22){$\theta_r$};\node[coupon] at (.6,.22){$\theta_s$};
 \node[dot] at (0,2.2){};\node[above] at (0,2.6){$u$};
 \node[below] at (-.6,0){$r$};\node[below] at (.6,0){$s$};
\end{tikzpicture}}

\newcommand{\twistfusionpic}{\begin{tikzpicture}[scale=.8,every node/.style={font=\scriptsize}]
 \draw[a](-.6,0)--(0,1.15)--(.6,0);\draw[a](0,1.15)--(0,2.4);
 \node[dot] at (0,1.15){};\node[coupon] at (0,1.85){$\theta_u$};
 \node[below] at (-.6,0){$r$};\node[below] at (.6,0){$s$};\node[above] at (0,2.4){$u$};
\end{tikzpicture}}

\newcommand{\rightsnakepic}{\begin{tikzpicture}[scale=.65]
  \draw[m](.8,0)--(.8,1.05) to[out=90,in=90](0,1.05)
    --(0,.7) to[out=-90,in=-90](-.8,.7)--(-.8,1.85);
  \node[above,font=\scriptsize] at (.4,1.48){$\ev_m$};
  \node[below,font=\scriptsize] at (-.4,.28){$\coev_m$};
  \node[below,font=\scriptsize] at (.8,0){$m$};
  \node[above,font=\scriptsize] at (-.8,1.85){$m$};
  \node[left,font=\scriptsize] at (0.15,.87){$m^*$};
 \end{tikzpicture}}

\newcommand{\leftsnakepic}{\begin{tikzpicture}[scale=.65]
  \draw[m](-.8,0)--(-.8,1.05) to[out=90,in=90](0,1.05)
    --(0,.7) to[out=-90,in=-90](.8,.7)--(.8,1.85);
  \node[above,font=\scriptsize] at (-.4,1.48){$\ev_m$};
  \node[below,font=\scriptsize] at (.4,.28){$\coev_m$};
  \node[below,font=\scriptsize] at (-.8,0){$m^*$};
  \node[above,font=\scriptsize] at (.8,1.85){$m^*$};
  \node[right,font=\scriptsize] at (0,.87){$m$};
 \end{tikzpicture}}

\newcommand{\isowire}[1]{\begin{tikzpicture}[scale=.7]
 \edge{#1}{(0,0) to[out=90,in=-90](.25,.55) to[out=90,in=-90](0,1.15)}
 \node[below,font=\scriptsize] at (0,0){$#1$};\end{tikzpicture}}
\newcommand{\isofuse}[3]{\begin{tikzpicture}[scale=.78,
 every node/.style={font=\scriptsize}]
 \edge{#1}{(-.48,0) to[out=90,in=-150](.15,.5)}
 \edge{#2}{(.48,0) to[out=90,in=-30](.15,.5)}
 \edge{#3}{(.15,.5) to[out=90,in=-90](0,1)}
 \node[dot] at (.15,.5){};\node[below] at (-.48,0){$#1$};
 \node[below] at (.48,0){$#2$};\node[above] at (0,1){$#3$};\end{tikzpicture}}
\newcommand{\isopivsnake}[1]{\begin{tikzpicture}[scale=.65,
 every node/.style={font=\scriptsize}]
 \begin{scope}[xscale=#1]
  \draw[m](-.8,0)--(-.8,1.05) to[out=90,in=90](0,1.05)
   --(0,.7) to[out=-90,in=-90](.8,.7)--(.8,1.85);
  \node[coupon] at (-.8,.7){$\rho$};\node[coupon] at (.8,1.3){$\rho$};
  \node[below] at (-.8,0){$m$};\node[above] at (.8,1.85){$m$};
 \end{scope}\end{tikzpicture}}
\newcommand{\isoflag}[1]{\ifstrequal{#1}{m}{\rho}{\epsilon_{#1}}}
\newcommand{\bentvertex}[4]{\begin{tikzpicture}[scale=.68,
 every node/.style={font=\scriptsize}]
 \ifcase#1
  \edge{#2}{(-.8,2.2)--(-.8,.55) to[out=-90,in=-90](.2,.55)--(.2,.9)--(.7,1.3)}
  \edge{#3}{(1.2,0)--(1.2,.9)--(.7,1.3)}\edge{#4}{(.7,1.3)--(.7,2.2)}
  \node[dot] at (.7,1.3){};\node[coupon] at (-.8,1.2){$\isoflag{#2}$};
  \node[below] at (-.3,.15){$\widetilde\coev_{#2}$};
  \node[left] at (.2,.95){$#2$};\node[above] at (-.8,2.2){$#2^*$};
  \node[above] at (.7,2.2){$#4$};\node[below] at (1.2,0){$#3$};
 \or
  \edge{#3}{(.8,2.2)--(.8,.55) to[out=-90,in=-90](-.2,.55)--(-.2,.9)--(-.7,1.3)}
  \edge{#2}{(-1.2,0)--(-1.2,.9)--(-.7,1.3)}\edge{#4}{(-.7,1.3)--(-.7,2.2)}
  \node[dot] at (-.7,1.3){};\node[below] at (.3,.15){$\coev_{#3}$};
  \node[right] at (-.2,.95){$#3$};\node[above] at (.8,2.2){$#3^*$};
  \node[above] at (-.7,2.2){$#4$};\node[below] at (-1.2,0){$#2$};
 \or
  \edge{#2}{(-.8,0)--(-.8,1.6) to[out=90,in=90](.2,1.6)--(.2,1.05)--(.7,.5)}
  \edge{#3}{(.7,0)--(.7,.5)}\edge{#4}{(.7,.5)--(1.2,1.05)--(1.2,2.2)}
  \node[dot] at (.7,.5){};\node[coupon] at (-.8,1.05){$\isoflag{#2}$};
  \node[above] at (-.3,1.96){$\widetilde\ev_{#2}$};
  \node[right] at (.2,1.3){$#2^*$};\node[below] at (-.8,0){$#2$};
  \node[below] at (.7,0){$#3$};\node[above] at (1.2,2.2){$#4$};
 \or
  \edge{#3}{(.8,0)--(.8,1.6) to[out=90,in=90](-.2,1.6)--(-.2,1.05)--(-.7,.5)}
  \edge{#2}{(-.7,0)--(-.7,.5)}\edge{#4}{(-.7,.5)--(-1.2,1.05)--(-1.2,2.2)}
  \node[dot] at (-.7,.5){};\node[above] at (.3,1.96){$\ev_{#3}$};
  \node[left] at (-.2,1.3){$#3^*$};\node[below] at (.8,0){$#3$};
  \node[below] at (-.7,0){$#2$};\node[above] at (-1.2,2.2){$#4$};
 \fi\end{tikzpicture}}
\newcommand{\isopivotal}[1]{\begin{tikzpicture}[scale=.7,
 every node/.style={font=\scriptsize}]
 \draw[a](0,0)--(0,2);
 \ifnum#1=0
  \node[coupon] at (0,.55){$f$};\node[coupon] at (0,1.45){$j_Y$};
  \node[right] at (0.2,1){$Y$};
 \else
  \node[coupon] at (0,.55){$j_X$};\node[coupon] at (0,1.45){$f^{**}$};
  \node[right] at (0.2,1){$X^{**}$};\fi
 \node[below] at (0,0){$X$};\node[above] at (0,2){$Y^{**}$};\end{tikzpicture}}

\newcommand{\semipentagon}[4]{\begin{tikzpicture}[
 every node/.style={font=\scriptsize},scale=1.2]
 \node (A) at (-3,0){\cftree{0}{#1}{#2}{#3}{#4}{#1#2}{#1#2#3}{#1#2#3#4}};
 \node (B) at (0,1.9){\cftree{1}{#1}{#2}{#3}{#4}{#2#3}{#1#2#3}{#1#2#3#4}};
 \node (C) at (3,0){\cftree{2}{#1}{#2}{#3}{#4}{#2#3}{#2#3#4}{#1#2#3#4}};
 \node (D) at (1.8,-2){\cftree{3}{#1}{#2}{#3}{#4}{#3#4}{#2#3#4}{#1#2#3#4}};
 \node (E) at (-1.8,-2){\cftree{4}{#1}{#2}{#3}{#4}{#1#2}{#3#4}{#1#2#3#4}};
 \draw[arr](A)--node[above left]{$\omega(#1,#2,#3)$}(B);
 \draw[arr](B)--node[above right]{$\omega(#1,#2#3,#4)$}(C);
 \draw[arr](C)--node[right]{$\omega(#2,#3,#4)$}(D);
 \draw[arr](A)--node[left]{$\omega(#1#2,#3,#4)$}(E);
 \draw[arr](E)--node[below]{$\omega(#1,#2,#3#4)$}(D);
\end{tikzpicture}}

\def\makeautorefname#1#2{\csdef{#1autorefname}{#2}}
\makeautorefname{section}{Section}%
\makeautorefname{subsection}{Section}%
\makeautorefname{subsubsection}{Section}%

\makeautorefname{theorem}{Theorem}%
\makeautorefname{lemma}{Lemma}%
\makeautorefname{proposition}{Proposition}%
\makeautorefname{corollary}{Corollary}%
\makeautorefname{example}{Example}%
\makeautorefname{remark}{Remark}%
\makeautorefname{definition}{Definition}%

\begin{document}
\title[The Tambara--Yamagami calculus and finite semigroups]{The Tambara--Yamagami calculus and finite semigroups}
\author[S. Cao and D. Tubbenhauer]{Super Cao and Daniel Tubbenhauer}
\address{S.C.: The University of Sydney, School of Mathematics and Statistics F07, Office Carslaw 806, NSW 2006, Australia}
\email{super.cao@sydney.edu.au}
\address{D.T.: The University of Sydney, School of Mathematics and Statistics F07, Office Carslaw 827, NSW 2006, Australia, \href{http://www.dtubbenhauer.com}{www.dtubbenhauer.com}, \href{https://orcid.org/0000-0001-7265-5047}{ORCID 0000-0001-7265-5047}}
\email{daniel.tubbenhauer@sydney.edu.au}
\date{}

\begin{abstract}
Tambara--Yamagami categories are among the simplest fusion categories beyond the pointed case: one adds a single noninvertible simple object to a finite group of invertible ones. We ask what remains of their (diagrammatic) calculus when the group is replaced by a finite semigroup, and develop the resulting (diagrammatic) calculus.
\end{abstract}

\subjclass[2020]{Primary: 18M30, 20M50; Secondary: 18M05, 20M12.}
\keywords{Tambara--Yamagami categories, finite semigroups, group ideals,
string diagrams, nonrigid monoidal categories, semigroup-graded categories,
braidings.}

\addtocontents{toc}{\protect\setcounter{tocdepth}{1}}
\maketitle
\tableofcontents

\section{Introduction}

Pointed fusion categories are controlled by group-theoretic data: the
simple objects form a finite group under tensor product, and the
associator is encoded by a $3$-cocycle. Tambara--Yamagami categories are
among the simplest fusion categories which are not pointed. They add
one noninvertible simple object to this group.

In this paper we ask a basic question: what remains of the
Tambara--Yamagami calculus if the group is replaced by a finite
semigroup?

\subsection{Some history}

A Tambara--Yamagami (TY) category has a finite group $A$ of invertible
simple objects and one further simple object $m$, with
\begin{equation}\label{eq:fusion}
 a\otimes b=ab,\qquad
 a\otimes m=m=m\otimes a,\qquad
 m\otimes m=\bigoplus_{a\in A}a.
\end{equation}
Every product of two simples is multiplicity-free, but $m^{\otimes3}$
contains $|A|$ copies of $m$. Fusing (meaning multiplication) the first two copies of $m$ first
labels these copies by the intermediate summand $a\in A$; fusing the last
two first gives another such basis. The associator is the change-of-basis
matrix between them. Tambara and Yamagami classified these categories over
$\C$ \cite{TaYa-fusion-rules}. The group must be abelian, and the
associators are determined by a symmetric nondegenerate bicharacter and a
scalar whose square is the reciprocal of the group order. In the bases
above, the associator on $m^{\otimes3}$ is, up to this scalar, the character
table of $A$, hence a finite Fourier matrix.

Their simplicity makes TY categories a useful testing ground for other
structures on tensor categories. Braidings were classified in
\cite{Si-braided-near-group}, and Frobenius--Schur indicators were
computed in \cite{Shi-TY-indicators}. TY categories also sit
inside the larger class of near-group categories, where the square of
the noninvertible simple may contain copies of that simple
\cite{EvGa-near-group-doubles,Si-near-group}; see also
\cite{Th-braided-near-group} for braidings. Generalized TY categories
allow several noninvertible simples, whose pairwise products still decompose into invertible simples
\cite{Li-generalized-TY,Na-faithful-gradings}.

These generalizations keep a group of invertible simple objects and
change the noninvertible part of the category. Our question goes in a
different direction. We keep one additional simple object, but weaken
the algebraic structure carried by the other simples: a semigroup
replaces the group, so these objects need no longer be invertible.

\begin{remark}
Before going further, let us fix some language. We use the terminology of
\cite{EtGeNiOs-tensor-categories} and string diagrams throughout, and assume
some familiarity with both. (For accounts emphasizing the diagrammatic
point of view, see e.g. \cite{Tu-qt,TuVi-monoidal-tqft}.) Roughly, tensor product places diagrams side
by side, composition stacks them, and decompositions into simple summands are
recorded by trivalent vertices; see, e.g.,
\cite{JoSt-tensor-calculus,Se-graphical-languages}. Different fusion trees
give different bases, and associators give the corresponding changes of basis. This change of fusion tree is called recoupling.

As the reader might have noticed at this point, we often use terminology
from the physics literature, for example fusion, channels, and recoupling;
see, e.g., \cite{Wa-topological}.

Diagrammatic generators-and-relations presentations have a long history,
going back at least to the 1930s \cite{Br,RTW}. More closely related to the
present paper are the later theories of spiders and webs, e.g.,
\cite{Ku-spiders,MoPeSn-categories-trivalent-vertex,Ya-invariant-graphs} and general constructions in semisimple tensor categories as, e.g., in \cite{Ba-fusion-diagrams,Ya-polygonal}. (For the reader familiar with some of these, none of the pictures below will look surprising.) We also borrow some of
the exposition from \cite{DeTu-dicyclic,LaTu-minimal-webs,RoTu-symmetric-webs}.

Of course, and that is part of the point, the categories we consider are semisimple and very easy. So it is no surprise that several related, and even closely related, constructions (diagrammatic or ``essentially diagrammatic'') are known, maybe too many to cite here.
\end{remark}

Our diagrammatic language is the following. We use black strands for the
(semi)group part and a heavier red strand for the additional object $m$:
\[
\begin{tikzpicture}[]
  \draw[a] (0,0)--(0,1);
  \node[below] at (0,0) {$a$};
  \draw[m] (1.5,0)--(1.5,1);
  \node[below] at (1.5,0) {$m$};
\end{tikzpicture}
,
\qquad
a\in A.
\]
We refer to these as the black and red calculus. The fusion rules, associators, rigidity, braidings etc. are encoded in pictures of the form (here $\omega,d$ and $\chi$ are certain scalars)
\[
 \fuse{r}{s}{rs},\fuse{s}{m}{m},
 \lefttree{r}{s}{t}{rs}{rst}
 =\omega(r,s,t)^{-1}\righttree{r}{s}{t}{st}{rst},
 \pivloop{m}{none}=d_m,
 \doublecross{a}{m}=\chi(a,a)\wire{a}\,\wire{m},
\]
and we get a calculus of (colored) trees, potentially with extra structure.

\begin{remark}
The paper can be read without color: in black and white, the red strand is distinguished by being thicker and lightly shaded.
\end{remark}

\subsection{Why semigroups?}

There is a simple reason to try semigroups: one gets new behavior.
Already the idempotent monoid \(E_2=\{1,e\}\) with \(e^2=e\) shows genuinely
new features. The nonunit \(e\) has no dual, but it can still cross, and there is also a red extension with \(em=me=m,m^2=e\). Both give generally new pictures:
\[
 \crossing{e}{e}=\resolvedcross{e}{e}{e},\quad
 \wire{m}\,\wire{m}=\channel{m}{m}{e}.
\]
For example, having a middle edge labeled \(e\) makes the red object behave locally like a TY object, but its cup and cap
use the local unit \(e\), not the global unit \(1\). The ambient category
is therefore not rigid.

This is useful rather than pathological. Nonrigid semisimple monoidal
categories are much less constrained than fusion categories (already in
rank two, compare \cite{Os-rank-two} with \cite{SuChZh-rank-two}; see also e.g. \cite{ChZh-reconstruction} and,
for recent rigidity results, \cite{EtPe-rigidity}). The
examples here are small enough that one can still calculate explicitly
which parts of the TY calculus survive without rigidity.

\subsection{This paper's contribution}

The black calculus itself works for every finite semigroup: multiplication
trees describe semigroup-graded vector spaces, and associativity remains
invertible. The real question is what happens after adding the extra TY
object.

There is an immediate obstruction to keeping the literal TY rules. As we will see (cf. \autoref{thm:semigroup-obstruction}), the formulas
\(sm=ms=m\) and \(m^2=\bigoplus_{x\in S}x\) force \(S\) to be a group.
If instead one insists on the literal TY rule, 
this failure is already visible for the idempotent monoid from
\((e\otimes m)\otimes m\cong1\oplus e\), whereas
\(e\otimes(m\otimes m)\cong e\oplus e\).

Our main solution is to keep invertible associators but restrict the
channels occurring in \(m^2\). Their support is forced to be a two-sided ideal which is itself a group (a group ideal), with constant multiplicity; see
\autoref{lem:stationary-channels}. We treat multiplicity one. If
\(A\subset S\) is such an ideal, with identity \(e\), then
\(\phi(s)=se=es\) defines a retraction \(\phi:S\to A\). When \(A\) is
abelian and carries TY data, the usual TY coefficients extend to the
ambient semigroup by evaluating black labels through \(\phi\), while the
original semigroup labels remain on the boundary.

\begin{remark}
Another feature of the semigroup setting is that notions standard in
semigroup theory, such as group ideals (see, for example,
\cite{ClPr-semigroups,Gr-structure-semigroups}), appear naturally in the
categorical structure. Such phenomena do not
arise in the usual group-based theory of tensor categories.
\end{remark}

We give a generators-and-relations presentation of the resulting
category and give bases of all morphism
spaces; see \autoref{thm:presentation}. Positive tensor powers of \(m\)
remain entirely inside the local TY subcategory, so their endomorphism
algebras are the ordinary TY algebras, realized directly by the channel
calculus.

The ambient category nevertheless behaves differently. Cups and caps for
\(m\) use the local unit \(e\), so they need not define ambient duals.
Crossings can extend further: we determine exactly when the local TY
braiding extends to the full semigroup category.

\begin{remark}
There is a second way around the obstruction: keep all red-red channels
but allow reassociation to be noninvertible. This leads towards skew
monoidal structures
\cite{LaSt-skew-monoidal,Sz-skew-bialgebroids}. We give one explicit
example for the idempotent monoid $E_2$, but do not pursue this direction further.
\end{remark}

All calculations are over an algebraically closed field of arbitrary
characteristic. The local TY classification in this generality is already
contained in \cite[Example 4.6 and Corollary 4.10]{Li-generalized-TY};
our purpose is to derive it diagrammatically and for
the semigroup extension. 
(Over a nonclosed field, simple objects need not be split. This is a different question; for TY categories over the reals, see e.g. \cite{PlSaSc-real-TY}.)

\begin{remark}
Interestingly, the only field restriction is that \(|A|\) be
nonzero in the ground field; there is no corresponding restriction on
\(|S|\).
\end{remark}

This gives behavior absent over \(\C\). For example, in characteristic
two a TY category for an odd-order group is still semisimple and rigid,
but has zero global dimension and is therefore not separable; compare
\cite{Et-faithful-lifting,EtNiOs-fusion}. We also show that passing to
finite tensor categories does not rescue the literal TY Grothendieck
rules in the excluded characteristics; see
\autoref{prop:TY-splitting}. This contrasts with genuinely
nonsemisimple near-group categories as e.g. in
\cite{EtOs-finite-tensor,Se-nonsemisimple-near-group}.

\subsection{Some interesting new categories}

There are many interesting non-group choices of \(A\subset S\) one could
try. One may adjoin a new global unit to an abelian group \(A\); take
\(A\) to be the periodic group ideal of a finite cyclic monoid; take
\(S=A\times T\) for any finite semigroup \(T\) with zero; or use a
two-level construction \(S=H\sqcup A\), with the action of \(H\) on
\(A\) induced by a homomorphism \(H\to A\). More generally, one can
inflate the elements of \(A\) by adding several ambient labels with the
same image under \(\phi\).

Diagram monoids give particularly concrete candidates (see e.g. \cite{CoTu-mobius,HaRa-partition-algebras,HeTu-affine-diagram,khovanov-monoidal-2024} for such monoids). In rook and
Brauer monoids one can choose \(A\) inside a maximal subgroup at a fixed
idempotent and enlarge it by suitable ambient diagrams. The same idea
can be tried for annular and affine diagram monoids, and for the
Möbius strip diagram monoids, where the additional nonorientable
decorations provide further ambient labels. In such examples the map
\(\phi:S\to A\) often has a direct diagrammatic interpretation by
forgetting, deleting, or closing the parts of the diagram outside the
chosen local sector.
For this construction, the chosen ambient diagrams must form a finite subsemigroup in which the chosen abelian subgroup is a two-sided ideal. Being a maximal subgroup at an idempotent does not by itself ensure this.

These examples suggest that the construction is far from being restricted to ad hoc semigroups (such as adjoining a global unit to $A$). For brevity, we do not discuss these examples further.
\smallskip

\noindent\textbf{Acknowledgments.}
This paper is part of the first author's PhD thesis.
SC was supported by the
Postgraduate Research Scholarship based on the ARC Future Fellowship FT230100489, and DT by the ARC Future Fellowship FT230100489. It remains an open question for DT whether it's a good or bad sign that zero lines of TikZ code were written by hand for this paper.

\noindent\textbf{AI declaration.}
During the preparation of this manuscript, the authors used ChatGPT, Codex, Claude, and Gemini (several mid-2026 models). Interactive drafting was employed for two specific components: mathematical statements were refined through back-and-forth prompts, and TikZ figures were generated by converting the authors' textual descriptions and hand-drawn concepts into LaTeX code. Additionally, the models were used for standard language editing, literature verification, and exploratory discussions. The core ideas and research questions are solely due to the authors, who hold full responsibility for the manuscript's conclusions.

\section{The black strand calculus}\label{sec:pointed}

We first describe the black strands.

\subsection{Conventions}

The following are used throughout.

\begin{itemize}

\item The ground field $k$ is algebraically closed, of arbitrary
characteristic. Categories and functors are $k$-linear, tensor
products are bilinear, and Hom spaces are finite-dimensional.
We use ``fusion category'' also in positive characteristic
\cite[Section 9]{EtNiOs-fusion}. Semigroups are finite and nonempty.

\item We briefly recall the cohomology used below, see \cite[Section 1]{No-semigroup-cohomology} for details. The scalars record reassociation; changing multiplication bases changes them by a coboundary. A $3$-cocycle on a
semigroup $S$, with trivial coefficients $k^\times$, is a function
$\omega:S^{\times3}\to k^\times$ satisfying the pentagon equation
\begin{equation}\label{eq:black-cocycle}
 \omega(s,t,u)\omega(r,st,u)\omega(r,s,t)
 =\omega(rs,t,u)\omega(r,s,tu).
\end{equation}
For a $2$-cochain $z:S^{\times2}\to k^\times$, its coboundary is
\begin{equation}\label{eq:black-gauge}
 dz(r,s,t)=\frac{z(s,t)z(r,st)}{z(rs,t)z(r,s)},
 \qquad \omega'=\omega\,dz.
\end{equation}
The last equality defines when two cocycles are cohomologous.
Their classes form $H^3(S,k^\times)$. For monoids we use
normalized cochains and cocycles: their value is $1$ whenever an
argument is the identity. Every cohomology class has a normalized
representative.

\item We distinguish the product label $rs\in S$ from the two-letter word
$r\otimes s$. However, we often write $mm$ instead of $m\otimes m$.

\item Diagrams are read from bottom to top: in $fg=f\circ g$,
$f$ sits above $g$. Tensor product places diagrams side by side:
\[
\begin{tikzpicture}[]
  \draw[a] (0,0) -- (0,1.8);
  \node[coupon] at (0,.6) {$g$};
  \node[coupon] at (0,1.2) {$f$};
  \node at (0.9,.9) {$=\,fg$,};
%
  \draw[a] (3.35,0) -- (3.35,1.8);
  \draw[a] (4.05,0) -- (4.05,1.8);
  \node[coupon] at (3.35,.9) {$f$};
  \node[coupon] at (4.05,.9) {$g$};
  \node at (5.05,.9) {$=\,f\otimes g$.};
\end{tikzpicture}
\]
With words as objects and juxtaposition as tensor product, the diagram
categories are strict.

\item Associators have direction
$\alpha_{X,Y,Z}:(X\otimes Y)\otimes Z\to X\otimes(Y\otimes Z)$.
Our duality convention follows \cite[Section 3.1]{Shi-TY-indicators}.

\item In the TY sections, $A$ denotes the relevant abelian group and
$N=|A|$, viewed as a scalar in $k$ when it occurs in a formula.
Unless stated otherwise, $N>1$. Complex examples use the positive
square root $\sqrt N$ and write $\nu=\tau\sqrt N\in\{1,-1\}$.

\end{itemize}

\subsection{Semigroup-graded vector spaces}

Fix a semigroup $S$ and a cocycle $\omega$.
An $S$-graded vector space is a finite-dimensional vector space
$V=\bigoplus_{s\in S}V_s$. Its morphisms preserve the grading, so
\[
 \Hom_{\Vec_S^\omega}(V,W)=\bigoplus_{s\in S}\Hom_k(V_s,W_s).
\]
The tensor product is the ordinary vector space tensor product,
graded by multiplication in $S$:
\[
 (V\otimes W)_t=\bigoplus_{rs=t}V_r\otimes W_s.
\]
Thus a tensor of vectors of degrees $r,s$ has degree $rs$.

\begin{definition}
The category $\mathcal V=\Vec_S^\omega$ is this category of graded vector
spaces, with associator on homogeneous vectors
$v\in V_r$, $w\in W_s$ and $z\in Z_t$ given by
\begin{equation}\label{eq:Vec-associator}
 \alpha_{r,s,t}:(v\otimes w)\otimes z
 \longmapsto\omega(r,s,t)\cdot\big(v\otimes(w\otimes z)\big).
\end{equation}
For a monoid its tensor unit is $k$ in degree $1$, with the usual
unit maps. We abbreviate $\Vec_S^1$ to $\Vec_S$.
\end{definition}

\autoref{eq:black-cocycle} says that reassociating four homogeneous
vectors using \autoref{eq:Vec-associator} in either order gives the same result. For a monoid, the
normalization ensures compatibility with the tensor unit.
Hence $\Vec_S^\omega$ is semigroupal (monoidal without unit), and monoidal when $S$ is a monoid.

Write $k_s$ for the one-dimensional space concentrated in degree
$s$. These are the simples, and
\[
 k_r\otimes k_s\cong k_{rs},\qquad
 \Hom_{\mathcal V}(k_r,k_s)=\begin{cases}k,&r=s,\\0,&r\ne s.\end{cases}
\]
Every object is a finite direct sum of these lines.
Choosing one graded line $k_s$ for each $s\in S$, both
$(k_r\otimes k_s)\otimes k_t$ and $k_r\otimes(k_s\otimes k_t)$
are identified with $k_{rst}$. The associator between them is nevertheless
multiplication by $\omega(r,s,t)$, which need not be $1$.

\begin{remark}
The category $\Vec_S^\omega$, after choosing the lines $k_s$ as representatives, is skeletal but not strict (unless $\omega$ is trivial).
\end{remark}

\subsection{Multiplication diagrams}

We now give a presentation by planar diagrams, which is strict, by construction. A strand carries a label in
$S$. The generating morphisms are a straight strand, a merge and
a split:
\[
 \id_s=\wire{s},\qquad
 \mu_{r,s}=\fuse{r}{s}{rs}:r\otimes s\to rs,\qquad
 \Delta_{r,s}=\splitv{r}{s}{rs}:rs\to r\otimes s.
\]
(The inputs at a merge or split are ordered.)

\begin{definition}\label{def:pointed}
The black strand category $\mathcal P=\mathcal P(S,\omega)$
has nonempty words in $S$ as objects. Morphisms are $k$-linear
combinations of diagrams obtained by composing and tensoring the
generators above, modulo progressive planar isotopy (cf. Remark \ref{remark:isotopy}) and the local relations
\begin{equation}\label{eq:pointed-inverse}
 \bubble{r}{s}{rs}{rs}=\wire{rs},\qquad
 \channel{r}{s}{rs}=\wire{r}\,\wire{s},
\end{equation}
\begin{align}
 \lefttree{r}{s}{t}{rs}{rst}
 &=\omega(r,s,t)^{-1}\righttree{r}{s}{t}{st}{rst},
 \label{eq:pointed-assoc}\\[1ex]
 \leftsplit{r}{s}{t}{rs}{rst}
 &=\omega(r,s,t)\rightsplit{r}{s}{t}{st}{rst}.
 \label{eq:pointed-assoc-picture}
\end{align}
Composition stacks diagrams, and tensor product concatenates words
and places diagrams side by side. Thus the category is strict
semigroupal. We write $\mathcal P(S)=\mathcal P(S,1)$.

For a monoid $M$, use words in $M\setminus\{1\}$, including the
empty word, and erase the global unit label (cf. \autoref{eq:pointed-bends}). Set
$\mu_{1,s}=\mu_{s,1}=\Delta_{1,s}=\Delta_{s,1}=\id_s$.
This gives the strict monoidal version $\mathcal P(M,\omega)$.
\end{definition}

\begin{remark}\label{remark:isotopy}
Here progressive isotopy keeps every strand monotone from bottom
to top and preserves the ordered inputs and outputs of every vertex, e.g.:
\[
 \isowire{s}=\wire{s},\qquad
 \isofuse{r}{s}{rs}=\fuse{r}{s}{rs}.
\]
Independent vertices may pass each other in height:
\[
\begin{tikzpicture}[]
  \draw[a] (0,0) -- (0,1.8);
  \draw[a] (.8,0) -- (.8,1.8);
  \node[coupon] at (0,.6) {$f$};
  \node[coupon] at (.8,1.2) {$g$};
  \node[below] at (0,0) {$X$};
  \node[below] at (.8,0) {$Y$};
  \node[above] at (0,1.8) {$X'$};
  \node[above] at (.8,1.8) {$Y'$};
  \node at (1.6,.9) {$=$};
%
  \draw[a] (2.4,0) -- (2.4,1.8);
  \draw[a] (3.2,0) -- (3.2,1.8);
  \node[coupon] at (2.4,1.2) {$f$};
  \node[coupon] at (3.2,.6) {$g$};
  \node[below] at (2.4,0) {$X$};
  \node[below] at (3.2,0) {$Y$};
  \node[above] at (2.4,1.8) {$X'$};
  \node[above] at (3.2,1.8) {$Y'$};
\end{tikzpicture}
.
\]
This is also known as the interchange law.
\end{remark}

\begin{remark}
Note that $P(S,\omega)$ is always strict, independent of $\omega$, but not skeletal.
\end{remark}

The first relations \autoref{eq:pointed-inverse} says that merge and split are inverse maps, and \autoref{eq:pointed-assoc} is the fusion relation.
We will use the following lemma to swap freely between \autoref{eq:pointed-assoc} and \autoref{eq:pointed-assoc-picture}.

\begin{lemma}[Asso and coasso]
Imposing only \autoref{eq:pointed-inverse} and \autoref{eq:pointed-assoc} (or only \autoref{eq:pointed-inverse} and \autoref{eq:pointed-assoc-picture}) gives the same category.
\end{lemma}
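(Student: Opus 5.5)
To prove the statement, I will show that, in the presence of \autoref{eq:pointed-inverse}, the relation \autoref{eq:pointed-assoc} implies \autoref{eq:pointed-assoc-picture}, and conversely. Then the two sets of relations generate the same tensor ideal of the free diagram category, so the quotient categories coincide. The key point is that \autoref{eq:pointed-inverse} makes every merge $\mu_{r,s}$ an isomorphism with inverse $\Delta_{r,s}$. Consequently both the left tree $L=\mu_{rs,t}(\mu_{r,s}\otimes\id_t)$ and the right tree $R=\mu_{r,st}(\id_r\otimes\mu_{s,t})$ are isomorphisms $r\otimes s\otimes t\to rst$. Their inverses are the corresponding split trees $L^{-1}=(\Delta_{r,s}\otimes\id_t)\Delta_{rs,t}$ and $R^{-1}=(\id_r\otimes\Delta_{s,t})\Delta_{r,st}$.

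First I verify these inverse formulas diagrammatically. Stacking the split tree on the merge tree gives a diagram in which the innermost merge-split pair is a channel, so the second relation of \autoref{eq:pointed-inverse} (possibly tensored with an identity strand) applies. After that the remaining pair forms a channel as well. This gives $L^{-1}L=\id$. The other order, $LL^{-1}$, reduces to $\id_{rst}$ in the same way using the bubble relation twice: first the inner bubble collapses to $\wire{rs}$, and then the outer pair forms a bubble on $rst$. The same argument handles $R$.

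Next, \autoref{eq:pointed-assoc} reads $L=\omega(r,s,t)^{-1}R$. Taking inverses of both sides gives $L^{-1}=\omega(r,s,t)R^{-1}$, which is exactly \autoref{eq:pointed-assoc-picture}. Concretely, I will precompose $L=\omega^{-1}R$ with $R^{-1}$ and postcompose with $L^{-1}$, then simplify the sandwiches $L^{-1}L$ and $RR^{-1}$ using the previous step; this shows the derivation uses only the imposed relations. The converse is symmetric: starting from $L^{-1}=\omega R^{-1}$, I sandwich with $L$ on the right and $R$ on the left. Since $\omega$ takes values in $k^\times$, the scalar inversion is legitimate.

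The only potential obstacle is bookkeeping: checking that each step uses only planar progressive isotopy plus \autoref{eq:pointed-inverse} applied locally. This matters in particular for the mixed stack $LL^{-1}$, where one needs the interchange law from \autoref{remark:isotopy} to slide the $t$-strand past the inner vertices so that a genuine bubble appears. For the monoid version, the conventions $\mu_{1,s}=\id_s$ and their split counterparts make the degenerate cases trivial, so the same argument applies.
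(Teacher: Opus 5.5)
Your proof is correct and takes the same route as the paper. The paper's proof just says that the splitting relation comes from inverting the fusion relation using \autoref{eq:pointed-inverse}, and conversely. Your argument fills in that inversion: you check, using only the cancellation relations, that the merge trees $L,R$ are isomorphisms with the split trees as inverses, and then invert $L=\omega^{-1}R$ to get $L^{-1}=\omega R^{-1}$, and vice versa.
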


\begin{proof}
The splitting relation in
\autoref{eq:pointed-assoc-picture} follows by inverting the fusion
relation using \autoref{eq:pointed-inverse}. The same works the other way around.
\end{proof}

\begin{example}[$\Z/2\Z$ and the idempotent monoid]
\label{ex:order-two-cocycles}
There are two monoids of order two: \(G_2=\{1,g\}\) with \(g^2=1\),
and the idempotent monoid \(E_2=\{1,e\}\) with \(e^2=e\). We will use both
repeatedly.

For a normalized cocycle, only its value on three copies of the nonidentity element can be nontrivial.
For \(G_2\), the cocycle equation gives \(\omega(g,g,g)^2=1\), so
there are two possibilities when \(\operatorname{char}k\neq2\).
Writing \(\lambda=\omega(g,g,g)\), we get
\[
 \lefttree{g}{g}{g}{1}{g}
 =
 \lambda\,\righttree{g}{g}{g}{1}{g}.
\]
For the idempotent monoid \(E_2\), the same equation forces
\(\omega(e,e,e)=1\). Hence
\[
 \lefttree{e}{e}{e}{e}{e}
 =
 \righttree{e}{e}{e}{e}{e},
 \qquad
 \bubble{e}{e}{e}{e}
 =
 \wire{e}.
\]
The second picture is cancellation in \(\End(e)\), not a closed loop.
\end{example}

A fusion tree is a composite of merges with one output,
called its root. Its internal labels are its channels.
Reversing its vertices gives a splitting tree.
For four leaves the splitting relations give
\begin{equation}\label{eq:semigroup-pentagon}
 \semipentagon{r}{s}{t}{u}.
\end{equation}
An arrow labeled by a scalar means that its source tree equals
that scalar times its target tree. 

\begin{lemma}[Pentagon]\label{lem:pointed-pentagon}
The diagram in \autoref{eq:semigroup-pentagon} commutes.
\end{lemma}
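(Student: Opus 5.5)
The plan is to reduce commutativity of the pentagon in \autoref{eq:semigroup-pentagon} to the cocycle equation \autoref{eq:black-cocycle}. Every arrow in that diagram is a single application of the associativity relation (\autoref{eq:pointed-assoc} or, via the Asso and coasso lemma, \autoref{eq:pointed-assoc-picture}). Each such application is performed inside a subtree of a four-leaf fusion tree, possibly with an extra merge attached above or a spectator leaf on one side. Since relations in $\mathcal P(S,\omega)$ are local, each application multiplies the tree by the scalar displayed on the arrow, with $(r,s,t)$ replaced by the three inputs of the reassociated subtree. The channel labels are then forced: every internal edge carries the product of the leaves below it. So all five trees have the labels shown, and only the scalars need to be tracked.

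First I will fix which local move each arrow is. The arrow labelled $\omega(r,s,t)$ reassociates the bottom subtree on $r,s,t$, with $u$ merged on top. The arrow labelled $\omega(r,st,u)$ is the top move on the leaves $r$, $st$, $u$, with the merge of $s,t$ as the middle input. The arrow labelled $\omega(s,t,u)$ is the move on $s,t,u$ under the root merge with $r$. The arrows $\omega(rs,t,u)$ and $\omega(r,s,tu)$ are the top-level moves with $rs$ and $tu$ as composite inputs. I will use the interchange law of \autoref{remark:isotopy} to see that each move takes place in a subdiagram literally of the form of the two sides of \autoref{eq:pointed-assoc}.

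Second, I will compose along the two paths from the leftmost tree to the bottom-right tree. By the arrow convention, the leftmost tree equals $\omega(r,s,t)\,\omega(r,st,u)\,\omega(s,t,u)$ times the target along the upper path, and $\omega(rs,t,u)\,\omega(r,s,tu)$ times the target along the lower path. These two products agree exactly by \autoref{eq:black-cocycle}, so the diagram commutes. For a monoid, with unit labels erased, the same argument applies because the cocycle is normalized and $\mu_{1,s}=\id_s$.

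The main obstacle is orientation bookkeeping. \autoref{eq:pointed-assoc} carries $\omega^{-1}$, whereas the pentagon is drawn with splitting trees (that is, with the cftree pictures read as splittings) and uses $\omega$ per arrow. I will read the arrows through \autoref{eq:pointed-assoc-picture}, so that each arrow contributes $\omega$ in the direction drawn. I must also check that the composite-input moves, the ones with inputs $rs$ or $tu$, are instances of that relation with those labels.
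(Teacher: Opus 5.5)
Your proposal is correct and takes essentially the same approach as the paper, whose proof is just the observation that the upper route coefficient $\omega(r,s,t)\,\omega(r,st,u)\,\omega(s,t,u)$ and the lower route coefficient $\omega(rs,t,u)\,\omega(r,s,tu)$ agree by \autoref{eq:black-cocycle}. Your additional bookkeeping is what the paper leaves implicit: reading each arrow through the splitting relation \autoref{eq:pointed-assoc-picture}, and using interchange to isolate the local move at the $(rs)(tu)$ tree.
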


\begin{proof}
Equality of the two route
coefficients is exactly \autoref{eq:black-cocycle}.
\end{proof}

\subsection{Equivalence and normal forms}

We now compare the strict diagram category $\mathcal P$ with $\Vec_S^\omega$.

\begin{lemma}[Evaluation]\label{lem:pointed-evaluation}
Sending a word $s_1\cdots s_n$ to the left-associated tensor product
of the lines $k_{s_1},\ldots,k_{s_n}$, and merges and splits to the
standard isomorphisms
\[
 k_r\otimes k_s\rightleftarrows k_{rs},
\]
defines a semigroupal functor
\[
 \mathcal E_\omega:\mathcal P(S,\omega)\longrightarrow\Vec_S^\omega.
\]
For a monoid it is monoidal.
\end{lemma}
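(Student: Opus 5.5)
The plan is to define $\mathcal E_\omega$ on generators, check that it respects every defining relation of $\mathcal P(S,\omega)$, and then supply the semigroupal structure maps, which are built from associators of $\Vec_S^\omega$. On objects, set $\mathcal E_\omega(s_1\cdots s_n)=(\cdots(k_{s_1}\otimes k_{s_2})\otimes\cdots)\otimes k_{s_n}$. Fix once and for all identifications $\mu^0_{r,s}:k_r\otimes k_s\xrightarrow{\sim}k_{rs}$, sending $1\otimes1\mapsto1$ on the chosen basis vectors, and let $\Delta^0_{r,s}$ be its inverse.

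To define $\mathcal E_\omega$ on a generator placed inside a word, I use the associators of $\Vec_S^\omega$. A merge in position $i$ of a word $w=u\,r\,s\,v$ is sent to the following composite. First reassociate the left-bracketed object so that $k_r\otimes k_s$ becomes a tensor factor. Then apply $\mu^0_{r,s}$. Finally reassociate back to left-bracketed form. Splits are treated the same way with $\Delta^0_{r,s}$.

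By Mac Lane coherence in $\Vec_S^\omega$ (the pentagon \autoref{eq:black-cocycle} holds there), the choice of reassociation path is irrelevant. The same coherence gives the interchange law, so progressive isotopy is respected automatically: morphisms acting on disjoint tensor factors commute after any rebracketing.

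Next I check the local relations. Both relations in \autoref{eq:pointed-inverse} hold because $\mu^0$ and $\Delta^0$ are mutually inverse. For \autoref{eq:pointed-assoc}, I evaluate both trees on $1\otimes1\otimes1$.
\begin{itemize}
\item The left tree, a merge of $r,s$ followed by a merge with $t$, sends $(1\otimes1)\otimes1\mapsto1$ with no scalar.
\item The right tree first needs the associator $\alpha_{r,s,t}$ to expose $k_s\otimes k_t$. Depending on the direction of the conversion in the embedding, this contributes $\omega(r,s,t)^{\pm1}$.
\end{itemize}
I must match the sign convention so that the left tree equals $\omega(r,s,t)^{-1}$ times the right tree. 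This bookkeeping is the main point to get right. If the naive convention gives the inverse, the fix is to read the right tree's evaluation through $\alpha^{-1}$ rather than $\alpha$, as dictated by the chosen left-bracketed target. Relation \autoref{eq:pointed-assoc-picture} then follows from the Asso and coasso lemma.

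For the semigroupal structure, take $J_{w,w'}:\mathcal E(w)\otimes\mathcal E(w')\to\mathcal E(ww')$ to be the canonical rebracketing built from associators. Its compatibility with the associator of $\Vec_S^\omega$ is again an instance of coherence, and naturality follows because $J$ is built from natural isomorphisms.

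For a monoid, the empty word maps to $k_1$. The normalization of $\omega$ makes the unit constraints trivial, so $\mu_{1,s}=\id_s$ is consistent with the unitors of $\Vec_S^\omega$, and the functor is monoidal.

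The main obstacle is the scalar bookkeeping in the associativity relation. Well-definedness on arbitrary diagrams, as opposed to generators, is handled entirely by coherence.
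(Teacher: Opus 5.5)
Your strategy is the same as the paper's. Cancellation is immediate, the three-letter relation comes from the associator of $\Vec_S^\omega$, the pentagon (through coherence) gives compatibility with tensor products, and the tensor maps of $\mathcal E_\omega$ are reassociations. Your coherence treatment of interchange and well-definedness is a more detailed version of the paper's one-line appeal to \autoref{lem:pointed-pentagon}.

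The gap is at the step you yourself call the main point: you never carry out the scalar check, and your description of it is wrong. The exponent is not a matter of convention, and the fallback of reading the right tree through $\alpha^{-1}$ is not available. With your definition, evaluating the right tree on $(k_r\otimes k_s)\otimes k_t$ must first pass to $k_r\otimes(k_s\otimes k_t)$. The only reassociation between these is $\alpha_{r,s,t}$; its inverse goes the other way. On $(1\otimes1)\otimes1$ this gives
\[
 \mathcal E_\omega(P_R)=\mu^0_{r,st}(\id\otimes\mu^0_{s,t})\alpha_{r,s,t}
 =\omega(r,s,t)\,\mu^0_{rs,t}(\mu^0_{r,s}\otimes\id)=\omega(r,s,t)\,\mathcal E_\omega(P_L),
\]
which is exactly \autoref{eq:pointed-assoc}.

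Sending words to right-bracketed products instead would force $\alpha^{-1}_{r,s,t}$ into the left tree and give the same ratio. So the exponent is fixed by the direction of \autoref{eq:Vec-associator} alone. Had the computation produced $\omega$ instead of $\omega^{-1}$, the conclusion would be that the evaluation lands in $\Vec_S^{\omega^{-1}}$, not that you may re-read it.

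This computation is the content of the paper's identity $P_R\,\alpha_{r,s,t}\,I_L=\omega(r,s,t)\id_{rst}$. That identity yields $P_L=\omega^{-1}P_R$ and $I_L=\omega I_R$ at once, so the asso/coasso lemma is not even needed.
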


\begin{proof}
Cancellation is immediate. For three letters, the two trees are related by
\begin{equation}\label{eq:Vec-associator-picture}
 \alpha_{r,s,t}
 =\omega(r,s,t)\,
 \rightsplit{r}{s}{t}{st}{rst}\circ
 \lefttree{r}{s}{t}{rs}{rst}.
\end{equation}
Writing $P_L,P_R$ for the two fusion trees and $I_L,I_R$ for their
inverses, \autoref{eq:Vec-associator-picture} says
\[
 P_R\alpha I_L=\omega(r,s,t)\id_{rst}.
\]
After identifying the two bracketings with the same word, we obtain
$I_L=\omega I_R$ and $P_L=\omega^{-1}P_R$, exactly the relations
\autoref{eq:pointed-assoc} and \autoref{eq:pointed-assoc-picture}.
The four-letter compatibility is \autoref{lem:pointed-pentagon}.
The tensor maps of $\mathcal E_\omega$ are the corresponding
reassociations; in the monoid case they also include the unit maps.
\end{proof}

For a word $w$, let $\pi(w)$ be its ordered product. We choose the
left comb $\mu_w:w\to\pi(w)$, which merges the letters from
left to right, and write $\Delta_w$ for its inverse. For example,
\[
\mu_{s_1s_2s_3s_4}=
\begin{tikzpicture}[scale=.72,
 every node/.style={font=\scriptsize}]
 \edge{s_1}{(-1.2,0)--(-.8,.45)}
 \edge{s_2}{(-.4,0)--(-.8,.45)}
 \edge{s_1s_2}{(-.8,.45)--(-.4,.9)}
 \edge{s_3}{(.4,0)--(-.4,.9)}
 \edge{s_1s_2s_3}{(-.4,.9)--(0,1.35)}
 \edge{s_4}{(1.2,0)--(0,1.35)}
 \edge{s_1s_2s_3s_4}{(0,1.35)--(0,1.8)}
 \node[dot] at (-.8,.45){};
 \node[dot] at (-.4,.9){};
 \node[dot] at (0,1.35){};
 \node[below] at (-1.2,0){$s_1$};
 \node[below] at (-.4,0){$s_2$};
 \node[below] at (.4,0){$s_3$};
 \node[below] at (1.2,0){$s_4$};
 \node[left] at (-.62,.67){$s_1s_2$};
 \node[left] at (-.18,1.12){$s_1s_2s_3$};
 \node[above] at (0,1.8){$s_1s_2s_3s_4$};
\end{tikzpicture}.
\]
For a monoid put $\pi(\emptyset)=1$ and $\mu_\emptyset=\id_1$.

\begin{lemma}[Multiplication normal form]\label{lem:pointed-normal}
For all allowed words $u,v$,
\begin{equation}\label{eq:pointed-hom}
 \Hom_{\mathcal P}(u,v)=
 \begin{cases}
 k\,\Delta_v\mu_u,&\pi(u)=\pi(v),\\
 0,&\pi(u)\ne\pi(v).
 \end{cases}
\end{equation}
\end{lemma}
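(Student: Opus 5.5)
The proof splits into a spanning argument inside the diagram category $\mathcal P(S,\omega)$ and a linear independence argument using the evaluation functor $\mathcal E_\omega$ of \autoref{lem:pointed-evaluation}.

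\emph{Spanning.} Every generator preserves the product label: a merge $\mu_{r,s}$ has source product $rs$ and target $rs$, and similarly for splits and identities. Tensoring and composing preserve this, so a nonzero diagram $u\to v$ exists only if $\pi(u)=\pi(v)$. For the second case of \autoref{eq:pointed-hom} this gives $0$ immediately, since $\Hom_{\mathcal P}(u,v)$ is spanned by diagrams.

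For the first case I show that every diagram $D:u\to v$ is a scalar multiple of $\Delta_v\mu_u$. Write $D=\id_v\circ D\circ\id_u$ and use \autoref{eq:pointed-inverse} in the form $\id_u=\Delta_u\mu_u$, which holds by iterated channel cancellation along the comb. This gives
\[
D=\Delta_v\,(\mu_v D\Delta_u)\,\mu_u .
\]
It then suffices to show that every diagram $\pi(u)\to\pi(u)$ on single-letter words is a scalar times the identity. I argue by induction on the number of vertices. Such a diagram has a bottommost split (if there is no split, there are no vertices at all). Using progressive isotopy and the fusion/splitting relations \autoref{eq:pointed-assoc} and \autoref{eq:pointed-assoc-picture}, I reassociate the strands until some split is directly followed by a merge of exactly its two output strands. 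Then the bubble relation in \autoref{eq:pointed-inverse} removes two vertices, at the cost of a product of $\omega^{\pm1}$ factors.

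Concretely, I would first put the diagram in layered form, then move every split below every merge by repeated use of the channel relation and reassociation. This yields a diagram of the form (merge tree)$\circ$(split tree). Any split tree out of a single strand can be rewritten, by the associativity relations, as a scalar times a comb, and dually for fusion trees. A comb split followed by a comb merge on the same word then collapses by \autoref{eq:pointed-inverse}. A cleaner alternative is to prove directly that any merge tree $w\to\pi(w)$ equals a nonzero scalar times $\mu_w$, by induction on tree shape using \autoref{eq:pointed-assoc}, and then to handle a general diagram by slicing it into layers.

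\emph{Independence.} This is the step I expect to be the main obstacle: the relations could in principle collapse everything to zero, and I rule this out by evaluation. Apply $\mathcal E_\omega$ from \autoref{lem:pointed-evaluation}. It sends $\Delta_v\mu_u$ to a composite of isomorphisms between one-dimensional spaces, hence to a nonzero map. So $\Delta_v\mu_u\neq0$, and the space is exactly one-dimensional. In the monoid case the same argument applies, with erased unit labels and $\mu_\emptyset=\id_1$ treating the empty word.
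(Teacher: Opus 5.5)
Your proposal is correct in outline. The independence step (apply $\mathcal E_\omega$ and note that $\Delta_v\mu_u$ goes to an isomorphism of lines) is exactly the paper's, but your spanning step takes a different route.

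\textbf{How the paper spans.} It never looks inside a diagram. It checks that the candidate spaces $k\,\Delta_v\mu_u$ (or $0$) contain the generators and are closed under two operations:
\begin{itemize}
\item composition, since $\mu_v\Delta_v=\id_{\pi(v)}$ cancels the middle combs;
\item tensor product, since after inserting $\Delta_{\pi(u),\pi(u')}\mu_{\pi(u),\pi(u')}$ the grafted trees are nonzero multiples of the combs on $uu'$ and $vv'$.
\end{itemize}
Hence these spaces contain every diagram.

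\textbf{How you span.} You conjugate by combs, $D=\Delta_v(\mu_vD\Delta_u)\mu_u$, and then eliminate vertices inside $\End(\pi(u))$ by rotating trees and cancelling bubbles.

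Both arguments rest on the same input: every fusion or splitting tree is a nonzero scalar times the comb. The paper's structural induction is shorter and needs no planar bookkeeping. Yours is an explicit reduction algorithm that also tracks the scalar.

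\textbf{A caution about your ``concretely'' paragraph.} Moving a split below a merge is not a valid local move for a general semigroup. Take the null semigroup $\{0,a,b\}$, where all products equal $0$. The map $\Delta_{b,b}\mu_{a,a}\colon aa\to bb$ is nonzero, yet it has no split-then-merge form at all, because neither $a$ nor $b$ is a product. This configuration can also occur in the middle of a diagram $0\to 0$, for example one passing through $00$, $aaaa$, $a0a$, $abba$, $00$.

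There are two ways to fix this:
\begin{itemize}
\item Run the induction on the bottommost merge. Everything below it is a single splitting tree out of the bottom strand (in the monoid case, after reinserting erased unit edges). Rotate that tree so the two merged leaves become siblings; the bubble relation then removes both vertices.
\item Use your layer-slicing alternative. For a split layer $\id_x\otimes\Delta_{r,s}\otimes\id_y$, first rotate the comb $\mu_{xrsy}$ so that $r$ and $s$ are merged first, then cancel.
\end{itemize}
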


\begin{proof}
Every vertex preserves the product of the boundary labels, so the Hom
space is zero when the two products differ. The tree relations change
any fusion tree into the left comb, up to a nonzero scalar; splitting
trees give the inverse changes.

The spaces in \autoref{eq:pointed-hom} contain the generators and are
closed under composition, by cancelling the middle combs, and under
tensor product, by changing the grafted tree back to the chosen comb.
They therefore contain every diagram. Finally,
$\mathcal E_\omega(\Delta_v\mu_u)$ is an isomorphism of graded lines,
so the displayed map is nonzero.
\end{proof}

Let $\Add(\mathcal C)$ denote the category obtained from $\mathcal C$
by adjoining finite direct sums and matrices of morphisms.

\begin{theorem}[Diagrammatic presentation]\label{thm:pointed-equivalence}
Evaluation induces a semigroupal equivalence
\begin{equation}\label{eq:pointed-equivalence}
 \Add\big(\mathcal P(S,\omega)\big)\simeq\Vec_S^\omega,
\end{equation}
which is monoidal for a monoid.
\end{theorem}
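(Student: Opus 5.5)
The functor is the evaluation $\mathcal E_\omega$ of \autoref{lem:pointed-evaluation}, extended to $\Add(\mathcal P(S,\omega))$ by its universal property. Since $\Vec_S^\omega$ is additive and $\mathcal E_\omega$ is $k$-linear, the extension $\Add(\mathcal E_\omega)$ is defined on formal direct sums and acts entrywise on matrices. Its semigroupal (resp.\ monoidal) structure comes from the tensor maps of $\mathcal E_\omega$, extended by distributivity. Tensor products in $\Add(\mathcal P)$ are defined by distributing over formal sums, and so are those in $\Vec_S^\omega$. The coherence conditions therefore reduce to those already checked for $\mathcal E_\omega$ on words. It remains to show that this functor is fully faithful and essentially surjective.

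For full faithfulness it suffices to treat objects that are single words $u,v$, since Hom spaces in $\Add$ are matrices of such. By \autoref{lem:pointed-normal}, $\Hom_{\mathcal P}(u,v)$ is $k\,\Delta_v\mu_u$ when $\pi(u)=\pi(v)$ and $0$ otherwise. The evaluation of $u$ is a left-associated tensor product of lines, which is a one-dimensional space concentrated in degree $\pi(u)$. Hence $\Hom_{\mathcal V}(\mathcal E u,\mathcal E v)$ is $k$ if $\pi(u)=\pi(v)$ and $0$ otherwise. In the nonzero case, $\mathcal E_\omega(\Delta_v\mu_u)$ is a composite of the standard isomorphisms and is therefore nonzero. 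This was noted at the end of the proof of \autoref{lem:pointed-normal}. A nonzero linear map between one-dimensional spaces is a bijection, and the zero case is trivial. The induced map on Hom spaces is thus an isomorphism.

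Essential surjectivity: every object of $\Vec_S^\omega$ is a finite direct sum of lines $k_s$. Each $k_s$ is the image of the one-letter word $s$. In the monoid case, $k_1$ is the image of the empty word, which is why the empty word is allowed. Direct sums in $\Add(\mathcal P)$ go to direct sums, so every object is hit up to isomorphism.

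The only step needing care is the bookkeeping in the monoid case. The unit label is erased and the conventions $\mu_{1,s}=\id_s$ are imposed, so one must check two things. First, \autoref{lem:pointed-normal} still applies with $\pi(\emptyset)=1$. Second, the unit constraints of $\mathcal E_\omega$ agree with the normalized cocycle, and this is where normalization is used. Once this is checked, a fully faithful, essentially surjective (semi)monoidal functor is a (semi)monoidal equivalence, with the quasi-inverse inheriting the structure in the standard way. This gives \autoref{eq:pointed-equivalence}.
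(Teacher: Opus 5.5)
Your proposal is correct and takes the same route as the paper. The paper's proof combines \autoref{lem:pointed-evaluation} (which gives the functor), \autoref{lem:pointed-normal} (zero or one-dimensional Hom-spaces with $\mathcal E_\omega(\Delta_v\mu_u)\neq 0$, hence full faithfulness) and the decomposition of every graded space into lines $k_s$ (essential surjectivity); you additionally spell out the extension to $\Add$ and the unit bookkeeping for monoids, which the paper leaves implicit.
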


\begin{proof}
A direct application of Lemmas \ref{lem:pointed-evaluation} and \ref{lem:pointed-normal}, and the immediate fact that every
graded vector space is a finite direct sum of the lines $k_s$.
\end{proof}

\begin{remark}
The normal form itself is independent of $\omega$. Changing another
tree to the left comb may contribute products of $\omega^{\pm1}$,
but the Hom spaces and their dimensions do not change.
\end{remark}

\subsection{Endomorphism algebras}

The normal form and \autoref{eq:pointed-equivalence} give the endomorphism algebras almost immediately.
Put
 $X_S=\bigoplus_{s\in S}k_s$,
and, for $n\ge1$, define
\[
 \Omega_n(t)=
 \{\mathbf s=(s_1,\ldots,s_n)\in S^n:s_1\cdots s_n=t\},
 \qquad
 d_n(t)=|\Omega_n(t)|.
\]
Thus $d_n(t)$ counts the summands of $X_S^{\otimes n}$ whose product
is $t$. Different tuples are kept separate, even if unit labels erase
to the same diagram.

For $\mathbf s\in\Omega_n(t)$, let
$P_{\mathbf s}:X_S^{\otimes n}\to k_t$ be projection onto the
corresponding summand followed by the chosen comb fusion, and let
$I_{\mathbf s}:k_t\to X_S^{\otimes n}$ be the inverse splitting
followed by summand inclusion.

For $\mathbf u,\mathbf v\in\Omega_n(t)$, put
\begin{equation}\label{eq:semigroup-matrix-pictures}
 E^t_{\mathbf v,\mathbf u}
 =I_{\mathbf v}P_{\mathbf u}
 =\treepair{X_S^n}{X_S^n}{t}{\mathbf u}{\mathbf v}.
\end{equation}
In \autoref{eq:semigroup-matrix-pictures}, the lower tree first projects to the summand indexed by $\mathbf u$
and fuses it to $t$; the upper tree then splits $t$ into the summand
indexed by $\mathbf v$.

\begin{lemma}[Tree matrix units]\label{lem:semigroup-matrix-units}
We have
\[
 E^t_{\mathbf v,\mathbf u}E^q_{\mathbf z,\mathbf w}
 =
 \delta_{t,q}\delta_{\mathbf u,\mathbf z}
 E^t_{\mathbf v,\mathbf w}.
\]
Hence the tree pairs are matrix units.
\end{lemma}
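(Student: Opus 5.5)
The product $E^t_{\mathbf v,\mathbf u}E^q_{\mathbf z,\mathbf w}=I_{\mathbf v}P_{\mathbf u}I_{\mathbf z}P_{\mathbf w}$ will be computed by evaluating its middle piece $P_{\mathbf u}I_{\mathbf z}:k_q\to k_t$. Unwinding the definitions, $I_{\mathbf z}$ splits $k_q$ along the chosen comb into the word $z_1\cdots z_n$ and includes it as the summand indexed by $\mathbf z$. Likewise, $P_{\mathbf u}$ projects onto the summand indexed by $\mathbf u$ and then fuses along the comb.

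The first step is to dispose of the case $\mathbf u\neq\mathbf z$. The summands of $X_S^{\otimes n}$ are indexed by tuples, kept separate even when labels coincide after erasing units. Hence projecting onto the $\mathbf u$-summand after including the $\mathbf z$-summand gives zero, since the summand projections and inclusions of a direct sum are orthogonal. This yields the factor $\delta_{\mathbf u,\mathbf z}$.

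When $\mathbf u=\mathbf z$, we must have $t=\pi(\mathbf u)=\pi(\mathbf z)=q$, so the factor $\delta_{t,q}$ is automatic. In this case the middle piece is $\mu_{\mathbf u}\Delta_{\mathbf u}$, the comb fusion composed with its own inverse splitting. It equals $\id_t$ by repeatedly applying the bubble relation in \autoref{eq:pointed-inverse}: cancel the innermost merge/split pair at the root, and the next pair then becomes adjacent. An induction on $n$ finishes this. For a monoid, merges with a unit label are identities by convention, so those steps are trivial. Substituting back gives $I_{\mathbf v}\,\id_t\,P_{\mathbf w}=E^t_{\mathbf v,\mathbf w}$.

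Finally, for $t\neq q$ with $\mathbf u=\mathbf z$ the situation cannot occur, since each tuple determines its product; for $\mathbf u\neq\mathbf z$ the product is already zero. This covers all cases. The only delicate point is bookkeeping for monoids, where distinct tuples may erase to the same diagram word. It is resolved by working in $\Add(\mathcal P)$, or equivalently in $\Vec_S^\omega$ via \autoref{thm:pointed-equivalence}, where $X_S^{\otimes n}$ is a genuine direct sum over tuples. I expect no real obstacle: the matrix-unit claim is just orthogonality of summands plus the comb being an isomorphism. \autoref{lem:pointed-normal} additionally shows each $E^t_{\mathbf v,\mathbf u}$ is nonzero, so these are honest matrix units.
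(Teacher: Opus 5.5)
Your proposal is correct and is essentially the paper's argument: everything reduces to the middle contraction $P_{\mathbf u}I_{\mathbf z}$. This vanishes by orthogonality of the tuple summands unless $\mathbf u=\mathbf z$ (which forces $t=q$), and otherwise equals $\id_t$ because the two combs cancel, leaving $I_{\mathbf v}P_{\mathbf w}=E^t_{\mathbf v,\mathbf w}$.

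One small wording slip: in $P_{\mathbf u}I_{\mathbf u}$ the first adjacent merge/split pair sits at the leaves, not at the root. So the bubble cancellations run from the leaves toward the root, which is exactly what your induction on $n$ does.
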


\begin{proof}
Stack the two tree pairs. If the middle roots or tuple labels disagree,
the middle projection and inclusion compose to zero. Otherwise the
middle combs cancel:
\[
 P_{\mathbf u}I_{\mathbf z}
 =\delta_{\mathbf u,\mathbf z}\id_t.
\]
The remaining lower and upper trees are
$I_{\mathbf v}$ and $P_{\mathbf w}$.
\end{proof}

\begin{theorem}[Block decomposition]\label{thm:semigroup-cells}
For every cocycle $\omega$ and $n\ge1$,
\begin{equation}\label{eq:semigroup-end}
 B_n(S):=\End(X_S^{\otimes n})
 \cong
 \bigoplus_{t:d_n(t)>0}\Mat_{d_n(t)}(k).
\end{equation}
In particular, the algebra is independent of $\omega$ up to
isomorphism.
\end{theorem}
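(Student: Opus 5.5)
The plan is to exhibit the matrix units $E^t_{\mathbf v,\mathbf u}$ of \autoref{lem:semigroup-matrix-units} as a basis of $B_n(S)$, and then read off the block structure. First decompose $X_S^{\otimes n}=\bigoplus_{\mathbf s\in S^n}k_{s_1}\otimes\cdots\otimes k_{s_n}$ into words of length $n$. By \autoref{thm:pointed-equivalence} we may compute inside $\Add(\mathcal P(S,\omega))$. There $\End(X_S^{\otimes n})$ is the space of matrices whose $(\mathbf v,\mathbf u)$ entry lies in $\Hom_{\mathcal P}(\mathbf u,\mathbf v)$. (For a monoid, tuples containing $1$ are still treated as separate summands, with unit letters erased in the diagram.)

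By \autoref{lem:pointed-normal}, each entry space is zero unless $\pi(\mathbf u)=\pi(\mathbf v)=t$, in which case it is the line spanned by $\Delta_{\mathbf v}\mu_{\mathbf u}$. After including into and projecting from the summands, that spanning element is exactly $E^t_{\mathbf v,\mathbf u}$. So the elements $E^t_{\mathbf v,\mathbf u}$, for $t\in S$ and $\mathbf u,\mathbf v\in\Omega_n(t)$, span $B_n(S)$. They are linearly independent because they are nonzero and occupy distinct matrix positions. Nonvanishing comes from \autoref{lem:pointed-normal}, or directly from $P_{\mathbf u}I_{\mathbf u}=\id_t$.

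By \autoref{lem:semigroup-matrix-units}, for each fixed $t$ with $d_n(t)>0$ the family $\{E^t_{\mathbf v,\mathbf u}\}$ is a full system of $d_n(t)\times d_n(t)$ matrix units. Products between different $t$ vanish. Hence sending $E^t_{\mathbf v,\mathbf u}$ to the elementary matrix $e_{\mathbf v\mathbf u}$ in the $t$-th block is an algebra isomorphism onto $\bigoplus_t\Mat_{d_n(t)}(k)$. The units also match: $\sum_{t}\sum_{\mathbf u\in\Omega_n(t)}E^t_{\mathbf u,\mathbf u}=\sum_{\mathbf u}I_{\mathbf u}P_{\mathbf u}$ is the sum of the summand idempotents, which is the identity. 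Since $d_n(t)$ is a purely combinatorial count, the algebra is independent of $\omega$.

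The only point needing care is the identification of the normal form basis with the $E^t_{\mathbf v,\mathbf u}$ in the monoid case, where erased unit labels could seem to identify distinct tuples. The fix is to index summands by tuples rather than by diagrams: distinct tuples give distinct summands, so distinct inclusion and projection maps, even when the underlying diagrams agree. Otherwise the argument is routine bookkeeping.
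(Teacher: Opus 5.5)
Your proposal is correct and follows essentially the paper's route: you decompose $X_S^{\otimes n}$ into its tuple summands, note that only pairs of tuples with equal product contribute (one-dimensional) Hom spaces, and identify the resulting basis with the matrix units of \autoref{lem:semigroup-matrix-units}. The paper says the same more briefly, via $X_S^{\otimes n}\cong\bigoplus_t k_t^{\oplus d_n(t)}$ and Schur's lemma in $\Vec_S^\omega$; you instead get the count from the diagrammatic normal form of \autoref{lem:pointed-normal}, and your point about indexing summands by tuples in the monoid case is the right one.
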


\begin{proof}
Fuse every tuple to its product. This gives
\[
 X_S^{\otimes n}
 \cong
 \bigoplus_{t\in S} k_t^{\oplus d_n(t)}.
\]
By \autoref{lem:semigroup-matrix-units}, the maps
$E^t_{\mathbf v,\mathbf u}$ are exactly the matrix units of this
decomposition.
\end{proof}

For every $t$ with $d_n(t)>0$, let
\begin{equation}\label{eq:semigroup-simples}
 L_t=\Hom(k_t,X_S^{\otimes n})
 =
 \bigoplus_{\mathbf s\in\Omega_n(t)}k I_{\mathbf s}.
\end{equation}

\begin{proposition}[Simple modules]\label{cor:semigroup-simples}
Then the $L_t$ are the pairwise nonisomorphic simple left
$B_n(S)$-modules, and
\[
 \dim L_t=d_n(t).
\]
\end{proposition}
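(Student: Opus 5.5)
The plan is to read everything off the matrix-unit description in \autoref{thm:semigroup-cells} and \autoref{lem:semigroup-matrix-units}. By \autoref{thm:semigroup-cells}, $B_n(S)$ is a finite direct sum of full matrix algebras $\Mat_{d_n(t)}(k)$, indexed by those $t$ with $d_n(t)>0$. So $B_n(S)$ is semisimple. Its simple modules are, up to isomorphism, the column modules $k^{d_n(t)}$ of the blocks, one per block, and they are pairwise nonisomorphic. It therefore suffices to identify $L_t$ with the column module of the $t$-block.

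First, $L_t$ is a left $B_n(S)$-module via postcomposition: for $f\in B_n(S)$ and $g\in\Hom(k_t,X_S^{\otimes n})$ put $f\cdot g=f\circ g$. Next, I would verify the basis claim in \autoref{eq:semigroup-simples}. By \autoref{thm:pointed-equivalence} and \autoref{lem:pointed-normal},
\[
\Hom(k_t,X_S^{\otimes n})=\bigoplus_{\mathbf s\in S^n}\Hom(k_t,s_1\cdots s_n),
\]
and each summand is $k\,\Delta_{\mathbf s}=k\,I_{\mathbf s}$ exactly when $\pi(\mathbf s)=t$, and $0$ otherwise. The $I_{\mathbf s}$ are linearly independent because they land in distinct summands; equivalently, $P_{\mathbf u}I_{\mathbf s}=\delta_{\mathbf u,\mathbf s}\id_t$. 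This gives $\dim L_t=d_n(t)$.

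Then I compute the action on this basis. Using $P_{\mathbf u}I_{\mathbf s}=\delta_{\mathbf u,\mathbf s}\id_t$, and $P_{\mathbf u}I_{\mathbf s}=0$ when the roots differ (by \autoref{lem:pointed-normal}), we get
\[
E^q_{\mathbf v,\mathbf u}\,I_{\mathbf s}=\delta_{q,t}\,\delta_{\mathbf u,\mathbf s}\,I_{\mathbf v}.
\]
So the $q$-blocks with $q\ne t$ act by zero, and the $t$-block acts on the basis $\{I_{\mathbf s}\}_{\mathbf s\in\Omega_n(t)}$ by the standard matrix-unit rule. Hence $L_t$ is isomorphic to the natural column module of $\Mat_{d_n(t)}(k)$, inflated to $B_n(S)$. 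This gives simplicity directly: any nonzero vector can be moved onto any basis vector by a suitable matrix unit.

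For pairwise nonisomorphism, observe that the central idempotent $\sum_{\mathbf u}E^t_{\mathbf u,\mathbf u}$ acts as the identity on $L_t$ and as zero on $L_q$ for $q\ne t$. Completeness follows from semisimplicity: every simple module of $\bigoplus_t\Mat_{d_n(t)}(k)$ is one of the column modules. Alternatively, compare dimensions via $\sum_t d_n(t)^2=\dim B_n(S)$.

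The main obstacle is only bookkeeping: checking that $P_{\mathbf u}I_{\mathbf s}$ vanishes for different roots and different tuples, including the monoid case, where distinct tuples may erase to the same diagram. Since tuples are indexed as separate summands of $X_S^{\otimes n}$, the projection/inclusion orthogonality handles this.
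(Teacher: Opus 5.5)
Your proposal is correct and takes essentially the paper's approach. The paper's proof only notes that the root-$t$ block of \autoref{eq:semigroup-end} acts on the column space spanned by the $I_{\mathbf s}$ with $\mathbf s\in\Omega_n(t)$, and that all other blocks act by zero; you spell out the same identification in more detail (the basis and dimension count for $L_t$, the action $E^q_{\mathbf v,\mathbf u}I_{\mathbf s}=\delta_{q,t}\delta_{\mathbf u,\mathbf s}I_{\mathbf v}$, separation by central idempotents, and completeness from semisimplicity).
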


\begin{proof}
The root-$t$ block in \autoref{eq:semigroup-end} acts on the column
space spanned by the $I_{\mathbf s}$ with
$\mathbf s\in\Omega_n(t)$, and all other blocks act by zero.
\end{proof}

Cutting a tree pair at its root leaves two independently varying tree indices and the scalar algebra $\End(k_t)=k$ at the cut. This is the following sandwich cellular structure.

\begin{proposition}[Cellularity]\label{prop:semigroup-cellularity}
The tree pairs form a sandwich cellular basis of $B_n(S)$, with roots
$t$ satisfying $d_n(t)>0$, index sets $\Omega_n(t)$, and sandwich
algebras $H_t=k$.

They are also a cellular basis using
 $\big(E^t_{\mathbf v,\mathbf u}\big)^\star
 =E^t_{\mathbf u,\mathbf v}$.
\end{proposition}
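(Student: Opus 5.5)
We deduce the cellularity directly from the matrix-unit description. By \autoref{thm:semigroup-cells} and \autoref{lem:semigroup-matrix-units}, the tree pairs $E^t_{\mathbf v,\mathbf u}$, with $t$ ranging over roots satisfying $d_n(t)>0$ and $\mathbf u,\mathbf v\in\Omega_n(t)$, form a basis of $B_n(S)$. Indeed, they are the matrix units of the block decomposition \autoref{eq:semigroup-end}, and there are $\sum_t d_n(t)^2$ of them. We fix the sandwich datum: the poset is the set of such roots $t$, with any total order, or even the trivial (discrete) order. The bottom and top index sets are both $\Omega_n(t)$, and the sandwich algebra is $H_t=\End(k_t)=k$ with basis $\{1\}$. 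The basis element attached to $(\mathbf v,1,\mathbf u)$ is $I_{\mathbf v}\circ 1\circ P_{\mathbf u}=E^t_{\mathbf v,\mathbf u}$, which is exactly the picture \autoref{eq:semigroup-matrix-pictures} cut at its root.

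Next we verify the sandwich cellular multiplication axiom. For $x\in B_n(S)$ we must show that $xE^t_{\mathbf v,\mathbf u}$ is a combination of elements $E^t_{\mathbf v',\mathbf u}$ with the same lower index $\mathbf u$ and the same $H_t$-part, modulo strictly lower cells. Here this holds with no error term. Expand $x=\sum c^{q}_{\mathbf z,\mathbf w}E^q_{\mathbf z,\mathbf w}$. Then \autoref{lem:semigroup-matrix-units} gives
\[
 xE^t_{\mathbf v,\mathbf u}=\sum_{\mathbf z}c^t_{\mathbf z,\mathbf v}E^t_{\mathbf z,\mathbf u},
\]
and the coefficients $c^t_{\mathbf z,\mathbf v}$ are independent of $\mathbf u$. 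Right multiplication is symmetric. So each cell span is a two-sided ideal, and its left and right actions factor through the tree indices alone. This is the sandwich axiom with trivial ideal filtration.

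For the involution $\star$, defined on the basis by $(E^t_{\mathbf v,\mathbf u})^\star=E^t_{\mathbf u,\mathbf v}$ and extended linearly, we check that it is an anti-involution. This is immediate from \autoref{lem:semigroup-matrix-units}:
\[
 (E^t_{\mathbf v,\mathbf u}E^q_{\mathbf z,\mathbf w})^\star=\delta_{t,q}\delta_{\mathbf u,\mathbf z}E^t_{\mathbf w,\mathbf v}=E^q_{\mathbf w,\mathbf z}E^t_{\mathbf u,\mathbf v}.
\]
It is an involution, and it swaps the two indices as cellularity requires. Since $H_t=k$ is cellular with trivial data, the sandwich cellular structure is genuinely cellular in the Graham--Lehrer sense, with cell modules the $L_t$ of \autoref{cor:semigroup-simples}.

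We expect no real obstacle. The only point needing care is matching the exact axioms of sandwich cellularity, for instance the form used by Tubbenhauer et al., including the requirement that the basis element be the composite $I_{\mathbf v}\,h\,P_{\mathbf u}$ with $h$ running over a basis of $H_t$. With $H_t=k$ this is automatic.
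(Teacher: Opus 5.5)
Your proposal is correct and follows essentially the paper's route. Both arguments read off from the matrix-unit rule of \autoref{lem:semigroup-matrix-units} (equivalently, the pairing $P_{\mathbf u}I_{\mathbf v}=\delta_{\mathbf u,\mathbf v}\id_t$) that left and right multiplication act independently on the two tree indices with no higher terms, and both take $\star$ to be the index transpose. Your explicit coefficient formula and your anti-involution check simply spell out what the paper states in one line.
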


\begin{proof}
The multiplication rule in
\autoref{lem:semigroup-matrix-units} acts independently on the two
tree indices, with pairing
\[
 P_{\mathbf u}I_{\mathbf v}
 =\delta_{\mathbf u,\mathbf v}\id_t.
\]
There are no higher terms, so this is the sandwich cell datum of
\cite{Tu-sandwich-cellular}. Swapping the two indices is the usual
matrix transpose and gives the cellular anti-involution of
\cite{GrLe-cellular}.
\end{proof}

\begin{remark}
The transpose above is defined in the chosen comb basis. For general
$\omega$, it need not be reflection of an arbitrary diagram.
\end{remark}

Finally, the block sizes can be read directly from the multiplication
table.

\begin{lemma}[Counting products]\label{lem:semigroup-count}
For $n\ge1$,
\begin{equation}\label{eq:semigroup-count}
 d_1(t)=1,\qquad
 d_{n+1}(t)=\sum_{sr=t}d_n(s),\qquad
 \dim B_n(S)=\sum_t d_n(t)^2.
\end{equation}
\end{lemma}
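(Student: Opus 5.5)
The argument is pure counting from the definition of $\Omega_n(t)$ together with \autoref{thm:semigroup-cells}. For $n=1$, the set $\Omega_1(t)$ consists of the single tuple $(t)$, so $d_1(t)=1$ for every $t\in S$.

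For the recursion, I will split an $(n+1)$-tuple into its first $n$ entries and its last entry. The map
\[
 \Omega_{n+1}(t)\longrightarrow\bigsqcup_{(s,r)\in S^2:\,sr=t}\Omega_n(s)\times\{r\},
 \qquad (s_1,\ldots,s_{n+1})\longmapsto\big((s_1,\ldots,s_n),s_{n+1}\big),
\]
has product $s=s_1\cdots s_n$. It is well defined because associativity in $S$ gives $(s_1\cdots s_n)s_{n+1}=s_1\cdots s_{n+1}=t$. Its inverse is concatenation. The union is disjoint because the pair $(s,r)$ is determined by the tuple itself. Taking cardinalities gives $d_{n+1}(t)=\sum_{sr=t}d_n(s)$, where the sum runs over pairs $(s,r)\in S^2$ with $sr=t$. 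Tuples differing only in unit entries count as distinct, as the paper stipulates, so no identification issues arise in the monoid case.

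Finally, \autoref{thm:semigroup-cells} gives $B_n(S)\cong\bigoplus_{t:d_n(t)>0}\Mat_{d_n(t)}(k)$. Since $\dim\Mat_d(k)=d^2$, we get $\dim B_n(S)=\sum_{d_n(t)>0}d_n(t)^2$. The terms with $d_n(t)=0$ contribute zero, so the sum may be taken over all $t\in S$.

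There is no real obstacle. The only points needing care are the reliance on associativity to split the product, and the convention that tuples rather than erased diagrams are counted.
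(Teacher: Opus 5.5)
Your proposal is correct and matches the paper's proof: the paper also gets the recurrence by splitting off the final letter $r$ and the product $s$ of the first $n$ letters, and reads the dimension formula off the block decomposition in \autoref{thm:semigroup-cells}. Your explicit bijection and your remark that the terms with $d_n(t)=0$ vanish simply make that one-line argument precise.
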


\begin{proof}
For the recurrence in \autoref{eq:semigroup-count}, choose the product
$s$ of its first $n$ letters and a final letter $r$ with $sr=t$.
The dimension formula follows from
\autoref{thm:semigroup-cells}.
\end{proof}

For a monoid one may also put $d_0(t)=\delta_{t,1}$ and $B_0(M)=k$.
For a semigroup without identity there is no empty tensor power, although
each $B_n(S)$ has its own identity $\id_{X_S^{\otimes n}}$.
The same argument works for any finite sum of boundary words.

\begin{example}[$\Z/2\Z$ and the idempotent monoid]
For \(S=G_2=\{1,g\}\) with \(g^2=1\), exactly half of the words in
\(G_2^n\) have product \(1\) and half have product \(g\). Thus
\[
 B_n(G_2)\cong
 \Mat_{2^{n-1}}(k)\oplus\Mat_{2^{n-1}}(k).
\]

For the idempotent monoid \(E_2=\{1,e\}\) with \(e^2=e\), only the word
\((1,\ldots,1)\) has product \(1\); every other word has product \(e\).
Hence
\[
 B_n(E_2)\cong k\oplus\Mat_{2^n-1}(k),
\]
in contrast to the \(\Z/2\Z\) case.
\end{example}

\subsection{Duals and braidings}\label{sec:semigroup-structures}

Duality needs invertible labels. Crossings do not.

\begin{proposition}[Units and duals]\label{prop:semigroup-structures}
The category $\Vec_S^\omega$ has a tensor unit exactly when $S$ is a
monoid. For a monoid $M$, the simple $k_s$ has a left or right dual
exactly when $s\in M^\times$. Thus $\Vec_M^\omega$ is rigid exactly
when $M$ is a group.
\end{proposition}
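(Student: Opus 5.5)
The plan is to treat the three claims in turn, working throughout in the skeleton of simples $k_s$, where tensor products obey $k_r\otimes k_s\cong k_{rs}$ and $\Hom(k_r,k_s)=\delta_{r,s}k$. Since every object is a finite direct sum of simples and $\otimes$ is biadditive, both units and duals can be tested on these lines.

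\emph{Tensor unit.} If $S$ is a monoid, the tensor unit is $k_1$, with the unit maps from the definition of $\Vec_S^\omega$; normalization of $\omega$ gives the triangle axiom. Conversely, suppose $\mathbf 1$ is a unit object. Then $\mathbf 1\otimes k_s\cong k_s$ for every $s$, so $\mathbf 1\otimes k_s$ is simple. Write $\mathbf 1\cong\bigoplus_t k_t^{\oplus n_t}$. Then $\mathbf 1\otimes k_s\cong\bigoplus_t k_{ts}^{\oplus n_t}$, and simplicity forces $\mathbf 1=k_e$ for a single $e$ with $es=s$ for all $s$. Using $k_s\otimes\mathbf 1\cong k_s$ in the same way gives $se=s$. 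Hence $e$ is a two-sided identity and $S$ is a monoid. (The unit isomorphisms only enter through their existence.)

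\emph{Duals.} Let $M$ be a monoid. If $s\in M^\times$, then $k_{s^{-1}}$ is a two-sided dual. For evaluation and coevaluation take the standard isomorphisms $k_{s^{-1}}\otimes k_s\cong k_1$ and $k_1\cong k_s\otimes k_{s^{-1}}$. The zigzag composites are nonzero scalars on $k_s$: they are a product of merge/split isomorphisms and one associator, contributing $\omega(s,s^{-1},s)^{\pm1}$. Rescale the evaluation by this scalar to make them equal to the identity.

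For the converse, suppose $k_s$ has a left dual $X$, with $\ev:X\otimes k_s\to k_1$ and $\coev:k_1\to k_s\otimes X$ satisfying the zigzag identities. The key step is that the identity $\id_{k_s}$ factors through $k_s\otimes X\otimes k_s$ via $\coev$ and $\ev$, so both maps are nonzero. Decompose $X=\bigoplus_t k_t^{\oplus n_t}$. Since $\coev\neq0$, some summand $k_t$ has $st=1$. Since $\ev\ne0$, some summand $k_{t'}$ has $t's=1$. In a finite monoid a one-sided inverse is two-sided (or use $t'=t'st=t$), so $s\in M^\times$. The right-dual case is symmetric.

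The main obstacle is making the nonvanishing argument precise. The zigzag composite $k_s\to k_s\otimes X\otimes k_s\to k_s$ (with associators inserted) is a sum over summands $k_t$ of $X$. A summand can contribute only if both $\coev$ and $\ev$ have nonzero components on it, because morphisms preserve degree. So the nonzero identity forces a single $t$ with $st=1=ts$; this gives two-sidedness directly and avoids the finiteness argument.

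Finally, rigidity of $\Vec_M^\omega$ means every object has both duals. Duals of direct sums are direct sums of duals, and a summand of an object with a dual need not have one a priori, but the simples are objects themselves. So rigidity is equivalent to every simple $k_s$ having duals, which is equivalent to $M^\times=M$, i.e.\ $M$ being a group.
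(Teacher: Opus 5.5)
Your proposal is correct and follows the paper's route: a unit must be a single line $k_e$ with $es=s=se$, nonvanishing of $\coev$ and $\ev$ forces $st=1=ts$, and units have the explicit duals $k_{s^{-1}}$. You go slightly beyond the paper by handling an arbitrary dual object $X$ rather than only a line $k_t$. The one step you leave implicit is that a single rescaling of $\ev$ fixes both zigzags, since the second lives on $k_{s^{-1}}$ and involves $\omega(s^{-1},s,s^{-1})$; this follows from the normalized cocycle identity $\omega(s^{-1},s,s^{-1})\,\omega(s,s^{-1},s)=1$, as in \autoref{lem:pointed-duality}.
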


\begin{proof}
A tensor unit must be a graded line $k_e$, and the unit maps force
$es=s=se$ for every $s$. Conversely, a normalized cocycle gives the
usual unit maps when $M$ is a monoid.

If $k_t$ is a dual of $k_s$, evaluation and coevaluation require
$ts=1=st$, so $s$ is a unit. Conversely, units have the explicit
duals below.
\end{proof}

For $g\in M^\times$, put $g^*=g^{-1}$ and
$\varepsilon_g=\omega(g,g^{-1},g)$. Define
\begin{equation}\label{eq:pointed-duality}
 \coev_g=\Delta_{g,g^{-1}},\qquad
 \ev_g=\varepsilon_g^{-1}\mu_{g^{-1},g}.
\end{equation}
Thus
\begin{equation}\label{eq:pointed-bends}
 \cupcap{cup}{g}{g^{-1}}
 =\splitv{g}{g^{-1}}{1},
 \qquad
 \cupcap{cap}{g^{-1}}{g}
 =\varepsilon_g^{-1}\fuse{g^{-1}}{g}{1}.
\end{equation}
The label changes from $g$ to $g^{-1}$ when a strand passes through a
bend. (One may add orientations if one wants to record this explicitly.)

\begin{lemma}[Snake identities]\label{lem:pointed-duality}
The maps \autoref{eq:pointed-duality} (the diagrams \autoref{eq:pointed-bends}) satisfy
\[
\begin{tikzpicture}[scale=.75,
  every node/.style={font=\scriptsize}]
 \draw[a] (.6,0)--(.6,1)
   to[out=90,in=90](0,1)--(0,.55)
   to[out=-90,in=-90](-.6,.55)--(-.6,1.6);
 \node[below] at (.6,0){$g$};
 \node[above] at (-.6,1.6){$g$};
 \node[left] at (-0.5,.8){$g^{-1}$};
\end{tikzpicture}
=\wire{g},
\qquad
\begin{tikzpicture}[scale=.75,
  every node/.style={font=\scriptsize}]
 \draw[a] (-.6,0)--(-.6,1)
   to[out=90,in=90](0,1)--(0,.55)
   to[out=-90,in=-90](.6,.55)--(.6,1.6);
 \node[below] at (-.6,0){$g^{-1}$};
 \node[above] at (.6,1.6){$g^{-1}$};
 \node[right] at (0,.8){$g$};
\end{tikzpicture}
=\wire{g^{-1}}.
\]
Hence the full subcategory supported on $M^\times$ is rigid.
\end{lemma}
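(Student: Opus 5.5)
We reduce each snake to the fusion and splitting relations of \autoref{def:pointed}, namely the tree relations \autoref{eq:pointed-assoc} and \autoref{eq:pointed-assoc-picture} together with the cancellations \autoref{eq:pointed-inverse}, and track the scalar. In the monoid convention a strand labeled $1$ is erased, so the cup $\coev_g=\Delta_{g,g^{-1}}$ is a split out of the empty word and the cap is $\varepsilon_g^{-1}\mu_{g^{-1},g}$. For the first snake we tensor $\id_g$ with the cup on its left, read as $g=1\otimes g$. The composite is then
\[
(\id_g\otimes\ev_g)\circ(\coev_g\otimes\id_g)=\varepsilon_g^{-1}\,(\id_g\otimes\mu_{g^{-1},g})\circ(\Delta_{g,g^{-1}}\otimes\id_g).
\]
The lower half is a left splitting tree from $1\cdot g=g$ into $g\otimes g^{-1}\otimes g$. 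More precisely, it is $\Delta_{g,g^{-1}}\otimes\id_g$ precomposed with the identification $\mu_{1,g}=\id_g$, so it equals the left split $g\to (gg^{-1})\otimes g\to g\otimes g^{-1}\otimes g$ with middle channel $1$.

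By \autoref{eq:pointed-assoc-picture} with $(r,s,t)=(g,g^{-1},g)$, this left split equals $\omega(g,g^{-1},g)$ times the right split, whose internal channel is $g^{-1}g=1$. Explicitly, the right split is $(\id_g\otimes\Delta_{g^{-1},g})\circ\Delta_{g,1}$, and $\Delta_{g,1}=\id_g$. Composing with $\id_g\otimes\mu_{g^{-1},g}$ on top, the cancellation in \autoref{eq:pointed-inverse} gives $\mu_{g^{-1},g}\Delta_{g^{-1},g}=\id_1$, which is the empty diagram. The whole composite is therefore $\varepsilon_g^{-1}\omega(g,g^{-1},g)\id_g=\id_g$, and this explains the choice $\varepsilon_g=\omega(g,g^{-1},g)$.

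The second snake is handled in the same way. We write it as $\varepsilon_g^{-1}(\mu_{g^{-1},g}\otimes\id_{g^{-1}})\circ(\id_{g^{-1}}\otimes\Delta_{g,g^{-1}})$. The lower half is a right split $g^{-1}\to g^{-1}\otimes g\otimes g^{-1}$ with channel $gg^{-1}=1$. We apply \autoref{eq:pointed-assoc-picture} backwards, now with the triple $(g^{-1},g,g^{-1})$, to convert it into the left split at the cost of the scalar $\omega(g^{-1},g,g^{-1})^{-1}$, and then cancel. The result is $\varepsilon_g^{-1}\omega(g^{-1},g,g^{-1})^{-1}\id_{g^{-1}}$.

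This is the main obstacle: we must show that $\omega(g,g^{-1},g)\,\omega(g^{-1},g,g^{-1})=1$ for a normalized cocycle. We will get it from \autoref{eq:black-cocycle} evaluated at $(r,s,t,u)=(g,g^{-1},g,g^{-1})$. The left side becomes $\omega(g^{-1},g,g^{-1})\,\omega(g,1,g^{-1})\,\omega(g,g^{-1},g)$. The right side becomes $\omega(1,g,g^{-1})\,\omega(g,g^{-1},1)$. Normalization kills every factor containing $1$, which gives exactly the needed identity. With this identity the coefficient is $1$ and the second snake holds.

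Finally, the cups and caps for all $g\in M^\times$ satisfy both snake identities, and the labels $g^{-1}$ again lie in $M^\times$. Hence every object of the full subcategory on $M^\times$ (words in units) has left and right duals, with duals of words obtained by nesting bends. Therefore that subcategory is rigid.
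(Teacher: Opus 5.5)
Your proposal is correct and is essentially the paper's proof: both snakes reduce, by one tree move and a cancellation, to the coefficients $\varepsilon_g^{-1}\omega(g,g^{-1},g)$ and $\varepsilon_g^{-1}\omega(g^{-1},g,g^{-1})^{-1}$. The second coefficient is handled by the normalized cocycle identity $\omega(g^{-1},g,g^{-1})=\varepsilon_g^{-1}$, which you correctly extract from the pentagon at $(g,g^{-1},g,g^{-1})$, and applying the same construction to $g^{-1}$ gives the opposite dual, as in the paper.
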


\begin{proof}
The cocycle equation gives
$\omega(g^{-1},g,g^{-1})=\varepsilon_g^{-1}$.
The first snake has coefficient
$\varepsilon_g^{-1}\omega(g,g^{-1},g)=1$; the second has coefficient
$\varepsilon_g^{-1}\omega(g^{-1},g,g^{-1})^{-1}=1$.
The same construction for $g^{-1}$ gives the opposite dual.
\end{proof}

Turning a trivalent vertex upside down need not give our chosen splitting
vertex exactly. Put
 $\kappa(g,h)=
 \frac{\omega(h,h^{-1},g^{-1})}
      {\omega(g,h,(gh)^{-1})}$.

\begin{lemma}[Turning a merge]\label{lem:pointed-mate}
For units $g,h$, turning the merge $\mu_{g,h}$ upside down gives
\[
\begin{tikzpicture}[]
%
  \coordinate (v) at (0,1.05);
  \draw[a] (-.35,.55) -- (v);
  \draw[a] (.35,.55) -- (v);
  \draw[a] (v) -- (0,1.55);
%
  \draw[a]
    (-.35,.55)
    .. controls (-.50,.20) and (-.90,.20) .. (-1.00,.55)
    -- (-1.00,2.05);
%
  \draw[a]
    (.35,.55)
    .. controls (.20,-.05) and (-1.45,-.05) .. (-1.55,.55)
    -- (-1.55,2.05);
%
  \draw[a]
    (0,1.55)
    .. controls (.25,1.95) and (1.10,1.95) .. (1.20,1.55)
    -- (1.20,0);
  \node[above] at (-1.55,2.05) {$h^{-1}$};
  \node[above] at (-1.00,2.05) {$g^{-1}$};
  \node[below] at (1.20,0) {$(gh)^{-1}$};
  \node at (1.85,1.05) {$=$};
  \node at (2.55,1.05) {$\kappa(g,h)$};
%
  \coordinate (w) at (3.50,1.05);
  \draw[a] (3.50,0) -- (w);
  \draw[a] (w) -- (3.00,1.75);
  \draw[a] (w) -- (4.00,1.75);
  \node[above] at (3.00,1.75) {$h^{-1}$};
  \node[above] at (4.00,1.75) {$g^{-1}$};
  \node[below] at (3.50,0) {$(gh)^{-1}$};
\end{tikzpicture}
.
\]
\end{lemma}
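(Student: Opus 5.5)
The plan is to reduce the statement to a single scalar computation. All four boundary strands carry unit labels, and the product of the bottom boundary $(gh)^{-1}$ equals the product $h^{-1}g^{-1}$ of the top boundary. So by \autoref{lem:pointed-normal} the space $\Hom_{\mathcal P}\big((gh)^{-1},h^{-1}\otimes g^{-1}\big)$ is one-dimensional, spanned by the split $\Delta_{h^{-1},g^{-1}}$. The left-hand diagram is therefore some scalar multiple $c\,\Delta_{h^{-1},g^{-1}}$, and it remains to show $c=\kappa(g,h)$. To extract $c$, I will postcompose both sides with the merge $\mu_{h^{-1},g^{-1}}$. By \autoref{eq:pointed-inverse} the right-hand side becomes $c\,\id_{(gh)^{-1}}$, and the left-hand side becomes a closed-up tree which I reduce using only the tree relations.

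Concretely, I read the left-hand diagram using \autoref{eq:pointed-bends}. The two nested cups are $\coev_{h^{-1}}=\Delta_{h^{-1},h}$ (outer) and $\coev_{g^{-1}}=\Delta_{g^{-1},g}$ (inner). The top bend, from the root $gh$ over to $(gh)^{-1}$, is the cap $\ev_{(gh)^{-1}}=\varepsilon_{(gh)^{-1}}^{-1}\mu_{gh,(gh)^{-1}}$. The resulting word of generators, written as a composite, is
\[
(\id_{h^{-1}}\otimes\id_{g^{-1}}\otimes\ev_{(gh)^{-1}})
\circ(\id\otimes\id\otimes\mu_{g,h}\otimes\id)
\circ(\id_{h^{-1}}\otimes\coev_{g^{-1}}\otimes\id_{h}\otimes\id)
\circ(\coev_{h^{-1}}\otimes\id_{(gh)^{-1}}).
\]
After postcomposing with $\mu_{h^{-1},g^{-1}}$, the total diagram is a planar trivalent graph from $(gh)^{-1}$ to $(gh)^{-1}$ in which every split is eventually matched with a merge. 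I will reduce it by a sequence of applications of \autoref{eq:pointed-assoc} and \autoref{eq:pointed-assoc-picture}. Each application moves a merge next to its matching split, and then \autoref{eq:pointed-inverse} cancels the pair. In the first step, $\mu_{h^{-1},g^{-1}}$ and the merges $\mu_{g,h}$ and $\mu_{gh,(gh)^{-1}}$ are reassociated so that the $g^{-1}g$ cup collapses; this costs a factor of $\omega$ at labels drawn from $(g^{-1},g,h)$ or $(h^{-1},g^{-1},g)$, depending on the order chosen. In the second step, the $h^{-1}h$ cup collapses in the same way, producing another $\omega$ factor. Each collapse is the same computation as in the snake identities of \autoref{lem:pointed-duality}, but with a third strand attached.

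Equivalently, I can evaluate through $\mathcal E_\omega$ (\autoref{lem:pointed-evaluation}) in $\Vec^\omega_S$. There each strand is a line, and $c$ is just the product of the associators $\alpha^{\pm1}$ needed to rebracket, together with $\varepsilon_{(gh)^{-1}}^{-1}$. This is the bookkeeping I will actually carry out, because it is mechanical.

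The main obstacle is the last step: simplifying the product of roughly three or four $\omega$-values and the factor $\varepsilon_{(gh)^{-1}}^{-1}$ down to
\[
\kappa(g,h)=\frac{\omega(h,h^{-1},g^{-1})}{\omega(g,h,(gh)^{-1})}.
\]
For this I will use two tools. The first is the identity $\omega(x,x^{-1},x)\,\omega(x^{-1},x,x^{-1})=1$, established in the proof of \autoref{lem:pointed-duality}, applied with $x=gh$. The second is the pentagon \autoref{eq:black-cocycle}, applied to quadruples such as $(g,h,(gh)^{-1},gh)$ and $(h,h^{-1},g^{-1},gh)$. Normalization kills every term containing an identity argument, so each such pentagon equation relates only three or four nontrivial values. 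I will first try to cancel $\varepsilon_{(gh)^{-1}}$ against the $\omega$ with arguments $(\ast,\ast,gh)$ that appears from the cap, and then match the remaining factors to $\kappa$. If the chosen reduction order leaves a different but cohomologically equivalent expression, I will choose a different order of moves; the pentagon (\autoref{lem:pointed-pentagon}) guarantees that the scalar $c$ does not depend on that choice.
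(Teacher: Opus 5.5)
Your reading of the drawn diagram is accurate: the outer cup is $\coev_{h^{-1}}=\Delta_{h^{-1},h}$, the inner cup is $\coev_{g^{-1}}=\Delta_{g^{-1},g}$, and the cap is $\ev_{(gh)^{-1}}=\varepsilon_{(gh)^{-1}}^{-1}\mu_{gh,(gh)^{-1}}$. Reducing to a scalar by postcomposing with $\mu_{h^{-1},g^{-1}}$ is also fine.

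The gap is in the step you defer as mechanical: it does not end at $\kappa(g,h)$. Rebracketing the top tree $(h^{-1}g^{-1})(gh\,(gh)^{-1})$ produces $\omega((gh)^{-1},gh,(gh)^{-1})$, which cancels the cap factor. What remains is the fusion tree $(h^{-1}g^{-1})(gh)$ glued to the splitting tree $h^{-1}((g^{-1}g)h)$, so
\[
 c=\frac{\omega(g^{-1},g,h)}{\omega(h^{-1},g^{-1},gh)}=\kappa(h^{-1},g^{-1})
 =\kappa(g,h)\,\frac{\varepsilon_{gh}}{\varepsilon_g\varepsilon_h}.
\]
The last equality is the identity the paper uses in its proof of the pivotal structure on units. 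The factor $\varepsilon_{gh}/(\varepsilon_g\varepsilon_h)$ is not $1$ in general, even for cohomologically trivial $\omega$. On $\Z/3\Z$, take $\omega=dz$ with $z(1,2)=\lambda$ and all other values $1$, and set $g=h=1$. Then $c=\lambda$ while $\kappa(1,1)=\lambda^{-2}$. So no sequence of pentagon moves will produce $\kappa(g,h)$. Independence of the order of moves only tells you that this other value is well defined.

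The paper's one-line proof (``nest the $h$-cup inside the $g$-cup'') is about the mirror configuration. That is the ordinary left-dual morphism $(\mu_{g,h})^*=(\ev_{gh}\otimes\id)(\id\otimes\mu_{g,h}\otimes\id)(\id\otimes\coev_{g\otimes h})$, where $\coev_{g\otimes h}=(\id_g\otimes\coev_h\otimes\id_{g^{-1}})\coev_g$. There the input strand is on the left, the cap is $\ev_{gh}=\varepsilon_{gh}^{-1}\mu_{(gh)^{-1},gh}$, and the nested cups sit on the right.

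Your scheme works in that configuration. The cap factor again cancels, and the closed part is the fusion tree $(gh)(h^{-1}g^{-1})$ against the splitting tree $(g(hh^{-1}))g^{-1}$. This gives $\omega(gh,h^{-1},g^{-1})/\omega(g,h,h^{-1})$. The pentagon on $(g,h,h^{-1},g^{-1})$ turns this into $\omega(h,h^{-1},g^{-1})/\omega(g,h,(gh)^{-1})=\kappa(g,h)$, which is exactly the pair of associators the paper names.

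Alternatively, you can keep the drawn picture but use right-duality bends built from $j_g=\varepsilon_g^{-1}$. These contribute $\varepsilon_g\varepsilon_h/\varepsilon_{gh}$ and restore $\kappa(g,h)$. Either way, you must fix which bends are meant before computing. With the raw bends of \autoref{eq:pointed-bends} on the picture as drawn, the identity you set out to prove is false.
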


\begin{proof}
Nest the $h$-cup inside the $g$-cup. Reassociating uses
$\alpha_{h,h^{-1},g^{-1}}$ and
$\alpha^{-1}_{g,h,(gh)^{-1}}$, whose quotient is $\kappa(g,h)$.
\end{proof}

We use the star to write the operation displayed in Lemma \ref{lem:pointed-mate}, which then, in algebra, says
 $(\mu_{g,h})^*
 =\kappa(g,h)\Delta_{h^{-1},g^{-1}}$.

\begin{proposition}[Pivotal structure on the units]
On the subcategory supported on $M^\times$, the maps
\[
 j_g=\varepsilon_g^{-1}\id_g:g\longrightarrow g^{**}
\]
define a spherical pivotal structure. Every simple has pivotal
dimension $1$.
\end{proposition}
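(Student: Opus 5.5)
We must show three things: $j$ is a monoidal natural isomorphism $\id\Rightarrow(-)^{**}$, it is spherical, and every simple has pivotal dimension $1$. The plan is to work simple by simple, since everything is semisimple and skeletal on the unit labels.

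\textbf{Step 1: the double dual.} Using \autoref{eq:pointed-duality} twice, $g^{**}=(g^{-1})^{-1}=g$. The functor $(-)^{**}$ acts on the one-dimensional spaces $\End(g)=k$ by the identity. Morphisms between distinct simples vanish by \autoref{lem:pointed-normal}, so any family of scalar multiples of identities is natural. Hence $j_g=\varepsilon_g^{-1}\id_g$ is natural, and it is invertible because $\varepsilon_g\in k^\times$.

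\textbf{Step 2: monoidality.} We need $j_{gh}\circ\mu_{g,h}$ to agree with $\mu_{g,h}^{**}\circ(j_g\otimes j_h)$, after identifying $(g\otimes h)^{**}\cong g^{**}\otimes h^{**}$ through the standard isomorphism. By \autoref{lem:pointed-mate}, applied twice, $\mu_{g,h}^{**}$ equals
\[
\kappa(g,h)\,\kappa\big(h^{-1},g^{-1}\big)\,\mu_{g,h}
\]
up to the chosen splitting/merging normalizations. The condition then reduces to the scalar identity
\[
\frac{\varepsilon_{gh}}{\varepsilon_g\,\varepsilon_h}=\kappa(g,h)\,\kappa\big(h^{-1},g^{-1}\big)^{\pm1},
\]
with the sign fixed by the direction convention for the star. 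This is a statement about the restriction of $\omega$ to the group $M^\times$. It should follow from repeated use of the cocycle equation \autoref{eq:black-cocycle}, applied to quadruples drawn from $g,h,h^{-1},g^{-1}$ and $(gh)^{\pm1}$.

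This is the main obstacle. My first attempt would be gauge invariance: both sides change consistently under $\omega\mapsto\omega\,dz$, since $j$ is canonical. I would then check the identity directly for the standard representative on each cyclic subgroup, or simply expand both sides using the pentagon instances $(g,h,h^{-1},g^{-1})$, $(gh,h^{-1},g^{-1},g)$, etc. If the diagrammatic bookkeeping becomes messy, a fallback is available. Pointed categories have a canonical pivotal structure in which all dimensions are $1$, and any pivotal structure differs from it by a character. So it would suffice to identify $j$ with the one giving dimension $1$ and verify monoidality only for that one.

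\textbf{Step 3: dimensions and sphericality.} The right pivotal trace of $\id_g$ is
\[
\ev_{g^{*}}\circ(j_g\otimes\id_{g^{-1}})\circ\coev_g,
\]
computed with the duals for $g^{-1}$. This equals
\[
\varepsilon_g^{-1}\,\varepsilon_{g^{-1}}^{-1}\,\mu_{g,g^{-1}}\Delta_{g,g^{-1}}=\big(\varepsilon_g\varepsilon_{g^{-1}}\big)^{-1}.
\]
The proof of \autoref{lem:pointed-duality} already gives $\omega(g^{-1},g,g^{-1})=\varepsilon_g^{-1}$, so $\varepsilon_{g^{-1}}=\varepsilon_g^{-1}$, and the trace is $1$. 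The left trace is computed symmetrically and also equals $1$. Since left and right dimensions of all simples agree and traces are additive, the left and right traces agree on all endomorphisms, by semisimplicity and the normal form. This proves sphericality and that every simple has pivotal dimension $1$.
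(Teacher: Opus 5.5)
\textbf{Verdict: genuine gap in Step 2.} Your Steps 1 and 3 are fine and match the paper; the loop computation using $\varepsilon_{g^{-1}}=\varepsilon_g^{-1}$ is exactly its dimension check. Step 2, however, is the actual content of the proposition, and it is both unproved and misstated.

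\textbf{The second dualization.} It is not of a merge. We have $(\mu_{g,h})^*=\kappa(g,h)\Delta_{h^{-1},g^{-1}}$ and $\Delta_{h^{-1},g^{-1}}=\mu_{h^{-1},g^{-1}}^{-1}$. Contravariant functoriality of $(-)^*$ together with \autoref{lem:pointed-mate} then gives $(\Delta_{h^{-1},g^{-1}})^*=\kappa(h^{-1},g^{-1})^{-1}\mu_{g,h}$. Hence $\mu_{g,h}^{**}=\kappa(g,h)\kappa(h^{-1},g^{-1})^{-1}\mu_{g,h}$, a ratio rather than a product.

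\textbf{The resulting identity.} Your own square $j_{gh}\mu_{g,h}=\mu_{g,h}^{**}(j_g\otimes j_h)$ then demands
\[
 \frac{\kappa(g,h)}{\kappa(h^{-1},g^{-1})}=\frac{\varepsilon_g\varepsilon_h}{\varepsilon_{gh}}.
\]
This is the identity the paper derives from the cocycle equation. You have inverted the $\varepsilon$-ratio, and your displayed identity fails for either sign. To see this, test it on a coboundary $\omega=dz$ with generic $z$, where $\varepsilon_g=z(g^{-1},g)/z(g,g^{-1})$; the correct identity holds there, yours does not.

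\textbf{Your suggested ways to finish.} The gauge reduction is legitimate: the defect is multiplicative in $\omega$ and trivial on coboundaries. But finishing by checking ``standard representatives on cyclic subgroups'' is insufficient. The identity for a pair $(g,h)$ depends on $\omega$ restricted to $\langle g,h\rangle$, and $M^\times$ need not be cyclic or even abelian. The fallback is logically sound: compare with a known pivotal structure having all dimensions $1$, and let Step 3 force the scalar ratio to be $1$. However, it simply imports what is being proved.

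\textbf{How to close Step 2.} Put $k=gh$ and use a normalized cocycle. The pentagon instances $(g,h,h^{-1},g^{-1})$ (which you listed), $(g,g^{-1},g,h)$, $(k,h^{-1},g^{-1},k)$ and $(g,h,h^{-1},h)$ of \autoref{eq:black-cocycle} read
\[
\begin{aligned}
 \omega(h,h^{-1},g^{-1})\,\omega(g,h,h^{-1})&=\omega(k,h^{-1},g^{-1})\,\omega(g,h,k^{-1}),\\
 \omega(g^{-1},g,h)\,\omega(g,g^{-1},g)&=\omega(g,g^{-1},k),\\
 \omega(h^{-1},g^{-1},k)\,\omega(k,k^{-1},k)\,\omega(k,h^{-1},g^{-1})&=\omega(g,g^{-1},k)\,\omega(k,h^{-1},h),\\
 \omega(h,h^{-1},h)\,\omega(g,h,h^{-1})&=\omega(k,h^{-1},h).
\end{aligned}
\]
Write out $\kappa(g,h)\,\varepsilon_{k}\big/\bigl(\kappa(h^{-1},g^{-1})\,\varepsilon_g\varepsilon_h\bigr)$ in terms of $\omega$. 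Substituting the first two relations and then the third leaves
\[
\omega(k,h^{-1},h)\big/\bigl(\omega(g,h,h^{-1})\,\omega(h,h^{-1},h)\bigr),
\]
which equals $1$ by the fourth. With this step in place, your argument coincides with the paper's.
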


\begin{proof}
The cocycle equation gives
\[
 \frac{\kappa(g,h)}{\kappa(h^{-1},g^{-1})}
 =\frac{\varepsilon_g\varepsilon_h}{\varepsilon_{gh}},
\]
which is exactly the compatibility of $j$ with tensor products.
The left and right dimensions are both $1$; for example,
\[
 \pivloop{g}{j_g}
 =\ev_{g^{-1}}(j_g\otimes\id_{g^{-1}})\coev_g=1,
\]
which uses the standard picture to illustrate the trace.
\end{proof}

\begin{remark}
For $\omega=1$, all bend factors, $\kappa$ and $j$ are $1$.
A nonunit label has no categorical trace of this kind.
\end{remark}

We now turn to crossings.

\begin{proposition}[Crossings without duals]\label{prop:semigroup-braiding}
If $\Vec_S^\omega$ has a braiding, then $S$ is commutative.
For commutative $S$, its braidings are exactly the functions
$b:S^{\times2}\to k^\times$ satisfying
\begin{equation}\label{eq:semigroup-hexagon}
\begin{aligned}
 b(r,st)&=b(r,s)b(r,t)
 \frac{\omega(s,r,t)}
 {\omega(r,s,t)\omega(s,t,r)},\\
 b(rs,t)&=b(r,t)b(s,t)
 \frac{\omega(r,s,t)\omega(t,r,s)}
 {\omega(r,t,s)}.
\end{aligned}
\end{equation}
The crossing on homogeneous vectors is
$c(v_r\otimes w_s)=b(r,s)w_s\otimes v_r$, or
\[
 \crossing{r}{s}
 =b(r,s)\resolvedcross{r}{s}{rs}.
\]
For a monoid, $b(1,s)=b(s,1)=1$. The braiding is symmetric exactly
when $b(r,s)b(s,r)=1,\forall r,s$.
\end{proposition}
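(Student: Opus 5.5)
A braiding is a natural isomorphism $c_{X,Y}:X\otimes Y\to Y\otimes X$ satisfying the two hexagon axioms; note that neither a unit nor duals enter. I would argue entirely on the simple lines $k_s$, using semisimplicity and the skeletal description of $\Vec_S^\omega$. First, commutativity: $c_{k_r,k_s}$ is an isomorphism $k_{rs}\to k_{sr}$. By the Hom computation for graded lines (equivalently \autoref{lem:pointed-normal} under \autoref{thm:pointed-equivalence}), such an isomorphism exists only if $rs=sr$. So $S$ is commutative, and $c_{k_r,k_s}$ is multiplication by a scalar $b(r,s)\in k^\times$.

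For the converse, every object is a finite direct sum of the lines $k_s$, and every morphism is graded-diagonal. So a natural transformation $X\otimes Y\to Y\otimes X$ is determined by its values on pairs of simples. Conversely, any choice of nonzero scalars $b(r,s)$ extends uniquely, by additivity, to a natural isomorphism acting on homogeneous vectors by $v_r\otimes w_s\mapsto b(r,s)\,w_s\otimes v_r$. Naturality is automatic, because morphisms between lines are scalars and act on each degree separately. This already gives the displayed crossing picture, since in the diagram calculus $\crossing{r}{s}$ must be a multiple of the unique basis element $\Delta_{s,r}\mu_{r,s}$.

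The main step is translating the hexagon axioms. It suffices to check them on triples $k_r,k_s,k_t$, because both sides are natural and additive. For the first hexagon,
\[
 \alpha_{s,t,r}\,c_{r,st}\,\alpha_{r,s,t}=(\id\otimes c_{r,t})\,\alpha_{s,r,t}\,(c_{r,s}\otimes\id).
\]
Every map here is a scalar on the one-dimensional space of degree $rst$ (using commutativity to identify all the lines). The scalars are read off from \autoref{eq:Vec-associator}, giving
\[
 \omega(r,s,t)\,b(r,st)\,\omega(s,t,r)=b(r,s)\,\omega(s,r,t)\,b(r,t),
\]
which is the first line of \autoref{eq:semigroup-hexagon}. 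The second hexagon, written with $\alpha^{-1}$, yields in the same way
\[
 \omega(r,s,t)^{-1}\,b(rs,t)\,\omega(t,r,s)^{-1}=b(s,t)\,\omega(r,t,s)^{-1}\,b(r,t),
\]
which rearranges to the second line. I expect the only real obstacle to be bookkeeping: getting the direction of each associator, and hence each exponent $\pm1$ on $\omega$, right. I would check it against the black-strand pentagon convention \autoref{eq:pointed-assoc}.

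For a monoid with normalized $\omega$, compatibility with the unit is part of being braided. Setting $r=1$ or $s=1$ in the first equation gives $b(1,s)=b(1,s)^2$, hence $b(1,s)=1$, and similarly $b(s,1)=1$ from the second. Finally, symmetry means $c_{Y,X}c_{X,Y}=\id$. On $k_r\otimes k_s$ this composite is multiplication by $b(r,s)b(s,r)$, so the braiding is symmetric exactly when this product equals $1$ for all $r,s$.
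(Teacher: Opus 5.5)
Your proposal is correct and follows the paper's proof essentially step by step. It gets commutativity from the vanishing of Hom-spaces between distinct lines, writes the crossing as $b(r,s)$ times the flip, reads the two hexagons off on homogeneous vectors with exactly the paper's coefficients, and obtains the converse by naturality and additivity; the only slip is in the unit step, where $b(r,1)=b(r,1)^2$ comes from the first equation with $s=t=1$ and $b(1,t)=b(1,t)^2$ comes from the second with $r=s=1$, so the substitutions you state do not produce $b(1,s)=b(1,s)^2$.
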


\begin{proof}
The two equations in \autoref{eq:semigroup-hexagon} say that a crossing
may be pulled past either side of a merge:
\[
\begin{aligned}
 \hexmove{0}{0}{r}{s}{t}{st}
 &=\hexmove{0}{1}{r}{s}{t}{st},\\[1ex]
 \hexmove{1}{0}{r}{s}{t}{rs}
 &=\hexmove{1}{1}{r}{s}{t}{rs}.
\end{aligned}
\]
An invertible crossing $k_{rs}\to k_{sr}$ requires $rs=sr$.
It is then multiplication by some nonzero scalar $b(r,s)$ followed
by the ordinary flip.

Moving $r$ past $s$ and $t$ in the first hexagon gives
\[
 \omega(r,s,t)^{-1}b(r,s)\omega(s,r,t)b(r,t)
 \omega(s,t,r)^{-1},
\]
which gives the first equation in \autoref{eq:semigroup-hexagon}.
The other hexagon gives the second. Conversely, these two equations
give the hexagons on homogeneous vectors, hence on all graded spaces.
The unit and symmetry statements follow immediately.
\end{proof}

\begin{remark}
When $\omega=1$, the two equations simply say that $b$ is a
bicharacter. For general $\omega$, commutativity alone is not enough.
For groups, the pairs $(\omega,b)$ are the abelian $3$-cocycles; see
\cite[Section 8.4]{EtGeNiOs-tensor-categories}.
\end{remark}

\begin{example}[$\Z/2\Z$ and the idempotent monoid]
Let \(G_2=\{1,g\}\) with \(g^2=1\). A normalized cocycle is determined by
\(\lambda=\omega(g,g,g)\), with \(\lambda^2=1\). If
\(q=b(g,g)\), the hexagon gives \(q^2=\lambda\), and
\[
 \crossing{g}{g}
 =q\,\resolvedcross{g}{g}{1}.
\]
Here \(g\) is its own dual.

For the idempotent monoid \(E_2=\{1,e\}\) with \(e^2=e\), the cocycle is
trivial and the same hexagon gives \(q=q^2\), hence \(q=1\):
\[
 \crossing{e}{e}
 =\resolvedcross{e}{e}{e}.
\]
Thus its braiding is unique and symmetric, but \(e\) has no dual.
\end{example}

\subsection{Classification}

We finish by explaining which choices of $\omega$ give the same
category. We use monoid cohomology with trivial multiplicative coefficients, see e.g. \cite{CeLe-cohomology-monoids,No-semigroup-cohomology}.

\begin{lemma}[Changing the vertices]\label{lem:black-gauge}
Let $z:S\times S\to k^\times$ be a $2$-cochain and replace the
multiplication vertices by
\[
 \fuse{r}{s}{rs}
 \longmapsto
 z(r,s)\fuse{r}{s}{rs},
 \qquad
 \splitv{r}{s}{rs}
 \longmapsto
 z(r,s)^{-1}\splitv{r}{s}{rs}.
\]
Then the new cocycle is given by \autoref{eq:black-gauge}:
\[
 \omega'=\omega\,dz.
\]
\end{lemma}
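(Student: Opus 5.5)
The plan is to substitute the rescaled vertices directly into the fusion relation \autoref{eq:pointed-assoc} and read off the new coefficient. Write $\mu'_{r,s}=z(r,s)\mu_{r,s}$ and $\Delta'_{r,s}=z(r,s)^{-1}\Delta_{r,s}$. First I will check that the primed vertices still satisfy \autoref{eq:pointed-inverse}. This holds because the scalars $z(r,s)$ and $z(r,s)^{-1}$ cancel in both the bubble and the channel. So the new generators again present a category of the same shape, and by the asso/coasso lemma it suffices to track only the fusion relation.

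Next I expand the left tree in the primed vertices. It is
\[
 \mu'_{rs,t}(\mu'_{r,s}\otimes\id_t)=z(rs,t)z(r,s)\,\mu_{rs,t}(\mu_{r,s}\otimes\id_t).
\]
The right tree is $z(r,st)z(s,t)$ times the old right tree. Applying the old relation \autoref{eq:pointed-assoc} then gives
\[
 \text{left}'=z(rs,t)z(r,s)\,\omega(r,s,t)^{-1}\,\text{right}
 =\frac{z(rs,t)z(r,s)}{z(r,st)z(s,t)}\,\omega(r,s,t)^{-1}\,\text{right}'.
\]
So the new coefficient $\omega'(r,s,t)^{-1}$ equals $\omega(r,s,t)^{-1}\,dz(r,s,t)^{-1}$, that is, $\omega'=\omega\,dz$ as in \autoref{eq:black-gauge}. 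The splitting relation \autoref{eq:pointed-assoc-picture} follows by inversion, or by the same computation with $z^{-1}$.

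Finally, I will remark that $\omega'$ is again a cocycle, since $dz$ satisfies \autoref{eq:black-cocycle}. The identity map on diagrams, with each vertex sent to its rescaled version, therefore gives an isomorphism $\mathcal P(S,\omega)\cong\mathcal P(S,\omega dz)$. In the monoid case, a normalized $z$ keeps the convention $\mu_{1,s}=\id_s$.

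The only real obstacle is bookkeeping. One has to check that the direction of the scalar in \autoref{eq:pointed-assoc} is $\omega^{-1}$ from left to right, so that $dz$ enters with the correct exponent rather than as its inverse. The computation above settles this.
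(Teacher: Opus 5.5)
Your proposal is correct and is essentially the paper's argument: the paper also compares the two three-leaf trees, reading off $\omega'$ from the right fusion tree (factor $z(s,t)z(r,st)$) and the left splitting tree (factor $z(r,s)^{-1}z(rs,t)^{-1}$), and notes that the cancellation relations survive because merges and splits are rescaled by reciprocal factors. Working instead with the two fusion trees in \autoref{eq:pointed-assoc} is the same bookkeeping, and your exponent check correctly gives $\omega'=\omega\,dz$.
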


\begin{proof}
Compare the two three-leaf trees. The right tree contributes
$z(s,t)z(r,st)$, while the left tree contributes
$z(r,s)^{-1}z(rs,t)^{-1}$. Hence
\[
 \omega'(r,s,t)
 =
 \omega(r,s,t)
 \frac{z(s,t)z(r,st)}
      {z(rs,t)z(r,s)}
 =
 \omega(r,s,t)\,dz(r,s,t).
\]
The cancellation relations are unchanged because a merge and the
matching split are rescaled by reciprocal factors.
\end{proof}

Thus changing the chosen multiplication bases changes $\omega$ only
by a coboundary.

\begin{theorem}[Classification]\label{thm:semigroup-cohomology}
Finite semisimple semigroupal categories with simple labels $S$ and
multiplication
\[
 k_r\otimes k_s\cong k_{rs}
\]
are classified, up to equivalences preserving the labels, by
$H^3(S,k^\times)$.

For a monoid, one uses normalized cocycles and monoidal equivalences.
More generally,
$\Vec_S^\omega$ and $\Vec_T^\eta$ are equivalent precisely when there
is a semigroup isomorphism $f:S\to T$ such that
\[
 [\omega]=f^*[\eta].
\]
\end{theorem}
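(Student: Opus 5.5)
The proof has two directions: every such category is equivalent to some $\Vec_S^\omega$, and any two $\Vec_S^\omega$ are identified exactly when their cocycles are cohomologous. Throughout I use the diagrammatic model $\mathcal P(S,\omega)$ from \autoref{thm:pointed-equivalence}.

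\textbf{Existence.} Let $\mathcal C$ be such a category. Choose a simple $X_s$ for each $s\in S$ and an isomorphism $\mu_{r,s}:X_r\otimes X_s\to X_{rs}$, with $\Delta_{r,s}$ its inverse. Each Hom space $\Hom(X_t,X_t)$ is $k$, since $k$ is algebraically closed. Hence the composite
\[
\mu_{r,st}(\id\otimes\mu_{s,t})\,\alpha_{r,s,t}\,(\mu_{r,s}\otimes\id)^{-1}\mu_{rs,t}^{-1}
\]
is a nonzero scalar, which I call $\omega(r,s,t)$. This reproduces \autoref{eq:Vec-associator-picture}. The pentagon axiom in $\mathcal C$ translates, exactly as in \autoref{lem:pointed-pentagon}, into \autoref{eq:black-cocycle}. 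So $\omega$ is a cocycle.

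The assignment $k_s\mapsto X_s$ then defines a semigroupal functor $\Vec_S^\omega\to\mathcal C$, with tensor structure given by the $\mu$'s. The associativity condition for this functor is precisely the definition of $\omega$. The functor is fully faithful on simples and essentially surjective by semisimplicity, so it is an equivalence.

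\textbf{Independence of choices.} The choice of $\mu_{r,s}$ matters only up to a scalar $z(r,s)$, and by \autoref{lem:black-gauge} this changes $\omega$ by $dz$. Rescaling the chosen simples is absorbed into $z$ as well.

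For monoids, $X_1$ must be the unit object (as in \autoref{prop:semigroup-structures}). I then choose $\mu_{1,s}$ and $\mu_{s,1}$ to be the unit constraints, so $\omega$ is normalized. The triangle axiom is exactly what makes this normalization consistent.

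\textbf{Equivalences.} Suppose $F:\Vec_S^\omega\to\Vec_T^\eta$ is an equivalence. It induces a bijection on simples, $F(k_s)\cong k_{f(s)}$. The tensor structure $F(k_r)\otimes F(k_s)\cong F(k_{rs})$ forces $f(r)f(s)=f(rs)$, so $f$ is a semigroup isomorphism. The tensor structure maps are scalars $z(r,s)$ relative to the chosen merges, and the monoidal functor coherence condition reads $\omega=f^*\eta\cdot dz$.

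Conversely, given $f$ and $z$ with $\omega=f^*\eta\cdot dz$, define $F$ on objects by $f$ and give it the tensor structure $z$. The same computation as in \autoref{lem:black-gauge} shows this is a semigroupal, or monoidal, equivalence. "Preserving labels" is the case $f=\id$, which gives classification by $H^3(S,k^\times)$.

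\textbf{Main obstacle.} I expect the only delicate step to be bookkeeping the direction and inverses in the coherence square, so that one gets $\omega\,(f^*\eta)^{-1}=dz$ rather than its inverse. In the monoid case one must also check that normalized cochains suffice, using the stated fact that every class has a normalized representative.
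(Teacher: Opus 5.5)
Your proposal is correct and follows the paper's own argument. You fix representatives and bases of the one-dimensional multiplication spaces so that the associators become scalars $\omega(r,s,t)$, whose pentagon is the cocycle equation; you use \autoref{lem:black-gauge} for changes of basis; and you read off a semigroup isomorphism $f$ and a coboundary from the tensor maps of an equivalence, only adding more explicit detail than the paper (the functor $\Vec_S^\omega\to\mathcal C$ and the normalization of the unit maps in the monoid case).
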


\begin{proof}
Choose one simple object for each $s\in S$ and a basis of each
one-dimensional multiplication space. Every associator is then a
nonzero scalar $\omega(r,s,t)$, and the pentagon is exactly the
cocycle equation.

Changing the multiplication bases changes $\omega$ by a coboundary,
by \autoref{lem:black-gauge}. Conversely, the tensor maps of an
equivalence give such a change of bases, after relabelling the simples.
This gives the stated classification. In the monoid case the unit
maps may be normalized.
\end{proof}

\begin{example}[$\Z/2\Z$ and the idempotent monoid]
The cocycle calculation in \autoref{ex:order-two-cocycles} gives
\[
 H^3(G_2,k^\times)\cong\{\lambda\in k^\times:\lambda^2=1\},\qquad H^3(E_2,k^\times)=1.
\]
Thus $G_2$ has two classes when $\operatorname{char}k\ne2$ and one in characteristic $2$, while $E_2$ has one.
\end{example}

\begin{remark}[Groups]
For a finite group $G$ this recovers the pointed fusion categories
$\Vec_G^\omega$. With the labels fixed, the classes are
$H^3(G,k^\times)$; forgetting the labels also allows automorphisms of
$G$.
\end{remark}

\section{Adding the red strand}\label{sec:semigroup-question}

The black calculus of \autoref{sec:pointed} works for every finite
semigroup. We now add one new simple object \(m\), drawn as a thicker
red strand.

\subsection{The proposed red calculus}

Tensoring a black label with \(m\) is supposed to leave \(m\) unchanged.
We represent the chosen identifications and their split versions by
\[
 \fuse{s}{m}{m},\quad \splitv{s}{m}{m},
 \qquad
 \fuse{m}{s}{m},\quad \splitv{m}{s}{m}.
\]
We call these absorption vertices. Two red strands are different:
they may fuse through black labels,
\[
 \fuse{m}{m}{x}:m\otimes m\longrightarrow x,
 \qquad
 \splitv{m}{m}{x}:x\longrightarrow m\otimes m.
\]
The black label \(x\) on the internal edge is the channel.

The literal TY rule keeps every black channel once:
\[
 s\otimes m\cong m\cong m\otimes s,
 \qquad
 m\otimes m\cong\bigoplus_{x\in S}x,
 \qquad
 \id_{m\otimes m}=\sum_{x\in S}\channel{m}{m}{x}.
\]
Our main question is whether different fusion trees for these rules can be related by invertible reassociation maps.

\subsection{The full-channel obstruction}\label{sec:semigroup-obstruction}

We start with our two monoids of order two.

\begin{example}[$\Z/2\Z$ and the idempotent monoid]
\label{ex:idempotent-obstruction}
First let \(G_2=\{1,g\}\) with \(g^2=1\). For the word \(gmm\), both
bracketings contain one copy of each simple. At roots \(1\) and \(g\)
the fusion trees are
\[
 \lefttree{g}{m}{m}{m}{1}
 \quad\longleftrightarrow\quad
 \righttree{g}{m}{m}{g}{1},
 \qquad
 \lefttree{g}{m}{m}{m}{g}
 \quad\longleftrightarrow\quad
 \righttree{g}{m}{m}{1}{g}.
\]
Thus left multiplication by \(g\) merely swaps the two red-red channels
\(1\) and \(g\).

Now let \(E_2=\{1,e\}\) with \(e^2=e\). The left bracketing still has one
tree at each root, but on the right there is no root-\(1\) tree and there
are two root-\(e\) trees:
\begin{gather*}
 \lefttree{e}{m}{m}{m}{1}
 \qquad\text{versus}\quad 0,\qquad
 \lefttree{e}{m}{m}{m}{e}
 \qquad\text{versus}\quad
 \righttree{e}{m}{m}{1}{e},
 \quad
 \righttree{e}{m}{m}{e}{e}.
\\
 (e\otimes m)\otimes m\cong1\oplus e,
 \qquad
 e\otimes(m\otimes m)\cong e\oplus e.
\end{gather*}
Hence no invertible change of fusion tree can exist for the idempotent
monoid.
\end{example}

Recall that $S$ is a nonempty finite semigroup.

\begin{theorem}[The full channel rule forces a group]\label{thm:semigroup-obstruction}
Suppose a semisimple $k$-linear category has pairwise nonisomorphic
simple objects $S\sqcup\{m\}$ and an additive tensor product such that
the full subcategory on $S$ is semigroupal and
\[
 s\otimes t=st,\qquad s\otimes m=m=m\otimes s,\qquad
 m\otimes m=\bigoplus_{x\in S}x.
\]
If there are invertible reassociation maps for the triples $smm$ and
$mms$, then $S$ is a group.
\end{theorem}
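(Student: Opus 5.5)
Compare multiplicities of simples in the two bracketings, exactly as in \autoref{ex:idempotent-obstruction}. An invertible reassociation $(s\otimes m)\otimes m\cong s\otimes(m\otimes m)$ is an isomorphism in a semisimple category, so the multiplicity of each simple $t\in S$ must agree on both sides. We have
\[
 (s\otimes m)\otimes m=m\otimes m=\bigoplus_{x\in S}x,
 \qquad
 s\otimes(m\otimes m)=\bigoplus_{x\in S}sx,
\]
using additivity of the tensor product. The left side contains each $t\in S$ exactly once. Hence the map $x\mapsto sx$ must hit every $t\in S$ exactly once, i.e.\ left multiplication $L_s:S\to S$ is a bijection for every $s$. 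Symmetrically, the triple $mms$ gives $(m\otimes m)\otimes s=\bigoplus_x xs$ versus $m\otimes(m\otimes s)=m\otimes m=\bigoplus_x x$, so right multiplication $R_s$ is a bijection for every $s$.

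It remains to show that a finite nonempty semigroup in which all left and right translations are bijective is a group. This is the standard argument, which I would spell out as follows.
\begin{itemize}
\item Pick $a\in S$. Surjectivity of $L_a$ gives $e$ with $ae=a$.
\item For any $t$, write $t=ya$ by surjectivity of $R_a$. Then $te=yae=ya=t$, so $e$ is a right identity.
\item Similarly, surjectivity of $R_a$ gives $f$ with $fa=a$, and the same argument shows $f$ is a left identity.
\item Then $f=fe=e$ is a two-sided identity.
\item Inverses: surjectivity of $L_s$ and $R_s$ give $s'$ with $ss'=e$ and $s''$ with $s''s=e$. Then $s''=s''ss'=s'$.
\end{itemize}
Hence $S$ is a group.

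The only point needing care is the first step. One has to justify that ``$m\otimes m=\bigoplus_x x$'' and additivity let us compute $s\otimes(m\otimes m)$ as $\bigoplus_x s\otimes x=\bigoplus_x sx$ up to isomorphism, and that an invertible reassociation forces equality of multiplicities by Krull--Schmidt/semisimplicity (Hom into each simple $k_t$ has equal dimension). No coherence such as the pentagon is needed; only the existence of the isomorphisms is used. So I expect no genuine obstacle beyond bookkeeping. Note also that finiteness is not needed once both translations are bijective, though it would also let injectivity alone suffice.
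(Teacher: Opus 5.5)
Your proposal is correct and follows the paper's proof. Both arguments compare simple multiplicities in the two bracketings of $smm$ and $mms$ to make every left and right translation bijective, and then run the elementary identity-and-inverse argument. The only cosmetic difference is in locating the identity: the paper gets $ex=x$ from $se=s$ by cancellation in $s(ex)=sx$, whereas you get a right identity from $ae=a$ via surjectivity of $R_a$; both work.
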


\begin{proof}
Fix $s,t\in S$. In the two bracketings of $smm$, the root-$t$ trees are
\[
 \underbrace{\lefttree{s}{m}{m}{m}{t}}_{\text{one channel}}
 \qquad\longleftrightarrow\qquad
 \underbrace{\righttree{s}{m}{m}{x}{t}}_
 {\text{one for each }x\text{ with }sx=t}.
\]
Indeed,
\[
 (s\otimes m)\otimes m\cong\bigoplus_{x\in S}x,
 \qquad
 s\otimes(m\otimes m)\cong\bigoplus_{x\in S}sx.
\]
An isomorphism preserves every simple multiplicity. Thus
$|\{x:sx=t\}|=1$ for all $s,t$, so every left translation is bijective.
The same argument for $mms$ makes every right translation bijective.

Choose $s$ and solve $se=s$ and $fs=s$.
From $s(ex)=(se)x=sx$ and $(xf)s=x(fs)=xs$, cancellation gives
$ex=x=xf$ for all $x$. Hence $e=f$ is a two-sided identity.
Solve $xy=e$ and $zx=e$; then $z=z(xy)=(zx)y=y$, so every $x$
has a two-sided inverse.
\end{proof}

\subsection{TY extensions over a semigroup}\label{sec:unitization}

The following is a main concept of this paper.

\begin{definition}
A \emph{TY extension over $S$} consists of a semisimple $k$-linear
category with pairwise nonisomorphic simple objects $S\sqcup\{m\}$ and
additive tensor product such that the full subcategory on $S$ is
semigroupal,
\[
 s\otimes t=st,\qquad s\otimes m\cong m\cong m\otimes s,
 \qquad
 m\otimes m\cong\bigoplus_{x\in S} n_xx,
\]
with $n_x\in\{0,1\}$ not all zero, together with invertible
reassociation maps involving $m$ satisfying the pentagon.
If $S$ is a monoid, we require its identity to be the tensor unit.
\end{definition}

\begin{remark}
\autoref{lem:stationary-channels} below determines the support of $m^2$ and its possible
multiplicities in the case $n_x\in\Z_{\ge0}$ instead of $n_x\in\{0,1\}$. We however decided to study the ``truly semigroupal'' version with $n_x\in\{0,1\}$ instead of the more general $n_x\in\Z_{\ge0}$ version.
\end{remark}

We first determine which black labels can occur in $m^2$. We then classify the local TY data and construct their extension to $S$.

The $smm$ and $mms$ reassociation maps preserve simple multiplicities,
so
\begin{equation}\label{eq:stationary-channels}
 n_t=\sum_{sx=t}n_x=\sum_{xs=t}n_x
 \qquad(s,t\in S).
\end{equation}
A nonempty subset $A\subseteq S$ is a two-sided ideal if $SA\cup AS\subseteq A$. If it is itself a group, it is called a group ideal; see, e.g., \cite{ClPr-semigroups}.

\begin{lemma}\label{lem:stationary-channels}
The nonzero solutions of \autoref{eq:stationary-channels} are exactly
\[
n_x=d\,\delta_{x\in A},
\]
where $d\ge1$ and $A$ is a group ideal of $S$.
\end{lemma}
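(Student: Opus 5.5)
We need to show that a nonzero function $n:S\to\Z_{\ge0}$ satisfying \autoref{eq:stationary-channels} is supported on a group ideal, with constant value there. The converse, that $n=d\,\delta_A$ solves the equations, is direct. For $s\in S$, left translation $x\mapsto sx$ maps $A$ into $A$, since $A$ is an ideal. It is also injective on $A$: if $sx=sy$ with $x,y\in A$, multiply by $e$ (the identity of $A$) and use $se\in A$, giving $(se)x=(se)y$, and cancel in the group $A$. So it is a bijection $A\to A$. Hence for $t\in A$ there is exactly one $x\in A$ with $sx=t$, and for $t\notin A$ there is none. This gives $\sum_{sx=t}n_x=d\,\delta_{t\in A}$, and the right-hand equation is symmetric.

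For the forward direction, let $A=\{x:n_x>0\}$, which is nonempty. The plan has four steps.

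\textbf{Step 1: $A$ is an ideal.} If $x\in A$ and $s\in S$, then $n_{sx}\ge n_x>0$, since $n_x$ occurs in the sum for $t=sx$. So $SA\subseteq A$, and symmetrically $AS\subseteq A$.

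\textbf{Step 2: each left translation permutes $A$ and preserves $n$.} Summing the left equation over $t\in S$ gives $\sum_t n_t=\sum_x n_x$ trivially, so I will instead sum over $t\in A$. For fixed $s$, the map $x\mapsto sx$ sends $A$ into $A$ by Step 1. The equation $n_t=\sum_{x\in A,\,sx=t}n_x$ for $t\in A$ shows that every $t\in A$ has a preimage in $A$. So $x\mapsto sx$ is a surjection $A\to A$ of a finite set, hence a bijection. The equation then reads $n_{sx}=n_x$. Likewise, right translations are bijections of $A$ preserving $n$.

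\textbf{Step 3: $A$ is a group.} Restricted to $A$, left and right multiplication by elements of $A$ are bijections. So $A$ is a finite semigroup with left and right cancellation and solvability. Repeating the argument in the proof of \autoref{thm:semigroup-obstruction} verbatim gives an identity and inverses. Hence $A$ is a group ideal.

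\textbf{Step 4: $n$ is constant on $A$.} For $x,y\in A$, take $s=yx^{-1}\in A$. Then $sx=y$, and Step 2 gives $n_y=n_x=:d\ge1$.

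The only delicate point is Step 2: one must extract bijectivity from the equation itself rather than from a global counting identity. Surjectivity onto $A$ comes from positivity of $n_t$, and finiteness then gives bijectivity.
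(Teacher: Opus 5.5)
Your proof is correct and follows essentially the same route as the paper: positivity gives $sA=A=As$, finiteness makes every translation a permutation of $A$, the translation argument of \autoref{thm:semigroup-obstruction} makes $A$ a group, and transitivity of its translations forces a constant multiplicity. Your converse, which cancels after inserting $e$, is the paper's observation $sx=(se)x$ with $se\in A$.
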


\begin{proof}
Let $A=\{x:n_x>0\}$. Positivity in
\autoref{eq:stationary-channels} gives $sA=A=As$. Since $S$ is finite,
left and right multiplication by any $s\in S$ therefore permute $A$.
The translation argument of \autoref{thm:semigroup-obstruction},
applied to $A$, shows that $A$ is a group. Its own left translations
are transitive, so all positive multiplicities $n_x$ are equal.

Conversely, let $e$ be the identity of a group ideal $A$. Then
$se=ese=es$, and for $x\in A$,
\[
 sx=(se)x,\qquad xs=x(es).
\]
Thus every left and right translation restricts to a permutation of
$A$, so constant multiplicity on $A$ satisfies
\autoref{eq:stationary-channels}.
\end{proof}

Let $e$ be the identity of the group ideal $A$. Then
\[
 \phi:S\longrightarrow A,\qquad \phi(s)=se=es,
\]
is a retraction: a semigroup homomorphism restricting to the identity on $A$.
Moreover,
\[
 sx=\phi(s)x,\qquad xs=x\phi(s)\qquad(x\in A).
\]
Thus an ambient black label acts on every red-red channel only through
its image under $\phi$.

Suppose that $A$ is abelian and carries ordinary TY data
$(\chi,\tau)$: a symmetric nondegenerate bicharacter
\[
 \chi:A\times A\longrightarrow k^\times
\]
and a scalar $\tau\in k^\times$ with $|A|\tau^2=1$.
These local data and their classification are derived in \autoref{sec:classification}.
The ordinary TY calculus $\TY(A,\chi,\tau)$ on $A$ then extends to the ambient semigroup
by replacing each black input $s$ in a recoupling coefficient by
$\phi(s)$, while keeping $s$ itself on the boundary.

\begin{proposition}[Extension through a group ideal]
\label{prop:group-ideal-extension}
Let $S$ be a finite semigroup with an abelian group ideal $A$, and
choose TY data $(\chi,\tau)$ on $A$. Then there is a TY extension over
$S$ with
\begin{equation}\label{eq:group-ideal-rules}
 st=st,\qquad sm=ms=m,\qquad
 m^2=\bigoplus_{x\in A}x.
\end{equation}
The full subcategory on $A\sqcup\{m\}$ is the ordinary TY category
$\TY(A,\chi,\tau)$, and every positive tensor power of $m$ lies in this
subcategory.
\end{proposition}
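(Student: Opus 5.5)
The plan is to build the extension as a pullback of the ordinary TY category along the retraction $\phi$, and to check the axioms by reducing everything to $\TY(A,\chi,\tau)$. The idea is that any coherence check involving $m$ computes the same scalars as a check in the TY category, with each black label $s$ replaced by $\phi(s)$.

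\textbf{Construction.} Take simple objects $S\sqcup\{m\}$ with the fusion rules \autoref{eq:group-ideal-rules}. On purely black words use $\Vec_S^{\omega}$ with $\omega$ the pullback $\phi^*\omega_{\mathrm{TY}}$. For the ordinary TY category, $\omega_{\mathrm{TY}}$ on $A^3$ is trivial, so we take $\omega=1$ on $S$.

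For words containing $m$, define every associator coefficient by applying $\phi$ to the black labels in the TY formulas:
\begin{itemize}
\item on $s\,m\,t$, use $\chi(\phi(s),\phi(t))$;
\item on $m\,s\,m$ at channel $x$, use $\chi(\phi(s),x)$;
\item on $m\,m\,m$, use $\tau\chi(x,y)^{-1}$;
\item the remaining $smm$, $mms$, $stm$, \dots{} coefficients are trivial, as in TY.
\end{itemize}
Channels $x,y$ always lie in $A$. This is consistent because $sx=\phi(s)x$ and $xs=x\phi(s)$ for $x\in A$, so every tree's channel labels match. In particular, the $smm$ recoupling sends channel $x$ to channel $\phi(s)x$, a bijection of $A$.

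\textbf{Key steps, in order.}
\begin{enumerate}
\item Show that multiplicities agree across bracketings. This follows from \autoref{lem:stationary-channels} together with $sA=A=As$, so the reassociation maps are invertible.
\item Verify the pentagons. For a four-letter word, classify it by the positions of the $m$'s; the pentagon is only nontrivial when $m$ appears.
\item Verify the unit axioms when $S$ is a monoid. All coefficients are normalized at $s=1$, because $\phi(1)=e$ and $\chi(e,-)=1$.
\end{enumerate}

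\textbf{Main obstacle: the pentagons.} I expect this to be the main work. The strategy is to observe that every pentagon coefficient is a function of the images under $\phi$ of the black letters and of the channel labels. The only place the semigroup structure enters is through products such as $st$ or $s\,x$, and $\phi$ is a homomorphism with $\phi(s)x=sx$. Hence each $S$-pentagon is literally the corresponding TY pentagon for the word obtained by replacing $s\mapsto\phi(s)\in A$. Those TY pentagons hold by the TY classification (\autoref{sec:classification}; equivalently \cite{TaYa-fusion-rules}, or in arbitrary characteristic \cite{Li-generalized-TY}).

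The subtle point is to check that no pentagon uses an ambient product not factoring through $\phi$. Purely black pentagons use $\omega=1$ and hold trivially. In the mixed case, an $m$ forces every adjacent product into $A$, except for products like $st$ in a word such as $stmm$. There the relevant coefficient depends only on $\phi(st)=\phi(s)\phi(t)$, since coefficients are bicharacters in $\phi$. I would run this check case by case over the positions of the $m$'s, using the pentagon pictures \autoref{eq:semigroup-pentagon} with red edges.

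\textbf{The final claims.}
\begin{itemize}
\item The full subcategory on $A\sqcup\{m\}$ is $\TY(A,\chi,\tau)$, because $\phi|_A=\id$, so the coefficients there are literally the TY ones. It is closed under $\otimes$, since $A$ is an ideal and $m^2\in\Add(A)$.
\item Positive tensor powers of $m$ lie in it. By induction, $m^{\otimes n}$ decomposes into copies of $A$ or of $m$, using $m\otimes A=m$ and $m\otimes m\in\Add(A)$.
\end{itemize}
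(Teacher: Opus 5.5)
Your proposal is correct and follows essentially the same route as the paper. The paper takes $\omega=1$ on black triples and uses the TY reassociation coefficients with every ambient black input $s$ replaced by $\phi(s)$. The identities $\phi(st)=\phi(s)\phi(t)$, $sx=\phi(s)x$ and $xs=x\phi(s)$ make the admissible channels match, so each mixed pentagon becomes the corresponding $\TY(A,\chi,\tau)$ pentagon. $\phi(1)=e$ handles the unit. Your case-by-case tracking of intermediate ambient labels such as $st$ only spells out what the paper asserts in one line.
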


\begin{proof}
Let $e$ be the identity of $A$ and put $\phi(s)=se=es$. Since
\[
 \phi(st)=ste=(se)(te)=\phi(s)\phi(t)
\]
and
\[
 sx=\phi(s)x,\qquad xs=x\phi(s)\qquad(x\in A),
\]
all admissible channels involving an ambient black label $s$ are the
same as those for the group element $\phi(s)$.

Use the ordinary TY reassociation maps on $A$, replacing every ambient
black input $s$ by $\phi(s)$. Each mixed pentagon then becomes the
corresponding TY pentagon after applying $\phi$, while the all-black
pentagons are those of the black calculus. Hence these maps define the
required TY extension.

Restricting to $A\sqcup\{m\}$ recovers $\TY(A,\chi,\tau)$, so every
positive tensor power of $m$ remains in this full subcategory.

If $S$ is a monoid, then $\phi(1)=e$; the corresponding unit
associators are identities, so the identity of $S$ is the tensor unit.
\end{proof}

\begin{example}[The idempotent monoid: keep one channel]
\label{ex:idempotent-restricted}
Let \(E_2=\{1,e\}\) with \(e^2=e\). Its group ideal is
\(A=\{e\}\), whose identity is \(e\), and
\[
 \phi(1)=\phi(e)=e.
\]
Thus
\[
 e^2=e,\qquad em=me=m,\qquad m^2=e,
\]
or, diagrammatically,
\[
 \wire{m}\,\wire{m}=\channel{m}{m}{e}.
\]
There is only one red-red channel. The multiplicity mismatch of
\autoref{ex:idempotent-obstruction} has disappeared: both bracketings
have the same simple summands.
\end{example}

\begin{example}[Two copies of the group]
Let
\[
 S=A\times\{0,1\},
 \qquad
 (x,i)(y,j)=(xy,\min(i,j)).
\]
Then $A\times\{0\}$ is a group ideal and
\[
 \phi(x,i)=(x,0).
\]
Thus the two copies of a black label act identically on red channels,
although they remain distinct labels on the boundary. For trivial
$A$, this is the idempotent monoid above.
\end{example}

A useful special case is obtained by adjoining a new global unit to an
ordinary TY category. Let $\mathcal C=\TY(A,\chi,\tau)$, with group
identity $e$, and put
\[
 M=A\sqcup\{1\},
\]
where $1$ is a new identity and the product on $A$ is unchanged.
Thus $A$ is a group ideal in $M$, but its identity $e$ is no longer
the global identity.

Put
\[
 \mathcal C^+=\Vec\oplus\mathcal C,
\]
with simples $\{1\}\sqcup A\sqcup\{m\}$ and no morphisms between $1$
and the old objects. Extend the tensor product by
$1\otimes X=X=X\otimes1$.

\begin{proposition}\label{prop:unitization}
The category $\mathcal C^+$ is a TY extension over $M$, with
\[
 xm=mx=m\quad(x\in M),\qquad
 m^2=\bigoplus_{a\in A}a.
\]
Its tensor unit is the new object $1$. Every positive tensor power of
$m$ lies in $\mathcal C$, and hence has the same endomorphism algebra
in $\mathcal C^+$ as in $\mathcal C$.
\end{proposition}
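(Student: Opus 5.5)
The plan is to deduce the statement from \autoref{prop:group-ideal-extension} applied to $S=M=A\sqcup\{1\}$, and then identify the resulting category with $\mathcal C^+$. First I check that $A$ is a group ideal of $M$: since $1$ is a new identity and the product on $A$ is unchanged, $MA\cup AM\subseteq A$ and $A$ is a group. The retraction is $\phi(1)=e$ and $\phi(a)=a$ for $a\in A$.

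\autoref{prop:group-ideal-extension} then gives a TY extension over $M$ with $xm=mx=m$ for all $x\in M$ and $m^2=\bigoplus_{a\in A}a$. Its full subcategory on $A\sqcup\{m\}$ is $\TY(A,\chi,\tau)=\mathcal C$. Since $M$ is a monoid, that proposition also makes $1$ the tensor unit, with unit associators equal to identities.

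Next I compare this extension with $\mathcal C^+=\Vec\oplus\mathcal C$ as defined. Both have simples $\{1\}\sqcup A\sqcup\{m\}$, the same fusion rules, and the same underlying additive category; in particular there are no morphisms between $1$ and the old simples. The recoupling coefficients agree. Triples with no $1$ use the TY associators of $\mathcal C$. Triples containing $1$ have associators obtained by substituting $\phi(1)=e$ into the TY coefficients, and these are trivial because the TY associators are normalized at $e$ (the unit of $\mathcal C$). They are also exactly the identities demanded by extending the tensor product by $1\otimes X=X=X\otimes1$. So $\mathcal C^+$ is this extension. The pentagons involving $1$ reduce to unit triangle/pentagon identities, which hold because all such associators are identities.

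Finally, the red-strand assertion follows because $m^{\otimes n}$ for $n\ge1$ decomposes into simples of $A\sqcup\{m\}$, none of which is $1$. There are no morphisms between $1$ and $\mathcal C$, so $\End_{\mathcal C^+}(m^{\otimes n})=\End_{\mathcal C}(m^{\otimes n})$.

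The main obstacle is the normalization step. I need the chosen TY associators of $\mathcal C$ to be trivial whenever an argument is $e$, so that substituting $\phi(1)=e$ gives identities. This matches the standard TY formulas, where the associator depends on group labels only through the cocycle-free data and through $\chi$, and $\chi(e,\cdot)=1$. If a nonnormalized representative were used, I would first gauge it to a normalized one using \autoref{lem:black-gauge}.
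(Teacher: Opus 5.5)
Your proposal is correct, and once the detour through \autoref{prop:group-ideal-extension} is unwound it is the paper's argument. Keep the TY associators of $\mathcal C$ and use identities whenever an input is the new unit $1$, so each pentagon is either an old TY pentagon or a tautology, and positive powers of $m$ never leave the full subcategory $\mathcal C$. The normalization step you flag as the main obstacle is automatic: the normal basis of \autoref{lem:cl-normal} already makes every unit vertex and every coefficient with input $e$ trivial. (If a gauge were ever needed, the relevant one would be the vertex rescaling of \autoref{prop:gauge}, not \autoref{lem:black-gauge}.)
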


\begin{proof}
Use the old TY reassociation maps on $\mathcal C$ and identity maps
whenever an input is the new unit. The pentagon is then either an old
TY pentagon or reduces to identities. Positive tensor powers of $m$
contain only objects of $\mathcal C$.
\end{proof}

In this example the old unit $e$ remains visible:
\[
 \wire{m}\,\wire{m}
   =\sum_{a\in A}\channel{m}{m}{a},
 \qquad
 \Delta_e=\splitv{m}{m}{e}:e\longrightarrow mm,
 \qquad
 p_e=\fuse{m}{m}{e}:mm\longrightarrow e.
\]
Thus $p_e\Delta_e=\id_e$, but the $e$-edge cannot be erased: the global
tensor unit is now $1$. This distinction will become important when
strands are bent or closed.

\subsection{A directed alternative}\label{sec:skew-calculus}

There is another way around \autoref{thm:semigroup-obstruction}: keep
all red-red channels, but drop invertibility of reassociation. The
pentagon still makes sense for directed maps
\[
 \alpha_{X,Y,Z}:(XY)Z\longrightarrow X(YZ).
\]
Such directed associators satisfying the pentagon, without unit data, define skew semimonoidal structures; see e.g.
\cite[Section 7]{LaSt-skew-monoidal}. We record the smallest example,
but will not pursue this direction further.

\begin{proposition}[The idempotent monoid with directed reassociation]
\label{prop:skew-idempotent}
Let \(E_2=\{1,e\}\) with \(e^2=e\). On the split semisimple \(k\)-linear
category with simples \(1,e,m\) and rules
\[
 1X=X1=X,\qquad em=me=m,\qquad m^2=1\oplus e,
\]
there are directed reassociation maps satisfying the pentagon.

For \(\eta\in k\) with \(\eta^2=1\), take all maps involving at most one
\(m\) to have coefficient \(1\), and put
\[
 \Pi=\operatorname{diag}(0,1)
\]
in the channel order \(1,e\). The remaining blocks may be chosen as
\[
\begin{array}{c|cc}
 &\text{root }1&\text{root }e\\ \hline
\alpha_{e,m,m}&k\xrightarrow{0}0
 &k\xrightarrow{\binom{0}{1}}k^2\\[1ex]
\alpha_{m,m,e}&0\xrightarrow{0}k
 &k^2\xrightarrow{(0\;1)}k\\[1ex]
\alpha_{m,e,m}&0&1,
\end{array}
\]
together with
\[
 \alpha_{1,m,m}=\alpha_{m,1,m}=\alpha_{m,m,1}=\Pi,
 \qquad
 \alpha_{m,m,m}=\eta\Pi.
\]
Thus reassociation simply kills every \(1\)-channel created by an
\(mm\)-vertex. On the surviving \(e\)-channels the pentagon reduces to
\(\eta^2=1\).
\end{proposition}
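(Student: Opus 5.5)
The plan is a direct verification: all associators are diagonal in the channel bases, so the pentagon is checked one simple summand at a time. I fix bases once and for all. For every triple of simples, both bracketings decompose into simple summands indexed by the channel labels of the fusion trees. I then read each associator as a block matrix from left-tree channels to right-tree channels at each root.

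The first step is to record the simple multiplicities, to see that the proposed maps land in the right places. With $m^2=1\oplus e$, $em=m$ and $1m=m$, we get, for instance,
\[
(e m)m\cong m^2\cong 1\oplus e,
\qquad
e(mm)\cong e\oplus e .
\]
So $\alpha_{e,m,m}$ has a $k\to 0$ block at root $1$ and a $k\to k^2$ block at root $e$, matching the table. Similarly $(mm)e\cong e\oplus e$ and $m(me)\cong 1\oplus e$, which matches $\alpha_{m,m,e}$. For $mem$ both sides are $m^2$, but on the left only the channel $e$ survives: $(me)m=mm$ fused through the $me$-vertex, and then $m\otimes m$ again has both channels. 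I would check carefully which channel survives in each of $mem$, $mmm$, and the triples with a $1$. These fix exactly which entries can be nonzero. All-black triples use the trivial cocycle, and triples with a single $m$ use coefficient $1$.

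Next I list the pentagons for words of length four, grouped by the number of $m$'s. Words with no $m$ are the black pentagon for $E_2$ with $\omega=1$. Words with exactly one $m$ involve only coefficient-$1$ maps and absorption vertices, so both routes are the identity on the single summand. Words with two, three or four $m$'s are the real content. For each I compose the two routes as matrices in channel bases. The guiding observation is the one stated in the proposition: every map containing an $mm$-vertex is $\Pi$ or a row or column of it, so each route is $0$ on any summand whose path passes through a $1$-channel. It is the identity, up to a power of $\eta$, on the path through $e$-channels only. So I would show that, in every pentagon, the composite along each route equals the projection onto "all internal $mm$-channels equal $e$", times $\eta^{j}$. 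Here $j$ counts the $\alpha_{m,m,m}$ factors.

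The main obstacle is this bookkeeping in the two- and three-$m$ pentagons. There one route may use $\alpha_{e,m,m}$ or $\alpha_{m,m,e}$ while the other uses $\Pi$-type maps. I must confirm that these kill the same summands and that no nonzero entry appears on one side only. I would handle this by checking that $\binom01$ and $(0\;1)$ are exactly $\Pi$ composed with the identification of the surviving $e$-channel. For the four-$m$ word $mmmm$, each route contains $\alpha_{m,m,m}$ twice on one side (the three-step route) and once at most on the other. Tracking this shows that the two routes differ by a factor $\eta^2$ on the surviving summand, so the pentagon is equivalent to $\eta^2=1$.
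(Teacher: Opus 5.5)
Your proposal is correct and follows the paper's own proof: a direct check that both routes of every pentagon vanish off the all-$e$ channel paths and agree there up to powers of $\eta$, with $mmmm$ the only nontrivial case, giving $\eta^2\Pi$ versus $\Pi$. One small correction to your count: the two-step route of the $mmmm$ pentagon contains no $\alpha_{m,m,m}$ at all, since it uses only $\alpha_{x,m,m}$ and $\alpha_{m,m,x}$ for $x\in\{1,e\}$; in each three-$m$ pentagon, both routes contain exactly one $\alpha_{m,m,m}$, so $\eta$ cancels there.
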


\begin{proof}
Every pentagon component outside the surviving \(e\)-channels is zero on
both routes. On the surviving channels all coefficients are \(1\) except
for the four-red pentagon, whose two routes give \(\eta^2\Pi\) and
\(\Pi\). Hence the pentagon is equivalent to \(\eta^2=1\).

For the unit claim, the skew unit axioms would force the reassociation
maps with an input \(1\) to be identities, contradicting the projection
\(\alpha_{1,m,m}=\Pi\). The two displayed decompositions follow directly
from the fusion rules.
\end{proof}

The structure in \autoref{prop:skew-idempotent} is genuinely directed. It has no left skew-monoidal
structure with unit object \(1\), and different parenthesizations need
not even give isomorphic objects. For example,
\[
 \big((mm)m\big)m\cong2\cdot1\oplus2\cdot e,
 \qquad
 (mm)(mm)\cong1\oplus3\cdot e.
\]

Thus the full-channel rule can be retained at the price of
noninvertible reassociation. We will not use this directed structure
below.

\section{Local recoupling}\label{sec:classification}

\aliascntresetthe{theorem}
\aliascntresetthe{lemma}
\aliascntresetthe{proposition}
\aliascntresetthe{corollary}
\aliascntresetthe{definition}
\aliascntresetthe{example}
\aliascntresetthe{remark}

\newcommand{\CS}{\mathcal C_S}
\newcommand{\CA}{\mathcal C_A}
\newcommand{\DS}{\mathcal D(S;A,\chi,\tau)}

\autoref{sec:unitization} showed that, in multiplicity one, the
red-red channels form a group ideal $A\subseteq S$. It also showed
that once the recoupling data on $A\sqcup\{m\}$ are known, they extend
to the ambient semigroup through
\[
 \phi:S\longrightarrow A,\qquad \phi(s)=se=es,
\]
where $e$ is the identity of $A$.

We therefore first solve the local problem on $A\sqcup\{m\}$. This is
essentially the original Tambara--Yamagami calculation
\cite{TaYa-fusion-rules}, now written in our diagrammatic language and
over our ground field. To derive the constraints rather than assume
them, we temporarily write the group as $G$, with identity $1$, and do
not assume that $G$ is abelian. The pentagon will force $G$ to be
abelian and determine the usual TY data $(\chi,\tau)$.
We then return to the group ideal $A$ and extend these coefficients to
the full semigroup through $\phi$.

For the usual TY calculus, the arbitrary-field local
classification is already contained in
\cite[Example 4.6 and Corollary 4.10]{Li-generalized-TY}; the coefficients below are in $k$.
We rederive it in the diagrammatic conventions needed for the
semigroup extension.

\subsection{Vertex bases and matrix conventions}\label{sec:gauge}

Let $G$ be a finite group with $|G|>1$. Suppose that a finite semisimple
$k$-linear monoidal category $\mathcal C$ has simple labels
$\mathcal L_G=G\sqcup\{m\}$ and fusion rules in \autoref{eq:fusion}, with $G$ in place
of $A$ and its identity as tensor unit. 

\begin{remark}
We assume neither rigidity
nor abelianity of $G$.
\end{remark}

Choose one representative of each isomorphism class and take the
unit maps to be identities. The associators are still to be determined;
this categorical model is not assumed strict.

\begin{remark}
Before introducing the notation, it is useful to keep the two basic
pictures in mind. In the strictification, the changes of tree are
\begin{equation*}
\begin{aligned}
 \leftsplit{r}{s}{t}{e}{u}
 =\sum_f F^{rst;u}_{f,e}\,\rightsplit{r}{s}{t}{f}{u},\quad
 \lefttree{r}{s}{t}{e}{u}
 =\sum_f (F^{rst;u})^{-1}_{e,f}\,\righttree{r}{s}{t}{f}{u}.
\end{aligned}
\end{equation*}
Thus reversing the trees interchanges fusion and splitting and inverts
the change-of-basis matrix.

Changing a vertex basis is drawn as
\begin{equation*}
 (p_{rs}^t)'=z_{rs}^t\fuse{r}{s}{t},\qquad
 (i_{rs}^t)'=(z_{rs}^t)^{-1}\splitv{r}{s}{t}.
\end{equation*}
The diagrammatic presentation is proved later; for now these pictures (and similar ones below)
simply record the conventions used in the calculation below.
\end{remark}

The vertices choose the individual summands of a tensor product.
For $r,s\in\mathcal L_G$, let $\Out(r,s)$ be the
set of summands of $r\otimes s$. Each summand has multiplicity one.
Choose projections and inclusions
\[
 p_{rs}^{t}:r\otimes s\longrightarrow t,\qquad
 i_{rs}^{t}:t\longrightarrow r\otimes s
 \qquad(t\in\Out\big(r,s\big))
\]
such that
\begin{equation}\label{eq:resolution}
 p_{rs}^{t}i_{rs}^{u}=\delta_{t,u}\id_t,
 \qquad
 \sum_{t\in\Out(r,s)}i_{rs}^{t}p_{rs}^{t}=\id_{rs}.
\end{equation}
In the first formula a term with $t\ne u$ means the zero map $u\to t$.

Set $p_{mm}^{a}=p_a$ and $i_{mm}^{a}=i_a$.
When either tensor factor is $1$, take
$p_{1s}^{s},i_{1s}^{s},p_{s1}^{s},i_{s1}^{s}$ to be identities.
The maps $p_1:mm\to1$ and $i_1:1\to mm$ remain nontrivial vertices.

For three inputs, the left and right trees give two decompositions.
Let $P_e^L=p_{et}^{u}(p_{rs}^{e}\otimes\id_t)$ and
$P_f^R=p_{rf}^{u}(\id_r\otimes p_{st}^{f})$. Let $I_e^L,I_f^R$
be the reverse composites of the chosen inclusions. Our convention for the associator matrix is
\begin{equation}\label{eq:F-convention}
 F^{rst;u}_{f,e}\id_u=P_f^R\,\alpha_{r,s,t}\,I_e^L,
 \qquad \alpha_{r,s,t}:(rs)t\longrightarrow r(st).
\end{equation}
Thus columns correspond to left channels and rows to right channels.
The associator and its inverse are
\begin{equation}\label{eq:alpha-reconstruction}
 \alpha_{r,s,t}=\sum_{u,e,f}F^{rst;u}_{f,e}I_f^RP_e^L,
 \qquad
 \alpha_{r,s,t}^{-1}=\sum_{u,e,f}(F^{rst;u})^{-1}_{e,f}I_e^LP_f^R.
\end{equation}
After transporting \autoref{eq:alpha-reconstruction} to the word strictification, both bracketings have
the same boundary word. In these pictures the associator is recorded
by a change of tree:
\begin{equation}\label{eq:two-tree-moves}
 I_e^L=\sum_f F^{rst;u}_{f,e}I_f^R,
 \qquad
 P_e^L=\sum_f\bigl((F^{rst;u})^{-1}\bigr)_{e,f}P_f^R.
\end{equation}

\begin{lemma}[Change of vertex bases]\label{prop:gauge}
Choose $z_{rs}^t\in k^\times$ for every admissible vertex, with
$z_{1s}^{s}=z_{s1}^{s}=1$. Under
$p_{rs}^t\mapsto z_{rs}^t p_{rs}^t$ and
$i_{rs}^t\mapsto(z_{rs}^t)^{-1}i_{rs}^t$, the coefficients become
\begin{equation}\label{eq:gauge-F}
 (F')^{rst;u}_{f,e}
 =\frac{z_{st}^{f}z_{rf}^{u}}{z_{rs}^{e}z_{et}^{u}}F^{rst;u}_{f,e}.
\end{equation}
\end{lemma}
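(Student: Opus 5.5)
The plan is to substitute the rescaled vertices directly into the defining formula \autoref{eq:F-convention}, keeping the associator $\alpha_{r,s,t}$ fixed, since it is a structure map of $\mathcal C$ and does not depend on the choice of bases. First I will check that the rescaled maps $(p_{rs}^t)'=z_{rs}^tp_{rs}^t$ and $(i_{rs}^t)'=(z_{rs}^t)^{-1}i_{rs}^t$ still satisfy \autoref{eq:resolution}. In both identities each $p$ is paired with an $i$ carrying the same indices, so the factors $z$ and $z^{-1}$ cancel. The normalization $z_{1s}^s=z_{s1}^s=1$ keeps the unit vertices equal to identities, so the new choice is again admissible.

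Next, I will compute the new tree composites. For the left projection,
\[
 (P_e^L)'=(p_{et}^u)'\bigl((p_{rs}^e)'\otimes\id_t\bigr)=z_{et}^uz_{rs}^e\,P_e^L,
\]
using bilinearity of $\otimes$ to pull the scalar out. Likewise $(P_f^R)'=z_{rf}^uz_{st}^f\,P_f^R$. The inclusion composites scale by the reciprocals: $(I_e^L)'=(z_{rs}^ez_{et}^u)^{-1}I_e^L$ and $(I_f^R)'=(z_{st}^fz_{rf}^u)^{-1}I_f^R$.

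Then I will plug these into \autoref{eq:F-convention}:
\[
 (F')^{rst;u}_{f,e}\id_u=(P_f^R)'\,\alpha_{r,s,t}\,(I_e^L)'
 =\frac{z_{st}^fz_{rf}^u}{z_{rs}^ez_{et}^u}\,P_f^R\,\alpha_{r,s,t}\,I_e^L .
\]
This gives \autoref{eq:gauge-F} directly. As a consistency check, the reconstruction \autoref{eq:alpha-reconstruction} with the primed data returns the same $\alpha$, because the scalars cancel against those in $I_f^RP_e^L$.

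There is no real obstacle here. The only point needing care is to be explicit that $\alpha$ itself is unchanged and only the bases move. One should also treat the cases where some vertex involves the unit $1$ correctly: there the $z$ equals $1$, so the formula remains consistent with the identity unit maps. The argument parallels \autoref{lem:black-gauge}.
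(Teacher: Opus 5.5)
Your proposal is correct and follows the same route as the paper's proof. You substitute the rescaled vertices into \autoref{eq:F-convention}, read off the factor $z_{st}^{f}z_{rf}^{u}$ from $P_f^R$ and $(z_{rs}^{e}z_{et}^{u})^{-1}$ from $I_e^L$, and note that the scalars cancel in $p_{rs}^t i_{rs}^t$, so \autoref{eq:resolution} is preserved.
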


\begin{proof}
In \autoref{eq:F-convention}, $P_f^R$ contributes
$z_{st}^{f}z_{rf}^{u}$ and $I_e^L$ contributes
$(z_{rs}^{e}z_{et}^{u})^{-1}$. The factors cancel in
$p_{rs}^ti_{rs}^t$, so \autoref{eq:resolution} is preserved.
\end{proof}

\subsection{Normalizing the vertices}

Name the eight coefficient types before choosing the bases:
\begin{equation}\label{eq:cl-raw}
\begin{gathered}
 \omega(a,b,c)=F^{abc;abc},\quad
 u(a,b)=F^{abm;m},\quad v(a,b)=F^{mab;m},\quad
 \chi(a,b)=F^{amb;m},\\
 B(a,x)=F^{amm;x},\quad d(a,x)=F^{mam;x},\quad
 E(a,x)=F^{mma;x},\quad T_{y,x}=F^{mmm;m}_{y,x}.
\end{gathered}
\end{equation}
Here $\omega$ is the black cocycle of \autoref{sec:pointed}
in the chosen vertex bases. The variables $a,b,c,x,y$ lie in $G$,
and we suppress the unique channel indices of scalar entries. Each coefficient other than an entry of $T$ is nonzero.
The matrix $T$ is invertible, but some entries could initially be zero.
It occurs because each bracketing of $mmm$ contains $|G|$ copies of
$m$; every other triple is multiplicity-free. At this stage, $\chi$
is just a function.

For the pentagon, we start at $(((rs)t)u)$ with first two internal
labels $e,f$ and end at $r(s(tu))$ with last two internal labels
$k,l$. If $v$ is the total output, the equation is
\begin{equation}\label{eq:component-pentagon}
 \sum_h F^{stu;l}_{k,h}F^{rhu;v}_{l,f}F^{rst;f}_{h,e}
 =F^{rsk;v}_{l,e}F^{etu;v}_{k,f}.
\end{equation}
The sum is over the possible labels of $s\otimes t$; a coefficient
with inadmissible indices is zero. The three factors on the left
reassociate the first three inputs, the root, and the last three
inputs. The right side passes through $(rs)(tu)$.

Next, recall the absorption vertices
\[
 \fuse{a}{m}{m},\qquad
 \splitv{a}{m}{m},\qquad
 \fuse{m}{a}{m},\qquad
 \splitv{m}{a}{m}.
\]
The pentagon with three black labels and one red label first removes
the freedom in the black cocycle.

\begin{lemma}[Trivializing the black cocycle]
\label{lem:absorption-cocycle}
Choose multiplication and absorption bases, and let
$u(a,b)\in k^\times$ be the coefficient of $\alpha_{a,b,m}$.
Then
\[
 \omega(a,b,c)u(a,bc)u(b,c)=u(ab,c)u(a,b),
 \qquad
 \omega=d(u^{-1}).
\]
Hence rescaling the black vertices by $z(a,b)=u(a,b)$ makes
$\omega=1$ and all coefficients $u(a,b)=1$.
\end{lemma}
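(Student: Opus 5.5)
Specialize the component pentagon \autoref{eq:component-pentagon} to the word $(r,s,t,u)=(a,b,c,m)$. All four inputs are admissible only along unique channels: the internal labels are black products or $m$, and the total output is $v=m$. Hence every entry is a scalar and the sum over $h$ collapses to the single term $h=bc$. The plan is to identify each of the five factors explicitly and then read off the cocycle relation.

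\textbf{Left side.} Start at $((ab)c)m$ with $e=ab$, $f=abc$, and end at $a(b(cm))$ with $k=l=m$. The factors are:
\begin{itemize}
\item[] $F^{bcm;m}=u(b,c)$, reassociating the last three inputs;
\item[] $F^{a,bc,m;m}=u(a,bc)$, reassociating at the root;
\item[] $F^{abc;abc}=\omega(a,b,c)$, reassociating the first three inputs.
\end{itemize}

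\textbf{Right side.} This route passes through $(ab)(cm)$, with factors
\begin{itemize}
\item[] $F^{a,b,m;m}=u(a,b)$, from $F^{rsk;v}$ with $k=m$;
\item[] $F^{ab,c,m;m}=u(ab,c)$, from $F^{etu;v}$.
\end{itemize}

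Equating the two routes gives exactly
\[
 \omega(a,b,c)\,u(a,bc)\,u(b,c)=u(ab,c)\,u(a,b).
\]
This is the same bookkeeping as the fourth pentagon pictured earlier, which has edge labels $\omega,u(a,bc),u(b,c),u(ab,c),u(a,b)$. Rearranging,
\[
\omega(a,b,c)=\frac{u(ab,c)\,u(a,b)}{u(b,c)\,u(a,bc)}=d(u^{-1})(a,b,c)
\]
by \autoref{eq:black-gauge}.

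For the normalization, apply \autoref{prop:gauge} (equivalently \autoref{lem:black-gauge}) with $z_{ab}^{ab}=u(a,b)$ on the black vertices, leaving the absorption vertices unchanged. The formula \autoref{eq:gauge-F} gives
\[
\omega'=\omega\,\frac{z(b,c)\,z(a,bc)}{z(ab,c)\,z(a,b)}=\omega\,d(u)=1.
\]
For $u'(a,b)=F'^{abm;m}$, the channels are $e=ab$ on the left and $f=m$ on the right, so
\[
u'(a,b)=\frac{z_{bm}^m\,z_{am}^m}{z_{ab}^{ab}\,z_{ab,m}^m}\,u(a,b)=u(a,b)^{-1}u(a,b)=1.
\]
The normalization $z_{1s}^s=1$ also needs checking. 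Because units are identities, $u(1,b)=u(a,1)=1$: the unit triangle makes associators with a unit input identities. So the allowed gauge is consistent.

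\textbf{Main obstacle.} The only delicate point is matching conventions. One must confirm that the left and right channel indices in \autoref{eq:F-convention} give $\omega$ in the numerator on the left side rather than its inverse, and that $dz$ has exactly the orientation of \autoref{eq:black-gauge}. I would settle this by checking the sign of a single exponent in the gauge formula against \autoref{lem:black-gauge}, where multiplying by $z$ on merges produces $\omega\,dz$.
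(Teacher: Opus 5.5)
Your proposal is correct and follows the paper's own proof. You read off the $(a,b,c,m)$ pentagon, and your index bookkeeping in \autoref{eq:component-pentagon} reproduces exactly the five edge labels of the paper's picture; you rearrange to $\omega=d(u^{-1})$ and then rescale the black vertices by $z=u$. Your explicit checks via \autoref{eq:gauge-F} that $u'(a,b)=1$, and that $u(1,b)=u(a,1)=1$ so the gauge respects the unit vertices, fill in details the paper leaves implicit, so the hedging in your ``main obstacle'' paragraph is unnecessary.
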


\begin{proof}
The $(a,b,c,m)$ pentagon is
\begin{equation*}
\cpentagon{3}
\end{equation*}
The product along the upper path is
$\omega(a,b,c)u(a,bc)u(b,c)$, while the lower path gives
$u(ab,c)u(a,b)$. Hence
 $\omega(a,b,c)u(a,bc)u(b,c)=u(ab,c)u(a,b)$,
so $\omega=d(u^{-1})$.
Rescaling the black multiplication vertices by $z(a,b)=u(a,b)$ gives
$\omega'=\omega\,du=1$ and $u'(a,b)=1$.
\end{proof}

The remaining rescalings make five of the eight types equal to $1$.

\begin{lemma}[Normal basis]\label{lem:cl-normal}
The vertex bases can be chosen so that
\begin{equation}\label{eq:cl-normal}
 \omega=u=v=B=E=1.
\end{equation}
\end{lemma}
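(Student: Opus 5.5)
We first apply \autoref{lem:absorption-cocycle}: rescaling the black multiplication vertices by $z(a,b)=u(a,b)$ gives $\omega=1$ and $u=1$. The remaining freedom is in the absorption vertices $p_{ma}^m$ and $p_{am}^m$ and the red-red vertices $p_x=p_{mm}^x$. We use it in a fixed order, reading off from \autoref{eq:gauge-F} how each remaining coefficient type transforms. Throughout we keep $z_{ab}^{ab}=1$ and $z_{am}^m=1$, so that $\omega$ and $u$ stay trivial. Then $v(a,b)=F^{mab;m}$ transforms by the factor $z_{mb}^m z_{m,ab}^m/(z_{ma}^m z_{ab}^{ab})$ (with $z_{ab}^{ab}=1$), and $B(a,x)=F^{amm;x}$ by $z_{mm}^{x}z_{a,x}^{ax}/(z_{am}^m z_{mm}^{ax})$. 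There is a caveat to keep in mind: for $B$ the right channel is $x$ and the left channel is $m$, so one must track carefully which vertex labels appear. The same care applies to $E(a,x)=F^{mma;x}$.

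\textbf{Killing $v$.} We use the pentagon with inputs $(m,a,b,c)$. As in \autoref{lem:absorption-cocycle}, this should give
\[
 v(a,b)v(ab,c)\cdot\omega=v(a,bc)v(b,c),
\]
with the orientation to be checked. Since $\omega=1$, this makes $v$ (or $v^{-1}$) a $2$-cocycle with respect to the right-absorption structure, and in fact a $2$-coboundary of a $1$-cochain. More simply, setting $z_{ma}^m:=$ a suitable product of $v$ values works. A cleaner route is to use the $(m,a,b,c)$ pentagon to see that $v(a,b)=w(a)w(b)/w(ab)$ with $w(a)=v(a,\,\cdot\,)$ evaluated at a fixed argument. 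Alternatively, since $G$ is a group, define $z_{ma}^m=v(\ldots)$ inductively via $z_{m,ab}^m=v(a,b)^{-1}z_{ma}^mz_{mb}^m$, which is consistent precisely by the cocycle condition. Concretely: the pentagon says $v$ is a $2$-cocycle of $G$ with trivial action. We then show it is a coboundary by noting that the $(m,a,b,m)$ or $(a,m,b,\ldots)$ pentagons exhibit it as one; if that fails, we use the $(m,m,a,b)$ pentagon, which relates $v$ to $E$ through the red-red channels, where free rescalings $z_{mm}^x$ are available.

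\textbf{Killing $B$ and $E$.} The $(a,b,m,m)$ pentagon, with $u=\omega=1$, should give $B(ab,x)=B(a,bx)B(b,x)$. Fixing the root $x=1$, we set $z_{mm}^{y}:=B(y,1)$, or its inverse, for all $y\in G$. This choice makes $B(a,1)=1$, and the cocycle-type identity then propagates the normalization to $B(a,x)=1$ for all $a,x$. Symmetrically, the $(m,m,a,b)$ pentagon gives $E(x,ab)$-type multiplicativity. The remaining freedom is $z_{ma}^m$, which does not enter $B$, and we use it to normalize $E$.

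This forces a different order from the first plan: use $z_{mm}^x$ for $B$ and $z_{ma}^m$ for $E$. Afterwards, the $(m,a,b,m)$ pentagon, with $B=E=1$, should yield $v(a,b)$ expressed through $d$, or directly force $v=1$. If instead $v$ survives, the leftover freedom $z_{am}^m$, combined with a compensating rescaling of the black vertices so as to keep $\omega$ a coboundary, is used to kill it.

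The main obstacle is this bookkeeping: three families of rescalings ($z_{am}^m$, $z_{ma}^m$, $z_{mm}^x$) must simultaneously trivialize the four types $u$, $v$, $B$, $E$. Each type is multiplicative by its pentagon, and one must verify that the choices do not undo one another. In each case the consistency of an inductive definition over the group $G$ reduces to the corresponding pentagon identity.
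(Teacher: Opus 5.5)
There is a genuine gap in the $E$ and $v$ steps. Your treatment of $\omega$ and $u$ via \autoref{lem:absorption-cocycle}, and of $B$ via the $(a,b,m,m)$ pentagon with the channel rescaling $z_{mm}^y=B(y,1)$, is correct and agrees with the paper. (You use $x$ for the right channel, whereas $B(a,x)=F^{amm;x}$ has $x$ as total output, so your $B(y,1)$ is the paper's $h(y)=B(y,y)$.)

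\textbf{The $E$ step.} Once $z_{mm}^x$ is fixed, the freedom you reserve for $E$ is $z_{mb}^m$. By \eqref{eq:gauge-F} it multiplies $E(b,x)$ by $z_{mb}^m$ alone. So this can kill $E$ only after you prove that $E(b,x)$ does not depend on $x$, and your proposal never does this. The missing input is the $(a,m,m,b)$ pentagon. With $x$ the total output it reads $B(a,xb^{-1})E(b,a^{-1}x)=B(a,x)E(b,x)$. Hence $B=1$ gives $E(b,x)=E(b,1)$, and $z_{mb}^m=E(b,1)^{-1}$ kills $E$. The $(m,m,a,b)$ pentagon you invoke instead gives $E(ab,x)=E(a,xb^{-1})E(b,x)v(a,b)$. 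That is a $v$-twisted multiplicativity in the group argument and says nothing about the $x$-dependence.

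\textbf{The $v$ step.} That same $(m,m,a,b)$ equation is what settles $v$: once $\omega=E=1$ it reads $v(a,b)=1$. So $v$ needs no separate normalization, and this is how the paper concludes. None of your routes for $v$ closes the argument:
\begin{itemize}
\item The $(m,a,b,c)$ pentagon only makes $v$ a $2$-cocycle, and cocycles need not be coboundaries (already $H^2((\Z/2\Z)^2,\C^\times)\neq1$). So the cocycle condition does not make the inductive definition $z_{m,ab}^m=v(a,b)^{-1}z_{ma}^mz_{mb}^m$ consistent.
\item The $(m,a,b,m)$ pentagon shows that $v(a,b)=d(a,x)d(b,x)/d(ab,x)$ is a coboundary, not that $v=1$. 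Removing it would reuse $z_{ma}^m$, which in your final order would disturb $E=1$.
\item The fallback via $z_{am}^m$ is not a valid step. By \eqref{eq:gauge-F} the gauge factor of $v=F^{mab;m}$ is $z_{ab}^{ab}z_{m,ab}^m/(z_{ma}^mz_{mb}^m)$, which does not contain $z_{am}^m$; your displayed factor interchanges $z_{mb}^m$ and $z_{ab}^{ab}$. The black vertices you would rescale to compensate also enter the gauge factors of $B$ and $E$ (through $z_{a,a^{-1}x}^x$ and $z_{xb^{-1},b}^x$), so the earlier normalizations would have to be redone.
\end{itemize}
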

\begin{proof}
All changes below rescale a projection and inversely rescale its
inclusion, as in \autoref{prop:gauge}. First use
\autoref{lem:absorption-cocycle}:
set $z_{ab}^{ab}=u(a,b)$ and leave the other vertex scalars equal
to $1$. This makes both $\omega$ and $u$ equal to $1$.
No assumption on the initial black cocycle is needed.

In these bases, the pentagon on $(a,b,m,m)$ gives
\[
 B(ab,x)=B(a,x)B(b,a^{-1}x).
\]
Put $h(x)=B(x,x)$. Taking $b=a^{-1}x$ shows that
$B(a,x)=h(x)/h(a^{-1}x)$. Rescale the channel vertices by
\begin{equation*}
 p_x'=h(x)\fuse{m}{m}{x},\qquad
 i_x'=h(x)^{-1}\splitv{m}{m}{x}.
\end{equation*}
Then $B'(a,x)=h(a^{-1}x)B(a,x)/h(x)=1$.

The $(a,m,m,b)$ pentagon now reads
$E(b,x)=E(b,a^{-1}x)$, so $E(b,x)=E(b,1)$ is independent of
$x$. Set $z_{mb}^{m}=E(b,1)^{-1}$. This makes $E=1$ and leaves
$\omega=u=B=1$. Finally the $(m,m,a,b)$ pentagon gives $v(a,b)=1$.
Every rescaling respects the unit vertices.
\end{proof}

The two mixed splitting relations in the normal basis are
\begin{equation}\label{eq:cl-mixed-pictures}
\begin{aligned}
 \leftsplit{a}{m}{b}{m}{m}
 =\chi(a,b)\rightsplit{a}{m}{b}{m}{m},\quad
 \leftsplit{m}{a}{m}{m}{x}
 =d(a,x)\rightsplit{m}{a}{m}{m}{x}.
\end{aligned}
\end{equation}
These are associator relations, with no crossings.

\subsection{The mixed pentagons}

The normalized coefficients leave only $\chi$, $d$ and $T$ to
determine.

\begin{lemma}[Pentagon equations]\label{lem:pentagon-table}
In the normal basis \autoref{eq:cl-normal}, the pentagon is equivalent
to the equations in \autoref{tab:pentagon-equations}.
\end{lemma}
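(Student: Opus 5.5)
We will enumerate the pentagon equation \autoref{eq:component-pentagon} over all $2^4=16$ color patterns of the four inputs $(r,s,t,u)\in\mathcal L_G^{4}$, each entry black ($a,b,c\in G$) or red ($m$). In each case we substitute the normal-basis values \autoref{eq:cl-normal}, $\omega=u=v=B=E=1$, so only $\chi$, $d$ and $T$ survive. For each pattern we first list the admissible internal labels $e,f,h,k,l$ and the output $v$ using the fusion rules \autoref{eq:fusion}. This shows the sum over $h$ is trivial unless $s\otimes t=m\otimes m$, in which case $h$ runs over $G$. It also shows the equation is scalar unless a $T$ appears, in which case it is an equation among $T$-entries indexed by channels.

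The patterns with zero or one red strand are quick. All-black is the trivial cocycle condition. The patterns $abcm$ and $mabc$ are identically satisfied, since they only involve $\omega,u,v$; this was already used to normalize. The patterns $abmc$ and $ambc$ give the bicharacter conditions $\chi(ab,c)=\chi(a,c)\chi(b,c)$ and $\chi(a,bc)=\chi(a,b)\chi(a,c)$, read off the pentagon drawn with $\cpentagon{0}$-type trees. The two-red patterns are as follows. The patterns $aBmm$-type $(a,b,m,m)$ and $(m,m,a,b)$ are already satisfied by the normalization in \autoref{lem:cl-normal}. The pattern $(a,m,m,b)$ gives $E(b,x)=E(b,a^{-1}x)$, now trivially $1=1$. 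The remaining two-red patterns, $(a,m,b,m)$, $(m,a,m,b)$ and $(m,a,b,m)$, give relations between $d$ and $\chi$: for instance $d(ab,x)=d(a,x)d(b,\dots)$ twisted by $\chi$, and the relation expressing $\chi(a,b)$ as a ratio of $d$'s, which encodes the fact that $d(a,\cdot)$ controls how the channel label shifts. The three-red patterns $(m,a,m,m)$, $(m,m,a,m)$ and $(a,m,m,m)$, $(m,m,m,a)$ are shown in the pentagons $\cpentagon{1}$ and $\cpentagon{2}$. They give $T_{ay,x}=d(a,x)T_{y,x}\chi(\cdot)$-type equivariance relations of $T$ in each index, together with the constraints that $T$ commutes appropriately with left and right multiplication. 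Finally $(m,m,m,m)$ gives the two genuinely matrix equations. These are the components with outer channel a black label and those with outer channel $m$, the latter involving $\sum_h T\,T\,T$.

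The table \autoref{tab:pentagon-equations} will be exactly this list, and the lemma claims equivalence. That equivalence is then just the statement that \autoref{eq:component-pentagon} over all patterns and admissible labels is the conjunction of the listed equations. Patterns giving identities, or consequences of normalization, must be checked to contribute nothing new. The main obstacle is bookkeeping in the three- and four-red cases. There, which index of $T$ is row versus column follows the convention \autoref{eq:F-convention}, and inverse entries $(F^{-1})$ appear via \autoref{eq:two-tree-moves}. I would carefully fix, for each such pattern, the intermediate trees $e,f,h,k,l$ as drawn in $\cpentagon{1}$ and $\cpentagon{2}$, and read off the products along both routes. For $mmmm$ I would separate the output $v\in G$ from $v=m$ and write each as a matrix identity in $T$, without yet solving it; solving is left to later results.
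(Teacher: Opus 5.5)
Your method is the paper's: substitute the normalized coefficients into the component pentagon, let admissibility fix the internal labels, and read off one row for each of the $16$ colour patterns. The gap is that several of the equations you predict are not what this procedure gives. For the decisive four-red pattern your description is structurally wrong. In $((mm)m)m$ the first channel $e$ lies in $G$, then $f=e\otimes m=m$, and the root $f\otimes m$ again lies in $G$; symmetrically $k\in G$ and $l=m$. So every $mmmm$ component has a group output $x$. There is no block with output $m$ to separate off, and no other dichotomy of ``outer channels'' either.

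On the long route the three moves are as follows. First $(m,m,m)$, with coefficient $T_{b,a}$. Then $(m,b,m)$, where $b\in G$ is the channel of the middle pair, with coefficient $d(b,x)$. Then $(m,m,m)$ again, with coefficient $T_{c,b}$. The middle move can never be a third $T$, since $m\otimes m$ only has group channels. The short route through $(mm)(mm)$ has coefficient $B(a,x)E(c,x)=1$ and is admissible exactly when $ac=x$. Thus the only four-red condition is the single family $\sum_b T_{c,b}d(b,x)T_{b,a}=\delta_{ac,x}$. Writing it as $\sum_h TTT$ drops the factor $d(b,x)$, which is what later turns this row into character orthogonality and forces $N\tau^2=1$.

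Similar slips occur elsewhere.
\begin{itemize}
\item The $mabm$ row has no $\chi$-twist. Every coefficient is a $d$ at the same output $x$, giving $d(ab,x)=d(a,x)d(b,x)$.
\item The $mamm$ row is $d(a,x)T_{ay,x}=T_{y,x}$, again without $\chi$. The bicharacter enters the three-red rows only through $ammm$ and $mmma$. There the short route contains an $amb$-type coefficient, $\chi(a,y)$ and $\chi(x,a)$ respectively, and those two rows contain no $d$.
\item Having $s\otimes t=m\otimes m$ does not by itself give a genuine sum over $h$. In $ammb$, $ammm$ and $mmmb$ admissibility still pins $h$ down (e.g.\ $h=a^{-1}f$). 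Only $mmmm$ has a free middle channel.
\end{itemize}
The lemma asserts equivalence with one specific list, so these readings have to be carried out correctly. As written, your list would contain a spurious four-red equation with the wrong coefficients, and wrong versions of the $mabm$ and $mamm$ rows.
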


\begin{table}[ht]
\centering
\small
\renewcommand{\arraystretch}{1.3}
\begin{tabular}{@{}ll@{}}
\toprule
inputs & pentagon equation\\
\midrule
$abcd,\ abcm,\ mabc$ & $1=1$\\
$abmm,\ ammb,\ mmab$ & $1=1$\\
$abmc$ & $\chi(ab,c)=\chi(a,c)\chi(b,c)$\\
$ambc$ & $\chi(a,bc)=\chi(a,b)\chi(a,c)$\\
$ambm$ & $d(b,x)=\chi(a,b)d(b,a^{-1}x)$\\
$mabm$ & $d(ab,x)=d(a,x)d(b,x)$\\
$mamb$ & $d(a,x)=d(a,xb^{-1})\chi(a,b)$\\
$ammm$ & $T_{y,a^{-1}x}=\chi(a,y)T_{y,x}$\\
$mamm$ & $d(a,x)T_{ay,x}=T_{y,x}$\\
$mmam$ & $T_{y,xa}d(a,y)=T_{y,x}$\\
$mmma$ & $T_{ya^{-1},x}=\chi(x,a)T_{y,x}$\\
$mmmm$ &
$\displaystyle\sum_b T_{c,b}d(b,x)T_{b,a}=\delta_{ac,x}$\\
\bottomrule
\end{tabular}
\caption{Pentagon equations in the normal basis. All group variables
range over $G$. For two or four $m$-inputs, $x$ is the total output;
for three $m$-inputs, $x,y$ are the initial and final group channels.}
\label{tab:pentagon-equations}
\end{table}

\begin{proof}
Substitute the eight types \autoref{eq:cl-raw} into
\autoref{eq:component-pentagon}. A quadruple with zero or four group
inputs has respectively a sum or a single scalar in each output
block. In the other rows, admissibility fixes the intermediate channels.
All normalized coefficients are $1$.
For example, on $(a,b,m,c)$ the long path has coefficients
$1,\chi(a,c),\chi(b,c)$, while the short path has
$\chi(ab,c),1$. On $(m,m,m,m)$, with total output $x$, the long
path sums over the middle-pair channel $b$ and contributes
$T_{c,b}d(b,x)T_{b,a}$. The short path is admissible precisely when
$ac=x$, with both coefficients $1$. The other rows are read in the same way. Together they cover all
$2^4=16$ input patterns.

Here is one of these cases spelled out; the others are similar.
The $(a,b,m,c)$ pentagon has the following splitting trees:
\begin{equation*}
 \cpentagon{0}
\end{equation*}
The product along the upper path is $\chi(a,c)\chi(b,c)$;
along the lower path it is $\chi(ab,c)$. The $(a,m,b,c)$ row
gives multiplicativity in the second variable. Thus $\chi$ is a
bicharacter, normalized by $\chi(1,a)=\chi(a,1)=1$.
\end{proof}

The one-red pentagons make $\chi$ multiplicative. The three-red
pentagons then determine $T$ up to one scalar and force symmetry.

\begin{lemma}[Bicharacter and recoupling matrix]\label{lem:cl-fourier-form}
The function $\chi$ is a symmetric bicharacter. There is a nonzero
scalar $\tau$ such that
\begin{equation}\label{eq:cl-fourier-form}
 T_{y,x}=\tau\chi(x,y)^{-1}.
\end{equation}
Moreover,
\begin{equation}\label{eq:cl-symmetry}
 d(a,x)=\chi(x,a)=\chi(a,x).
\end{equation}
\end{lemma}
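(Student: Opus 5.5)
From the rows of \autoref{tab:pentagon-equations} the plan is to derive, in order, that $\chi$ is a bicharacter, that $d$ is determined by $\chi$, and that $T$ has the stated form. The rows $abmc$ and $ambc$ say that $\chi$ is multiplicative in each variable. Since $u=v=1$ and the unit vertices are identities, we also get $\chi(1,a)=\chi(a,1)=1$, so $\chi$ is a bicharacter and takes values in $k^\times$.

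For $T$, we use the rows $ammm$ and $mmma$. Put $x=1$ in $ammm$ to get $T_{y,a^{-1}}=\chi(a,y)T_{y,1}$. Rewriting with $a\mapsto a^{-1}$ gives $T_{y,a}=\chi(a,y)^{-1}T_{y,1}$. Similarly, $mmma$ with $y=1$ gives $T_{a^{-1},x}=\chi(x,a)T_{1,x}$, that is, $T_{a,x}=\chi(x,a)^{-1}T_{1,x}$. Combining the two,
\[
T_{y,x}=\chi(x,y)^{-1}T_{y,1}=\chi(x,y)^{-1}\chi(1,y)^{-1}T_{1,1}=\tau\chi(x,y)^{-1}
\]
with $\tau:=T_{1,1}$. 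Since $T$ is invertible and every entry is a multiple of $\tau$, we have $\tau\neq0$. This gives \autoref{eq:cl-fourier-form}.

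Next we determine $d$ and force symmetry. Substituting \autoref{eq:cl-fourier-form} into $mamm$ gives $d(a,x)\chi(x,ay)^{-1}=\chi(x,y)^{-1}$, so $d(a,x)=\chi(x,a)$ by bimultiplicativity. Substituting into $mmam$ gives $\chi(xa,y)^{-1}d(a,y)=\chi(x,y)^{-1}$, so $d(a,y)=\chi(a,y)$. Comparing the two expressions yields $\chi(x,a)=\chi(a,x)$ for all $a,x$, which is \autoref{eq:cl-symmetry}. As a consistency check, the remaining rows $ambm$, $mabm$ and $mamb$ are then automatic consequences of $d=\chi$ being a symmetric bicharacter; we do not need them for the statement.

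The main obstacle is keeping track of which index of $\chi$ each row uses, because the symmetry conclusion depends exactly on $mamm$ and $mmam$ producing $\chi$ with its arguments in opposite orders. I will redo that substitution carefully, expanding $\chi(x,ay)=\chi(x,a)\chi(x,y)$ and $\chi(xa,y)=\chi(x,y)\chi(a,y)$. Nondegeneracy and $|G|\tau^2=1$ are not claimed here; they presumably come later from the $mmmm$ row.
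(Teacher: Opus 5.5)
Your proposal is correct and follows essentially the same route as the paper: bimultiplicativity from the $abmc$ and $ambc$ rows, the Fourier form with $\tau=T_{1,1}$ from the $ammm$ and $mmma$ rows, and then $d(a,x)=\chi(x,a)$ and $d(a,y)=\chi(a,y)$ from the $mamm$ and $mmam$ rows, which together force symmetry. Your explicit justification of $\tau\neq0$ (every entry of the invertible matrix $T$ is a nonzero multiple of $\tau$) spells out a step the paper only asserts.
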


\begin{proof}
The $abmc$ and $ambc$ rows of
\autoref{tab:pentagon-equations} make $\chi$ multiplicative in both
variables, and the unit vertices give
$\chi(1,a)=\chi(a,1)=1$.

The $ammm$ and $mmma$ rows give
\[
 T_{y,a}=\chi(a,y)^{-1}T_{y,1},
 \qquad
 T_{a,1}=T_{1,1}.
\]
Hence \autoref{eq:cl-fourier-form} holds with
$\tau=T_{1,1}\neq0$.

The two remaining three-$m$ pentagons are
\[
 \cpentagon{1}
,
 \cpentagon{2}.
\]
Reading the two routes gives
\[
 d(a,x)=\frac{T_{y,x}}{T_{ay,x}}=\chi(x,a),
 \qquad
 d(a,y)=\frac{T_{y,x}}{T_{y,xa}}=\chi(a,y).
\]
Thus \autoref{eq:cl-symmetry} holds, and $\chi$ is symmetric.
\end{proof}

\subsection{Nondegeneracy and abelianity}

Invertibility of $T$ means that distinct channels must remain
distinguishable after recoupling.

\begin{proposition}\label{prop:cl-abelian}
The bicharacter $\chi$ is nondegenerate and $G$ is abelian.
\end{proposition}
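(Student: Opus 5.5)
The plan is to extract both claims from invertibility of the recoupling matrix $T$ together with the formulas already derived. By \autoref{lem:cl-fourier-form}, $T_{y,x}=\tau\chi(x,y)^{-1}$ with $\tau\neq0$, and $\chi$ is a symmetric bicharacter. We will treat the two claims in order: first nondegeneracy, then abelianity.

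\emph{Nondegeneracy.} Suppose some $a\neq1$ satisfies $\chi(a,y)=1$ for all $y\in G$. Then
\[
 T_{y,a}=\tau\chi(a,y)^{-1}=\tau=T_{y,1}
\]
for every $y$. So the columns of $T$ indexed by $a$ and $1$ coincide, contradicting invertibility. The same argument on rows handles the other variable, although symmetry already makes it redundant. Hence $\chi$ has trivial left and right radicals.

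\emph{Abelianity.} A bicharacter into the abelian group $k^\times$ kills commutators in each variable. Indeed, for fixed $y$, the map $a\mapsto\chi(a,y)$ is a homomorphism $G\to k^\times$, so $\chi(aba^{-1}b^{-1},y)=1$. Thus every commutator $c$ satisfies $\chi(c,y)=1$ for all $y$. By nondegeneracy, $c=1$, so $G$ is abelian.

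A more robust route, if one prefers to avoid relying on multiplicativity in this way, is as follows. The $ammm$ row gives $T_{y,a^{-1}x}=\chi(a,y)T_{y,x}$. Combined with the Fourier form, this makes the column of $T$ at $x$ depend only on the character $\chi(x,-)$, and the same conclusion follows.

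The main obstacle is only bookkeeping: making sure the earlier lemmas really give the multiplicativity $\chi(ab,c)=\chi(a,c)\chi(b,c)$ for non-commuting $a,b$. This is exactly the $abmc$ row of \autoref{tab:pentagon-equations}, which was derived without assuming commutativity. Everything else is the one-line column comparison above. It is also worth noting, for later use, that the $mmmm$ row, $\sum_b T_{c,b}\chi(b,x)T_{b,a}=\delta_{ac,x}$, then yields $|G|\tau^2=1$ via character orthogonality.
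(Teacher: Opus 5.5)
Your proof is correct and follows the paper's route: nondegeneracy comes from two equal columns of the invertible matrix $T_{y,x}=\tau\chi(x,y)^{-1}$, and abelianity from $\chi(ab,-)=\chi(ba,-)$. The paper compares the columns at $ab$ and $ba$ directly, phrased diagrammatically by contracting with $P^L_{ab}$, which is equivalent to your step of putting the commutator in the radical. One small correction to your aside: the $ammm$ row alone does give injectivity of $x\mapsto\chi(x,-)$ without multiplicativity, but abelianity still needs $\chi(ab,-)=\chi(ba,-)$, and that is multiplicativity.
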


\begin{proof}
If $\chi(a,b)=\chi(c,b)$ for every $b$, then
\autoref{eq:cl-fourier-form} makes columns $a,c$ of $T$ equal.
Since $T$ is invertible, $a=c$. Thus $\chi$ is nondegenerate.

For abelianity, multiplicativity gives
$\chi(ab,-)=\chi(ba,-)$, hence
\begin{equation*}
 \leftsplit{m}{m}{m}{ab}{m}
 =\sum_{y\in G}\frac{\tau}{\chi(ab,y)}
       \rightsplit{m}{m}{m}{y}{m}
 =\leftsplit{m}{m}{m}{ba}{m}.
\end{equation*}
Contracting with $P_{ab}^L$ gives
\[
 \id_m
 =P_{ab}^LI_{ab}^L
 =P_{ab}^LI_{ba}^L
 =\delta_{ab,ba}\id_m.
\]
Since $m\ne0$, we have $ab=ba$.
\end{proof}

From now on write $A=G$ and $N=|A|$.

\begin{example}[A nonabelian fusion ring]\label{ex:cl-S3}
Take $G=S_3$. Every character $G\to\C^\times$ is trivial on
$h=(123)\in[G,G]$. Hence any bicharacter would satisfy
$\chi(h,b)=\chi(1,b)=1$ for all $b$. The proposed recoupling would
identify
\begin{equation*}
 \leftsplit{m}{m}{m}{h}{m}
 =\sum_{b\in S_3}\tau\,
      \rightsplit{m}{m}{m}{b}{m}
 =\leftsplit{m}{m}{m}{1}{m}.
\end{equation*}
Contract with $P_h^L$: the two sides give $\id_m$ and $0$.
Thus these associative fusion rules have no fusion category
realization over $\C$.
\end{example}

\begin{example}[Nondegeneracy and symmetry]\label{ex:cl-failures}
Work over $\C$.
For $A=G_2=\{1,g\}$, the trivial bicharacter would give
$T=\tau\left(\begin{smallmatrix}1&1\\1&1\end{smallmatrix}\right)$.
Its two columns agree, so recoupling would send $I_1^L-I_g^L$ to zero.
Nondegeneracy does not imply symmetry. On $A=(\mathbb F_2)^2$, let
$\chi(x,y)=(-1)^{x^{\mathsf T}By}$ with
$B=\left(\begin{smallmatrix}1&1\\0&1\end{smallmatrix}\right)$.
This is a nondegenerate bicharacter, but
$\chi(e_1,e_2)=-1$ and $\chi(e_2,e_1)=1$.
The two pentagons in the proof of \autoref{lem:cl-fourier-form} would require $d(e_1,e_2)$ to equal
both $1$ and $-1$. An invertible Fourier matrix alone is therefore
insufficient.
\end{example}

\subsection{The Fourier matrix and the four-red pentagon}

By \autoref{lem:cl-fourier-form}, the three-red recoupling matrix is
\[
 F_{b,a}=\tau\chi(a,b)^{-1}.
\]
We call $F$ the Fourier matrix. (In the normal basis, $F=T$.) Only the four-red pentagon
remains; character orthogonality will determine its normalization.

\begin{lemma}[The Fourier matrix]\label{lem:Fourier}
For $a,c\in A$,
\begin{equation}\label{eq:orthogonality}
 \sum_{b\in A}\chi(ac^{-1},b)=N\delta_{a,c}.
\end{equation}
If $N\tau^2=1$, then
\[
 (F^{-1})_{a,b}=\tau\chi(a,b).
\]
Moreover $F^2=J$, where $J_{c,a}=\delta_{c,a^{-1}}$, and hence
$F^4=I$.
\end{lemma}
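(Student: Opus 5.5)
The proof has three parts, each resting on the fact, established in \autoref{lem:cl-fourier-form} and \autoref{prop:cl-abelian}, that $\chi$ is a symmetric nondegenerate bicharacter on the abelian group $A$.

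\textbf{Orthogonality.} Put $g=ac^{-1}$. Then $b\mapsto\chi(g,b)$ is a group homomorphism $\psi_g:A\to k^\times$. If $g=1$, every term equals $1$ and the sum is $N$. If $g\neq1$, nondegeneracy gives some $b_0$ with $\chi(g,b_0)\neq1$. Substituting $b\mapsto b_0b$ shows that the sum $\Sigma$ satisfies $\Sigma=\chi(g,b_0)\Sigma$, so $\Sigma=0$. This step works in any characteristic, so \autoref{eq:orthogonality} holds as an identity in $k$, with $N$ read as a scalar.

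\textbf{The inverse.} Define the candidate $G_{a,b}=\tau\chi(a,b)$ and compute the product with $F$:
\[
 (GF)_{a,c}=\sum_b\tau\chi(a,b)\,\tau\chi(c,b)^{-1}
 =\tau^2\sum_b\chi(ac^{-1},b)=\tau^2N\delta_{a,c}.
\]
This uses multiplicativity in the first variable. If $N\tau^2=1$, then $GF=I$. Since $F$ is a square matrix, $G=F^{-1}$.

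\textbf{The relations $F^2=J$ and $F^4=I$.} Compute
\[
 (F^2)_{c,a}=\sum_b F_{c,b}F_{b,a}=\tau^2\sum_b\chi(b,c)^{-1}\chi(a,b)^{-1}.
\]
Symmetry of $\chi$ turns this into $\tau^2\sum_b\chi(ac,b)^{-1}$. Then $\chi(ac,b)^{-1}=\chi((ac)^{-1},b)$, and orthogonality applied to the element $(ac)^{-1}$ gives $\tau^2N\delta_{ac,1}=\delta_{c,a^{-1}}$. So $F^2=J$.

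The matrix $J$ is the permutation matrix of inversion, which is an involution, so $J^2=I$ and therefore $F^4=J^2=I$.

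\textbf{Main obstacle.} The only delicate point is index bookkeeping: keeping straight which argument of $\chi$ carries which index, since $F_{b,a}=\tau\chi(a,b)^{-1}$ transposes its indices. This is exactly where symmetry of $\chi$ is needed. I would also note that without $N\tau^2=1$ the computation still gives $F^2=N\tau^2J$.
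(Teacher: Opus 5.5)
Your proof is correct and follows essentially the same route as the paper. You prove orthogonality by translating $b$ by an element with $\chi(ac^{-1},b_0)\neq 1$, show that the candidate inverse gives $\tau^2 N$ times the identity, and obtain $F^2=J$ from symmetry of $\chi$ plus orthogonality applied to $(ac)^{-1}$. The paper's proof adds only a diagrammatic reading of the inverse computation, as the overlap of a splitting tree with a fusion tree.
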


\begin{proof}
If $a=c$, every summand in \autoref{eq:orthogonality} is $1$.
Otherwise choose $z$ with $\chi(ac^{-1},z)\ne1$; replacing $b$ by
$zb$ multiplies the sum by this scalar, so the sum is zero.
Under the assumption $N\tau^2=1$, the matrix with entries
$\tau\chi(a,b)$ is therefore the inverse of $F$.
Contracting a splitting tree with its fusion tree expresses the
same identity:
\begin{equation*}
 \sameoverlap{c}{a}
 =\tau^2\sum_b\chi(a,b)\chi(c,b)^{-1}\,\wire{m}
 =\delta_{a,c}\,\wire{m}.
\end{equation*}
Multiplying two copies of $F$
gives $\tau^2\sum_b\chi(ac,b)^{-1}=\delta_{c,a^{-1}}$, proving
$F^2=J$ and $F^4=I$. Two opposite tree moves use $F^{-1}F$, not $F^2$.
\end{proof}

\begin{proposition}[The normalization]\label{prop:cl-tau}
The remaining condition on the normal associators is
$N\tau^2=1$. Conversely, a symmetric nondegenerate bicharacter on a
finite abelian group, together with such a $\tau$, satisfies all
sixteen pentagons in \autoref{tab:pentagon-equations}.
\end{proposition}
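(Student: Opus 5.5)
By \autoref{lem:pentagon-table} and \autoref{lem:cl-fourier-form}, every row of \autoref{tab:pentagon-equations} except the last is already settled: $\chi$ is a symmetric bicharacter, $d=\chi$, and $T=F$ with $T_{y,x}=\tau\chi(x,y)^{-1}$. The plan is therefore to substitute these forms into the $mmmm$ row and show that it is equivalent to $N\tau^2=1$. For the converse, I will check directly that the stated data satisfy every row.

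\textbf{Forward direction.} Substituting gives
\[
 \sum_b T_{c,b}\,d(b,x)\,T_{b,a}
 =\tau^2\sum_b \chi(b,c)^{-1}\chi(b,x)\chi(a,b)^{-1}
 =\tau^2\sum_b\chi\big(b,\,x(ac)^{-1}\big),
\]
using bimultiplicativity and symmetry. By the orthogonality relation \autoref{eq:orthogonality} of \autoref{lem:Fourier}, this sum equals $N\tau^2\delta_{ac,x}$. That computation uses only nondegeneracy; the hypothesis $N\tau^2=1$ is not needed for it. The $mmmm$ row therefore reads $N\tau^2\delta_{ac,x}=\delta_{ac,x}$. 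Taking $x=ac$ forces $N\tau^2=1$, and then the row holds for all $a,c,x$. As a side remark, this also shows $N\neq0$ in $k$, which is the only field restriction.

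\textbf{Converse.} Start from a finite abelian group $A$, a symmetric nondegenerate bicharacter $\chi$, and $\tau$ with $N\tau^2=1$. Define the normal-basis associators by $\omega=u=v=B=E=1$, $d=\chi$, and $T_{y,x}=\tau\chi(x,y)^{-1}$. I will then check the rows of the table one at a time.
\begin{itemize}
\item The rows asserting $1=1$ are trivial.
\item The $abmc$ and $ambc$ rows hold because $\chi$ is a bicharacter.
\item The $ambm$ row becomes $\chi(b,x)=\chi(a,b)\chi(b,a^{-1}x)$, which follows from multiplicativity and symmetry. The $mabm$ and $mamb$ rows hold in the same way.
\item The four three-red rows reduce, after substituting $T$, to single identities of the form $\chi(a,y)\chi(x,y)^{-1}=\chi(a^{-1}x,y)^{-1}$ and their variants, all of which follow from multiplicativity and symmetry.
\item The $mmmm$ row is the computation above, read backwards.
\end{itemize}
Two further points must hold for these data to define a category. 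First, $T$ must be invertible, and this follows from \autoref{lem:Fourier}. Second, the unit normalizations must be consistent, which holds because $\chi(1,-)=1$.

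\textbf{Main obstacle.} The step needing most care is the $mmmm$ row. The index conventions (which variable is the row index and which the column) and the sign of the exponent of $\chi$ must line up so that the three characters combine into $\chi(b,x(ac)^{-1})$, rather than into an expression whose sum does not collapse. To make sure of this, I would recheck the order of the factors against \autoref{eq:component-pentagon}.
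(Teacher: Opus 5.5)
Your proposal is correct and is essentially the paper's argument. You substitute $T_{y,x}=\tau\chi(x,y)^{-1}$ and $d=\chi$ into the $mmmm$ row, collapse the sum with \autoref{eq:orthogonality}, and, for the converse, verify the remaining rows of \autoref{tab:pentagon-equations} by multiplicativity and symmetry. The only difference is cosmetic: the paper first reads off $N\tau^2=1$ from the component $a=c=x=1$ and then applies orthogonality to general channels, whereas you evaluate the whole sum as $N\tau^2\delta_{ac,x}$ at once, which makes the equivalence with $N\tau^2=1$ slightly more transparent.
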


\begin{proof}
The four-$m$ pentagon is
\begin{center}
 \pentagonpic
\end{center}
where $a$ is the initial pair channel, $c$ the final pair channel,
$x$ the total output, and $b$ is summed along the three-step path.

First take $a=c=x=1$. Since $T_{1,b}=T_{b,1}=\tau$ and
$d(b,1)=1$, the two routes give
\begin{equation}\label{eq:cl-tau-forced}
 1=\sum_{b\in A}T_{1,b}d(b,1)T_{b,1}
   =\sum_{b\in A}\tau^2=N\tau^2.
\end{equation}

For arbitrary channels, the three-step path has coefficient
\[
 \sum_b F_{c,b}\chi(b,x)F_{b,a}
 =\tau^2\sum_b
   \chi(a,b)^{-1}\chi(b,x)\chi(b,c)^{-1}
 =\delta_{c,a^{-1}x},
\]
by \autoref{eq:orthogonality}. The other path has coefficient $1$
precisely when $ac=x$, so the two routes agree.

For the converse, multiplicativity checks the one-$m$ and two-$m$
rows of \autoref{tab:pentagon-equations}. Substituting
\[
 T_{y,x}=\frac{\tau}{\chi(x,y)},
 \qquad
 d(a,x)=\chi(a,x)
\]
checks the four three-$m$ rows, while the remaining normalized rows
are identities. Hence all pentagons hold.
In particular, $N\ne0$ in $k$; there are two choices of $\tau$ when
$\operatorname{char}k\ne2$, and one in characteristic $2$.
\end{proof}

\begin{example}[Recoupling for $\Z/3\Z$]\label{ex:Z3-recoupling}
Work over $\C$. Let
\[
 A=\langle g\mid g^3=1\rangle,\qquad
 \zeta=e^{2\pi i/3},\qquad
 \chi(g^j,g^k)=\zeta^{jk},\qquad
 \tau=\frac{\nu}{\sqrt3},
\]
where $\nu\in\{1,-1\}$. In the order $(1,g,g^2)$,
\[
 F=\frac{\nu}{\sqrt3}
 \begin{pmatrix}
 1&1&1\\
 1&\zeta^2&\zeta\\
 1&\zeta&\zeta^2
 \end{pmatrix},
 \qquad
 F^{-1}=\frac{\nu}{\sqrt3}
 \begin{pmatrix}
 1&1&1\\
 1&\zeta&\zeta^2\\
 1&\zeta^2&\zeta
 \end{pmatrix}.
\]

For example, the mixed move is
\[
 \lefttree{g}{m}{g}{m}{m}
 =\zeta^2\righttree{g}{m}{g}{m}{m},
\]
whereas the reversed splitting move has coefficient $\zeta$. Likewise,
\[
 P_g^L=\frac{\nu}{\sqrt3}
       (P_1^R+\zeta P_g^R+\zeta^2P_{g^2}^R),
 \qquad
 I_g^L=\frac{\nu}{\sqrt3}
       (I_1^R+\zeta^2I_g^R+\zeta I_{g^2}^R).
\]
Thus
\[
 P_g^LI_g^L=\frac13(1+1+1)\id_m=\id_m,
\]
whereas using the same Fourier coefficients in both directions would
give $\frac13(1+\zeta+\zeta^2)=0$.

The same cancellation appears in the simplest Fourier bubbles:
\[
 \sameoverlap{g}{g}
 =\frac{1+1+1}{3}\wire{m}
 =\wire{m},
 \qquad
 \sameoverlap{g}{1}
 =\frac{1+\zeta+\zeta^2}{3}\wire{m}
 =0.
\]

Finally, take initial channel $g$ and total output $1$ in the four-$m$
pentagon. Along the long route,
\[
\begin{aligned}
 \fourtree{0}{g}{m}{1}
 &=\frac{\nu}{\sqrt3}\sum_{j=0}^2\zeta^{-j}
       \fourtree{1}{g^j}{m}{1}
 =\frac{\nu}{\sqrt3}\sum_{j=0}^2\zeta^{-j}
       \fourtree{2}{g^j}{m}{1}\\
 &=\sum_{k=0}^2
   \left(\frac13\sum_{j=0}^2\zeta^{-j(1+k)}\right)
       \fourtree{3}{g^k}{m}{1}
 =\fourtree{3}{g^2}{m}{1}.
\end{aligned}
\]
The short route is
\[
 \fourtree{0}{g}{m}{1}
 =\fourtree{4}{g}{g^2}{1}
 =\fourtree{3}{g^2}{m}{1}.
\]
Thus both routes give the same tree with coefficient $1$.
\end{example}

\begin{example}[The cyclic group of order four]
Work over \(\C\).
For $A=\Z/4\Z$ in additive notation, set
$\chi(j,k)=i^{jk}$ and $\tau=\nu/2$. The Fourier matrix is
\begin{equation}\label{eq:Z4}
 F=\frac{\nu}{2}
 \begin{pmatrix}
 1&1&1&1\\1&-i&-1&i\\1&-1&1&-1\\1&i&-1&-i
 \end{pmatrix}.
\end{equation}
Its square is the permutation $j\mapsto-j$, exchanging
$1$ and $3$ and fixing $0,2$. 
\end{example}

We can now summarize the local calculation. The pentagon forces the
group $G$ to be abelian, the function $\chi$ to be a symmetric
nondegenerate bicharacter, and the three-red recoupling matrix to be
$ F_{b,a}=\tau\chi(a,b)^{-1},
 |G|\tau^2=1$.
After our choice of vertex bases, all remaining associators are
determined by this data. Thus we have recovered the usual
Tambara--Yamagami recoupling calculus in our conventions, that we wrap up now.

\subsubsection*{Classification}

The parameters are read from the $amb$ move in
\autoref{eq:cl-mixed-pictures} and the unit-channel overlap
\begin{equation}\label{eq:cl-tau-overlap}
 P_1^R\alpha_{m,m,m}I_1^L=\coverlap{1}{1}=\tau\wire{m}.
\end{equation}
Here $p_1i_1=\id_1$; no choice of cup or cap is involved.

\begin{lemma}\label{lem:cl-invariants}
With the unit vertices fixed, $\chi(a,b)$ and $T_{1,1}$ are invariant
under arbitrary vertex rescalings.
\end{lemma}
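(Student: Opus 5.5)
The plan is to apply the gauge formula \autoref{eq:gauge-F} of \autoref{prop:gauge} directly to the two relevant coefficients. The point is that each is a ratio in which every vertex factor occurs once upstairs and once downstairs.

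\textbf{The bicharacter.} We have $\chi(a,b)=F^{amb;m}$, with left channel $e=m$ (from $a\otimes m$) and right channel $f=m$ (from $m\otimes b$). Formula \autoref{eq:gauge-F} gives
\[
 \chi'(a,b)=\frac{z_{mb}^{m}\,z_{am}^{m}}{z_{am}^{m}\,z_{mb}^{m}}\,\chi(a,b)=\chi(a,b).
\]
The two absorption vertices $am\to m$ and $mb\to m$ each appear once in the numerator (through $P^R_f$) and once in the denominator (through $I^L_e$). Hence $\chi$ is unchanged for arbitrary rescalings, not only for those preserving the normal basis.

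\textbf{The unit-channel entry $T_{1,1}$.} This is $F^{mmm;m}_{1,1}$, with left channel $e=1$ and right channel $f=1$. Formula \autoref{eq:gauge-F} gives
\[
 T'_{1,1}=\frac{z_{mm}^{1}\,z_{m1}^{m}}{z_{mm}^{1}\,z_{1m}^{m}}\,T_{1,1}.
\]
The factors $z_{mm}^1$ cancel: they come from $p_1$ in $P^R_1$ and from $i_1$ in $I^L_1$. The remaining factors $z_{m1}^m$ and $z_{1m}^m$ are unit vertices, which are fixed to be $1$ by hypothesis. So $T'_{1,1}=T_{1,1}$. This matches the overlap \autoref{eq:cl-tau-overlap}, where $p_1$ and $i_1$ appear in a pair.

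\textbf{The main obstacle.} The only real care needed is bookkeeping: we must correctly identify which vertices enter $P^R_f$ and $I^L_e$ in the convention \autoref{eq:F-convention}. For $T_{1,1}$ we must also check that the middle vertices really are the unit-absorption vertices, which is exactly where the hypothesis "unit vertices fixed" is used.

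It is worth noting why the statement is restricted to these two quantities. A general entry $T_{y,x}$ is not gauge invariant: it picks up the factor $z^y_{mm}/z^x_{mm}$, so only the diagonal unit entry survives. Likewise $d$ is not invariant on its own, although it is determined by $\chi$ via \autoref{eq:cl-symmetry}.
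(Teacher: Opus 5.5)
Your proof is correct and is the paper's argument: you substitute the channel data of $F^{amb;m}$ and $F^{mmm;m}_{1,1}$ into \autoref{eq:gauge-F}, the first factor cancels identically, and the second reduces to $z_{m1}^{m}/z_{1m}^{m}=1$ because the unit vertices are fixed. One small slip in your closing aside: a general entry $T_{y,x}$ rescales by $z_{mm}^{y}z_{my}^{m}/(z_{mm}^{x}z_{xm}^{m})$, not just by $z_{mm}^{y}/z_{mm}^{x}$, although your conclusion that only the unit entry is invariant still holds.
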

\begin{proof}
The rescaling factors in \autoref{eq:gauge-F} are respectively
\begin{equation*}
 \frac{z_{mb}^{m}z_{am}^{m}}{z_{am}^{m}z_{mb}^{m}}=1,
 \qquad
 \frac{z_{mm}^{1}z_{m1}^{m}}{z_{mm}^{1}z_{1m}^{m}}=1.
\end{equation*}
Use $z_{m1}^{m}=z_{1m}^{m}=1$ in the second factor. The projection
and inclusion in \autoref{eq:cl-tau-overlap} acquire reciprocal scalars,
so vertex rescaling cannot remove $\tau$.
\end{proof}

\begin{theorem}[Classification]\label{thm:cl-classification}
Let $|G|>1$. The rules in \autoref{eq:fusion}, with the group identity as
tensor unit, admit a finite semisimple $k$-linear monoidal category
precisely when $G$ is abelian and $|G|\ne0$ in $k$.
Every such category is automatically rigid and is monoidally
equivalent to $\TY(A,\chi,\tau)$ for a symmetric nondegenerate
bicharacter $\chi$ and $|A|\tau^2=1$.

Two data give $k$-linearly monoidally
equivalent categories if and only if there is a group isomorphism
$f:A\to A'$ such that
\begin{equation}\label{eq:cl-isometry}
 \chi'\big(f(a),f(b)\big)=\chi(a,b)\quad(a,b\in A),\qquad \tau'=\tau.
\end{equation}
\end{theorem}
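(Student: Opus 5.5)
Most of the work is already done in the preceding lemmas, so the plan is to assemble them into existence, rigidity and classification.

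\textbf{Necessity.} Suppose a finite semisimple monoidal category realizes \autoref{eq:fusion}. By \autoref{lem:cl-normal} we may pass to the normal basis \autoref{eq:cl-normal}. By \autoref{lem:pentagon-table} and \autoref{lem:cl-fourier-form}, $\chi$ is then a symmetric bicharacter, $T_{y,x}=\tau\chi(x,y)^{-1}$ and $d=\chi$. By \autoref{prop:cl-abelian}, $G$ is abelian and $\chi$ is nondegenerate. By \autoref{prop:cl-tau}, the four-red pentagon \autoref{eq:cl-tau-forced} forces $N\tau^2=1$, so $N\neq0$ in $k$.

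\textbf{Existence and rigidity.} For sufficiency, the converse in \autoref{prop:cl-tau} shows that any such data satisfy all sixteen pentagons. Hence the skeletal category with these associators, defined by \autoref{eq:alpha-reconstruction}, is monoidal; the unit constraints are identities because of the normalization. Rigidity follows from \autoref{lem:pointed-duality} on $A$. For $m$, take $\coev_m=i_1$ and $\ev_m=\tau^{-1}p_1$ with $m^*=m$. The snake identity is a single recoupling $I_1^L\mapsto I_1^R$ on $mmm$. Its coefficient is $T_{1,1}=\tau$ in one direction and $(T^{-1})_{1,1}=\tau$ in the other, by \autoref{lem:Fourier}. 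With the chosen normalization this gives the identity. This yields both snakes, so every simple is dualizable and the category is rigid. Any realization is then equivalent to $\TY(A,\chi,\tau)$: the normal-basis coefficients are exactly the $\TY$ ones, and the identity-on-simples functor with tensor maps given by the rescalings of \autoref{prop:gauge} is a monoidal equivalence.

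\textbf{Classification.} Given a monoidal equivalence between two such categories, it maps invertible simples to invertible simples and $m$ to $m'$, so it induces a group isomorphism $f:A\to A'$. After composing with $f$, we may assume the labels are fixed. The tensor structure maps are then a vertex rescaling $z$. Two inputs are needed here:
\begin{itemize}
\item By \autoref{lem:cl-invariants}, $\chi(a,b)$ and $T_{1,1}=\tau$ are invariant under such rescalings, which gives \autoref{eq:cl-isometry}.
\item The rescaling with unit vertices fixed is legitimate because a monoidal equivalence may be normalized to be strictly unital.
\end{itemize}
Conversely, given $f$ satisfying \autoref{eq:cl-isometry}, relabelling by $f$ carries the normal-form associators of one category exactly onto those of the other, which gives an equivalence.

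\textbf{Main obstacle.} The step needing the most care is the normalization in the classification. The tensor maps of an arbitrary equivalence must be shown to amount to scalars $z^t_{rs}$ with $z^s_{1s}=z^s_{s1}=1$. Here I would first invoke that every monoidal functor is isomorphic to one with identity unit map. Then I would use that each Hom space between simples is one-dimensional, so that each tensor map component is a scalar on each multiplicity-free summand.
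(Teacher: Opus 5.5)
Your necessity, rigidity and classification arguments are essentially the paper's own. You use the normal basis, the pentagon table, nondegeneracy and abelianity, and $N\tau^2=1$ from the four-red pentagon. You take cup $i_1$ and cap $\tau^{-1}p_1$, with snake coefficients $\tau^{-1}T_{1,1}=\tau^{-1}(T^{-1})_{1,1}=1$. And you let an equivalence induce $f$ together with a unit-normalized vertex rescaling, to which \autoref{lem:cl-invariants} applies.

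The gap is in the ``if'' half of the first sentence. Your sufficiency step only shows that, \emph{given} a symmetric nondegenerate bicharacter $\chi$ and a $\tau$ with $N\tau^2=1$, the normal-basis associators satisfy all pentagons and yield a rigid category. You never show that such data exist for every finite abelian $G$ with $|G|\neq0$ in $k$. So you have not proved that these two conditions suffice. Note that the hypothesis $|G|\neq0$ is never used anywhere in your sufficiency paragraph.

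This is not a formality, because $\chi$ must take values in $k^\times$. For instance, in characteristic $p$ the group $k^\times$ has no elements of order $p$, so $\Z/p\Z$ carries only the trivial bicharacter, which is degenerate.

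The missing step, which the paper supplies explicitly, runs as follows. Write $A=\prod_j\Z/n_j\Z$. Each $n_j$ divides $|A|$, so it is prime to $\operatorname{char}k$, and the algebraically closed field $k$ contains a primitive $n_j$-th root of unity $\zeta_j$. Put $\chi(x,y)=\prod_j\zeta_j^{x_jy_j}$. This is visibly symmetric. It is nondegenerate because pairing $x$ with the generator of the $j$-th factor gives $\zeta_j^{x_j}$, which equals $1$ only if $x_j\equiv0\pmod{n_j}$. Finally, take $\tau$ to be a square root of $|A|^{-1}$, which exists because $|A|\neq0$ in $k$.

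Thus $|G|\neq0$ is used twice in sufficiency: once for the roots of unity and once for $\tau$. With this step added, your proof is complete.
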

\begin{proof}
\autoref{lem:cl-normal} and \autoref{lem:cl-fourier-form}, together
with \autoref{prop:cl-abelian} and \autoref{prop:cl-tau}, prove
necessity and determine all eight coefficient types in the normal basis.

Conversely, the displayed formulas define associators on the semisimple
category with simple labels $A\sqcup\{m\}$. The scalar and
permutation blocks are invertible. The remaining block has inverse
$(T^{-1})_{x,y}=\tau\chi(x,y)$ by \autoref{lem:Fourier}.
\autoref{prop:cl-tau} proves the pentagon, and the unit
conventions prove the triangle.

For rigidity, take $a^*=a^{-1}$,
$m^*=m$ and
\[
 \coev_a=i_{a,a^{-1}}^1,\quad \ev_a=p_{a^{-1},a}^1,
 \qquad \coev_m=i_1,\quad \ev_m=\tau^{-1}p_1.
\]
The group-labelled snakes have coefficient $1$. The two $m$-snakes
have coefficients $\tau^{-1}T_{1,1}$ and
$\tau^{-1}(T^{-1})_{1,1}$, both equal to $1$. Duality extends to
finite direct sums, so this is a fusion category over $k$.

To obtain such data whenever $|A|\ne0$ in $k$, write $A=\prod_j\Z/n_j\Z$ and choose a
primitive $n_j$-th root $\zeta_j\in k$. Such roots exist because
$k$ is algebraically closed and each $n_j$ is prime to its characteristic.
Then
 $\chi(x,y)=\prod_j\zeta_j^{x_jy_j}$
is symmetric and nondegenerate. Algebraic closure also supplies
$\tau$ with $|A|\tau^2=1$.

A monoidal equivalence must send invertible simples to invertible
simples, inducing $f:A\to A'$, and must send the unique noninvertible
simple $m$ to $m'$. After this relabelling, its effect on the
one-dimensional vertex spaces is a collection of nonzero scalars.
The unit can be normalized, so \autoref{lem:cl-invariants} forces
\autoref{eq:cl-isometry}. Conversely, such an $f$ relabels every vertex
and channel. All associator coefficients then agree, giving a
monoidal equivalence.
\end{proof}

For fixed $A$, we therefore take symmetric nondegenerate
bicharacters up to $\operatorname{Aut}(A)$, together with one choice
of $\tau$ in characteristic $2$ and two otherwise, as in
\cite{TaYa-fusion-rules} and \cite[Example 4.6]{Li-generalized-TY}.

When $A=\{1\}$, the same calculation gives $\tau^2=1$ and
$\alpha_{m,m,m}=\tau\id_m$. Here $m^2=1$, giving one pointed category
on $\Z/2\Z$ in characteristic $2$, and two otherwise. In the equivalence
argument, $m$ is the unique nonunit simple.

\begin{example}[The two order-two data]
Work over $\C$.
For $A=G_2=\{1,g\}$ with the nondegenerate bicharacter $\chi(g,g)=-1$ we have
\begin{equation*}
 T=\frac{\nu}{\sqrt2}
   \begin{pmatrix}1&1\\1&-1\end{pmatrix},\qquad
 \coverlap{1}{1}=\frac{\nu}{\sqrt2}\wire{m},\qquad \nu=\pm1.
\end{equation*}
Thus there are two monoidal classes, distinguished by the displayed
overlap. 
\end{example}

\begin{example}[Relabelling a cyclic group]\label{ex:cl-cyclic}
Work over $\C$.
Write $A=\Z/n\Z$ additively and put $\zeta=e^{2\pi i/n}$.
Every bicharacter is $\chi_k(a,b)=\zeta^{kab}$; it is
nondegenerate exactly when $\gcd(k,n)=1$. The automorphism
$f(a)=ua$, with $u\in(\Z/n\Z)^\times$, is an isometry from
$\chi_k$ to $\chi_{k'}$ precisely when
\begin{equation*}
 k\equiv k'u^2\pmod n.
\end{equation*}
On a recoupling diagram this is the simultaneous relabelling
\begin{equation*}
 \leftsplit{m}{m}{m}{a}{m}
   \xmapsto{\ f\ }
 \leftsplit{m}{m}{m}{ua}{m},\qquad
 \frac{\tau}{\zeta^{kab}}
   =\frac{\tau}{\zeta^{k'(ua)(ub)}}.
\end{equation*}
Both channel labels must be relabelled. For $n=3$, every unit has
square $1$, so $k=1$ and $k=2$ are inequivalent. Together with
$\tau=\pm1/\sqrt3$ this gives four monoidal classes. In particular,
replacing $\chi$ by $\chi^{-1}$ need not preserve the category.
For $n=5$, the two orbits are $\{1,4\}$ and $\{2,3\}$, again giving
four classes. For example, $u=2$ identifies
$k=1$ with $k'=4$ since $4\cdot2^2\equiv1\pmod5$.
\end{example}

Allowing a nonsemisimple tensor category does not remove the
restriction on $|G|$: the same Grothendieck rules already force
products of simples to split.

\begin{proposition}[Splitting the TY products]\label{prop:TY-splitting}
Let $\mathcal T$ be a finite tensor category over $k$ with simples
$G\sqcup\{m\}$, where $G$ is a finite group with $|G|>1$, and with
multiplication \autoref{eq:fusion} in its Grothendieck ring. Then
\[
 m\otimes m\cong\bigoplus_{g\in G}g.
\]
The full additive subcategory generated by the simples is a TY category.
Consequently $G$ is abelian and $|G|\ne0$ in $k$.
\end{proposition}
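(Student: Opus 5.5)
The plan is to use rigidity of $\mathcal T$ together with composition lengths to show that every tensor product of two simples is semisimple. The additive span of the simples is then a finite semisimple monoidal category with the TY fusion rules, and \autoref{thm:cl-classification} applies to it.

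First I would identify the objects. For $g\in G$, write $[g][g^{-1}]=[1]$ in the Grothendieck ring. Then $g\otimes g^{-1}$ has composition length one, so it is a simple object with class $[1]$, hence isomorphic to $1$; the same holds for $g^{-1}\otimes g$. So every $g$ is invertible and $g\otimes-$ is an autoequivalence. Consequently $g\otimes m$ is simple with class $[m]$, so $g\otimes m\cong m$; similarly $m\otimes g\cong m$ and $g\otimes h\cong gh$. Duality permutes the simples and sends invertibles to invertibles, so $m^*\cong m$ (and ${}^*m\cong m$), since $m$ is the only noninvertible simple. Since $k$ is algebraically closed, $\End(s)=k$ for every simple $s$.

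Next comes the key step, semisimplicity of $m\otimes m$. For each $g\in G$, adjunction gives
\[
 \Hom(g,m\otimes m)\cong\Hom(g\otimes m^*,m)\cong\Hom(m,m)=k\neq0 .
\]
Hence every $g$ embeds in $m\otimes m$, and the socle of $m\otimes m$ contains $\bigoplus_{g\in G}g$, because the $g$ are pairwise nonisomorphic simples. The socle therefore has length at least $|G|$. By the Grothendieck rule, $m\otimes m$ has length exactly $|G|$, with each $g$ appearing once. So the socle is all of $m\otimes m$, which gives $m\otimes m\cong\bigoplus_{g}g$. I expect this to be the main point: one must avoid arguing through $\ev_m\circ\coev_m=\dim m$, which may vanish in bad characteristic, and use the length count instead.

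Finally, every product of two simples is now a direct sum of simples. The full additive subcategory of semisimple objects is therefore closed under $\otimes$, and it inherits the associators, the unit maps and the pentagon from $\mathcal T$. It is a finite semisimple $k$-linear monoidal category with simples $G\sqcup\{m\}$, the rules \autoref{eq:fusion}, and the identity of $G$ as tensor unit. \autoref{thm:cl-classification} then shows that it is a TY category, equivalent to some $\TY(A,\chi,\tau)$. In particular $G$ is abelian and $|G|\neq0$ in $k$. This uses \autoref{lem:Fourier} and \autoref{prop:cl-tau}, which force $N\tau^2=1$.
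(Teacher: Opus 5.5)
Your proof is correct and follows the paper's route. Length-one products give invertibility of the $g$ and $m^*\cong m$; adjunction gives $\Hom(g,m\otimes m)\cong k$; a composition-length count identifies $m\otimes m$ with $\bigoplus_g g$; and \autoref{thm:cl-classification} then gives abelianity and $|G|\ne0$. The only difference is how you show $\bigoplus_g g\to m\otimes m$ is a monomorphism: you use that pairwise nonisomorphic simple subobjects sum directly, whereas the paper also uses $\Hom(m\otimes m,g)\cong k$ and multiplicity one to build projections with $p_gi_h=\delta_{g,h}\id_g$, which hands it the splitting data directly.
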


\begin{proof}
The products $gg^{-1}$, $g^{-1}g$, $gm$ and $mg$ have composition
length one. Hence the group simples are invertible and
$gm\cong mg\cong m$. Duality permutes the simples and preserves
invertibility, so $m^*\cong m$.

For $g\in G$, rigidity gives
\[
 \Hom(g,mm)\cong\Hom(m^*g,m)\cong k,\qquad
 \Hom(mm,g)\cong\Hom(m,gm^*)\cong k.
\]
Choose nonzero maps $i_g:g\to mm$ and $p_g:mm\to g$.
If $p_gi_g=0$, then $g$ occurs both in $\ker p_g$ and in the quotient
$g$, contradicting $[mm:g]=1$. After rescaling,
$p_gi_g=\id_g$. For $g\ne h$, $\Hom(h,g)=0$, hence
 $p_gi_h=\delta_{g,h}\id_g$.
Therefore the induced maps
\[
 I:\bigoplus_{g\in G}g\longrightarrow mm,\qquad
 P:mm\longrightarrow\bigoplus_{g\in G}g
\]
satisfy $PI=\id$. Both objects have composition length $|G|$, so
$I$ is an isomorphism.

Thus all products of simples split, and their duals are simple. The
full additive subcategory generated by them is therefore semisimple,
tensor-closed and rigid, with the original tensor unit. Applying
\autoref{thm:cl-classification} shows that $G$ is abelian and
$|G|\ne0$ in $k$. In particular, if $\operatorname{char}k$ divides
$|G|$, then \autoref{eq:cl-tau-forced} would give $0=1$.
\end{proof}

\begin{remark}
\autoref{prop:TY-splitting} says that the products of simples split and their full additive subcategory is TY. It does not assert that every object of the original finite tensor category is semisimple.

For comparison, in characteristic zero the nonsemisimple near-group
categories of \cite[Sections 2 and 3.1]{Se-nonsemisimple-near-group}
have a simple projective $Q$ whose square involves projective covers.
Their Grothendieck rules are therefore genuinely different from the TY
rules considered here.
\end{remark}

\section{Global recoupling}\label{sec:relations}

We now return to the semigroup $S$.

\subsection{The associators on semigroup labels}
\label{sec:normal-associators}

Recall that a TY datum on $A$ consists
of a symmetric nondegenerate bicharacter $\chi$ and a scalar $\tau$
with $|A|\tau^2=1$. We write
$\CA=\TY(A,\chi,\tau)$
for the corresponding local TY category.

Let $A\subseteq S$ be a group ideal, with identity $e$, and recall the
homomorphism
 $\phi:S\longrightarrow A,\phi(s)=se=es$.
The local coefficients extend to $S$ by applying $\phi$ to every black
label appearing in a coefficient, while leaving the boundary label
itself unchanged.

\begin{definition}\label{def:CS}
The \emph{TY extension associated to}
$(S,A,\chi,\tau)$ is the semisimple $k$-linear category $\CS$ with
simple labels $S\sqcup\{m\}$, fusion rules
\autoref{eq:group-ideal-rules}, and the following associators.

We use the black gauge $\omega=1$ and the normal vertex bases fixed
above. The arrows below display categorical associators in splitting-tree bases. The fusion equalities in the presentation use their inverse coefficients. For $s,t,u\in S$ and $a,b,x\in A$,
\begin{gather*}
\leftsplit{s}{t}{u}{st}{stu}
\xmapsto{\alpha_{s,t,u}}
\rightsplit{s}{t}{u}{tu}{stu},
\qquad
\leftsplit{s}{t}{m}{st}{m}
\xmapsto{\alpha_{s,t,m}}
\rightsplit{s}{t}{m}{m}{m},
\\[1.5ex]
\leftsplit{s}{m}{t}{m}{m}
\xmapsto{\alpha_{s,m,t}}
\chi\big(\phi(s),\phi(t)\big)
\rightsplit{s}{m}{t}{m}{m},
\qquad
\leftsplit{m}{s}{t}{m}{m}
\xmapsto{\alpha_{m,s,t}}
\rightsplit{m}{s}{t}{st}{m},
\\[1.5ex]
\leftsplit{s}{m}{m}{m}{x}
\xmapsto{\alpha_{s,m,m}}
\rightsplit{s}{m}{m}{\phi(s)^{-1}x}{x},
\qquad
\leftsplit{m}{s}{m}{m}{x}
\xmapsto{\alpha_{m,s,m}}
\chi\big(\phi(s),x\big)\,
\rightsplit{m}{s}{m}{m}{x},
\\[1.5ex]
\leftsplit{m}{m}{s}{x\phi(s)^{-1}}{x}
\xmapsto{\alpha_{m,m,s}}
\rightsplit{m}{m}{s}{m}{x},
\qquad
\leftsplit{m}{m}{m}{a}{m}
\xmapsto{\alpha_{m,m,m}}
\sum_{b\in A}\tau\chi(a,b)^{-1}
\rightsplit{m}{m}{m}{b}{m}.
\end{gather*}
Here inverses are taken in $A$. If $S$ is a monoid, then $\phi(1)=e$
and associators involving the global unit are identities.
\end{definition}

From now on we say msemigroupal to mean semigroupal on the semigroup input together with the fusion rules for $m$ in \autoref{prop:group-ideal-extension}.

\begin{proposition}[Transfer of recoupling]\label{prop:ambient-recoupling}
These formulas define the category $\CS$ of
\autoref{prop:group-ideal-extension}. They give a faithful
functor preserving tensor products up to the natural identifications,
that is, a msemigroupal functor
\[
 F_\phi:\CS\longrightarrow\CA,\qquad s\longmapsto\phi(s),\quad m\longmapsto m,
\]
monoidal when $S$ is a monoid, whose restriction to $\CA$
is the identity.
\end{proposition}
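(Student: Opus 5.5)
The plan is to define $F_\phi$ on simples by $s\mapsto\phi(s)$, $m\mapsto m$, and extend additively. On morphisms, Hom spaces between simples are one-dimensional exactly when the labels agree, and otherwise zero. So $F_\phi$ sends $\id_s$ to $\id_{\phi(s)}$ and $\id_m$ to $\id_m$, and is defined on matrices of morphisms entrywise. Faithfulness is then immediate: a morphism in $\CS$ is a matrix of scalars indexed by pairs of equal simple labels, and $F_\phi$ keeps each nonzero scalar. Identifying distinct labels $s\neq s'$ with $\phi(s)=\phi(s')$ only merges zero blocks with diagonal ones. It never forces a nonzero entry to vanish.

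For the tensor structure I would compare fusion rules first. By \autoref{eq:group-ideal-rules}, the summands of $s\otimes t$, $s\otimes m$, $m\otimes s$ and $m\otimes m$ are $st$, $m$, $m$ and $A$. Their images are $\phi(s)\phi(t)=\phi(st)$, $m$, $m$ and $A=\phi(A)$, and these are exactly the summands of $F_\phi(s)\otimes F_\phi(t)$ and so on in $\CA$. So I define the tensor structure maps $J_{X,Y}:F_\phi(X)\otimes F_\phi(Y)\to F_\phi(X\otimes Y)$ on simples as the composite of the chosen projection in $\CA$ with the chosen inclusion in $\CS$, relabelled. In vertex language, $J$ sends the vertex $\mu_{s,t}$ to $\mu_{\phi(s),\phi(t)}$, absorption vertices to absorption vertices, and the channel vertex $p_x$ to $p_x$. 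I then extend additively.

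The key step is the hexagon (tensor-functor) axiom $F_\phi(\alpha_{X,Y,Z})J J=J J\alpha^{\CA}_{F X,F Y,F Z}$. By additivity it suffices to check it on triples of simples, and in the splitting-tree bases of \autoref{eq:alpha-reconstruction} it becomes the claim that the coefficient matrices agree. Going through the eight types of \autoref{def:CS}, each $\CS$-coefficient is literally the $\CA$-coefficient evaluated at $\phi$ of the black labels. Examples are $\chi(\phi(s),\phi(t))$, $\chi(\phi(s),x)$, the channel shifts $\phi(s)^{-1}x$, and $T$ for $mmm$. The intermediate channel labels also match, because $sx=\phi(s)x$ for $x\in A$. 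The all-black block is $\omega=1$ on both sides. This comparison simultaneously justifies the first sentence of the statement. The pentagon in $\CS$, which is needed for the formulas to define a category, follows by pulling back along $F_\phi$. Each pentagon component in $\CS$ has the same scalar equation as the corresponding component in $\CA$ at labels $\phi(\cdot)$, with the same admissible channels. The latter hold by \autoref{prop:cl-tau}, and the all-black pentagons are trivial. I expect this bookkeeping to be the main obstacle. One must check carefully that the sums over intermediate channels in $\CS$ biject with those in $\CA$; this holds because channels of $m$-vertices always lie in $A$, on which $\phi$ is the identity, and ambient black channels are determined uniquely by products. Associator invertibility likewise transfers.

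Finally, the remaining claims come quickly. On $A\sqcup\{m\}$, $\phi$ is the identity and the coefficients of \autoref{def:CS} reduce to those of $\CA$, so the restriction of $F_\phi$ is the identity with $J=\id$. Since $\CA$ is a full subcategory containing all $m^{\otimes n}$, this recovers \autoref{prop:group-ideal-extension}. In the monoid case $\phi(1)=e$, the associators with a unit input are identities, and $J$ with a unit is the identity vertex. This gives the unit axioms, so $F_\phi$ is monoidal.
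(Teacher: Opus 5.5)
Your proposal is correct and follows essentially the same route as the paper. $F_\phi$ is faithful because it keeps scalar entries and direct-sum copy indices. The identities $\phi(st)=\phi(s)\phi(t)$, $sx=\phi(s)x$ and $xs=x\phi(s)$ match products and admissible channels, so each associator coefficient of $\CS$ is the TY coefficient evaluated at $\phi$ of the labels. The pentagon in $\CS$ is then pulled back from the TY pentagon: the paper phrases this via faithfulness, and your direct component-wise comparison amounts to the same thing. Your extra detail on the structure maps $J$ and on the bijection of intermediate channels fills in steps the paper leaves implicit.
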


\begin{proof}
The functor keeps scalar matrix entries and direct sum copy indices,
so it is faithful even when distinct labels have the same image.
The identities $\phi(st)=\phi(s)\phi(t)$, $sx=\phi(s)x$ and
$xs=x\phi(s)$ for $x\in A$ identify the products and admissible
channels. Applying $F_\phi$ turns both routes of every mixed pentagon into
the corresponding TY routes. All-black components are identities.
Faithfulness therefore proves equality of the two routes in $\CS$.
For a monoid, $\phi(1)=e$ also gives compatibility with the unit.
\end{proof}

Distinct black labels can have the same image, so the functor need
not be full. The inclusion $\CA\subset\CS$ is full and semigroupal;
it preserves the global unit exactly when $S=A$.

\begin{example}[A nontrivial proper group ideal]\label{ex:proper-group-ideal}
Work over $\C$. Let $G_2=\{1,g\}$ with $g^2=1$, and put
$S=G_2\times\{0,1\}$ with
$(x,i)(y,j)=(xy,\min(i,j))$ and $A=G_2\times\{0\}$.
Write $1=(1,1)$, $s=(g,1)$, $e=(1,0)$ and $a=(g,0)$.
Then $1$ is the global unit, $A=\{e,a\}$ is a proper group ideal
with identity $e$, and $\phi(x,i)=(x,0)$. Thus
$\phi(1)=\phi(e)=e$ and $\phi(s)=\phi(a)=a$.

Choose $\chi(a,a)=-1$ and $\tau=1/\sqrt2$. Then $m^2=e\oplus a$ and
\begin{gather*}
 \leftsplit{s}{m}{s}{m}{m}
 \xmapsto{\alpha_{s,m,s}}
 -\,\rightsplit{s}{m}{s}{m}{m},
\\
 \leftsplit{s}{m}{m}{m}{e}
 \xmapsto{\alpha_{s,m,m}}
 \rightsplit{s}{m}{m}{a}{e},
 \qquad
 \leftsplit{s}{m}{m}{m}{a}
 \xmapsto{\alpha_{s,m,m}}
 \rightsplit{s}{m}{m}{e}{a}.
\end{gather*}
Thus the recoupling coefficients and internal channels only see
$\phi(s)=a$, while the boundary still remembers $s$.

The ambient category $\CS$ has four black simple objects, twice as many
as the local TY category $\CA$, but it is no longer rigid: the global
unit is $1$, whereas $m^2=e\oplus a$ contains only the local unit $e$.
Thus $m$ has no ambient dual.
\end{example}

\begin{proposition}[Field condition for the extension]\label{cor:ambient-field}
A finite semigroup with abelian group ideal $A$ supports the above
construction precisely when $|A|\ne0$ in $k$. Its parameter satisfies
$|A|\tau^2=1$; no invertibility condition on $|S|$ is imposed.
\end{proposition}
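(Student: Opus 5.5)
We prove the two directions separately. In each direction we reduce to the local classification on $A\sqcup\{m\}$, and then we observe that $S$ itself never enters a normalization condition.

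\emph{Sufficiency.} Suppose $|A|\ne0$ in $k$. \autoref{thm:cl-classification} then supplies TY data $(\chi,\tau)$ on $A$ with $|A|\tau^2=1$. Concretely, we take primitive roots of unity in the cyclic factors of $A$, which exist because each $n_j$ is prime to $\operatorname{char}k$, and we take a square root of $|A|^{-1}$ using algebraic closure. With these data, \autoref{prop:group-ideal-extension} and \autoref{prop:ambient-recoupling} construct $\CS$, so the construction exists.

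\emph{Necessity.} Suppose $\CS$ exists for some data. The full subcategory on $A\sqcup\{m\}$ is closed under tensor product, since $A$ is an ideal and $m^2=\bigoplus_{x\in A}x$. Its associators are the restricted ones, and the identity $e$ of $A$ acts as its tensor unit; the associators with an $e$ input are identities after applying $\phi$. Hence this subcategory is a finite semisimple monoidal category with the TY fusion rules \autoref{eq:fusion} for the group $A$.

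If $|A|>1$, the local analysis of \autoref{sec:classification} applies verbatim, because it never assumed rigidity. In particular, the four-$m$ pentagon at channels $a=c=x=e$ gives \autoref{eq:cl-tau-forced}, namely $1=|A|\tau^2$. This forces $|A|\ne0$ in $k$ and fixes the stated normalization. If $A=\{e\}$ is trivial, the same pentagon reads $\tau^2=1$, and $|A|=1\ne0$ holds automatically.

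\emph{No condition on $|S|$.} Every pentagon in $\CS$ is either an all-black pentagon, which holds with $\omega=1$, or is mapped by the faithful functor $F_\phi$ of \autoref{prop:ambient-recoupling} to a pentagon of $\CA$. Moreover, every channel sum in the definition of $\CS$ runs over $A$, never over $S$. So the only scalar constraint that appears is $|A|\tau^2=1$, and $|S|$ never occurs. \autoref{ex:proper-group-ideal} illustrates this, and one could add an example with $\operatorname{char}k$ dividing $|S|$ but not $|A|$. An example is $E_2$ in characteristic $2$, where $A=\{e\}$.

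The main point needing care is the claim that the restricted subcategory really has $e$ as a tensor unit, with identity unit constraints, so that the local classification applies to it. I would check this directly from \autoref{def:CS}: with $\phi(e)=e$, every associator with an $e$ input reduces to an identity or to $\chi(e,-)=1$.
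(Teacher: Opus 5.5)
Your proof is correct and follows the paper's route. For $|A|>1$ you invoke \autoref{thm:cl-classification}, using the four-$m$ pentagon on the local subcategory for necessity and the explicit roots-of-unity data for sufficiency. You then transfer to $S$ through \autoref{prop:group-ideal-extension} and \autoref{prop:ambient-recoupling}, and treat $A=\{e\}$ separately with $\chi=1$ and $\tau^2=1$. Your additional checks are that the restriction to $A\sqcup\{m\}$ has unit $e$ with trivial unit associators, and that all channel sums run over $A$, so $|S|$ never enters. These only make explicit what the paper's two-line proof leaves implicit.
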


\begin{proof}
For $|A|>1$, apply \autoref{thm:cl-classification} and
\autoref{prop:ambient-recoupling}.
For $A=\{e\}$ use $\chi=1$ and $\tau^2=1$.
\end{proof}

The associators determine relations between trees with the same
boundary word. Together with cancellation at a vertex, these relations
present $\CS$, just as multiplication trees present $\Vec_S$.

\subsection{Generators and relations}

Fix $(S,A,\phi,\chi,\tau)$ as above and put $\mathcal L=S\sqcup\{m\}$.
We use the normal basis of \autoref{sec:normal-associators},
with $\omega=1$ on black triples. This choice of vertex bases and
the strictness of the diagram category are separate conventions.
The possible outputs are
\[
 \Out(s,t)=\{st\},\quad \Out(s,m)=\Out(m,s)=\{m\},\quad \Out(m,m)=A
 \qquad(s,t\in S).
\]
For $r,s\in\mathcal L$ and $t\in\Out(r,s)$ introduce
$p_{rs}^t:rs\to t$ and $i_{rs}^t:t\to rs$. The input $rs$ is a
word, whereas a black output $st$ is its single product label.
The four vertex types, with $s,t\in S$ and $a\in A$, are
\begin{equation}\label{eq:generators}
\begin{gathered}
 \fuse{s}{t}{st},\quad\splitv{s}{t}{st},
 \qquad \fuse{s}{m}{m},\quad\splitv{s}{m}{m},\\[1ex]
 \fuse{m}{s}{m},\quad\splitv{m}{s}{m},
 \qquad \fuse{m}{m}{a},\quad\splitv{m}{m}{a}.
\end{gathered}
\end{equation}
Write $p_{mm}^{a}=p_a$ and $i_{mm}^{a}=i_a$.
For a monoid erase only its global unit $1$ and impose
$p_{1r}^r=i_{1r}^r=p_{r1}^r=i_{r1}^r=\id_r$.
If $e\ne1$, retain the local-unit vertices and edges:
\[
 i_e=\splitv{m}{m}{e}:e\longrightarrow mm,\qquad
 p_e=\fuse{m}{m}{e}:mm\longrightarrow e.
\]
For $a\in A$, put $e_a=i_ap_a$.

The vertex relations \autoref{eq:resolution} are drawn as
\begin{equation*}
\bubble{r}{s}{u}{t}=\delta_{t,u}\,\wire{t},\qquad
 \sum_{t\in\Out(r,s)}\channel{r}{s}{t}=\wire{r}\,\wire{s}.
\end{equation*}

The empty diagram represents $\id_1$ in the monoid case.
Progressive isotopy preserves the ordered input and output sides, e.g.:

\begin{equation*}
 \isowire{m}=\wire{m},\qquad
 \isofuse{s}{m}{m}=\fuse{s}{m}{m}.
\end{equation*}
The interchange law of \autoref{remark:isotopy} applies to these vertices and to whole subdiagrams. No bends are included among these generators.

Moreover, the associators in \autoref{sec:normal-associators} act on
splitting trees. The fusion form \autoref{eq:two-tree-moves}
uses $F^{-1}$ and gives the defining tree relations. For $s,t,u\in S$ and
$a,b,x\in A$, the relations are:
\begin{gather}\label{eq:R-aaa}
\begin{gathered}
\lefttree{s}{t}{u}{st}{stu}
 =\righttree{s}{t}{u}{tu}{stu},
 \lefttree{s}{t}{m}{st}{m}
 =\righttree{s}{t}{m}{m}{m},
\\[1ex]
 \lefttree{s}{m}{t}{m}{m}
 =\chi\big(\phi(s),\phi(t)\big)^{-1}\righttree{s}{m}{t}{m}{m},
 \lefttree{m}{s}{t}{m}{m}
 =\righttree{m}{s}{t}{st}{m}.
\end{gathered}
\end{gather}
With two red inputs, the outer black label changes the group
channel, while a middle black label contributes a scalar:
\begin{gather}
 \lefttree{s}{m}{m}{m}{x}
 =\righttree{s}{m}{m}{\phi(s)^{-1}x}{x},\label{eq:R-amm}\\[1ex]
 \lefttree{m}{s}{m}{m}{x}
 =\chi\big(\phi(s),x\big)^{-1}\righttree{m}{s}{m}{m}{x},
 \lefttree{m}{m}{s}{x\phi(s)^{-1}}{x}
 =\righttree{m}{m}{s}{m}{x}.\label{eq:R-mam}
\end{gather}
With three red inputs, the fusion coefficients are the entries of
$F^{-1}$:
\begin{equation}\label{eq:R-mmm}
 \lefttree{m}{m}{m}{a}{m}
 =\sum_{b\in A}\tau\chi(a,b)\righttree{m}{m}{m}{b}{m}.
\end{equation}

\begin{definition}\label{def:D}\label{def:vertex}
The diagram category $\DS$ is the $k$-linear strict
msemigroupal category generated by \autoref{eq:generators}, modulo
\autoref{eq:resolution} and \autoref{eq:R-aaa}--\autoref{eq:R-mmm}.
For nonunital $S$ its objects are nonempty words in $\mathcal L$.
For a monoid it is strict monoidal, with objects the words in
$(S\setminus\{1\})\sqcup\{m\}$ including the empty word, and the
global-unit convention above. Diagrams are taken up to progressive
planar isotopy, preserving every vertex's ordered input and output
sides. As before, $\Add(\DS)$ adjoins finite direct sums and matrices
of morphisms.
\end{definition}

For $S=A$ the category $\DS$ is the TY diagram category $\D$.

\begin{lemma}[Splitting relations]\label{lem:splitting-relations}
The relations of $\DS$ imply the splitting equations
\autoref{eq:two-tree-moves}, obtained by strictifying the associator formulas.
\end{lemma}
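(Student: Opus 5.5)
We derive each splitting equation from the corresponding fusion relation by inverting with the vertex relations \autoref{eq:resolution}, as in the \emph{Asso and coasso} lemma for the black calculus. Fix a triple $r,s,t\in\mathcal L$ and a root $u$. Write the defining relations \autoref{eq:R-aaa}--\autoref{eq:R-mmm} in the compact form
\[
 P_e^L=\sum_f (F^{rst;u})^{-1}_{e,f}P_f^R,
\]
with $F$ the matrix of coefficients of \autoref{def:CS}. We want to deduce $I_e^L=\sum_f F^{rst;u}_{f,e}I_f^R$.

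The key input is that the left and right trees form dual resolutions of the identity on the word $rst$. Applying the two vertex relations twice gives
\[
 \sum_{u,e}I_e^LP_e^L=\id_{rst}=\sum_{u,f}I_f^RP_f^R,
 \qquad P_e^LI_{e'}^L=\delta_{e,e'}\id_u,\qquad P_f^RI_{f'}^R=\delta_{f,f'}\id_u .
\]
Here roots that differ give zero by the bubble relation. We then compute
\[
 I_e^L=\Big(\sum_{u',f}I_f^RP_f^R\Big)I_e^L ,
\]
so we need the mixed pairings $P_f^RI_e^L$.

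To obtain the pairings, substitute the fusion relation into $P_{e'}^LI_e^L=\delta_{e',e}$. This gives
\[
 \sum_f (F^{-1})_{e',f}\,P_f^RI_e^L=\delta_{e',e}\id_u .
\]
Each $P_f^RI_e^L$ is an endomorphism of the simple label $u$, since other roots vanish. To see that it is a scalar, I would use the normal form for trees with a single root, or more directly the bubble relation after expanding. The resulting system says that the matrix $X_{f,e}$ with $P_f^RI_e^L=X_{f,e}\id_u$ satisfies $F^{-1}X=I$. Since $F$ is invertible, $X=F$, and therefore $I_e^L=\sum_f F_{f,e}I_f^R$.

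The invertibility of $F$ is immediate in every block except the three-red one. There it is \autoref{lem:Fourier}, where $(F^{-1})_{a,b}=\tau\chi(a,b)$ needs $N\tau^2=1$. In the one-dimensional blocks the entries are the nonzero scalars or permutation entries of \autoref{def:CS}; for $\alpha_{s,m,m}$ the channel relabelling $x\mapsto\phi(s)^{-1}x$ is a bijection of $A$.

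The main obstacle is justifying that $P_f^RI_e^L$ is a scalar multiple of $\id_u$ before we know the Hom spaces of $\DS$, since the basis theorem comes later. I would avoid this issue as follows. Write $P_f^RI_e^L=P_f^R\big(\sum_{e'}I_{e'}^LP_{e'}^L\big)I_e^L$ and use only the substitution above. Concretely, multiply the fusion relation on the right by $I_e^L$ and on the left by nothing, and solve the identity $\sum_f(F^{-1})_{e',f}Y_{f}=\delta_{e',e}$ for the morphisms $Y_f=P_f^RI_e^L$ by applying the matrix $F$ on the left. This is legitimate because the equations are linear over $k$ with morphism-valued unknowns, so no scalar assumption is needed. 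Then
\[
 I_e^L=\sum_f I_f^RY_f=\sum_f F_{f,e}I_f^R ,
\]
which is the required splitting equation.
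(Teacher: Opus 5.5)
Your argument is correct and is essentially the paper's proof. The paper multiplies the fusion relation by $F$ to get $P_f^R=\sum_e F_{f,e}P_e^L$, contracts with $I_g^L$ to get $P_f^RI_g^L=F_{f,g}\id_u$ directly (so the scalar worry you raise never arises), and then inserts $\id_{rst}=\sum_{u,f}I_f^RP_f^R$. Your version contracts first and applies $F$ afterwards, which is the same linear algebra; that substitution also justifies the vanishing of mixed pairings $P_f^RI_e^L$ with different roots, which you asserted somewhat loosely ``by the bubble relation''.
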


\begin{proof}
Multiply the fusion relation by the matrix $F$ to obtain
$P_f^R=\sum_e F_{f,e}P_e^L$. Contract with $I_g^L$ and use
\autoref{eq:resolution} at its two vertices:
\[
 P_f^RI_g^L=\sum_eF_{f,e}P_e^LI_g^L=F_{f,g}\id_u.
\]
Applying the resolution twice also gives
$\id_{rst}=\sum_{u,f}I_f^RP_f^R$. Hence
$I_g^L=\sum_fF_{f,g}I_f^R$, as required.
\end{proof}

\begin{lemma}[Evaluation]\label{lem:evaluation}
Sending a word to its left-associated tensor product in $\CS$ and
vertices to the normal projections and inclusions defines a functor
$\mathcal E:\DS\to\CS$. Its tensor maps are the natural
reassociations, so it is strong semigroupal, and monoidal when $S$
is a monoid.
\end{lemma}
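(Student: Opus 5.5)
The plan is the same one used for \autoref{lem:pointed-evaluation}: define $\mathcal E$ on generators, check every defining relation of $\DS$ in $\CS$, and then read off the tensor structure.

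\textbf{Defining $\mathcal E$.} Send a word $r_1\cdots r_n$ in $\mathcal L$ to the left-associated product $(\cdots(r_1\otimes r_2)\otimes\cdots)\otimes r_n$ in $\CS$. A vertex $p_{rs}^t$ or $i_{rs}^t$, sitting inside a larger word $w\,r s\,w'$, is sent to the composite of three maps:
\begin{itemize}[label={}]
\item the canonical reassociation isolating the factor $r\otimes s$;
\item the normal projection or inclusion of \autoref{sec:normal-associators} tensored with identities;
\item the reassociation back to left-associated form.
\end{itemize}
These reassociations are built from the associators of \autoref{def:CS}. They are well defined by coherence, because $\CS$ satisfies the pentagon by \autoref{prop:ambient-recoupling}. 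This is the standard strictification: the strict category on words is equivalent to $\CS$. Hence $\mathcal E$ is well defined on free diagrams, and progressive planar isotopy, including the interchange law, is respected by functoriality of $\otimes$ in $\CS$.

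\textbf{Checking the relations.} The vertex relations \autoref{eq:resolution} hold because the reassociations cancel around them, and in $\CS$ the normal projections and inclusions were chosen to satisfy exactly \autoref{eq:resolution}. For the tree relations \autoref{eq:R-aaa}--\autoref{eq:R-mmm}, compare the images of the left and right fusion trees on a three-letter word. By \autoref{eq:F-convention} and \autoref{eq:alpha-reconstruction}, the image of the left tree $P_e^L$ equals $\sum_f (F^{-1})_{e,f}P_f^R$ after transport along $\alpha_{r,s,t}$. So each relation holds precisely because the coefficients were defined as the entries of $F^{-1}$ read off from \autoref{def:CS}.

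This is the step I expect to need the most care, and it is bookkeeping rather than a new idea. I would go through the eight vertex types one at a time. In each case I would check that the scalar in the fusion relation is the inverse of the associator coefficient:
\begin{itemize}[label={}]
\item $\chi(\phi(s),\phi(t))^{-1}$ against $\chi(\phi(s),\phi(t))$;
\item $\chi(\phi(s),x)^{-1}$ against $\chi(\phi(s),x)$;
\item $\tau\chi(a,b)$ against $\tau\chi(a,b)^{-1}$, using \autoref{lem:Fourier} for the inverse Fourier matrix.
\end{itemize}
I would also check that the channel shifts $\phi(s)^{-1}x$ and $x\phi(s)^{-1}$ match the admissible right channels.

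For relations applied inside longer words, naturality of the associator moves the local identity through the surrounding reassociations. The splitting versions need no separate check, by \autoref{lem:splitting-relations}.

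\textbf{Tensor structure.} The tensor maps $J_{u,v}:\mathcal E(u)\otimes\mathcal E(v)\to\mathcal E(uv)$ are the canonical reassociations from $(\text{left}(u))\otimes(\text{left}(v))$ to $\text{left}(uv)$. They are natural because $\mathcal E$ of a diagram tensored with an identity was defined using these same reassociations. The hexagon-type coherence for $J$ reduces to the pentagon in $\CS$ via Mac Lane coherence. So $\mathcal E$ is strong semigroupal, and more precisely msemigroupal.

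In the monoid case, the global unit $1$ is erased, the unit vertices go to identities, and all associators involving $1$ are identities by \autoref{def:CS}, since $\phi(1)=e$ enters only through trivial coefficients. The empty word maps to $1$ with identity unit constraints, which gives a monoidal functor. The local unit $e$ is not erased, and its vertices $i_e,p_e$ are ordinary vertices handled by the check above.
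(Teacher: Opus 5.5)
Your proposal is correct and follows the paper's own argument. The resolution relations hold because the normal projections and inclusions in $\mathcal C_S$ satisfy them. The tree relations are exactly the $F^{-1}$ comparisons obtained from $F=P_R\alpha I_L$. Passing through the word strictification of $\mathcal C_S$, with the pentagon and coherence, gives compatibility with tensor products and makes the reassociations the tensor maps. Your explicit check of the eight coefficient types and channel shifts, and of the global unit in the monoid case, only spells out what the paper leaves implicit.
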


\begin{proof}
The projections and inclusions in $\CS$ satisfy
\autoref{eq:resolution}. By $F=P_R\alpha I_L$, comparing fusion trees
uses $F^{-1}$ and gives exactly \autoref{eq:R-aaa}--\autoref{eq:R-mmm}.
The pentagon proved in \autoref{prop:ambient-recoupling}
makes these comparisons compatible with tensor product.

Equivalently, first evaluate in the word strictification of $\CS$,
where the diagram relations hold as equalities. The functor back to
$\CS$ sends each word to its left-associated product and has the
stated reassociations as tensor maps. For a monoid these also
respect the unit maps.
\end{proof}

\begin{example}[The two order-two monoids]\label{ex:order-two-red-relations}
For \(A=G_2=\{1,g\}\) with \(g^2=1\), nondegeneracy gives
\(\chi(g,g)=-1\) and \(\tau=\nu/\sqrt2\), \(\nu=\pm1\) over
\(\C\). The two red channels resolve the identity,
\[
 \wire{m}\,\wire{m}
 =\channel{m}{m}{1}+\channel{m}{m}{g},
\]
and the three-red splitting relations are
\begin{equation}\label{eq:z2-F-rows}
 \leftsplit{m}{m}{m}{1}{m}
 \!{=}\frac{\nu}{\sqrt2}\left(
     \rightsplit{m}{m}{m}{1}{m}
    \!\!\!{+}\!\!\!\rightsplit{m}{m}{m}{g}{m}\right),\!
 \leftsplit{m}{m}{m}{g}{m}
 \!{=}\frac{\nu}{\sqrt2}\left(
     \rightsplit{m}{m}{m}{1}{m}
    \!\!\!{-}\!\!\!\rightsplit{m}{m}{m}{g}{m}\right).
\end{equation}
Here \(F=F^{-1}\), so the fusion relations have the same coefficients as \autoref{eq:z2-F-rows}.
For example,
\begin{equation*}
 \lefttree{m}{m}{m}{g}{m}
 =\frac{\nu}{\sqrt2}\left(
 \righttree{m}{m}{m}{1}{m}-\righttree{m}{m}{m}{g}{m}\right).
\end{equation*}
The mixed relations remember the bicharacter sign:
\begin{equation*}
\begin{gathered}
 \lefttree{g}{m}{g}{m}{m}=-\righttree{g}{m}{g}{m}{m},
 \lefttree{m}{g}{m}{m}{1}=\righttree{m}{g}{m}{m}{1},
 \lefttree{m}{g}{m}{m}{g}=-\righttree{m}{g}{m}{m}{g}.
\end{gathered}
\end{equation*}

For the idempotent monoid \(E_2=\{1,e\}\), \(e^2=e\), the group ideal
is \(A=\{e\}\). The restricted rule has only one red channel:
\[
 \wire{m}\,\wire{m}=\channel{m}{m}{e}.
\]
Writing \(\tau^2=1\) for its three-red coefficient, the only
nontrivial recoupling is
\begin{equation*}
 \lefttree{m}{m}{m}{e}{m}
 =\tau\,\righttree{m}{m}{m}{e}{m}.
\end{equation*}
Notice that here the three simple objects themselves form the monoid
\[
 M_3=\{1,e,m\},\qquad e^2=e,\quad em=me=m,\quad m^2=e.
\]
Thus this example is exactly a monoid-graded category $\Vec_{{M_3}}^\omega$
of \autoref{sec:pointed}, and its monoidal structures are classified by
$H^3(M_3,k^\times)$.
\end{example}

\subsection{Fusion trees}\label{sec:basis}

A tree successively resolves a word into simple summands. Unlike
the all-black case, its internal channels need not be unique.

\begin{definition}[Fusion trees]
Fix a planar binary tree $T$ with ordered leaves $w=s_1\cdots s_n$.
A labelling is \emph{admissible} if the output at every vertex belongs
to $\Out(r,s)$ for its two inputs. Denote by $\Lambda_T(w;t)$ the
labellings with root $t$. Composing projections gives $P^T_\lambda:w\to t$;
composing inclusions in the reverse order gives $I^T_\lambda:t\to w$.
A one-leaf tree is a strand; a monoid also has the empty tree,
with root $1$ and map $\id_1$.
\end{definition}

\begin{lemma}\label{lem:trees}
For each fixed tree $T$,
\begin{equation*}
 P^T_\lambda I^T_\mu=\delta_{\lambda,\mu}\id_t,
 \qquad
 \sum_{t\in\mathcal L}\sum_{\lambda\in\Lambda_T(w;t)}
 I^T_\lambda P^T_\lambda=\id_w.
\end{equation*}
The first expression is zero when the two root labels differ.
\end{lemma}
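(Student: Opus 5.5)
Both identities will come from induction on the number of vertices of $T$, using only the vertex relations \autoref{eq:resolution}, progressive isotopy, and the interchange law. A one-leaf tree is a strand: $\Lambda_T(w;t)$ is a single labelling when $t=s_1$ and empty otherwise, so both identities are immediate. For a monoid, the empty tree gives $\id_1$ and the same conclusion holds. Otherwise $T$ has a root vertex whose two subtrees $T_1,T_2$ have leaf words $w_1,w_2$ with $w=w_1w_2$. An admissible labelling $\lambda$ with root $t$ is then exactly a triple $(\lambda_1,\lambda_2)$ with roots $r,s$ and $t\in\Out(r,s)$. Consequently
\[
P^T_\lambda=p_{rs}^{t}\,(P^{T_1}_{\lambda_1}\otimes P^{T_2}_{\lambda_2}),\qquad I^T_\lambda=(I^{T_1}_{\lambda_1}\otimes I^{T_2}_{\lambda_2})\,i_{rs}^{t}.
\]

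For the orthogonality relation, take $\mu=(\mu_1,\mu_2)$ with roots $r',s'$ and root label $t'$. By the interchange law,
\[
P^T_\lambda I^T_\mu=p_{rs}^{t}\bigl(P^{T_1}_{\lambda_1}I^{T_1}_{\mu_1}\otimes P^{T_2}_{\lambda_2}I^{T_2}_{\mu_2}\bigr)i_{r's'}^{t'}.
\]
The induction hypothesis makes the middle factor $\delta_{\lambda_1,\mu_1}\delta_{\lambda_2,\mu_2}\id_{rs}$, with the convention that a mismatch of root labels already gives zero. When the middle factor survives we have $(r',s')=(r,s)$, and the first relation of \autoref{eq:resolution} gives $p_{rs}^ti_{rs}^{t'}=\delta_{t,t'}\id_t$. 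The labelling records all three pieces of data, so the product of the three deltas is exactly $\delta_{\lambda,\mu}$.

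For completeness, sum over $t$ and $\lambda$, grouping the sum by $(r,s,\lambda_1,\lambda_2)$. The inner sum over $t\in\Out(r,s)$ of $i_{rs}^tp_{rs}^t$ equals $\id_{rs}$ by the second relation of \autoref{eq:resolution}. What remains is
\[
\sum (I^{T_1}_{\lambda_1}\otimes I^{T_2}_{\lambda_2})(P^{T_1}_{\lambda_1}\otimes P^{T_2}_{\lambda_2})=\Bigl(\sum I^{T_1}_{\lambda_1}P^{T_1}_{\lambda_1}\Bigr)\otimes\Bigl(\sum I^{T_2}_{\lambda_2}P^{T_2}_{\lambda_2}\Bigr),
\]
which is $\id_{w_1}\otimes\id_{w_2}=\id_w$ by induction.

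The only real care needed is bookkeeping in the monoid case. There, erased global-unit labels make some vertices identities, namely $p_{1r}^r=i_{1r}^r=\id_r$. In that case $\Out(1,r)=\{r\}$ and the relations in \autoref{eq:resolution} hold trivially. A subtree whose leaves are all erased gives the empty tree with root $1$. These cases fit the induction without change, and I expect nothing harder than this.
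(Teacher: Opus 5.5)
Your proof is correct and follows essentially the same route as the paper. The paper also stacks the two trees and cancels bubbles with the first relation of \autoref{eq:resolution}, from the leaves inward, which is your structural induction via interchange; for completeness it likewise inserts the channel resolution at the root and then recursively in each subtree, and your factorization $P^T_\lambda=p_{rs}^t(P^{T_1}_{\lambda_1}\otimes P^{T_2}_{\lambda_2})$ states that induction more explicitly.
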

\begin{proof}
Stack the splitting tree below the fusion tree. Cancel matching
branches from the leaves towards the root using
\[
 \bubble{r}{s}{u}{t}=\delta_{t,u}\wire{t}.
\]
If a label fails to match, the diagram is zero; otherwise only the
root strand remains. For the second identity, we reverse the induction using
\[
 \wire{r}\,\wire{s}=\sum_{t\in\Out(r,s)}\channel{r}{s}{t}.
\]
Sum at the root and then in each subtree. Strands and empty trees need no replacement.
\end{proof}

\begin{lemma}[Change of tree]\label{lem:rotation}
Let $T,T'$ have the same ordered leaves and root. The families
$\{P^T_\lambda\}$ and $\{P^{T'}_\mu\}$ are related by an
invertible matrix. The same holds for the splitting families.
\end{lemma}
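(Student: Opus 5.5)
The plan is to reduce the change of tree to a sequence of elementary rotations, using the defining relations \autoref{eq:R-aaa}--\autoref{eq:R-mmm} for each step and \autoref{lem:trees} for invertibility.

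First, any two planar binary trees with the same ordered leaves are connected by a finite sequence of local rotations. Each rotation replaces a subtree of shape $(XY)Z$ by $X(YZ)$, or the reverse, at some internal vertex. Here $X,Y,Z$ are subtrees, possibly single leaves. This is the standard connectivity of the associahedron (Tamari lattice), and the paper already uses it implicitly for the black combs in \autoref{lem:pointed-normal}. Since a product of invertible matrices is invertible, it suffices to treat a single rotation.

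For one rotation, fix labellings of the parts of the tree outside the rotated vertex, namely the roots $r,s,t$ of $X,Y,Z$ together with the labellings inside $X,Y,Z$ and the labels above the rotated vertex. Then $P^T_\lambda$ factors as (the upper part) $\circ\, P^L_e\circ(P^X\otimes P^Y\otimes P^Z)$, where $P^L_e$ is the three-leaf left tree with inputs $r,s,t$, channel $e$ and output $u$. By the relations \autoref{eq:R-aaa}--\autoref{eq:R-mmm}, each $P^L_e$ equals $\sum_f (F^{rst;u})^{-1}_{e,f}P^R_f$. In the cases with at most two red inputs the matrix is a scalar or a permutation times scalars $\chi(\cdot,\cdot)^{\pm1}$. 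In the $mmm$ case it is $F^{-1}$, which is invertible by \autoref{lem:Fourier}. Substituting this relation expresses each $P^T_\lambda$ as a combination of the $P^{T'}_\mu$, with $\mu$ agreeing with $\lambda$ except at the rotated channel. The transition matrix is therefore block diagonal, with one block for each choice of the outer data, and each block is an $F^{-1}$-block. An inverse matrix also exists directly, coming from the $F$ form of the relations.

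Next I check that the two label sets match. The admissible labels of $T$ with fixed outer data and channel $e$ range over exactly the rows of $F^{rst;u}$, and likewise for $T'$. This holds because the left and right decompositions of $rst$ have the same summands: the multiplicity check is \autoref{eq:stationary-channels}, via \autoref{lem:stationary-channels}. So each block is a square invertible matrix. The main obstacle is this bookkeeping with ambient labels: the channel sets involve $\phi(s)^{-1}x$ and must be bijective. I would handle it by the identities $sx=\phi(s)x$ and $xs=x\phi(s)$.

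For the splitting families, I would use \autoref{lem:splitting-relations} in the same way, obtaining $I^L_e=\sum_f F_{f,e}I^R_f$. Alternatively, \autoref{lem:trees} gives the dual statement directly: if $P^{T}=MP^{T'}$, then $I^{T'}=I^{T}M$, so the splitting transition is $M^{-1}$ up to transpose. This follows by computing $P^T_\lambda I^{T'}_\mu=M_{\lambda\mu}\id_t$ and inserting the resolution of the identity $\sum I^TP^T=\id_w$. For a monoid, the unit-erased leaves and empty trees need no rotation, because their vertices are identities.
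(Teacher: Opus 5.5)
Your proposal is correct and follows essentially the same route as the paper: reduce to single rotations, recognise each rotation as one of the relations \autoref{eq:R-aaa}--\autoref{eq:R-mmm} (invertible monomial matrices, plus the Fourier block made invertible by \autoref{lem:Fourier}), and handle the splitting families via \autoref{lem:splitting-relations}. The only difference is how trees are connected: the paper rotates at the root explicitly until every tree becomes the left comb, rather than citing Tamari connectivity, and this is the same argument.
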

\begin{proof}
A rotation $((T_1T_2)T_3)\leftrightarrow(T_1(T_2T_3))$ is one of
\autoref{eq:R-aaa}--\autoref{eq:R-mmm} applied at the three subtree roots.
The first seven matrices are invertible monomial matrices; the last
is invertible by \autoref{lem:Fourier}. Any tree can be changed to
the left comb: rotate at the root until the right subtree is a leaf,
then repeat on the left subtree. For splitting maps use
\autoref{lem:splitting-relations}.
\end{proof}

Choose a left comb for each word and omit its superscript.
For $\lambda\in\Lambda(u;t)$ and $\mu\in\Lambda(v;t)$ put
\begin{equation}\label{eq:normal-form}
 E^t_{\mu,\lambda}=I_\mu P_\lambda:u\longrightarrow v.
\end{equation}
We abbreviate the chosen trees by fans, with ordered boundaries $u,v$:
\begin{equation*}
E^t_{\mu,\lambda}=\treepair{u}{v}{t}{\lambda}{\mu}.
\end{equation*}
The lower tree projects $u$ onto its copy $\lambda$ of the simple
$t$. The upper includes $t$ into the copy $\mu$ in $v$.

\subsection{The presentation theorem}

Cancellation gives the product of two tree pairs. Reassociation
makes their span closed under tensor products.

\begin{proposition}\label{prop:span}
For all words $u,v$, the maps in \autoref{eq:normal-form} span
$\Hom_{\DS}(u,v)$.
\end{proposition}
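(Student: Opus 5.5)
The plan is to mimic the proof of \autoref{lem:pointed-normal}. Let $\mathcal S(u,v)$ denote the $k$-span of the maps $E^t_{\mu,\lambda}$ in \autoref{eq:normal-form}, taken over all roots $t$ and all admissible labellings. Since every diagram in $\DS$ is built from the generators \autoref{eq:generators} by composition and tensor product, it suffices to prove three things: the spans contain all identities and all generators, they are closed under composition, and they are closed under tensor product.

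\textbf{Generators and identities.} For a word $w$, \autoref{lem:trees} applied to the left comb gives
\[
\id_w=\sum_t\sum_{\lambda}E^t_{\lambda,\lambda}.
\]
For a single vertex, say $p_{rs}^t:rs\to t$, the left comb on the two-letter word $rs$ is that vertex itself. Hence $p_{rs}^t=E^t_{\emptyset,\lambda}$, where $\lambda$ is the labelling with channel $t$ and the upper tree is the strand $t$. Similarly $i_{rs}^t=E^t_{\lambda,\emptyset}$. In the monoid case, the erased unit vertices are identities, and the local-unit vertices $p_e,i_e$ are ordinary channel vertices, so they are covered by the same argument.

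\textbf{Composition.} Given $E^t_{\mu,\lambda}:u\to v$ and $E^{t'}_{\nu,\rho}:v\to w$, the middle contraction is $P_\rho I_\mu$. By \autoref{lem:trees} this equals $\delta_{\rho,\mu}\delta_{t,t'}\id_t$, because both are left combs on the same word $v$. The product is therefore $\delta\,E^t_{\nu,\lambda}$.

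\textbf{Tensor product.} This is the step I expect to need the most care. We have
\[
E^t_{\mu,\lambda}\otimes E^{t'}_{\mu',\lambda'}=(I_\mu\otimes I_{\mu'})(P_\lambda\otimes P_{\lambda'}).
\]
Insert the identity on $tt'$ as $\sum_{s\in\Out(t,t')}i^s_{tt'}p^s_{tt'}$. The lower factor $p^s_{tt'}(P_\lambda\otimes P_{\lambda'})$ is a fusion tree for the word $uu'$ of shape $(T_u)(T_{u'})$, with an admissible labelling. By \autoref{lem:rotation} it is a linear combination of left-comb projections $P_\kappa$ with root $s$. The upper factor $(I_\mu\otimes I_{\mu'})i^s_{tt'}$ is a splitting tree, and \autoref{lem:splitting-relations} together with \autoref{lem:rotation} expresses it as a combination of left-comb inclusions $I_\eta$. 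Multiplying out gives a combination of the $E^s_{\eta,\kappa}$.

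The main point to check is that \autoref{lem:rotation} is stated for whole families of trees, including admissible labellings with arbitrary black leaves $s\in S$. Rotations apply locally at subtree roots, which carry simple labels in $\mathcal L$. A subtree root may be a product label $st$ or a channel in $A$, but it is always a single simple object, so each local relation among \autoref{eq:R-aaa}--\autoref{eq:R-mmm} applies with those labels as inputs. Words without $m$, or the empty word in the monoid case, are degenerate instances of the same argument: the relevant trees are strands or empty trees.

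Combining the three steps, the spans $\mathcal S(u,v)$ contain the generators and are closed under both operations, so they exhaust $\Hom_{\DS}(u,v)$.
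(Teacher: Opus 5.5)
Your proposal is correct and follows essentially the same route as the paper. You place identities and single vertices in the span using a one-vertex tree against a strand (or the empty tree for a global-unit output), get closure under composition from the orthogonality in \autoref{lem:trees}, and get closure under tensor product by resolving the middle pair through $\Out(t,t')$ and rotating both grafted trees to the chosen combs via \autoref{lem:rotation}, with \autoref{lem:splitting-relations} on the splitting side.
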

\begin{proof}
Let $M(u,v)$ be the stated span. \autoref{lem:trees} puts the
identities in $M$. The generating projections and inclusions also
belong to $M$: use a one-vertex tree on one side and a strand on the other (the empty tree for a global-unit output).

Orthogonality gives closure under composition:
\begin{equation}\label{eq:matrix-product}
 (I_\eta P_\mu)(I_{\mu'}P_\lambda)
 =\delta_{\mu,\mu'}I_\eta P_\lambda.
\end{equation}

On three red strands,
\begin{equation*}
 \matrixunit{a}{b}\circ\matrixunit{c}{d}
 =\delta_{b,c}\matrixunit{a}{d}.
\end{equation*}
The middle trees cancel from their leaves; the remaining lower and
upper channels are $d$ and $a$.

For tensor products, resolve the intermediate pair $r,s$:
\begin{align*}
 &(I_\mu P_\lambda)\otimes(I_{\mu'}P_{\lambda'})\\
 &\quad=\sum_{t\in\Out(r,s)}
 \underbrace{(I_\mu\otimes I_{\mu'})i_{rs}^{t}}_{\text{grafted splitting tree}}\hspace{0.5cm}
 \underbrace{p_{rs}^{t}(P_\lambda\otimes P_{\lambda'})}_{\text{grafted fusion tree}}.
\end{align*}
Change both grafted trees to the chosen combs by
\autoref{lem:rotation}. Each summand lies in $M$, also for a
global-unit root.
\end{proof}

\begin{theorem}[Tree basis and equivalence]\label{thm:presentation}
For all words $u,v$, the tree pairs $E^t_{\mu,\lambda}$ form a
basis of $\Hom_{\DS}(u,v)$. Evaluation is fully faithful and
strong semigroupal, and induces $\Add(\DS)\simeq\CS$, a monoidal
equivalence when $S$ is a monoid. Moreover,
\begin{equation}\label{eq:hom-dimension}
 \dim\Hom_{\DS}(u,v)
 =\sum_{t\in\mathcal L}|\Lambda(u;t)|\,|\Lambda(v;t)|.
\end{equation}
\end{theorem}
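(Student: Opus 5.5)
Spanning is \autoref{prop:span}, so the remaining task is linear independence, and for that we use the evaluation functor $\mathcal E$ of \autoref{lem:evaluation}. The plan is to show that $\mathcal E$ sends the tree pairs $E^t_{\mu,\lambda}$ to a linearly independent family in $\Hom_{\CS}(\mathcal E u,\mathcal E v)$. Once that holds, any linear relation among the $E^t_{\mu,\lambda}$ in $\DS$ would map to a relation in $\CS$, which forces all coefficients to vanish. So the tree pairs form a basis, $\mathcal E$ is faithful on each Hom space, and the dimension formula \autoref{eq:hom-dimension} follows by counting tree pairs.

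The key step is to compute $\mathcal E(P_\lambda)$ and $\mathcal E(I_\mu)$. Under evaluation these are composites of the chosen projections and inclusions of $\CS$, with reassociations inserted by the tensor maps. The resulting maps
\[
 \mathcal E(I_\lambda):t\to\mathcal E(w)
\]
for $\lambda\in\Lambda(w;t)$ are the inclusions of the summands in an iterated direct sum decomposition of the left-associated product $\mathcal E(w)$. This decomposition is obtained by applying \autoref{eq:resolution} repeatedly along the left comb. The comb is left-associated, so no reassociation is actually needed here.

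Hence
\[
 \mathcal E(P_\lambda)\,\mathcal E(I_{\lambda'})=\delta_{\lambda,\lambda'}\id_t
 \qquad\text{and}\qquad
 \sum_{t,\lambda}\mathcal E(I_\lambda P_\lambda)=\id.
\]
Equivalently, \autoref{lem:trees} holds in $\CS$, giving $\mathcal E(w)\cong\bigoplus_t t^{\oplus|\Lambda(w;t)|}$. By semisimplicity and Schur's lemma, $\End(t)=k$ for every simple $t$ (using that $k$ is algebraically closed, and distinct simples have no maps between them). Therefore $\Hom_{\CS}(\mathcal E u,\mathcal E v)$ has dimension $\sum_t|\Lambda(u;t)||\Lambda(v;t)|$, with basis the matrix units $\mathcal E(I_\mu)\mathcal E(P_\lambda)$. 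Linear independence follows directly: composing a relation with $\mathcal E(P_{\mu})$ on the left and $\mathcal E(I_\lambda)$ on the right isolates each coefficient, exactly as in \autoref{eq:matrix-product}.

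This also shows that $\mathcal E$ is full on words, since the dimensions match. Every object of $\CS$ is a direct sum of simples, and each simple is the image of a one-letter word. So $\mathcal E$ extends to a fully faithful and essentially surjective functor $\Add(\DS)\to\CS$. Its strong semigroupal (or monoidal) structure is the one given in \autoref{lem:evaluation}.

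The main obstacle is the bookkeeping in the monoid case with $e\neq 1$. There the global unit is erased and the empty word has root $1$, while the local unit $e$ remains a genuine label. We must check that the $\Lambda(w;t)$ counted with these conventions match the summands of $\mathcal E(w)$; for example, the empty word must give exactly one copy of $1$, and $mm$ must give $e$ and not $1$. We also need to check that the tensor maps, which are identities on the unit, do not disturb the comb computation.
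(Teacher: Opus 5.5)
Your proposal is correct and follows essentially the same route as the paper. Spanning comes from \autoref{prop:span}, and evaluation sends the tree pairs to the matrix units of $\mathcal E(w)\cong\bigoplus_t t^{\oplus|\Lambda(w;t)|}$, which form a basis of the target; this gives linear independence, full faithfulness and the dimension formula at once, and essential surjectivity holds because every simple is the image of a strand (or, for a global unit $1$, of the empty word rather than a one-letter word, as your final paragraph anticipates). Your added detail — that the left comb needs no reassociation under left-associated evaluation, and the unit bookkeeping — only makes explicit what the paper leaves implicit.
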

\begin{proof}
Put $\mathcal C=\CS$. Evaluation is well-defined by
\autoref{lem:evaluation}. For a fixed tree,
\[
 \mathcal E(w)\cong
 \bigoplus_{t\in\mathcal L}\ \bigoplus_{\lambda\in\Lambda(w;t)}t.
\]
The image of $I_\mu P_\lambda$ is the matrix unit from copy
$\lambda$ to copy $\mu$ of $t$. These images form a basis of the
target Hom-space. \autoref{prop:span} now proves that the
functor is fully faithful and gives \autoref{eq:hom-dimension}.

Every simple of $\mathcal C$ is the image of its strand, or of the empty
word for a global unit. The additive extension is therefore essentially surjective.
\end{proof}

\begin{remark}
By \autoref{thm:presentation}, every idempotent in $\Add(\DS)$
already splits, so no further completion by idempotents is needed.
The diagram category remains strict; the nontrivial associators
in $\CS$ are recorded by the tensor maps of evaluation.
\end{remark}

\begin{proposition}[The black and local subcalculi]\label{cor:pointed-embedding}
The assignments $\mu_{s,t}\mapsto p_{st}^{st}$ and
$\Delta_{s,t}\mapsto i_{st}^{st}$ give a fully faithful semigroupal
functor $\mathcal P(S)\to\DS$, monoidal when $S$ is a monoid.
Its additive image is $\Vec_S$.
The local category $\Add(\D)$ embeds fully faithfully and semigroupally
in $\Add(\DS)$ with image $\CA$. Its empty word maps to $e$,
so the inclusion preserves the ambient unit exactly when $S=A$.
\end{proposition}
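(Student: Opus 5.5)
The plan has two parts, one for each subcalculus. In both, the fact that the proposed map is a functor is checked on generators and relations, and full faithfulness comes from matching the tree bases of \autoref{lem:pointed-normal} and \autoref{thm:presentation}.

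For the black part, define the functor on generators by $\mu_{s,t}\mapsto p_{st}^{st}$ and $\Delta_{s,t}\mapsto i_{st}^{st}$, acting as the identity on words in $S$. Well-definedness means checking the relations of $\mathcal P(S)=\mathcal P(S,1)$.
\begin{itemize}
\item The cancellation relations \autoref{eq:pointed-inverse} go to \autoref{eq:resolution}. Here $\Out(s,t)=\{st\}$, so the resolution of $\id_{st}$ has a single term.
\item The tree relation \autoref{eq:pointed-assoc} with $\omega=1$ goes to the first relation of \autoref{eq:R-aaa}.
\end{itemize}
Since $\mathcal P(S)$ and $\DS$ are both strict and the functor is the identity on words, it is strict semigroupal. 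In the monoid case the erased-unit conventions agree, so it is strict monoidal.

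Full faithfulness for the black part goes as follows. For black words $u,v$, every admissible labelling of the left comb has only black channels, since $\Out(s,t)=\{st\}$. Hence $|\Lambda(u;t)|=\delta_{t,\pi(u)}$. By \autoref{thm:presentation}, $\Hom_{\DS}(u,v)$ is zero if $\pi(u)\neq\pi(v)$, and is spanned by the single tree pair $I_vP_u$ otherwise. This tree pair is exactly the image of $\Delta_v\mu_u$, and \autoref{lem:pointed-normal} shows that $\Delta_v\mu_u$ spans the source Hom space. So the map is a bijection between one-dimensional or zero spaces. Composing with evaluation $\mathcal E$ then identifies the additive image with the full subcategory of $\CS$ on $S$, which is $\Vec_S$ with trivial cocycle by \autoref{def:CS}.

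For the local part, send words in $A\sqcup\{m\}$ to the same words, with the empty word going to $e$, and send generators to the identically named vertices. The relations of $\D$ are those of $\DS$ with $\phi$ restricted to $A$, where $\phi$ is the identity. So well-definedness is immediate, except at the unit.

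The main point to check is the unit, because $e$ is the unit of $\D$ but is not erased in $\DS$ when $e\neq1$. Empty-word boundaries and erased $e$-edges in $\D$ must be sent to $e$-strands and $e$-vertices, with the unit vertices $p_{ea}^a$ and $i_{ea}^a$ sent to the black vertices $p_{ea}^{a}$ and $i_{ea}^{a}$. This is consistent because these are identities up to cancellation. By \autoref{eq:resolution}, $\Out(e,a)=\{a\}$ and $\Out(e,m)=\{m\}$, and the associators with an $e$ input are $1$, since $\phi(e)=e$ and $\chi(e,-)=1$.

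Full faithfulness for the local part is again a count of tree bases. A labelling of a comb on a word in $A\sqcup\{m\}$ uses only channels in $A\sqcup\{m\}$, because $A$ is an ideal and $\Out(m,m)=A$. The admissible labellings therefore coincide in $\D$ and in $\DS$, so tree pairs go bijectively to tree pairs. By \autoref{thm:presentation}, applied once with $S$ and once with $S=A$, this is a bijection of bases. The essential image is the full subcategory of $\CS$ generated by $A\sqcup\{m\}$, which is $\CA$ by \autoref{prop:group-ideal-extension}. Finally, the inclusion is unital exactly when $e=1$. Since $A$ is an ideal containing the unit, this happens exactly when $S=A$.
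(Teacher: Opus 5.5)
Your proposal is correct and is essentially the paper's proof. The relations of $\mathcal P(S,1)$ go to \autoref{eq:resolution} and the first relation of \autoref{eq:R-aaa}; a black word has a unique admissible comb, so \autoref{thm:presentation} reproduces the Hom-spaces of \autoref{lem:pointed-normal}; and in the local part all roots and channels stay in $A\sqcup\{m\}$, so tree pairs correspond bijectively. One phrasing point: $p_{ea}^a$ and $p_{em}^m$ are identity morphisms in $\mathcal D(A,\chi,\tau)$, so the corresponding absorption vertices of $\mathcal D(S;A,\chi,\tau)$ should enter as the semigroupal structure maps $eX\cong X\cong Xe$ of the functor (as in the paper), not as images of morphisms, and your remark that every associator with an $e$-input is trivial is exactly what makes these structure maps coherent.
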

\begin{proof}
For a black word there is a unique admissible comb, with root the
product of its leaves. Thus \autoref{thm:presentation} gives exactly the Hom-spaces in
\autoref{lem:pointed-normal}, with the same nonzero basis maps.
The tensor relations restrict to \autoref{eq:pointed-inverse} and
\autoref{eq:pointed-assoc} with $\omega=1$, so this is
$\mathcal P(S)=\mathcal P(S,1)$.
Apply \autoref{thm:pointed-equivalence}.
For the local inclusion, all roots and channels remain in
$A\sqcup\{m\}$, so the same basis proves fullness and faithfulness.
At the local empty word, its tensor maps are the identifications
$eX\cong X\cong Xe$ in $\CA$.
\end{proof}

\begin{proposition}\label{cor:simples}
The simples of $\Add(\DS)$ are $S\sqcup\{m\}$.
The unique positive real multiplicative dimension function on these
rules is $d_{\mathrm{pos}}(s)=1$ and $d_{\mathrm{pos}}(m)=\sqrt N$.
On the local fusion category these are the Frobenius--Perron dimensions
$\FPdim(a)=1$ and $\FPdim(m)=\sqrt N$, distinct from categorical
dimensions in $k$.
\end{proposition}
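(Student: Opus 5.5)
The statement has three parts. First, the simples of $\Add(\DS)$ are exactly $S\sqcup\{m\}$. Second, there is a unique positive real multiplicative dimension function on these fusion rules. Third, on the local fusion category it agrees with $\FPdim$. I would prove them in that order.

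For the first claim, I use \autoref{thm:presentation}: $\Add(\DS)\simeq\CS$, and $\CS$ is semisimple with simple labels $S\sqcup\{m\}$ by \autoref{def:CS}. Concretely, each one-letter word $t$ has $\End(t)$ spanned by tree pairs with root $t$. By \eqref{eq:hom-dimension} this space is one-dimensional, since a single strand has a single admissible labelling. Hence each strand is simple. For distinct labels $t\neq t'$ we get $\Hom(t,t')=0$, because there is no common root. Every word decomposes by \autoref{lem:trees} into a sum of strands. So these are all the simples up to isomorphism.

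For the second claim, let $d:S\sqcup\{m\}\to\mathbb R_{>0}$ be multiplicative on the fusion rules \eqref{eq:group-ideal-rules}, meaning $d(r)d(s)=\sum_t N_{rs}^t d(t)$.
\begin{itemize}
\item From $st=st$ we get $d(s)d(t)=d(st)$, so $d$ restricted to $S$ is a homomorphism into $(\mathbb R_{>0},\cdot)$.
\item From $sm=m$ we get $d(s)d(m)=d(m)$. Since $d(m)>0$, this gives $d(s)=1$ for every $s\in S$.
\item From $m^2=\bigoplus_{x\in A}x$ we get $d(m)^2=\sum_{x\in A}d(x)=N$, so $d(m)=\sqrt N$ by positivity.
\end{itemize}
For existence, I check directly that $d_{\mathrm{pos}}$ satisfies every fusion rule: $1\cdot1=1$, $1\cdot\sqrt N=\sqrt N$, and $\sqrt N^2=N$. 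Note that positivity is only used in choosing the root $d(m)=\sqrt N$; the values $d(s)=1$ follow from $d(m)\neq0$ alone.

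For the third claim, on the local fusion category $\CA$ (rigid by \autoref{thm:cl-classification}) the Frobenius--Perron dimension is the unique ring homomorphism from the Grothendieck ring to $\mathbb R$ that is positive on simples, by the standard Frobenius--Perron theory for fusion rings. Restricting the computation above to $A\sqcup\{m\}$ gives $\FPdim(a)=1$ and $\FPdim(m)=\sqrt N$. The Frobenius--Perron argument is characteristic-free, since it only uses the fusion ring.

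To show these differ from categorical dimensions in $k$, I cite the rigidity computation in \autoref{thm:cl-classification}. With $\ev_m=\tau^{-1}p_1$ and $\coev_m=i_1$, the loop evaluates to $\tau^{-1}p_1i_1=\tau^{-1}$, an element of $k$ with $(\tau^{-1})^2=N$. The categorical dimension of $m$ is therefore $\pm\sqrt N$, interpreted in $k$. In positive characteristic this is not a real number at all. Over $\C$ it is negative when $\nu=-1$.

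The main subtlety, though minor, is how to read "distinct". I would phrase it as saying that categorical dimensions live in $k$ and depend on $\tau$, whereas $\FPdim$ is real and positive and does not. I would not claim a literal inequality in every case, since over $\C$ with $\nu=1$ the two values coincide.
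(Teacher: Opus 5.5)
Your proposal is correct and is essentially the paper's argument: the basis theorem (\autoref{thm:presentation}) and \autoref{lem:trees} identify the simples, $sm=m$ forces $d(s)=1$, $m^2=\bigoplus_{a\in A}a$ forces $d(m)^2=N$, and checking the converse and restricting to the local fusion ring gives $\FPdim$. One small precision in your last paragraph: the raw loop $\ev_m\coev_m=\tau^{-1}$ is not itself a categorical dimension, since $d_m=\rho\tau^{-1}$ also requires the pivotal choice of \autoref{prop:pivotal}; your conclusion that it is a square root of $N$ in $k$ still stands.
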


\begin{proof}
The basis theorem gives one-dimensional endomorphism spaces for these
labels and zero Hom-spaces between distinct labels. Every word splits
into them by \autoref{lem:trees}. Positivity and $sm=m$ force
$d_{\mathrm{pos}}(s)=1$; then $m^2=\bigoplus_a a$ forces
$d_{\mathrm{pos}}(m)^2=N$. Conversely these values respect all rules,
and restrict to the positive dimension function of the local fusion ring.
\end{proof}

\Needspace{13\baselineskip}
\subsection{Tensor products and mixed words}

Counting the free channels in a comb gives the tensor powers of $m$.

\begin{proposition}\label{prop:powers}
For $r\ge1$ and $s\ge0$,
\begin{equation*}
 m^{2r}\cong\bigoplus_{a\in A}a^{\oplus N^{r-1}},\qquad
 m^{2s+1}\cong m^{\oplus N^s}.
\end{equation*}
The decompositions are realized by the tree projections and inclusions.
Consequently
\begin{equation}\label{eq:End-powers}
 \End(m^{2r})\cong\bigoplus_{a\in A}\Mat_{N^{r-1}}(k),\qquad
 \End(m^{2s+1})\cong\Mat_{N^s}(k).
\end{equation}
In particular $\dim\End(m^n)=N^{n-1}$ for $n\ge1$, whereas
$\End(m^0)=\End(1)=k$ only when $S$ has a global unit.
\end{proposition}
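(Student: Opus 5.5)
We count admissible labellings of the left comb on the word $m^n$, using \autoref{thm:presentation} and \autoref{lem:trees}, and argue by induction on $n$. For the left comb, the successive roots are the objects $\pi_1=m$, $\pi_2,\dots,\pi_n$. At each step we fuse the current root with one more $m$, using $\Out(m,m)=A$ and $\Out(a,m)=\{m\}$ for $a\in A$. So the roots alternate: they lie in $A$ at even positions and equal $m$ at odd positions.

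Concretely, at an even step the current root is $m$, and fusing with $m$ gives $N$ choices of black channel $a\in A$. At an odd step $n\ge 3$ the current root is some $a\in A$, and $a\otimes m$ has the unique output $m$, with no choice. A labelling is therefore the same as a sequence of black channels at the even positions $2,4,\dots$. For $n=2r$ the last of these channels is the root, so $|\Lambda(m^{2r};a)|=N^{r-1}$ for each $a\in A$. For $n=2s+1$ there are $s$ free channels and the root is $m$, so $|\Lambda(m^{2s+1};m)|=N^s$. All other roots have no labellings.

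By \autoref{lem:trees}, the families $I_\lambda,P_\lambda$ satisfy $P_\lambda I_\mu=\delta_{\lambda,\mu}\id_t$ and $\sum I_\lambda P_\lambda=\id$. This exhibits the claimed direct sum decompositions in $\Add(\DS)$, realized by tree inclusions and projections. For the endomorphism algebras we apply \autoref{thm:presentation}. The tree pairs $E^t_{\mu,\lambda}$ form a basis, and by \autoref{eq:matrix-product} they multiply as matrix units. This gives a full matrix algebra $\Mat_{|\Lambda(m^n;t)|}(k)$ for each root $t$, which yields \autoref{eq:End-powers}. The dimension count is then immediate: $N\cdot N^{2(r-1)}=N^{2r-1}$ in the even case and $N^{2s}$ in the odd case, so $\dim\End(m^n)=N^{n-1}$ in both cases.

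For $n=0$, the empty word exists only in the monoid case, where it is the empty tree with root $1$ and $\End(1)=k$ by \autoref{thm:presentation}. For a semigroup without global unit there is no empty word.

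The only delicate point is bookkeeping at the root and at the first step. For the left comb, the first vertex is $m\otimes m$, so the even/odd pattern starts correctly from $\pi_1=m$. We also need that $\Out(a,m)=\{m\}$ holds even when $a=e$ is only a local unit. That vertex is retained as a genuine vertex, not erased, but it still contributes no choice. Everything else is routine.
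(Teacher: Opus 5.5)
Your proposal is correct and follows essentially the paper's argument. The paper's primary route inducts by tensoring the $r=1$ channel decomposition with $m$ and using $am=m$, but it explicitly states this is equivalent to the comb-label count you carry out ($r-1$ free labels at an even root $a\in A$, $s$ free labels at the odd root $m$), with \autoref{lem:trees} supplying the decomposition maps and the tree-pair basis giving the matrix blocks.
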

\begin{proof}
The even formula starts with the channel decomposition at $r=1$.
Tensor with $m$ and use $am=m$ to obtain $N^r$ copies of $m$;
tensor once more to obtain $N^r$ copies of each $a$.
The case $s=0$ is $m$ itself. \autoref{lem:trees} supplies the
decomposition maps, giving \autoref{eq:End-powers}. Equivalently, count the
$r-1$ free comb labels at an even root and the $s$ free labels
at the odd root $m$.
\end{proof}

If a word contains a red strand, all black labels can be absorbed
into red strands. The number of red strands then determines its
decomposition.

\begin{proposition}[Mixed words]\label{prop:mixed-words}
Let $h(w)$ count the red strands of $w$. If $h(w)>0$, multiplication
and absorption give an isomorphism $\kappa_w:w\to m^{h(w)}$.
For $p=h(u)$ and $q=h(v)$,
\begin{equation*}
 \dim\Hom_{\DS}(u,v)=
 \begin{cases}
 \delta_{\pi(u),\pi(v)},&p=q=0,\\
 N^{(p+q)/2-1},&p,q>0,\quad p\equiv q\pmod2,\\
 {\bf1}_A(\pi(u))N^{r-1},&p=0,\quad q=2r>0,\\
 {\bf1}_A(\pi(v))N^{r-1},&q=0,\quad p=2r>0,\\
 0,&\text{otherwise}.
 \end{cases}
\end{equation*}
Here $\pi$ is the product of a pure black word and $\mathbf1_A$ its
membership indicator. Empty products occur only for monoids and
have value the global $1$.
\end{proposition}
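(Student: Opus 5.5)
The plan is to reduce everything to the dimension formula \autoref{eq:hom-dimension} of \autoref{thm:presentation}, which says that $\dim\Hom_{\DS}(u,v)=\sum_t|\Lambda(u;t)|\,|\Lambda(v;t)|$. This turns the statement into counting admissible labellings of left combs. The isomorphism $\kappa_w$ comes first. For a word $w$ with $h(w)>0$, I would group the letters into maximal black blocks and fuse each block to its product using black merges. Each resulting product label is then absorbed into an adjacent red strand with an absorption vertex: into the red strand on its right if there is one, and otherwise into the one on its left. Every such vertex has a unique output, and by \eqref{eq:resolution} its matching split is an inverse. So the composite $\kappa_w:w\to m^{h(w)}$ is invertible, with inverse the reversed composite of splits.

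Next, the case analysis. If $p,q>0$, conjugating by $\kappa_u,\kappa_v$ gives $\Hom(u,v)\cong\Hom(m^p,m^q)$. \autoref{prop:powers} decomposes $m^p$ and $m^q$. If $p,q$ have different parity, one side has only black summands in $A$ and the other only copies of $m$, so the space is $0$. If both are even, $p=2r$ and $q=2r'$, the dimension is $\sum_{a\in A}N^{r-1}N^{r'-1}=N^{r+r'-1}=N^{(p+q)/2-1}$. If both are odd, $p=2s+1$ and $q=2s'+1$, it is $N^{s}N^{s'}=N^{(p+q)/2-1}$. If $p=q=0$, both words are black, and \autoref{cor:pointed-embedding} together with \autoref{lem:pointed-normal} gives $\delta_{\pi(u),\pi(v)}$. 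The empty word counts here with product the global $1$ in the monoid case.

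For the mixed case $p=0$ and $q>0$, the word $u$ fuses by its comb to the single simple $\pi(u)$, so $\Hom(u,v)\cong\Hom(\pi(u),m^q)$. This is zero when $q$ is odd, because $m^q$ is then a sum of copies of $m$. When $q=2r$, \autoref{prop:powers} says $m^{2r}$ contains only labels in $A$, each with multiplicity $N^{r-1}$. So the dimension is $N^{r-1}$ when $\pi(u)\in A$ and $0$ otherwise. The case $q=0$, $p>0$ is symmetric.

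The main point needing care is the conjugation step: \autoref{prop:powers} is stated for $m^n$, and I need all black labels to disappear without changing channel counts. Applying \eqref{eq:hom-dimension} directly sidesteps this. A comb labelling of $w$ with $h(w)>0$ has forced black steps and absorption steps, and free choices only at vertices where the running product, which is $m$, meets a red leaf and splits into channels in $A$. This gives $|\Lambda(w;t)|=|\Lambda(m^{h(w)};t)|$ for every root $t$. I would use this to cross-check the isomorphism argument, and I would verify by hand the degenerate words with an empty black block and the monoid unit conventions.
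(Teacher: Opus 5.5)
Your proposal is correct and follows the paper's proof. You build $\kappa_w$ by fusing each maximal black block and absorbing it into the next red strand (the last red strand for a terminal block), which is invertible by the resolution relations. You then read off the dimensions from \autoref{cor:pointed-embedding} for pure black words and from the decompositions of \autoref{prop:powers} otherwise, summing products of multiplicities. Your comb-counting check $|\Lambda(w;t)|=|\Lambda(m^{h(w)};t)|$ via \autoref{eq:hom-dimension} is also valid, since black letters act on $A$ by the bijections $a\mapsto a\phi(s)$.
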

\begin{proof}
Fuse each maximal block of black labels, then absorb it into the next $m$,
working from left to right. Absorb a terminal block into the last $m$.
All these maps are invertible by \autoref{eq:resolution}; their
composite defines $\kappa_w$.

For two pure black words use \autoref{cor:pointed-embedding};
otherwise use \autoref{prop:powers}. Two positive even powers
have $N$ common roots, two odd powers have the single root $m$,
and opposite parities have none. If one boundary is pure black and the other contains $2r>0$ red
strands, their only possible common root is the black product.
It contributes $N^{r-1}$ precisely when it lies in $A$. Summing products of
multiplicities proves the formula. The tree matrix units, conjugated
by the absorption maps, give bases for these Hom-spaces.
\end{proof}

\begin{example}[The idempotent boundary]\label{ex:ambient-mixed}
For \(E_2=\{1,e\}\) with \(e^2=e\) and \(A=\{e\}\), the restricted rule is
\(mm=e\). Thus
\[
 \Hom(1,mm)=0,\qquad \Hom(e,mm)=k\,\splitv{m}{m}{e}.
\]
For \(S=A=G_2=\{1,g\}\) with \(g^2=1\), both black labels occur in \(mm\).
More generally, a proper group ideal in a monoid gives
\(\Hom(1,m^{2r})=\Hom(m^{2r},1)=0\) for \(r>0\).
\end{example}

\section{Endomorphism algebras}\label{sec:channels}

We now describe the endomorphism algebras more explicitly.
In the ordinary TY case these are closely related to clock--shift,
generalized Clifford and twisted tensor-product algebras
\cite{EvGa-TY-Potts,GuKiRoZh-twisted}.
We derive the operators directly from the TY channel projectors in the
black/red calculus, uniformly for our semigroup extensions.
For related diagrammatic Fourier and para constructions, see
\cite{HePeTe-planar,JaLi-planar-para}.

\subsection{Channel idempotents and matrix units}

We focus on powers of $m$. The group identity is $e$ and $N=|A|$ is invertible in $k$.
Numerical examples are over $\C$. When $S=A$, we may write $e$ as $1$.

\begingroup
\renewcommand{\quotlabel}[2]{\ifstrequal{#1}{#2}{e}{\ifstrequal{#1}{e}{#2}{#1^{-1}#2}}}
\renewcommand{\quotedge}[3]{\ifstrequal{#1}{#2}{\edge{e}{#3}}{\ifstrequal{#1}{e}{\edge{#2}{#3}}{\edge{a}{#3}}}}

\begin{lemma}[Reduction to the group ideal]\label{thm:ambient-red}
For $n\ge1$, the full inclusion $\CA\subset\CS$ gives
\[
 B_n:=\End_{\CS}(m^n)=\End_{\CA}(m^n).
\]
This identification preserves the tree matrix units and all local
operators.
\end{lemma}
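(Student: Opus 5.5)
The plan is to combine the full faithfulness of the local inclusion with the tree basis of \autoref{thm:presentation}. By \autoref{cor:pointed-embedding}, the inclusion $\Add(\D)\subset\Add(\DS)$ is fully faithful and semigroupal, with image $\CA$. For $n\ge1$ the word $m^n$ lies in both diagram categories, and fullness and faithfulness give
\[
\End_{\DS}(m^n)=\End_{\D}(m^n).
\]
Applying the evaluation equivalences $\Add(\DS)\simeq\CS$ and $\Add(\D)\simeq\CA$ then yields $B_n=\End_{\CS}(m^n)=\End_{\CA}(m^n)$. The only subtlety is the unit: the inclusion sends the local empty word to $e$ rather than to the global unit. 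We therefore restrict to $n\ge1$, where no empty word occurs.

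Next we check that the tree matrix units match. Every admissible labelling of a left comb with leaves $m^n$ has internal channels in $\Out(m,m)=A$ or equal to $m$. This is because $am=ma=m$ and $mm\in A$, so no label outside $A\sqcup\{m\}$ ever appears. Hence $\Lambda(m^n;t)$ is the same set in $\DS$ and in $\D$. The maps $I_\mu P_\lambda$ are built from the same vertices $p_a,i_a$ and absorption vertices with labels in $A$. So the inclusion sends local tree pairs to the ambient tree pairs bijectively, and the dimension count \autoref{eq:hom-dimension} agrees on both sides, which gives a second check of fullness.

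For the local operators, namely the channel idempotents $e_a=i_ap_a$ placed on adjacent red strands and the recoupling moves \autoref{eq:R-mmm}, all coefficients involve only labels in $A$. The ambient relations \autoref{eq:R-aaa}--\autoref{eq:R-mmm} restricted to $A$-labels are exactly the TY relations, because $\phi|_A=\id_A$. The main point to state carefully, more a matter of bookkeeping than a real obstacle, is that no relation in $\DS$ involving a label outside $A$ can impose an additional identification on $\End(m^n)$. Faithfulness, which comes from the basis theorem, rules this out.
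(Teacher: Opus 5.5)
Your proposal is correct and rests on the same observation as the paper: every admissible comb on $m^n$ has its channels and root in $A\sqcup\{m\}$, so no label outside the group ideal ever enters. The paper argues directly with the full subcategory $\mathcal C_A\subset\mathcal C_S$, which contains $m^n$ with the same channels, associators and tree products. You instead pass through the diagrammatic embedding $\operatorname{Add}(\mathcal D(A,\chi,\tau))\subset\operatorname{Add}(\mathcal D(S;A,\chi,\tau))$ and the evaluation equivalences; that route is valid but not needed, since fullness of $\mathcal C_A$ in $\mathcal C_S$ holds by its definition as a full subcategory.
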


\begin{proof}
Every positive power of $m$ lies in $\CA$. Its channels,
associators and tree products are therefore the same in both categories. The
presentation below also gives independence of $\tau$.
\end{proof}

\begin{remark}
The notation $B_n$ refers to powers of $m$; $B_n(S)$ in \autoref{eq:semigroup-end} refers to the sum of the black simples.
\end{remark}

Resolving two red strands into the channel $a$ and back gives the
projector $e_a$. Cancellation makes these projectors orthogonal:
\begin{equation*}
\begin{gathered}
 e_a=i_ap_a=\channel{m}{m}{a},\qquad
 \zchannels{2}{1/b,1/a}=\delta_{a,b}\channel{m}{m}{a},\qquad
 \sum_{a\in A}\channel{m}{m}{a}=\ztlid{2}.
\end{gathered}
\end{equation*}

\begin{lemma}
The $e_a$ form 
a basis of $\End(mm)$ and $\End(mm)\cong k^N$.
\end{lemma}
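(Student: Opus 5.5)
The plan is to read this off the tree basis of \autoref{thm:presentation}. By \autoref{thm:ambient-red} the computation can be done inside $\CA$, or directly in $\DS$, since the word $mm$ is the same in both. For the word $u=v=mm$, the only planar binary tree is the single vertex. Its admissible labellings are exactly the channels $a\in\Out(m,m)=A$, each with root $a$. So $\Lambda(mm;t)$ is a singleton $\{a\}$ when $t=a\in A$, and is empty otherwise (that is, for $t=m$ or $t\in S\setminus A$).

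The tree pairs $E^t_{\mu,\lambda}=I_\mu P_\lambda$ are therefore exactly $i_ap_a=e_a$ for $a\in A$. \autoref{thm:presentation} says these form a basis of $\End(mm)$, and \autoref{eq:hom-dimension} gives
\[
\dim\End(mm)=\sum_{t\in\mathcal L}|\Lambda(mm;t)|^2=N.
\]
This agrees with the case $n=2$ of \autoref{prop:powers}.

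For the algebra structure, I would use the vertex relation \autoref{eq:resolution}, namely $p_ai_b=\delta_{a,b}\id_a$. This gives
\[
e_ae_b=i_a(p_ai_b)p_b=\delta_{a,b}e_a,
\]
which is the displayed orthogonality picture. The completeness relation $\sum_a e_a=\id_{mm}$ is the second vertex relation. So the $e_a$ are $N$ pairwise orthogonal idempotents summing to the identity, and they form a basis. The linear map $k^N\to\End(mm)$ sending the $a$-th coordinate vector to $e_a$ is then a unital algebra isomorphism. Equivalently, this is \autoref{eq:End-powers} with $r=1$, a direct sum of $N$ copies of $\Mat_1(k)$.

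There is no real obstacle. The only points to check are that no tree other than the single vertex exists for a two-letter word, and that every $e_a$ is nonzero. Nonvanishing follows from $p_ae_ai_a=\id_a\neq0$, or from the basis statement.
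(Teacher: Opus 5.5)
Your proof is correct and is essentially the paper's argument, which the paper calls immediate. The tree basis of Theorem~\ref{thm:presentation} (equivalently, the case $r=1$ of Proposition~\ref{prop:powers}) gives the basis $e_a=i_ap_a$, $a\in A$, of $\End(mm)$, and the orthogonality and resolution relations displayed just before the lemma make the $e_a$ pairwise orthogonal idempotents summing to $\id_{mm}$, so $\End(mm)\cong k^N$.
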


\begin{proof}
Immediate.
\end{proof}

On $m^3$ put
$L_a=e_a\otimes\id_m$ and $R_b=\id_m\otimes e_b$, and set
$M_{a,c}=I_a^LP_c^L$. Their expanded diagrams are
\[
 M_{a,c}=\matrixunit{a}{c}.
\]
The lower tree selects channel $c$ and fuses to $m$; the upper tree
splits through channel $a$.

\begin{proposition}\label{prop:adjacent}
For $a,b,c\in A$,
\begin{equation}\label{eq:adjacent}
 L_aR_bL_c=\frac{\chi(ac^{-1},b)}{N}M_{a,c}.
\end{equation}
In particular,
\begin{equation}\label{eq:adjacent-diagonal}
 L_aR_bL_a=N^{-1}L_a,\qquad
 M_{a,c}=N L_aR_eL_c.
\end{equation}
Hence $\{L_a,R_b:a,b\in A\}$ generates $\End(m^3)$ and $\End(m^3)\cong\Mat_N(k)$.
\end{proposition}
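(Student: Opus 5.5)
The plan is to compute everything in the left-comb channel basis on $m^3$. By \autoref{lem:trees} with the left tree, $\id_{m^3}=\sum_{a}I_a^LP_a^L$, and $L_a=e_a\otimes\id_m=I_a^LP_a^L$, because the outer vertex $p^{m}_{am}$ and its inclusion cancel. So $L_a=M_{a,a}$, and more generally $L_aXL_c=I_a^L(P_a^LXI_c^L)P_c^L$. Every map $m\to m$ is a scalar multiple of $\id_m$, so it suffices to compute the scalar $P_a^LR_bI_c^L$.

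For this scalar I rewrite $R_b$ in the right comb: $R_b=\id_m\otimes e_b=I_b^RP_b^R$, by the same cancellation. This gives
\[
P_a^LR_bI_c^L=(P_a^LI_b^R)(P_b^RI_c^L).
\]
Now apply the change-of-tree formulas \autoref{eq:two-tree-moves} together with orthogonality of the right-comb trees. First, $P_b^RI_c^L=F_{b,c}\id_m=\tau\chi(c,b)^{-1}\id_m$. Second, $P_a^LI_b^R=(F^{-1})_{a,b}\id_m=\tau\chi(a,b)\id_m$, using the inverse from \autoref{lem:Fourier}. Their product is $\tau^2\chi(a,b)\chi(c,b)^{-1}$. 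Using $N\tau^2=1$ and bimultiplicativity of $\chi$, this equals $\chi(ac^{-1},b)/N$, which gives \autoref{eq:adjacent}.

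The only real obstacle is bookkeeping: getting $F$ versus $F^{-1}$ and the index order right in the two contractions. I would check this against the entries fixed in \autoref{lem:splitting-relations} and \autoref{eq:R-mmm}. I would also confirm that the sign convention reproduces the bubble identity in \autoref{lem:Fourier}, which is the same computation with $a=c$ summed over $b$.

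The consequences follow directly.
\begin{itemize}
\item Setting $c=a$ gives $\chi(e,b)=1$, hence $L_aR_bL_a=N^{-1}L_a$.
\item Setting $b=e$ gives $\chi(\cdot,e)=1$, hence $M_{a,c}=NL_aR_eL_c$. This uses that $N$ is invertible in $k$.
\item By \autoref{prop:powers}, or by \autoref{lem:trees} applied to the left comb, the $M_{a,c}$ with $a,c\in A$ are the $N^2$ tree matrix units of $\End(m^3)\cong\Mat_N(k)$.
\end{itemize}
Since every $M_{a,c}$ lies in the algebra generated by the $L_a$ and $R_b$, these elements generate $\End(m^3)$.
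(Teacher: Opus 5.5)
Your proof is correct and follows the paper's route: the paper also resolves the middle projector $R_b=I_b^RP_b^R$ into right splitting and fusion trees and multiplies the two overlap scalars $P_a^LI_b^R=(F^{-1})_{a,b}=\tau\chi(a,b)$ and $P_b^RI_c^L=F_{b,c}=\tau\chi(c,b)^{-1}$ to get $N^{-1}\chi(ac^{-1},b)$. It then specializes $c=a$ and $b=e$ exactly as you do.
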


\begin{proof}
Expanding every channel gives
\begin{equation*}
\zchannels{3}{1/c,2/b,1/a}
 =\frac{\chi(ac^{-1},b)}{N}\matrixunit{a}{c}.
\end{equation*}
For example, $(L_aR_b)^2=N^{-1}L_aR_b$. Hence
$NL_aR_b$ is an idempotent.

Resolve the middle projector into its right splitting and fusion
trees. The two remaining overlaps are
\begin{equation}\label{eq:adjacent-overlaps}
 \oppoverlap{b}{a}=\tau\chi(a,b)\wire{m},\qquad
 \coverlap{c}{b}=\tau\chi(c,b)^{-1}\wire{m}.
\end{equation}
The $F^{-1}$ and $F$ entries multiply to $N^{-1}\chi(ac^{-1},b)$,
leaving $M_{a,c}$. Take $c=a$ for the projector sandwich and $b=e$
for all matrix units.
\end{proof}

\begin{example}[Three strands for the group of order two]
Take $A=G_2=\{1,g\}$, $g^2=1$, $\chi(g,g)=-1$ and
$\tau=\nu/\sqrt2$.

Write $L_x=e_x\otimes\id_m$, $R_x=\id_m\otimes e_x$ and
$M_{x,y}=I_x^LP_y^L$ for $x,y\in\{1,g\}$:
\[
 M_{1,1}=\zmatrixunit{1}{1},\quad
 M_{g,1}=\zmatrixunit{g}{1},\quad
 M_{1,g}=\zmatrixunit{1}{g},\quad
 M_{g,g}=\zmatrixunit{g}{g}.
\]
In the left-comb basis ordered by $(1,g)$,
\[
 F=\frac{\nu}{\sqrt2}
 \begin{pmatrix}1&1\\1&-1\end{pmatrix},\qquad
 L_1=\begin{pmatrix}1&0\\0&0\end{pmatrix},\qquad
 R_1=\frac12\begin{pmatrix}1&1\\1&1\end{pmatrix}.
\]

Since $N=2$, the general sandwich relation becomes
\[
 L_xR_yL_z=\frac12\chi(xz,y)M_{x,z}.
\]
Thus the two middle channels give opposite off-diagonal matrix units:
\[
 \zchannels{3}{1/1,2/1,1/g}
 =\frac12\zmatrixunit{g}{1},\qquad
 \zchannels{3}{1/1,2/g,1/g}
 =-\frac12\zmatrixunit{g}{1}.
\]
Indeed, the coefficient is
$\tau\chi(x,y)\tau\chi(z,y)=\frac12\chi(xz,y)$.

An off-diagonal tree pair can be nonzero and still have square zero.
For $V=2L_aR_1L_1=M_{g,1}$,
\begin{equation*}
 \zmatrixunit{g}{1}\circ\zmatrixunit{g}{1}=0,\qquad
 \zmatrixunit{1}{g}\circ\zmatrixunit{g}{1}=\zchannels{3}{1/1}.
\end{equation*}
Indeed, the middle contraction in the first product is
$P_1^LI_g^L=0$; in the second it is $P_g^LI_g^L=\id_m$.
More generally,
$M_{x,y}M_{z,w}=\delta_{y,z}M_{x,w}$.
\end{example}

\begin{example}[Adjacent channels for $\Z/3\Z$]
For the datum in \autoref{ex:Z3-recoupling}, put $e=1$. Then
\begin{equation*}
\zchannels{3}{1/1,2/g,1/g}=\frac\zeta3\matrixunit{g}{1},\qquad
 \zchannels{3}{1/g,2/g,1/g}=\frac13\zchannels{3}{1/g},
\end{equation*}
as one can easily check.
\end{example}

\subsection{Fusion paths and Fourier operators}

The successive labels along a splitting comb form a fusion path.
For $m^{2r+1}$, the path is determined by
$\mathbf a=(a_1,\ldots,a_r)\in A^r$. After the first $2j$ leaves
the root is $a_j$; after $2j+1$ leaves it is $m$. Set $a_0=e$.
For $m^{2r}$ with fixed root $t$, set $a_r=t$, leaving
$a_1,\ldots,a_{r-1}$ free. Denote the splitting map by
$|\mathbf a\rangle$. On six leaves this splitting map is
\begin{equation*}
|a_1,a_2;t\rangle=\zsixstate{a_1}{a_2}{t}{none}.
\end{equation*}

Write $e_b^{(i)}$ for the $b$-channel projector on strands $i,i+1$,
and define the Fourier operator $u_i(x)$ on the same two strands by
\begin{equation}\label{eq:Fourier-channels}
 \channel{m}{m}{b}=e_b,\qquad
 \zlocal{2}{1/u(x)}
 =\sum_{b\in A}\chi(x,b)\channel{m}{m}{b}.
\end{equation}
Placing these diagrams on strands $i,i+1$ gives $e_b^{(i)}$ and
$u_i(x)$.

\begin{lemma}[Fourier inversion]\label{lem:Fourier-inversion}
For $1\le i<n$,
\[
 e_b^{(i)}
 =\frac1N\sum_{x\in A}\chi(x,b)^{-1}u_i(x).
\]
\end{lemma}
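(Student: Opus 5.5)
The plan is to substitute the definition \autoref{eq:Fourier-channels} of $u_i(x)$ into the right-hand side and then collapse the resulting double sum using the orthogonality relation \autoref{eq:orthogonality} from \autoref{lem:Fourier}. Since everything is supported on strands $i,i+1$ and extended by identities, it suffices to prove the identity on $mm$, that is, in $\End(mm)$, and then tensor with identity strands on both sides. By the interchange law, placing a local diagram on strands $i,i+1$ is linear.

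On $mm$, substituting $u(x)=\sum_{c\in A}\chi(x,c)e_c$ gives
\[
 \frac1N\sum_{x\in A}\chi(x,b)^{-1}u(x)
 =\sum_{c\in A}\Big(\frac1N\sum_{x\in A}\chi(x,b)^{-1}\chi(x,c)\Big)e_c .
\]
By bimultiplicativity and symmetry of $\chi$ (\autoref{lem:cl-fourier-form}), the inner summand is $\chi(x,b)^{-1}\chi(x,c)=\chi(cb^{-1},x)$. The relation \autoref{eq:orthogonality} then evaluates the inner sum as $N\delta_{c,b}$, so the right-hand side collapses to $e_b$.

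The only point requiring care is the division by $N$. It is legitimate because $|A|\neq0$ in $k$ by \autoref{cor:ambient-field}. Orthogonality itself needs only nondegeneracy of $\chi$, which holds by \autoref{prop:cl-abelian}.

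There is no real obstacle here. The one point to check is the convention of which slot of $\chi$ carries $x$. Symmetry of $\chi$ makes this irrelevant.
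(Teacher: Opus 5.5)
Your proposal is correct and follows the paper's own argument: the paper's proof is the one-line appeal to character orthogonality \autoref{eq:orthogonality}. You have written out that appeal explicitly, by substituting \autoref{eq:Fourier-channels} and collapsing the inner sum to $\delta_{c,b}$.
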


\begin{proof}
This is character orthogonality, \autoref{eq:orthogonality}.
\end{proof}

\begin{lemma}[Local action on paths]\label{lem:path-action}
For all admissible indices,
\begin{align*}
 e_b^{(2j-1)}|\mathbf a\rangle
 &=\delta_{b,a_{j-1}^{-1}a_j}|\mathbf a\rangle,
 \\
 e_b^{(2j)}|\mathbf a\rangle
 &=\frac1N\sum_{c\in A}\chi(ca_j^{-1},b)
 |a_1,\ldots,a_{j-1},c,a_{j+1},\ldots\rangle.
\end{align*}
Consequently
\begin{align}
 u_{2j-1}(x)|\mathbf a\rangle
 &=\chi(x,a_{j-1}^{-1}a_j)|\mathbf a\rangle,
 \label{eq:path-u-odd}\\
 u_{2j}(x)|\mathbf a\rangle
 &=|a_1,\ldots,a_{j-1},x^{-1}a_j,a_{j+1},\ldots\rangle.
 \label{eq:path-u-even}
\end{align}
\end{lemma}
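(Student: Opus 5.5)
The plan is to reduce everything to a local computation on three consecutive leaves of the left comb, and then read off the $u$-formulas from Fourier inversion.

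\emph{Odd projectors.} For $e_b^{(2j-1)}$, the strands $2j-1,2j$ are two red leaves. In the comb $|\mathbf a\rangle$, leaf $2j-1$ is first fused with the accumulated root $a_{j-1}$ to give $m$ (absorption), and then leaf $2j$ is fused to give $a_j$. Locally this is the three-leaf tree with inputs $a_{j-1},m,m$ and left channel $m$. I would apply the splitting version of \autoref{eq:R-amm} (via \autoref{lem:splitting-relations}) to rotate it into the right tree, whose pair channel is $a_{j-1}^{-1}a_j$; the coefficient is $1$. Then $e_b$ on that pair acts by $\delta_{b,a_{j-1}^{-1}a_j}$, by \autoref{eq:resolution}. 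Rotating back with the inverse move gives the first formula. For $j=1$, $a_0=e$ and the leaf is simply the first red strand, so no rotation is needed.

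\emph{Even projectors.} For $e_b^{(2j)}$, the strands are leaves $2j,2j+1$. Locally, the comb has root $m$ after $2j-1$ leaves, fuses leaf $2j$ to channel $a_j$, and then leaf $2j+1$ gives $m$. This is exactly the left tree for $mmm$ with channel $a_j$. I would proceed as follows.
\begin{enumerate}
\item Apply the splitting move \autoref{eq:R-mmm} in its $F$-form: $I^L_{a_j}=\sum_b\tau\chi(a_j,b)^{-1}I^R_b$.
\item Apply $e_b$, which keeps only the $b$ term.
\item Rotate back using $F^{-1}$: $I^R_b=\sum_c\tau\chi(c,b)I^L_c$.
\end{enumerate}
The product of coefficients is $\tau^2\chi(ca_j^{-1},b)=N^{-1}\chi(ca_j^{-1},b)$, which is the second formula. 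This is the same overlap computation as \autoref{eq:adjacent-overlaps}.

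The step needing care is that the local root above leaf $2j+1$ is $m$ in the odd-length case, while at the top of an even word it is $t$. In either case, the upper part of the comb (further absorptions and channels) is unaffected, since only the subtree on three leaves changes. Interchange and progressive isotopy justify this locality. The main obstacle is getting the direction of $F$ versus $F^{-1}$ right, using the convention \autoref{eq:two-tree-moves}, and checking the orientation $\chi(ca_j^{-1},b)$ rather than its inverse.

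\emph{Fourier operators.} By \autoref{eq:Fourier-channels}, $u_i(x)=\sum_b\chi(x,b)e_b^{(i)}$. The odd formula \autoref{eq:path-u-odd} is immediate from the first formula. For \autoref{eq:path-u-even}, substitute the second formula:
\[
u_{2j}(x)|\mathbf a\rangle=\frac1N\sum_c\Bigl(\sum_b\chi(xca_j^{-1},b)\Bigr)|\ldots,c,\ldots\rangle .
\]
By \autoref{eq:orthogonality}, the inner sum is $N\delta_{xc,a_j}$, so $c=x^{-1}a_j$.
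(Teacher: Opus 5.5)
Your proposal is correct and follows the paper's proof step for step. You contract the prefix, use the coefficient-one splitting form of the $smm$ relation at odd positions, resolve the right pair with $F$ and recouple with $F^{-1}$ at even positions (the same product $(F^{-1})_{c,b}F_{b,a}$ the paper takes from the adjacent overlaps), and finish with character orthogonality.

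One remark should be tidied. In an even word the fixed root $a_r=t$ appears only at the odd position $2r-1$. Every even position $2j$ has $j\le r-1$ and local root $m$, so no case distinction is needed there.
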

\begin{proof}
For an odd position, contract the prefix to $h=a_{j-1}$. Reassociate
the local tree by the splitting form of \autoref{eq:R-amm}:
\[
 \leftsplit{h}{m}{m}{m}{a_j}
 =\rightsplit{h}{m}{m}{h^{-1}a_j}{a_j}.
\]
The pair channel is $h^{-1}a_j$, with coefficient $1$.
A $b$-projector therefore gives
$\delta_{b,h^{-1}a_j}$, and its Fourier sum acts diagonally:
\begin{equation*}
 \stateaction{h}{m}{m}{m}{a}{2}{u(x)}
 =\chi(x,h^{-1}a)\leftsplit{h}{m}{m}{m}{a}.
\end{equation*}
For an even position, contract the prefix to $m$. Resolve the
right pair, then recouple back:
\begin{equation}\label{eq:graphical-local-projector}
 \stateaction{m}{m}{m}{a}{m}{2}{e_b}
 =\frac1N\sum_c\chi(ca^{-1},b)\leftsplit{m}{m}{m}{c}{m}.
\end{equation}
The coefficient is the product $(F^{-1})_{c,b}F_{b,a}$ from
\autoref{eq:adjacent-overlaps}. Fourier summation forces $xca^{-1}=e$:
\begin{equation*}
 \stateaction{m}{m}{m}{a}{m}{2}{u(x)}
 =\leftsplit{m}{m}{m}{x^{-1}a}{m}.
\end{equation*}
Reattach the prefix and suffix. For the first pair, $a_0=e$ uses
the local comparison $em\cong m$.
\end{proof}

\begin{example}[Fourier operators for $\Z/3\Z$]
\label{ex:local-Z3}\label{ex:Z3-Weyl}
Work over $\C$. Let
$A=\langle g\mid g^3=e\rangle$ and
$\chi(g,g)=\zeta=e^{2\pi i/3}$.
On three strands,
\[
 \stateaction{m}{m}{m}{g}{m}{1}{u(g)}
 =\zeta\leftsplit{m}{m}{m}{g}{m},\qquad
 \stateaction{m}{m}{m}{g}{m}{2}{u(g)}
 =\leftsplit{m}{m}{m}{e}{m}.
\]
Thus the first Fourier operator is diagonal, while the second shifts
the channel. (For Fourier transforms, matrix units and Pauli operators, compare \cite[Sections 6.3--6.5 and 9]{JaLi-planar-para}.) For example,
\[
 \zchannels{3}{1/e,2/e,1/g}
 =\tau^2\matrixunit{g}{e}
 =\frac13\matrixunit{g}{e},
 \qquad
 \matrixunit{e}{g}\circ\matrixunit{g}{e}
 =\matrixunit{e}{e}
 =\zchannels{3}{1/e}.
\]

In the path basis $\{|e\rangle,|g\rangle,|g^2\rangle\}$, put
\[
 Z=u_1(g)=
 \begin{pmatrix}1&0&0\\0&\zeta&0\\0&0&\zeta^2\end{pmatrix},
 \qquad
 X=u_2(g^2)=
 \begin{pmatrix}0&0&1\\1&0&0\\0&1&0\end{pmatrix}.
\]
Then $X|a\rangle=|ga\rangle$ and $ZX=\zeta XZ$. Hence the nine
operators $X^jZ^k$, $0\le j,k<3$, form a basis of $\End(m^3)$.
For instance,
\[
 L_e=\frac13(1+Z+Z^2),\qquad
 M_{g,e}=XL_e=\frac13(X+XZ+XZ^2).
\]
The commutator reduces diagrammatically to
\begin{equation}\label{eq:Z3-commutator}
 \zlocal{3}{2/X^2,1/Z^2,2/X,1/Z}
 =\zeta\,\zlocal{3}{2/X^2,1/Z^3,2/X}
 =\zeta\,\ztlid{3}.
\end{equation}
The coupons expand into the channel projectors of
\autoref{eq:Fourier-channels}.
\end{example}

Fourier operators also compare the two ways of absorbing a black strand.
For $s\in S$, put
\[
 d_s^L=p_{ms}^{m}\otimes\id_m,\qquad
 d_s^R=\id_m\otimes p_{sm}^{m}:msm\longrightarrow mm
\]
in the strict diagram category. The first absorbs $s$ to the left,
the second to the right. Changing between them gives a Fourier operator on $mm$.

\begin{lemma}[Moving an absorption]\label{lem:absorption}
For every $s\in S$,
\begin{equation*}
 d_s^L=\left(\sum_{x\in A}\chi(\phi(s),x)^{-1}e_x\right)d_s^R,
 \qquad d_s^L(d_s^R)^{-1}=u_1(\phi(s)^{-1}).
\end{equation*}
\end{lemma}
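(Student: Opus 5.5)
The plan is to test both sides against the channel projections $p_x$ on $mm$ and then reassemble using the resolution of the identity $\id_{mm}=\sum_{x\in A}i_xp_x$ from \autoref{eq:resolution}.

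First, for each $x\in A$, I will identify the composite $p_x d_s^L=p_x(p_{ms}^m\otimes\id_m)$ with the left fusion tree on the word $msm$ with inner label $m$ and root $x$. Likewise, $p_xd_s^R=p_x(\id_m\otimes p_{sm}^m)$ is the corresponding right fusion tree. The first relation in \autoref{eq:R-mam} relates exactly these two trees:
\[
 p_xd_s^L=\chi\big(\phi(s),x\big)^{-1}\,p_xd_s^R .
\]
No channel sum appears, because $\Out(m,s)=\Out(s,m)=\{m\}$. Next I compose on the left with $i_x$ and sum over $x\in A$. On the left-hand side the resolution gives $\sum_x i_xp_xd_s^L=d_s^L$. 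On the right-hand side I obtain $\sum_x\chi(\phi(s),x)^{-1}e_x\,d_s^R$, with $e_x=i_xp_x$. This is the first displayed identity.

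For the second identity, I will observe that $d_s^R$ is invertible with inverse $\id_m\otimes i_{sm}^m$. This holds by \autoref{eq:resolution}, again because $\Out(s,m)$ is the single label $m$, so the channel sum has one term. Hence
\[
 d_s^L(d_s^R)^{-1}=\sum_x\chi(\phi(s),x)^{-1}e_x .
\]
Since $\chi$ is a bicharacter, $\chi(\phi(s),x)^{-1}=\chi(\phi(s)^{-1},x)$. By \autoref{eq:Fourier-channels} on strands $1,2$, the sum is then exactly $u_1(\phi(s)^{-1})$.

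The only delicate point is the convention bookkeeping. The defining relations \autoref{eq:R-mam} are stated for fusion trees and use the inverse coefficient $\chi^{-1}$, whereas \autoref{def:CS} records the splitting-tree associator with $\chi$. Working with projections (fusion trees) throughout, rather than inclusions, avoids having to invert anything, and that fixes the sign of the exponent. The other point to state explicitly is that the coefficient depends only on $\phi(s)$, which is built into the relation.
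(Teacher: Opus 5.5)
Your proposal is correct and follows the paper's proof: the fusion relation \autoref{eq:R-mam} gives $p_xd_s^L=\chi(\phi(s),x)^{-1}p_xd_s^R$, and then you compose with $i_x$, sum over $x\in A$ using the channel resolution, and recognize the Fourier operator. Your remark that $d_s^R$ has inverse $\id_m\otimes i_{sm}^m$, and your use of $\chi(\phi(s),x)^{-1}=\chi(\phi(s)^{-1},x)$, supply details the paper leaves implicit.
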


\begin{proof}
The fusion relation \autoref{eq:R-mam} gives
\[
 \lefttree{m}{s}{m}{m}{x}
 =\chi\big(\phi(s),x\big)^{-1}\righttree{m}{s}{m}{m}{x},
\]
or $p_xd_s^L=\chi(\phi(s),x)^{-1}p_xd_s^R$.
Multiply by $i_x$ and sum. The definition of the Fourier operator
gives the second equality.
\end{proof}

For $s=a\in A$ this is the expanded absorption comparison
\begin{equation*}
 \leftabsorptionpic
 =\sum_x\chi(a,x)^{-1}\rightabsorptionpic.
\end{equation*}

\subsection{An algebra presentation}

The Fourier operators give a particularly small diagrammatic
presentation. Over $\C$, its abstract algebra relations and ordered basis are those of \cite[Section 3.1 and Proposition 3.2]{GuKiRoZh-twisted}, with twisting bicharacter $\chi^{-1}$. Here the generators are realized by TY channel operators.

\begin{theorem}\label{thm:centralizers}
For $n\ge2$, the local operators of \autoref{eq:Fourier-channels}
present $B_n=\End(m^n)$ by the relations
\begin{align}
 \zlocal{2}{1/u(e)}
 &=\ztlid{2},
 \zlocal{2}{1/u(y),1/u(x)}
=\zlocal{2}{1/u(xy)},
 \label{eq:u-group}
 \\[1ex]
 \zlocal{3}{2/u(y),1/u(x)}
 &=\chi(x,y)^{-1}
   \zlocal{3}{1/u(x),2/u(y)},
 \label{eq:u-near}
 \\[1ex]
 \zlocal{4}{3/u(y),1/u(x)}
 &=\zlocal{4}{1/u(x),3/u(y)}.
 \label{eq:u-far}
\end{align}
Here $x,y\in A$ and $1\le i,j<n$: the first relations act at one site $i$, the second at sites $i,i+1$ with $i<n-1$, and disjoint sites commute for $|i-j|>1$.
Their ordered monomials
\begin{equation}\label{eq:u-basis}
 u_1(x_1)u_2(x_2)\cdots u_{n-1}(x_{n-1}),
 \qquad (x_1,\ldots,x_{n-1})\in A^{n-1},
\end{equation}
form a basis. This implies
\begin{equation}\label{eq:centralizer-blocks}
 \End(m^{2r+1})\cong\Mat_{N^r}(k),\qquad
 \End(m^{2r})\cong\bigoplus_{t\in A}\Mat_{N^{r-1}}(k).
\end{equation}
\end{theorem}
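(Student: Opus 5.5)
The plan has three steps: verify that the relations hold in $B_n$, show that the ordered monomials span the abstract algebra they define, and then use a dimension count to conclude that the monomials form a basis and that the presentation is complete.

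\emph{Step 1: the relations hold.} By \autoref{thm:ambient-red} it suffices to work in $\End_{\CA}(m^n)$. The one-site relations \autoref{eq:u-group} follow directly from orthogonality of the channel projectors, since $u_i(x)u_i(y)=\sum_b\chi(x,b)\chi(y,b)e_b^{(i)}$, together with multiplicativity of $\chi$ and the resolution $\sum_b e_b=\id$. Far commutativity \autoref{eq:u-far} is the interchange law. For the near relation \autoref{eq:u-near}, I will use the explicit path action of \autoref{lem:path-action} rather than diagrams. By \autoref{thm:presentation}, the splitting maps $|\mathbf a\rangle$ for all admissible paths, composed on the right with arbitrary maps, span enough that an operator on $m^n$ is determined by its action on all paths. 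Equivalently, $B_n$ acts faithfully on $\bigoplus_t\Hom(t,m^n)$.

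There are two cases for adjacent sites $(i,i+1)$:
\begin{itemize}
\item If $i=2j-1$ is odd, $u_i(x)$ multiplies $|\mathbf a\rangle$ by $\chi(x,a_{j-1}^{-1}a_j)$, and $u_{i+1}(y)$ replaces $a_j$ by $y^{-1}a_j$. Comparing the two orders gives the ratio $\chi(x,y^{-1})=\chi(x,y)^{-1}$.
\item If $i=2j$ is even, $u_{i+1}(y)$ reads the pair $(a_j,a_{j+1})$ through $\chi(y,a_j^{-1}a_{j+1})$, while $u_i(x)$ shifts $a_j$. This again produces $\chi(y,x)^{\pm1}$, and symmetry of $\chi$ gives the stated sign.
\end{itemize}
Fixing the exponent conventions so that both cases land on $\chi(x,y)^{-1}$ is bookkeeping, not a real difficulty. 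For even $n$ the last pair uses the root $t$ in place of $a_r$, and the same formulas apply.

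\emph{Step 2: spanning in the abstract algebra.} Let $\mathcal U_n$ be the algebra presented by these relations. Using \autoref{eq:u-near} and \autoref{eq:u-far}, every word in the generators can be reordered into increasing site order at the cost of scalars. Then \autoref{eq:u-group} merges repeated sites and removes $u(e)$. Hence the monomials \autoref{eq:u-basis} span $\mathcal U_n$, giving $\dim\mathcal U_n\le N^{n-1}$.

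\emph{Step 3: surjectivity and the dimension count.} The map $\mathcal U_n\to B_n$ is surjective because, by \autoref{lem:Fourier-inversion}, every $e_b^{(i)}$ lies in the image. The key remaining claim is that products of channel projectors generate $B_n$. For this I will use induction with the tree matrix units: $I_\mu P_\lambda$ on left combs can be written as a product of local channel projectors with nonzero scalars, exactly as in \autoref{prop:adjacent}, where $M_{a,c}=NL_aR_eL_c$.

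A more efficient route, which avoids this generation argument altogether, is to show directly that the $N^{n-1}$ ordered monomials act linearly independently on the path basis. By \autoref{eq:path-u-odd} and \autoref{eq:path-u-even}, the even-site operators act as shift operators and the odd-site operators as clock operators on the labels of $\mathbf a$. This is a tensor product of Weyl-type pairs, whose monomials are independent by character orthogonality \autoref{eq:orthogonality}, using that $N\neq0$ in $k$ and that $\chi$ is nondegenerate. Independence gives $\dim(\text{image})\ge N^{n-1}$. Since \autoref{prop:powers} gives $\dim B_n=N^{n-1}$, the map $\mathcal U_n\to B_n$ is an isomorphism. This yields the presentation and the basis, and the block decomposition \autoref{eq:centralizer-blocks} is then \autoref{eq:End-powers}.

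I expect the main obstacle to be the independence argument in Step 3, specifically handling the mixing of clock and shift sites. The approach I will try first: expand $\sum c_{\mathbf x}\,u_1(x_1)\cdots u_{n-1}(x_{n-1})=0$, apply it to a fixed path $|\mathbf a\rangle$, and read off the coefficient of a fixed output path. The shift sites $x_{2j}$ are determined by comparing the output and input paths, which leaves a character sum in the clock variables. Nondegeneracy then forces the remaining coefficients to vanish.
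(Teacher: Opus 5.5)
Your proof is correct. Steps 1 and 2 agree with the paper: the near relation comes from the same bookkeeping of which coordinate a shift moves and which difference a clock reads, and both of your parity cases do give $u_i(x)u_{i+1}(y)=\chi(x,y)^{-1}u_{i+1}(y)u_i(x)$.

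The genuine difference is Step 3.

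\textbf{The paper's route (surjectivity).} It uses the telescoping products $v_j(x)=u_1(x)u_3(x)\cdots u_{2j-1}(x)$, which act by $\chi(x,a_j)$. Fourier-averaging them gives projections onto $a_j=c$. Combined with the shifts $u_{2j}$, and the root projections $z_t$ when $n$ is even, these generate all matrix units. The path count then finishes the proof.

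\textbf{Your route (injectivity).} You show that the images in $B_n$ of the $N^{n-1}$ ordered monomials are linearly independent.
\begin{itemize}
\item Fixing input and output path determines the shift variables $x_{2j}$.
\item The clock at site $2j-1$ then reads $a_{j-1}^{-1}x_{2j}^{-1}a_j$, and for $n=2r$ the last clock reads $a_{r-1}^{-1}t$.
\item With the shifts fixed, the map from (path, root) to these clock arguments is a triangular bijection onto $A^r$, where $r$ is the number of odd sites.
\item Distinct tuples of clock variables therefore give distinct characters of $A^r$, by nondegeneracy, and Dedekind's lemma kills all coefficients.
\end{itemize}
Two points to make explicit when you write this up. For even $n$ you must vary the root $t$ as well as the path. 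Only nondegeneracy is needed here, not $N\neq0$.

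\textbf{Comparison.} Your version proves directly that the ordered monomials are a basis of $B_n$ itself. It makes your vaguer first sketch, the induction on tree matrix units, unnecessary, so drop it. It does import the dimension and block decomposition from \autoref{prop:powers}. The paper's version instead constructs explicit matrix units and the central idempotents $z_t$ in terms of the generators. It rederives the block decomposition, and those idempotents are reused later for the simple modules.
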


\begin{proof}
Channel orthogonality gives \autoref{eq:u-group}, while disjoint
supports give \autoref{eq:u-far}. For adjacent supports, one Fourier
operator shifts the channel seen by the other. The coefficient changes
by $\chi(x,y)^{-1}$, giving \autoref{eq:u-near}.

The three relations put every word into the ordered form
\autoref{eq:u-basis}, so the presented algebra has dimension at most
$N^{n-1}$.

For generation, put
\[
 v_j(x)=u_1(x)u_3(x)\cdots u_{2j-1}(x).
\]
By \autoref{eq:path-u-odd}, $v_j(x)$ acts by $\chi(x,a_j)$.
Hence
\[
 \frac1N\sum_x\chi(x,c)^{-1}v_j(x)
\]
projects onto paths with $a_j=c$, while $u_{2j}(x)$ shifts that
coordinate by $x^{-1}$. Products of these projections and shifts give
all matrix units inside each root block.

For $n=2r$, the operator $v_r(x)$ acts by $\chi(x,t)$ on root $t$, so
\begin{equation}\label{eq:central-root}
 z_t=\frac1N\sum_x\chi(x,t)^{-1}v_r(x)
\end{equation}
projects onto that root block. There are $N^r$ paths for odd $n$, and
$N^{r-1}$ paths in each of the $N$ even root blocks. Hence
\autoref{eq:centralizer-blocks} follows, and the total dimension is
$N^{n-1}$. The presented algebra therefore has the correct dimension
and maps isomorphically onto $\End(m^n)$.
\end{proof}

\subsection{Root blocks and simple modules}\label{sec:sandwich}

Fix $n\ge1$ and put 
$\Lambda_n(t)=\Lambda(m^n;t)$ and
$S_n=\{t:\Lambda_n(t)\ne\emptyset\}$.
Cutting a tree pair at its simple root $t$ leaves a fusion tree
below and a splitting tree above. At the cut we have $\End(t)=k\id_t$:
\begin{equation*}
 E^t_{\mu,\lambda}=I_\mu\,\id_t\,P_\lambda
 =\begin{tikzpicture}[scale=.72,
 every node/.style={font=\scriptsize}]
  \path[region](-.8,.3)--(.8,.3)--(0,1)--cycle;
  \path[fill=treegraylight](-.8,2.7)--(.8,2.7)--(0,2)--cycle;
  \foreach \x in {-.65,0,.65}{\draw[m](\x,0)--(\x,.3);
    \draw[m](\x,2.7)--(\x,3);}
  \edge{t}{(0,1)--(0,2)}
  \draw[gray,dashed](-1.1,1.5)--(1.1,1.5);
  \node[coupon] at (0,1.5){$\id_t$};
  \node at (0,.5){$P_\lambda$};\node at (0,2.5){$I_\mu$};
  \node[below] at (0,0){$m^n$};\node[above] at (0,3){$m^n$};
 \end{tikzpicture}.
\end{equation*}
The shaded fans denote the chosen trees, with root $m$ or $t\in A$.
Gluing the middle trees gives the scalar pairing
(cf. \autoref{lem:trees})
\begin{equation*}
 \Hom(m^n,t)\otimes\Hom(t,m^n)\longrightarrow\End(t),
 \qquad P_\lambda\otimes I_\mu\longmapsto
 P_\lambda I_\mu=\delta_{\lambda,\mu}\id_t.
\end{equation*}
The multiplication acts separately on the lower and upper tree
indices. This is the sandwich cellular structure, with one scalar
algebra at each root, just as in \autoref{sec:pointed}. The spaces
spanned by the upper splitting trees give the simple modules.

\begin{proposition}[Sandwich cellular basis]\label{prop:sandwich}
The tree pairs form a sandwich cellular basis of $B_n$ in the sense
of \cite[Definition~2A.3]{Tu-sandwich-cellular}. Its sandwich cell datum is
\begin{equation*}
\begin{gathered}
 P=S_n\ \text{with the antichain order},\qquad
 T(t)=B(t)=\Lambda_n(t),\\
 H_t=\End(t)\cong k,\qquad B_t=\{\id_t\},\qquad
 C(\mu,\id_t,\lambda)=E^t_{\mu,\lambda}.
\end{gathered}
\end{equation*}
The $J$-cells, defined by mutual reachability under two-sided
multiplication, are precisely the root blocks.
\end{proposition}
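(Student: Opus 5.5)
We verify the axioms of a sandwich cellular basis in \cite[Definition~2A.3]{Tu-sandwich-cellular} directly from the tree-pair calculus, in the same way as in \autoref{prop:semigroup-cellularity}. First, basis: by \autoref{thm:presentation} (or \autoref{lem:trees} and \autoref{prop:span} with $u=v=m^n$), the elements $E^t_{\mu,\lambda}=I_\mu P_\lambda$ with $t\in S_n$ and $\lambda,\mu\in\Lambda_n(t)$ form a basis of $B_n$. Taking the sandwich datum $P=S_n$, $T(t)=B(t)=\Lambda_n(t)$, $H_t=k$ with basis $B_t=\{\id_t\}$, the map $(\mu,h,\lambda)\mapsto I_\mu hP_\lambda$ is then a bijection onto the basis.

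Next, the multiplication axiom. For $f\in B_n$, write $fI_\mu\in\Hom(t,m^n)$. Since $\{I_\nu:\nu\in\Lambda_n(t)\}$ is a basis of $\Hom(t,m^n)$, which the pairing $P_\nu I_\mu=\delta_{\nu,\mu}\id_t$ makes dual to $\{P_\nu\}$, we get $fI_\mu=\sum_\nu r_f(\nu,\mu)I_\nu$ with $r_f(\nu,\mu)\id_t=P_\nu fI_\mu$. This scalar is independent of $\lambda$, so
\[
 fE^t_{\mu,\lambda}=\sum_{\nu}r_f(\nu,\mu)E^t_{\nu,\lambda},
\]
and symmetrically on the right. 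There are no lower-order correction terms, since no other roots appear; this is what makes the antichain order on $P$ admissible. The sandwiched middle algebra $H_t=\End(t)=k\id_t$ is one-dimensional because $t$ is simple (\autoref{cor:simples}), so the sandwich condition holds trivially.

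Finally, the $J$-cells. By \autoref{eq:matrix-product}, $E^{t}_{\eta,\mu}E^{t}_{\mu,\lambda}E^{t}_{\lambda,\kappa}=E^t_{\eta,\kappa}$, while products across distinct roots vanish. Hence any two basis elements with the same root reach each other by two-sided multiplication, and elements with different roots do not. For the latter, note that $B_nE^t_{\mu,\lambda}B_n$ lies in the span of root-$t$ pairs, by the formula above. Thus the $J$-cells are exactly the root blocks.

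The only point needing care is the $\lambda$-independence of $r_f$ and the absence of other roots; both follow immediately from orthogonality in \autoref{lem:trees}. I therefore expect no real obstacle: the remaining work is matching the conventions of \cite{Tu-sandwich-cellular}.
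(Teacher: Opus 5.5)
Your proposal is correct and follows essentially the same route as the paper. You take the basis from \autoref{thm:presentation}, show that multiplication stays inside one root block with coefficients independent of the other tree index, and read off the $J$-cells from the matrix-unit product. Two small differences: you obtain the coefficients from the pairing $r_f(\nu,\mu)\id_t=P_\nu fI_\mu$ rather than by expanding $f$ in the tree-pair basis (both give $f^t_{\nu,\mu}$), and you leave (AC3) implicit. The paper writes (AC3) out as the bimodule isomorphism $\Hom(t,m^n)\otimes_{H_t}\Hom(m^n,t)\cong\operatorname{span}_k\{E^t_{\mu,\lambda}\}$, $I_\mu\otimes P_\lambda\mapsto I_\mu P_\lambda$, which follows immediately from your formulas since $H_t=k$.
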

\begin{proof}
\autoref{thm:presentation} gives the basis. By
\autoref{eq:matrix-product},
\begin{equation}\label{eq:sandwich-product}
 E^t_{\mu,\lambda}E^s_{\nu,\kappa}
 =\begin{cases}
 E^t_{\mu,\kappa},&t=s,\ \lambda=\nu,\\
 0,&\text{otherwise}.
 \end{cases}
\end{equation}
For $f=\sum_{s,\eta,\kappa}f^s_{\eta,\kappa}E^s_{\eta,\kappa}$ this gives
\[
 fE^t_{\mu,\lambda}=\sum_\eta f^t_{\eta,\mu}E^t_{\eta,\lambda},
 \qquad
 E^t_{\mu,\lambda}f=\sum_\kappa f^t_{\lambda,\kappa}E^t_{\mu,\kappa}.
\]
The left coefficients are independent of $\lambda$, and the right
coefficients of $\mu$. There are no higher terms in the antichain order.
The cell algebra at $t$ has the $B_n$-bimodule factorization
\begin{equation*}
\begin{aligned}
 \Hom(t,m^n)\otimes_{H_t}\Hom(m^n,t)
 &\xrightarrow{\ \sim\ }\operatorname{span}_{k}
       \{E^t_{\mu,\lambda}:\mu,\lambda\in\Lambda_n(t)\},\\
 I_\mu\otimes P_\lambda&\longmapsto I_\mu P_\lambda.
\end{aligned}
\end{equation*}
Both factors are free over $H_t$, with the splitting and fusion bases.
This verifies (AC1)--(AC3). The product formula also shows that two
basis elements lie in the same $J$-cell exactly when their roots agree.
\end{proof}

A left cell fixes $(t,\lambda)$ and varies
$\mu$: it is a column of splitting trees. A right cell fixes
$(t,\mu)$ and varies $\lambda$: it is a row of fusion trees.
An $H$-cell is the intersection of a left and a right cell;
here it is a single basis element. Choosing a
diagonal one gives the idempotent corner
\[
 \End(t)\xrightarrow{\ \sim\ }
 E^t_{\lambda,\lambda}B_nE^t_{\lambda,\lambda},
 \qquad h\longmapsto I_\lambda hP_\lambda.
\]

\begin{remark}
This is also a Graham--Lehrer cellular datum~\cite{GrLe-cellular} for the
$k$-linear anti-involution
$(E^t_{\mu,\lambda})^\star=E^t_{\lambda,\mu}$.
\autoref{eq:sandwich-product} gives
$(fg)^\star=g^\star f^\star$ and $\star^2=\id$; triangularity follows
as above.
\end{remark}

\begin{proposition}[Simple modules]\label{cor:path-simples}
A complete set of pairwise nonisomorphic simple left $B_n$-modules (with action by postcomposition) is
\begin{equation*}
 L_t=\Hom(t,m^n)=\bigoplus_{\lambda\in\Lambda_n(t)}k I_\lambda
 \qquad(t\in S_n).
\end{equation*}
Their path bases and dimensions are
\[
\begin{array}{c|c|c|c}
 n&\text{simple modules}&\text{basis indices}&\text{dimension}\\\hline
 2r+1\ (r\ge0)&L_m\ \text{(one)}&A^r&N^r\\
 2r\ (r\ge1)&L_t,\ t\in A\ \text{($N$ in total)}&A^{r-1}&N^{r-1}
\end{array}
\]
For a monoid, $n=0$ additionally gives $B_0=k$ and the single simple
$L_1=k$; otherwise there is no ambient empty power.
\end{proposition}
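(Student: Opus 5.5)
The plan is to read the statement off the sandwich cellular structure of \autoref{prop:sandwich}, or more directly off the matrix-unit decomposition. By \autoref{thm:presentation}, the tree pairs $E^t_{\mu,\lambda}$ with $t\in S_n$ and $\lambda,\mu\in\Lambda_n(t)$ form a basis of $B_n$. By \autoref{eq:sandwich-product} they multiply as matrix units. Hence
\[
 B_n\cong\bigoplus_{t\in S_n}\Mat_{|\Lambda_n(t)|}(k),
\]
with $E^t_{\mu,\lambda}$ mapping to the $(\mu,\lambda)$ matrix unit of the $t$-block. In particular $B_n$ is split semisimple. Its simple modules are therefore exactly the column spaces of the blocks, one per $t\in S_n$, and these are pairwise nonisomorphic because different blocks act by zero on each other's columns.

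Next I identify the column space of the $t$-block with $L_t=\Hom(t,m^n)$, with $B_n$ acting by postcomposition. The map $I_\lambda\mapsto$ (the $\lambda$-th basis column) is the relevant identification. The formula
\[
 E^s_{\nu,\kappa}I_\lambda=\delta_{s,t}\delta_{\kappa,\lambda}I_\nu,
\]
which follows from $P_\kappa I_\lambda=\delta_{\kappa,\lambda}\id_t$ in \autoref{lem:trees}, shows that this identification is $B_n$-equivariant. The $I_\lambda$ span $L_t$ by the basis theorem applied to $\Hom(t,m^n)$, namely \autoref{eq:hom-dimension} with $u=t$ a single strand. They are linearly independent by the same pairing. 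Consequently $L_t$ is simple and $\dim L_t=|\Lambda_n(t)|$.

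It remains to count admissible left-comb labellings, using the fusion-path description. The root after $j$ leaves alternates: it is $m$ after an odd number of leaves and lies in $A$ after an even number, since $am=m$ and $mm\in A$. For $n=2r+1$, the only root is $m$, and the free labels are $a_1,\dots,a_r\in A$; each step $a_{j}m\to m$, $mm\to a_{j+1}$ is admissible for every choice. This gives a single simple $L_m$ with basis indexed by $A^r$ and dimension $N^r$, consistent with \autoref{prop:powers}. For $n=2r$ with $r\ge1$, the root is $t=a_r\in A$, and there are $r-1$ free labels $a_1,\dots,a_{r-1}$. So $S_n=A$, giving $N$ simples, each with basis $A^{r-1}$ and dimension $N^{r-1}$. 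For a monoid with $n=0$, the empty word has $B_0=\End(1)=k$ and the single simple $L_1=k$. For a nonunital $S$ there is no empty object, so this case does not arise.

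The only step needing care is the identification of $S_n$. I must check that no black label outside $A$, such as the global unit $1$ when $A$ is proper, can occur as a root of a power of $m$. This holds because $\Out(m,m)=A$ and $A$ is an ideal, so every even-length root lies in $A$. This is also exactly the content of \autoref{thm:ambient-red}.
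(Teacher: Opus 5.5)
Your proof is correct and takes essentially the paper's route. The tree pairs act on the splitting basis $I_\lambda$ of $L_t$ as matrix units, the decomposition $B_n\cong\bigoplus_{t\in S_n}\Mat_{|\Lambda_n(t)|}(k)$ gives simplicity, separation and exhaustion, and counting comb labellings gives the dimensions. The paper separates the modules with the central idempotents $z_t=\sum_\lambda E^t_{\lambda,\lambda}$ and gets exhaustion from \autoref{eq:centralizer-blocks}, framed as the $H$-reduction of the sandwich cellular structure; this is the same argument as your direct appeal to the Wedderburn decomposition.
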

\begin{proof}
In the root-$t$ block,
$E^t_{\mu,\lambda}I_\nu=\delta_{\lambda,\nu}I_\mu$;
all other blocks act by zero. Matrix units therefore extract any
nonzero coefficient of a vector and send it to any basis vector.
Thus $L_t$ is simple. The central primitive idempotents are
\begin{equation*}
 z_t=\sum_{\lambda\in\Lambda_n(t)}E^t_{\lambda,\lambda},
 \qquad z_s|_{L_t}=\delta_{s,t}\id_{L_t}.
\end{equation*}
For even $n$, these are exactly \autoref{eq:central-root}; for odd $n$,
$z_m=1$. They distinguish the modules, and
\autoref{eq:centralizer-blocks} proves exhaustion. This is the
$H$-reduction of \cite[Theorem~2A.17]{Tu-sandwich-cellular}: each root has the single
simple $H_t$-module $k$. Counting the paths gives the displayed dimensions.
\end{proof}

The path vector $|\mathbf a\rangle$ in \autoref{lem:path-action}
is $I_\lambda\in L_t$. By \autoref{eq:path-u-odd} and
\autoref{eq:path-u-even}, odd generators act diagonally (as clocks) and even generators
shift $a_j\mapsto x^{-1}a_j$. For even powers, $a_r=t$ stays fixed.

\begin{example}[The doubled-up group]
For the semigroup in \autoref{ex:proper-group-ideal}, $N=2$.
The red algebras are
\[
 \End_{\CS}(m^{2r+1})\cong\Mat_{2^r}(\C),\qquad
 \End_{\CS}(m^{2r})\cong
 \Mat_{2^{r-1}}(\C)\oplus\Mat_{2^{r-1}}(\C),
\]
where $r\geq 1$.
\end{example}

\begin{example}[The order-two monoids]
For the group $G_2=\{1,g\}$ with $g^2=1$, the red square has two
channels,
\[
 \wire{m}\,\wire{m}
 =\channel{m}{m}{1}+\channel{m}{m}{g}.
\]
They are orthogonal idempotents:
\[
 \channel{m}{m}{x}\circ\channel{m}{m}{y}
 =\delta_{x,y}\channel{m}{m}{x},
 \qquad x,y\in\{1,g\}.
\]
Thus
$B_2\cong k\oplus k$.
In general, $B_{2r+1}\cong\Mat_{2^r}(k),\qquad
 B_{2r}\cong\Mat_{2^{r-1}}(k)\oplus\Mat_{2^{r-1}}(k)$.

For the idempotent monoid $E_2=\{1,e\}$ with $e^2=e$, the group ideal is
$A=\{e\}$. There is only one red channel,
\[
 \wire{m}\,\wire{m}=\channel{m}{m}{e},
\]
and hence
\[
 \channel{m}{m}{e}\circ\channel{m}{m}{e}
 =\channel{m}{m}{e}
 =\id_{m^2}.
\]
Therefore $B_2\cong k$. More generally,
$B_n\cong k$ for every $n\ge1$.
\end{example}

\subsection{Four-strand blocks}

Pairwise fusion on four strands gives
\[
 m^4\cong\bigoplus_{a,b\in A}ab
       \cong\bigoplus_{t\in A}t^{\oplus N}.
\]
For fixed root $t$, the first pair label $a$ determines the second
by $b=a^{-1}t$. The fusion and splitting maps are
\begin{equation*}
 Q_a^t=p_{a,a^{-1}t}^{t}(p_a\otimes p_{a^{-1}t}),\qquad
 J_a^t=(i_a\otimes i_{a^{-1}t})i_{a,a^{-1}t}^{t}.
\end{equation*}
The maps $Q_a^t$ form a basis of $\Hom(m^4,t)$, with diagrams
\begin{equation*}
Q_a^t=\pairfusion{a}{a^{-1}t}{t},\qquad J_a^t=\fourtree{4}{a}{a^{-1}t}{t}.
\end{equation*}
The matrix units $J_b^tQ_a^t$ satisfy
\begin{equation}\label{eq:four-matrix}
 (J_b^tQ_a^t)(J_d^uQ_c^u)
 =\delta_{t,u}\delta_{a,d}J_b^tQ_c^t,
 \qquad
 \End(m^4)\cong\bigoplus_{t\in A}\Mat_N(k).
\end{equation}

The corresponding pair of trees is
\begin{equation*}
 J_b^tQ_a^t=\fourunit{b}{a}{t}.
\end{equation*}
The lower tree fuses the pairs through $a$ and $a^{-1}t$ to root
$t$; the upper tree splits through $b$ and $b^{-1}t$. Thus $t$ fixes
the block, $a$ its column and $b$ its row. Cancelling the middle
trees gives \autoref{eq:four-matrix}. The $N$ splitting trees $J_a^t$
form a basis of $L_t$.

By \autoref{eq:R-amm}, this pair basis agrees with the comb basis.
\autoref{lem:path-action} gives
\begin{equation*}
 (e_a^{(1)})_{x,y}=\delta_{x,y}\delta_{x,a},\quad
 (e_b^{(2)})_{x,y}=\frac{\chi(xy^{-1},b)}N,\quad
 (e_c^{(3)})_{x,y}=\delta_{x,y}\delta_{c,x^{-1}t}.
\end{equation*}
Multiplication in the total-output-$t$ block gives
\begin{equation}\label{eq:four-sandwich}
 z_t e_a^{(1)}e_b^{(2)}e_c^{(3)}
 =\frac{\chi(act^{-1},b)}N J_a^tQ_{tc^{-1}}^t.
\end{equation}
The first and last projections select row $a$ and column $tc^{-1}$;
the middle matrix contributes the coefficient. In particular,
\begin{equation*}
 z_t=\sum_a e_a^{(1)}e_{a^{-1}t}^{(3)},\qquad
 J_a^tQ_d^t=N z_t e_a^{(1)}e_e^{(2)}e_d^{(1)}.
\end{equation*}
For instance,
\[
 (J_b^tQ_a^t)(J_a^tQ_d^t)=J_b^tQ_d^t,
 \qquad (J_b^tQ_d^t)^2=\delta_{b,d}J_b^tQ_d^t.
\]

\begin{example}[A root projection for $\Z/3\Z$]
Use $\chi(g,g)=\zeta=e^{2\pi i/3}$.
On four strands put $V=u_1(g)u_3(g)$. It acts by $1,\zeta,\zeta^2$
on the roots $e,g,g^2$, respectively. Thus
\[
 z_g=\tfrac13(1+\zeta^2V+\zeta V^2)
\]
projects onto root $g$. Its product with
$3e_a^{(1)}e_e^{(2)}e_d^{(1)}$ is the matrix unit
$J_a^gQ_d^g$ in this $3\times3$ block.
\end{example}

\endgroup

For the order-two Jones/Clifford comparison behind the following path models, see, for example, \cite{IoLeZh-Jones}.

\begin{example}[$\Z/2\Z$ again]\label{ex:Ising-four}
For $A=G_2=\{1,g\}$ with $g^2=1$, the first block decompositions are
\[
\begin{gathered}
 m^3\cong m^{\oplus2},\qquad
 m^4\cong1^{\oplus2}\oplus g^{\oplus2},\qquad
 m^5\cong m^{\oplus4},\\
 \dim\End(m^3)=4,\qquad\dim\End(m^4)=8,\qquad\dim\End(m^5)=16.
\end{gathered}
\]
Put
\[
 Z=\begin{pmatrix}1&0\\0&-1\end{pmatrix},\qquad
 X=\begin{pmatrix}0&1\\1&0\end{pmatrix}.
\]
With $s_i=u_i(g)$, the $m^3$ path space has $s_1=Z$ and
$s_2=X$; hence
$ZX=-XZ$ and $Z^2=X^2=1$. On $m^4$ the three generators act as
\[
 (s_1,s_2,s_3)=
 \begin{cases}(Z,X,Z),&t=1,\\(Z,X,-Z),&t=g.\end{cases}
\]
Thus $s_1s_3$ distinguishes the two root blocks, and their
central idempotents are
$z_1=\tfrac12(1+s_1s_3)$ and
$z_g=\tfrac12(1-s_1s_3)$.
In the channel basis, \autoref{eq:four-sandwich} gives
\begin{equation}\label{eq:Ising-four-product}
\zchannels{4}{3/1,2/g,1/1}
 =\frac12\fourunit{1}{1}{1}-\frac12\fourunit{1}{g}{g}.
\end{equation}
In root $1$ its matrix is $\tfrac12\left(\begin{smallmatrix}1&0\\0&0\end{smallmatrix}\right)$;
in root $g$ it is $-\tfrac12\left(\begin{smallmatrix}0&1\\0&0\end{smallmatrix}\right)$.
\end{example}

\subsection{Six-strand blocks}

Continue with $A=G_2=\{1,g\}$, $g^2=1$, and $s_i=u_i(g)$.
Six strands give two free channel labels in each root block.
There are four fusion paths to each root $t\in\{1,g\}$. Denote
their splitting maps by $I_{x,y}^t=|x,y;t\rangle$, where the first
two and first four leaves fuse to $x,y\in\{1,g\}$, respectively:
\begin{equation*}
 I_{1,1}^t=\zsixstate{1}{1}{t}{none},\qquad
 I_{g,1}^t=\zsixstate{g}{1}{t}{none},\qquad
 I_{g,g}^t=\zsixstate{g}{g}{t}{none}.
\end{equation*}
The fourth path is $I_{1,g}^t$. Let $P_{x,y}^t$ be the matching
fusion map, so $P_{x,y}^tI_{z,w}^t=\delta_{x,z}\delta_{y,w}\id_t$.
The root edge is erased when $t=1$.

\begin{lemma}\label{lem:z2-six}
In the ordered basis $\{|1,1;t\rangle,|1,g;t\rangle,
|g,1;t\rangle,|g,g;t\rangle\}$, put
\[
 Z=\begin{pmatrix}1&0\\0&-1\end{pmatrix},\qquad
 X=\begin{pmatrix}0&1\\1&0\end{pmatrix},\qquad
 \kappa_t=\chi(g,t).
\]
Then the five local generators act by
\begin{equation*}
 s_1=Z\otimes1,\quad s_2=X\otimes1,\quad
 s_3=Z\otimes Z,\quad s_4=1\otimes X,\quad
 s_5=\kappa_t(1\otimes Z).
\end{equation*}
In particular, $\End(m^6)\cong\Mat_4(\C)\oplus\Mat_4(\C)$.
\end{lemma}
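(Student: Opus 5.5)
The plan is to read all five actions off \autoref{lem:path-action}, specialised to $A=G_2$, $n=6$ and $r=3$. A splitting path of $m^6$ with root $t$ is $|\mathbf a\rangle$ with $\mathbf a=(a_1,a_2,a_3)$, where $a_0=e=1$ and $a_3=t$ is fixed. The basis vector $|x,y;t\rangle=I^t_{x,y}$ is the path with $a_1=x$ and $a_2=y$. Ordering the four paths lexicographically in $(x,y)$, as in the statement, identifies the root-$t$ block of $L_t$ with $\C^2\otimes\C^2$: the first factor records $x$ and the second records $y$. Since $\chi(g,g)=-1$, every group element is its own inverse and $\chi(g,-)$ takes values $\pm1$. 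So $\chi(g,a)$ is the sign $+1$ for $a=1$ and $-1$ for $a=g$, i.e.\ the diagonal entries of $Z$.

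I would then go through the generators $s_i=u_i(g)$ in order.

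\begin{itemize}
\item For odd sites $2j-1$, \autoref{eq:path-u-odd} gives $s_{2j-1}|\mathbf a\rangle=\chi(g,a_{j-1}^{-1}a_j)|\mathbf a\rangle$.
\begin{itemize}
\item With $j=1$ this is $\chi(g,x)$, so $s_1=Z\otimes1$.
\item With $j=2$ it is $\chi(g,xy)=\chi(g,x)\chi(g,y)$ by multiplicativity, so $s_3=Z\otimes Z$.
\item With $j=3$ it is $\chi(g,yt)=\chi(g,y)\kappa_t$, so $s_5=\kappa_t(1\otimes Z)$.
\end{itemize}
\item For even sites $2j$, \autoref{eq:path-u-even} replaces $a_j$ by $g^{-1}a_j=ga_j$, swapping $1$ and $g$ in that coordinate.
\begin{itemize}
\item For $j=1$ this flips $x$, so $s_2=X\otimes1$.
\item For $j=2$ this flips $y$, so $s_4=1\otimes X$.
\end{itemize}
\end{itemize}
The coordinate $a_3=t$ is never shifted, because $u_6$ does not exist on six strands. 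This is consistent with $t$ labelling a block.

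For the algebra statement I would simply invoke \autoref{eq:centralizer-blocks} from \autoref{thm:centralizers} with $N=2$ and $r=3$. This gives $\End(m^6)\cong\bigoplus_{t\in G_2}\Mat_4(\C)$. As a direct check inside each block, $Z\otimes1$, $X\otimes1$, $1\otimes Z$ (which is $\kappa_t s_5$) and $1\otimes X$ already generate all two-qubit Pauli products, hence all of $\Mat_4(\C)$. The two blocks are separated by the central element $s_1s_3s_5=\kappa_t$, which acts as $\pm1$.

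The main point of care is bookkeeping rather than any real computation. I would confirm that the odd-position formula of \autoref{lem:path-action} at $j=3$ is legitimate when $a_3=t$ is the root rather than a free label. I expect this to hold, since its proof contracts the prefix to $a_2$ and uses only \autoref{eq:R-amm}, which is valid for any root. I would also check that the basis order in the statement matches the tensor-factor order above, so that $X\otimes1$ really flips the first label $x$.
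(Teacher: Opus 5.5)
Your proposal is correct and takes the same approach as the paper. Both read the diagonal scalars $\chi(g,x)$, $\chi(g,xy)$, $\chi(g,yt)$ for $s_1,s_3,s_5$ and the shifts $x\mapsto gx$, $y\mapsto gy$ for $s_2,s_4$ directly off \autoref{lem:path-action}, and both obtain the block decomposition from the four paths to each root; your appeal to \autoref{eq:centralizer-blocks} and the Pauli-generation check only make that last step slightly more explicit.
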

\begin{proof}
\autoref{lem:path-action} gives the diagonal scalars
$\chi(g,x)$, $\chi(g,xy)$ and $\chi(g,yt)$ for $s_1,s_3,s_5$.
The operators $s_2$ and $s_4$ replace $x$ and $y$ by $gx$ and
$gy$, respectively. Four paths to each root give the block decomposition.
\end{proof}

The even generators change the internal labels and preserve the root:
\begin{equation*}
 \zsixstate{1}{1}{t}{2}=\zsixstate{g}{1}{t}{none},\qquad
 \zsixstate{g}{1}{t}{4}=\zsixstate{g}{g}{t}{none}.
\end{equation*}
Also $s_3I_{g,1}^t=-I_{g,1}^t$, while $s_3I_{g,g}^t=I_{g,g}^t$.

\begin{example}[A rank-one map on six strands]\label{ex:z2-six-unit}
Consider the three disjoint channel projections
\begin{equation*}
 D_t=e_1^{(1)}e_1^{(3)}e_t^{(5)}
 =\zpairprojector{t}
 =I_{1,1}^tP_{1,1}^t.
\end{equation*}
The three channels force $x=1$, $y=1$ and root $t$, respectively. Thus
\[
 V_t=s_2s_4D_t=I_{g,g}^tP_{1,1}^t,\qquad
 W_t=D_ts_4s_2=I_{1,1}^tP_{g,g}^t.
\]
It follows that
\begin{equation*}
 V_t^2=0,\qquad W_tV_t=D_t,\qquad
 V_tW_t=I_{g,g}^tP_{g,g}^t.
\end{equation*}
The product $s_1s_3s_5$ acts by $\kappa_t$ on the whole root
block. Hence its central projections are
 $z_1=\tfrac12(1+s_1s_3s_5)$ and
 $z_g=\tfrac12(1-s_1s_3s_5)$.
\end{example}

\section{Duality and traces}\label{sec:duality}

Can the new red strand bend? A bend has to start or end at a tensor
unit. The answer is therefore local: the TY part on \(A\sqcup\{m\}\)
has unit \(e\), while the full semigroup category may have a different
global unit \(1\).

\subsection{Local and global duals}

Recall that local means on $A$ and global on $S$.

\begin{proposition}[Local and global duals]\label{prop:ambient-duals}
For a monoid $S$, neither $m$ nor any $a\in A$ has an ambient dual
when $A\subsetneq S$. A black simple $s$ has one exactly
when $s\in S^\times$, with dual $s^{-1}$ and the ordinary black
pivotal calculus. Thus $\CS$ is rigid exactly when $S=A$.
For nonunital $S$, $\CS$ has no unit with which to define rigidity;
$\CA$ is rigid with unit $e$.
\end{proposition}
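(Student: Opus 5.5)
The plan is a Hom-space computation, using the tree basis of \autoref{thm:presentation} and the dimension formulas of \autoref{prop:mixed-words}. Suppose $X$ has a dual $X^*$ in $\CS$, with global unit $1$. We need $\coev:1\to X\otimes X^*$ and $\ev:X^*\otimes X\to1$ whose snake composites are identities. In particular the composite
\[
X\to X\otimes X^*\otimes X\to X
\]
equals $\id_X\neq0$. So both $\Hom(1,X\otimes X^*)$ and $\Hom(X^*\otimes X,1)$ must be nonzero. Moreover, $1$ must occur as a summand of $X\otimes X^*$ in such a way that the identity factors through it.

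First, the case $X=m$ with $A\subsetneq S$. The simple $X^*$ is either $m$ or a black label $s$.
\begin{enumerate}
\item If $X^*=s$, then $m\otimes s\cong m$, so $\Hom(1,m)=0$ since $m$ is not black.
\item If $X^*=m$, then $\Hom(1,mm)=0$ by \autoref{prop:mixed-words}, because $1\notin A$. This is the statement in \autoref{ex:ambient-mixed}.
\end{enumerate}
Here I need $1\notin A$. This holds because $A$ is a two-sided ideal: if $1\in A$, then $S=S\cdot1\subseteq A$.

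Next, the case $X=a\in A$. Any candidate dual is either $m$ or a black $t$.
\begin{enumerate}
\item If the dual is $m$, the products $am$ and $ma$ are $m$, so there is no unit summand.
\item If the dual is a black $t$, then $at=1$ would put $1=at\in A$, which is impossible by the ideal property.
\end{enumerate}

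For a general black $s$, the argument is that of \autoref{prop:semigroup-structures}.
\begin{enumerate}
\item Using $m$ as a dual is impossible, as above.
\item A black dual $t$ requires $\Hom(1,st)\ne0$ and $\Hom(ts,1)\ne0$, so $st=1=ts$.
\item Conversely, a unit $s$ has the dual $s^{-1}$ from \autoref{eq:pointed-duality}. This lives inside the black subcategory $\mathcal P(S)\subset\DS$ of \autoref{cor:pointed-embedding}, with $\omega=1$, so the snake identities are those of \autoref{lem:pointed-duality}.
\end{enumerate}

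Rigidity then follows. If $S=A$, then $\CS=\CA$, which is rigid by \autoref{thm:cl-classification}. If $A\subsetneq S$, the simple $m$ has no dual, so $\CS$ is not rigid. For nonunital $S$ there is no tensor unit by \autoref{prop:semigroup-structures}. The local statement is rigidity of $\TY(A,\chi,\tau)$ with unit $e$, from the same theorem.

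The main obstacle is the bookkeeping for a dual that is not simple, that is, a general object $X^*$. I would handle it by decomposing $X^*$ into simples and noting that, for simple $X$, every simple summand $Y$ of $X^*$ has $\Hom(1,X\otimes Y)=0$. Hence $\Hom(1,X\otimes X^*)=0$ as well, and so $\coev=0$. Then the snake composite vanishes, which contradicts $\id_X\neq0$.
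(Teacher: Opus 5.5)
Your proposal is correct and is essentially the paper's argument. Since $A$ is an ideal, $1\notin A$, so no tensor product involving $m$ or a label of $A$ has a global-unit summand and the snakes cannot hold; a black $s$ needs $st=1=ts$, and conversely units bend via the black calculus. The differences are cosmetic. You use both $\coev$ and $\ev$ to get a two-sided inverse directly, where the paper uses one of them plus the fact that one-sided inverses in a finite monoid are two-sided. Your bookkeeping for nonsimple duals can also be skipped, since $\End(X^*)\cong\Hom(1,X\otimes X^*)\cong\End(X)=k$ forces the dual of a simple object to be simple.
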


\begin{proof}
A group ideal containing the global $1$ equals $S$. In the proper
case, tensoring $m$ or a label of $A$ with any object has no
global-unit summand. Evaluation to $1$ is zero and cannot satisfy a
snake. For a black label $s$, a nonzero evaluation requires a black
summand $t$ with $ts=1$ or $st=1$. In a finite monoid a one-sided
inverse is two-sided, so $s$ must be a unit. Conversely its inverse
gives \autoref{eq:pointed-bends}. The duals in $\CA$ are described below.
\end{proof}

\begin{example}[The idempotent monoid: the missing bend]
For \(E_2=\{1,e\}\) with \(e^2=e\), there is one nonunit label, \(e\), and its rules are
\begin{equation*}
 \mu=\fuse{e}{e}{e},\qquad \Delta=\splitv{e}{e}{e},\qquad
 \bubble{e}{e}{e}{e}=\wire{e},\qquad
 \channel{e}{e}{e}=\wire{e}\,\wire{e}.
\end{equation*}
The associator and multiplication were described in \autoref{sec:pointed}. But
\[
 \Hom(e\otimes e,1)=\Hom(1,e\otimes e)=0.
\]
The split and merge have an \(e\)-leg, which cannot be erased.
They therefore give no categorical \(e\)-loop.

The same point is even more visible in the restricted red extension
of \autoref{ex:idempotent-restricted}. There \(A=\{e\}\) and
\(m^2=e\), so the only possible red bends are
\[
 \splitv{m}{m}{e}:e\longrightarrow mm,
 \qquad
 \fuse{m}{m}{e}:mm\longrightarrow e.
\]
These are bends for the local unit \(e\), not for the global unit \(1\).
\end{example}

For \autoref{ex:proper-group-ideal}, $mm=e\oplus a$, so
$\Hom_{\CS}(1,mm)=\Hom_{\CS}(mm,1)=0$.

The local red bends are the maps
\begin{equation*}
 \coev_m^A=\splitv{m}{m}{e}:e\longrightarrow mm,\qquad
 \ev_m^A=\tau^{-1}\fuse{m}{m}{e}:mm\longrightarrow e.
\end{equation*}
Write $\ell_m=p_{em}^m$ and $r_m=p_{me}^m$ for local unit comparisons.
The first snake is
\[
 r_m(\id_m\otimes\ev_m^A)\alpha_{m,m,m}
 (\coev_m^A\otimes\id_m)\ell_m^{-1}=\id_m.
\]
The other reverses the bend and uses $\alpha^{-1}$. Including the
local unit comparisons, the two tree diagrams (the local snake identities) are
\begin{equation*}
 \tau^{-1}\coverlap{e}{e}=\wire{m}
 =\tau^{-1}\oppoverlap{e}{e}.
\end{equation*}
In each picture, the cup and cap select the $e$-channel. The selected
Fourier entry is $\tau$, which cancels the cap factor $\tau^{-1}$.

For the bending calculations, we work in $\CA\simeq\Add(\D)$ and
write $1_A=e$ as $1$. To read the pictures in $\CS$, put back the
$e$-boundary and local unit comparisons.

\begin{remark}
Adjunction can be formulated even when no genuine unit object is
present. A bicategorical framework for this is developed in
\cite{KoMaZh-adjunction}; see also \cite{St-adjunction} for the
corresponding theory of adjunctions in semigroup categories.
Our situation is slightly different: the group ideal has its own local
unit, while ambient duality is measured against the global unit.
\end{remark}

\subsection{Cups and caps}

We use left duals, with $\coev_X:1\to X\otimes X^*$ and
$\ev_X:X^*\otimes X\to1$. On the simple labels put
\begin{equation}\label{eq:duality}
 a^*=a^{-1},\quad m^*=m,\qquad
 \coev_a=i_{a,a^{-1}}^1,\quad \ev_a=p_{a^{-1},a}^1,
 \qquad \coev_m=i_1,\quad \ev_m=\tau^{-1}p_1.
\end{equation}
The corresponding cups and caps are
\begin{equation*}
\begin{gathered}
 \cupcap{cup}{m}{m^*}=\splitv{m}{m}{1},
 \cupcap{cap}{m^*}{m}=\tau^{-1}\fuse{m}{m}{1},
 \coev_a=\cupcap{cup}{a}{a^{-1}},
 \ev_a=\cupcap{cap}{a^{-1}}{a}.
\end{gathered}
\end{equation*}

\begin{lemma}[Snake identities]\label{lem:snakes}
The maps \autoref{eq:duality} make $\Add(\D)$ rigid.
\end{lemma}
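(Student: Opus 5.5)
The plan is to check the two snake identities for each simple label, one label at a time, and then pass to finite direct sums. Rigidity of $\Add(\D)$ then follows because duals of direct sums are direct sums of duals, with cups and caps given by block-diagonal matrices. By \autoref{thm:presentation} we have $\Add(\D)\simeq\CA$, so we may compute either in the diagram category or with the normal associators of \autoref{def:CS}, restricted to $S=A$, where $\phi=\id$ and $e=1$.

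For a black label $a$, the maps in \autoref{eq:duality} are exactly the black bends \autoref{eq:pointed-bends} with $\omega=1$ and $\varepsilon_a=1$. The snakes therefore follow from \autoref{lem:pointed-duality}, via the fully faithful embedding of \autoref{cor:pointed-embedding}. Concretely, each snake is a three-leaf black tree move with coefficient $1$, followed by two cancellations from \autoref{eq:resolution}.

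For $m$, consider the first snake
\[
 r_m(\id_m\otimes\ev_m)\alpha_{m,m,m}(\coev_m\otimes\id_m)\ell_m^{-1}.
\]
The plan is to rewrite it as follows.
\begin{itemize}
 \item Since the unit vertices are identities, $(\coev_m\otimes\id_m)$ is the left splitting tree $I^L_1$ on $mmm$ with root $m$.
 \item Apply the associator in splitting form, \autoref{lem:splitting-relations}: $I^L_1=\sum_b F_{b,1}I^R_b$, where $F_{b,1}=\tau\chi(1,b)^{-1}=\tau$.
 \item The cap $\tau^{-1}p_1$ on the right pair kills every term except $b=1$, by $p_1i_b=\delta_{1,b}$.
 \item What remains is $\tau^{-1}\tau\,\id_m=\id_m$. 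This is the picture $\tau^{-1}\coverlap{1}{1}=\wire{m}$, using \autoref{eq:cl-tau-overlap}.
\end{itemize}

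The second snake reverses the bend and uses $\alpha^{-1}$. Its coefficient is therefore $\tau^{-1}(F^{-1})_{1,1}$, and \autoref{lem:Fourier} gives $(F^{-1})_{1,1}=\tau\chi(1,1)=\tau$, so this coefficient is also $1$. Diagrammatically, this means starting from the right tree with channel $1$, expanding it by the fusion relation \autoref{eq:R-mmm} (whose entries are those of $F^{-1}$), and contracting with the cap on the left pair.

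The main obstacle I expect is bookkeeping rather than mathematics. One has to be sure which of $F$ and $F^{-1}$ enters each snake, given the convention that $F=P^R\alpha I^L$ and that fusion relations carry $F^{-1}$. The other point needing care is that the unit comparisons $\ell_m,r_m$ contribute no scalars; this holds because $z^m_{1m}=z^m_{m1}=1$ in the normal basis. Once these conventions are fixed, both red snakes reduce to the single overlap coefficient $\tau$, which cancels the factor $\tau^{-1}$ built into $\ev_m$.
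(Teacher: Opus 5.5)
Your proposal is correct and is essentially the paper's proof. Group labels are handled by \autoref{lem:pointed-duality} (with $\omega=1$, so $\varepsilon_a=1$), and the two $m$-snakes reduce to the $(1,1)$ entries of $F$ and $F^{-1}$, each equal to $\tau$, which cancels the factor $\tau^{-1}$ on the cap before extending to words and finite direct sums; your bookkeeping of $\ell_m,r_m$ and of which of $F$, $F^{-1}$ enters each snake is accurate, though for the second snake it is cleaner to say that the cap's left fusion tree $P^L_1$ is expanded by \autoref{eq:R-mmm} and then contracted against the cup $I^R_1$.
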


\begin{proof}
For group labels, use \autoref{lem:pointed-duality}.
In the skeletal TY model, the two $m$-snakes are
\[
\begin{aligned}
 (\id_m\otimes\ev_m)\alpha_{m,m,m}
 (\coev_m\otimes\id_m)&=\id_m,\\
 (\ev_m\otimes\id_m)\alpha_{m,m,m}^{-1}
 (\id_m\otimes\coev_m)&=\id_m.
\end{aligned}
\]
The cup and cap select the $(1,1)$ entries of $F$ and $F^{-1}$,
respectively. Both give $\tau^{-1}\tau\id_m=\id_m$:
\begin{equation}\label{eq:snake-picture}
 \leftsnakepic
 =\wire{m}=
 \rightsnakepic.
\end{equation}
Duals of words reverse their order; duality extends to finite direct sums.
\end{proof}

The factor on the cap is necessary: with cup $i_1$ and cap $p_1$
the snake would be $\tau\id_m$. The resulting raw loop is
\begin{equation*}
\pivloop{m}{none}=\ev_m\coev_m=\tau^{-1}.
\end{equation*}

\subsection{The Temperley--Lieb operators}

We denote by $\mathrm{TL}_n(\delta)$
the usual Temperley--Lieb algebra with parameter $\delta$ and $n$ strands.

\begin{proposition}\label{prop:TL}
For $n\ge2$ let $e_1^{(i)}$ be the unit-channel idempotent on positions
$i,i+1$ of $m^n$, and put
 $U_i=\tau^{-1}e_1^{(i)},\qquad \delta=\tau^{-1}$.
(In ambient notation, $U_i=\tau^{-1}e_e^{(i)}$.) They satisfy
\begin{equation}\label{eq:TL}
 U_i^2=\delta U_i,\qquad
 U_iU_{i+1}U_i=U_i,\qquad
 U_{i+1}U_iU_{i+1}=U_{i+1},\qquad
 U_iU_j=U_jU_i\quad(|i-j|>1).
\end{equation}
They therefore define a representation of $\mathrm{TL}_n(\delta)$ on
$m^n$.
\end{proposition}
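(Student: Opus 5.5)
We verify the four relations in \autoref{eq:TL} directly from the channel calculus. The first and last are immediate; the middle two are where the Fourier normalization of \autoref{lem:Fourier} enters.

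The first relation follows from the orthogonality $e_1^{(i)}e_1^{(i)}=e_1^{(i)}$ of channel projectors. This gives $U_i^2=\tau^{-2}e_1^{(i)}=\tau^{-1}U_i=\delta U_i$. Diagrammatically, this is a cap followed by a cup closing a loop of value $\tau^{-1}$, in agreement with the raw loop computed after \autoref{lem:snakes}. The far commutation $U_iU_j=U_jU_i$ for $|i-j|>1$ is the interchange law, since the two projectors act on disjoint pairs of strands (compare \autoref{eq:u-far}).

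For the braid-like relations, we reduce to three adjacent strands. The operators act on positions $i,i+1,i+2$ of $m^n$ only, tensored with identities. The associativity of the strict diagram category lets us assume $n=3$, with $U_1=\tau^{-1}L_1$ and $U_2=\tau^{-1}R_1$ in the notation of \autoref{prop:adjacent}. That proposition, \autoref{eq:adjacent-diagonal}, gives
\[
 L_1R_1L_1=\frac{\chi(1,1)}{N}M_{1,1}=N^{-1}L_1 .
\]
Here we use $M_{1,1}=I_1^LP_1^L=L_1$, because the left comb with first channel $1$ fuses through $1\otimes m\cong m$. Hence
\[
 U_1U_2U_1=\tau^{-3}N^{-1}L_1=\tau^{-1}L_1=U_1,
\]
using $N\tau^2=1$.

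For $U_2U_1U_2$ we need the mirrored identity $R_1L_1R_1=N^{-1}R_1$. I would obtain it by repeating the proof of \autoref{prop:adjacent} with left and right exchanged. One expands the middle projector $L_1$ in right combs via the splitting relations \autoref{lem:splitting-relations}. The two overlaps are the $(1,1)$ entries of $F$ and $F^{-1}$, each equal to $\tau$, so their product is $\tau^2=N^{-1}$. Alternatively, one can conjugate by the Fourier change of basis: $R_1$ is $L_1$ written in the right-comb basis. This mirrored computation is the only step that is not already displayed earlier. The main care needed is to check that the right-comb matrix units play the role of $M_{a,c}$ and that the coefficient is still $\chi(1,1)/N=N^{-1}$, which holds because all characters are trivial at the unit channel.

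With all four relations of \autoref{eq:TL} in place, the assignment of $U_i$ to the $i$-th Temperley--Lieb generator defines the stated representation of $\mathrm{TL}_n(\delta)$ on $m^n$. In the ambient category $\CS$, the same computation goes through after replacing $1$ by $e$ and inserting the local unit comparisons. By \autoref{thm:ambient-red}, $B_n$ is unchanged.
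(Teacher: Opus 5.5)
Your proposal is correct and follows the paper's proof essentially step for step. Idempotency, or equivalently the raw loop $\ev_m\coev_m=\tau^{-1}$, gives $U_i^2=\delta U_i$, and \autoref{eq:adjacent-diagonal} together with $N\tau^2=1$ gives $U_1U_2U_1=U_1$. The other sandwich is, in the paper's words, ``the same cancellation with the right pair fixed''; this is exactly your overlap product $F_{b,a}(F^{-1})_{a,b}=\tau^2=N^{-1}$. Far commutativity is interchange in both your argument and the paper's.
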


\begin{proof}
In the fixed cup--cap normalization, stacking two generators creates
one raw loop:
\[
 \ztl{2}{1,1}=\pivloop{m}{none}\,\ztl{2}{1}
 =\delta\ztl{2}{1}.
\]
For adjacent generators, resolve the three unit channels and use
\autoref{eq:adjacent-diagonal}:
\[
 \ztl{3}{1,2,1}
 =\tau^{-3}\zchannels{3}{1/1,2/1,1/1}
 =\frac{\tau^{-3}}N\zchannels{3}{1/1}
 =\ztl{3}{1}.
\]
Here $N\tau^2=1$. The other sandwich uses the same cancellation
with the right pair fixed. Disjoint cup--cap diagrams commute by
interchange.
\end{proof}

With the cup and cap of \autoref{eq:duality}, the operators are
\begin{equation*}
U_i=\id^{\otimes(i-1)}\otimes\ztl{2}{1}\otimes\id^{\otimes(n-i-1)}.
\end{equation*}
For $N>2$, these operators do not generate the whole endomorphism
algebra: already on two strands, $1,U_1$ span a two-dimensional
subalgebra of the $N$-dimensional algebra $\End(mm)$.

For the order-four datum \autoref{eq:Z4}, these two dimensions are $2$ and $4$.

\subsubsection{The order-two case}

This is closely related to the familiar Ising/Jones--Clifford picture, see e.g.
\cite{EvGa-TY-Potts,IoLeZh-Jones}.
Over $\C$, let \(A=G_2=\{1,g\}\) with \(g^2=1\), \(\chi(g,g)=-1\) and
\(\tau=\nu/\sqrt2\). Put \(\delta=\tau^{-1}=\nu\sqrt2\).
With the normalization above, the cup is \(i_1\), the cap is
\(\delta p_1\), and \(U=\delta e_1\). Hence
\begin{equation}\label{eq:z2-resolution}
 \ztlid{2}=\channel{m}{m}{1}+\channel{m}{m}{g},\qquad
 \channel{m}{m}{g}=\ztlid{2}-\delta^{-1}\ztl{2}{1}.
\end{equation}
The raw red loop is \[\zloop{raw}=\delta.\]
In this case the cup--cap operators already generate the whole red
endomorphism algebra.
Set $U_i=\delta e_1^{(i)}$. The first two Temperley--Lieb relations
take the form
\begin{equation*}
 \ztl{2}{1,1}=\delta\ztl{2}{1},\qquad
 \ztl{3}{1,2,1}=\ztl{3}{1}.
\end{equation*}
On three strands there is one further relation:
\begin{equation*}
 \ztlid{3}
 -\delta\left(\ztl{3}{1}+\ztl{3}{2}\right)
 +\ztl{3}{2,1}+\ztl{3}{1,2}=0.
\end{equation*}
The last two terms are $U_1U_2$ and $U_2U_1$, respectively.
The cups and caps in these drawings have the fixed normalisation
of \autoref{eq:z2-resolution}.

\begin{proposition}\label{prop:z2-TL-quotient}
For $n\ge2$, the algebra $\End(m^n)$ is generated by
$U_1,\ldots,U_{n-1}$, subject to the Temperley--Lieb relations
\autoref{eq:TL} and the local relations
\begin{equation*}
 1-\delta(U_i+U_{i+1})+U_iU_{i+1}+U_{i+1}U_i=0
 \qquad(1\le i\le n-2).
\end{equation*}
Its dimension is $2^{n-1}$.
\end{proposition}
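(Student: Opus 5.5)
The plan is to reduce this to the Fourier presentation of \autoref{thm:centralizers} by an explicit invertible change of generators. For $A=G_2$ the only nontrivial Fourier operator on sites $i,i+1$ is $s_i=u_i(g)=e_1^{(i)}-e_g^{(i)}$, by \autoref{eq:Fourier-channels} with $\chi(g,g)=-1$. Since $e_1^{(i)}+e_g^{(i)}=1$, we get $s_i=2e_1^{(i)}-1$. By \autoref{prop:TL}, $U_i=\delta e_1^{(i)}$, and the normalization $N\tau^2=1$ gives $\delta^2=2$. Hence
\[
 s_i=\delta U_i-1,\qquad U_i=\delta^{-1}(1+s_i).
\]
So the $U_i$ and the $s_i$ generate the same subalgebra.

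\textbf{Generation.} By \autoref{thm:centralizers}, $B_n=\End(m^n)$ is generated by the $u_i(x)$ with $x\in\{1,g\}$. Since $u_i(1)=\id$, the $s_i$ generate $B_n$, and therefore so do $U_1,\ldots,U_{n-1}$.

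\textbf{The relations hold in $B_n$.} The Temperley--Lieb relations \autoref{eq:TL} hold by \autoref{prop:TL}. For the local relation, \autoref{eq:u-near} with $\chi(g,g)^{-1}=-1$ gives $s_is_{i+1}+s_{i+1}s_i=0$. Substituting $s_j=\delta U_j-1$ yields
\[
 \delta^2(U_iU_{i+1}+U_{i+1}U_i)-2\delta(U_i+U_{i+1})+2=0 .
\]
Dividing by $\delta^2=2$ gives exactly the displayed local relation.

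\textbf{Dimension bound for the presented algebra.} Let $Q$ be the abstract algebra on $U_1,\ldots,U_{n-1}$ with relations \autoref{eq:TL} and the local relations. Then $U_i\mapsto U_i$ defines a surjection $Q\to B_n$. In $Q$ put $s_i=\delta U_i-1$. The relation $U_i^2=\delta U_i$ gives $s_i^2=\delta^2U_i^2-2\delta U_i+1=1$, using $\delta^2=2$. The local relation is equivalent to $s_is_{i+1}=-s_{i+1}s_i$. Far commutation of the $U$'s gives far commutation of the $s$'s. These are the relations \autoref{eq:u-group}--\autoref{eq:u-far} for $A=G_2$. Every word in the $s_i$ can therefore be reordered, at the cost of signs, into an ordered monomial $s_1^{\epsilon_1}\cdots s_{n-1}^{\epsilon_{n-1}}$ with $\epsilon_j\in\{0,1\}$. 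Hence $\dim Q\le 2^{n-1}$.

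\textbf{Conclusion.} Since $\dim B_n=N^{n-1}=2^{n-1}$ by \autoref{thm:centralizers} (or \autoref{prop:powers}), the surjection $Q\to B_n$ is an isomorphism, and the dimension claim follows. The braid-like relation $U_iU_{i+1}U_i=U_i$ is then redundant in $Q$ but harmless, since it holds in $B_n$.

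The only delicate point is the normalization bookkeeping: the cap factor $\tau^{-1}$ must give $U_i=\delta e_1^{(i)}$, not $e_1^{(i)}$, and $\delta^2=2$ must be used to match the coefficients of the local relation. Once $s_i=\delta U_i-1$ is fixed, everything else follows from \autoref{thm:centralizers}.
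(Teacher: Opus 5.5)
Your proof is correct and follows the paper's argument. The change of generators $s_i=\delta U_i-1=e_1^{(i)}-e_g^{(i)}$, together with $\delta^2=2$, turns the relations into the anticommuting Fourier presentation of \autoref{thm:centralizers}, and the ordered monomials there give dimension $2^{n-1}$; the only cosmetic difference is that the paper derives $U_iU_{i+1}U_i=U_i$ directly from $(1+s_i)(1+s_{i+1})(1+s_i)=2(1+s_i)$, whereas you conclude via the surjection $Q\to B_n$ and a dimension count, which makes that relation automatically redundant.
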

\begin{proof}
Put $s_i=e_1^{(i)}-e_{g}^{(i)}=\delta U_i-1$. \autoref{thm:centralizers}, with $x=y=g$, presents $\End(m^n)$ by
\begin{equation*}
 s_i^2=1,\qquad s_is_{i+1}=-s_{i+1}s_i,\qquad
 s_is_j=s_js_i\quad(|i-j|>1).
\end{equation*}
Since $\delta^2=2$, the equality $s_i^2=1$ is equivalent to
$U_i^2=\delta U_i$, and
\[
 s_is_{i+1}+s_{i+1}s_i
 =2\bigl(1-\delta(U_i+U_{i+1})
          +U_iU_{i+1}+U_{i+1}U_i\bigr).
\]
Far commutativity is preserved by this change of generators.
These relations also imply the sandwich relation: for adjacent
$s=s_i$ and $t=s_{i+1}$,
\[
 (1+s)(1+t)(1+s)=2(1+s),
\]
so $U_iU_{i+1}U_i=U_i$. Hence the two presentations are
equivalent. Their ordered basis is
$s_1^{\epsilon_1}\cdots s_{n-1}^{\epsilon_{n-1}}$ with
$\epsilon_i\in\{0,1\}$.
\end{proof}

The generators and their adjacent relation are
\begin{equation*}
 \zlocal{2}{1/s}=\channel{m}{m}{1}-\channel{m}{m}{g},\qquad
 \zlocal{3}{2/s,1/s}=-\zlocal{3}{1/s,2/s}.
\end{equation*}
The coupon $s$ always acts on the two strands that meet it.

\begin{example}[Reducing a word]\label{ex:z2-word}
On four strands, move $s_1$ past $s_3$ and then past $s_2$:
\[
 s_1s_2s_3s_1s_2s_3
 =s_1s_2s_1s_3s_2s_3
 =-s_2s_3s_2s_3=1.
\]
With all four boundary strands shown,
\begin{equation*}
 \zlocal{4}{3/s,2/s,1/s,3/s,2/s,1/s}
 =-\zlocal{4}{3/s,2/s,3/s,2/s}=\ztlid{4}.
\end{equation*}
The word is reduced.
\end{example}

\subsection{Pivotal structures on the local category}

A pivotal structure makes it possible to turn coupons and close
diagrams. For the chosen duality, it is determined by one scalar on $m$.

\begin{proposition}\label{prop:pivotal}
For the duality \autoref{eq:duality}, the pivotal structures are
\begin{equation}\label{eq:pivotal}
 j_a=\id_a\quad(a\in A),\qquad j_m=\rho\id_m,
 \qquad\rho\in k^\times,\quad\rho^2=1.
\end{equation}
All are spherical; there is one choice in characteristic $2$ and two otherwise. Their dimensions are
\begin{equation}\label{eq:dimensions}
 d_a=1,\qquad d_m=\rho\tau^{-1}.
\end{equation}
Over $\C$, the pivotal structure with positive dimensions is
$\rho=\nu$ and has $d_m=\sqrt N$.
\end{proposition}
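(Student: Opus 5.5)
The plan is to classify pivotal structures by viewing them as monoidal natural isomorphisms $j:\id\Rightarrow(-)^{**}$. The duality \autoref{eq:duality} gives $a^{**}=a$ and $m^{**}=m$ as objects, so on each simple $j$ is a scalar. Thus $j_a=\lambda_a\id_a$ and $j_m=\rho\id_m$. A natural transformation of $\Add(\D)$ is determined by its components on simples. It therefore suffices to impose monoidality, $j_{X\otimes Y}=j_X\otimes j_Y$ up to the tensor structure of $(-)^{**}$, on the trivalent vertices. Concretely, for every vertex $p_{rs}^t$ we need $j_t\,p_{rs}^t=(p_{rs}^t)^{**}(j_r\otimes j_s)$. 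So the first step is to compute the double dual of each vertex type. This is done by turning each vertex twice, as in \autoref{lem:pointed-mate}, using the normal associators of \autoref{def:CS} with $S=A$.

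For the black vertices, all black coefficients are trivial in the normal basis ($\omega=1$). Hence the pointed computation gives $\kappa\equiv1$ and $\varepsilon_a=1$, so $(p_{ab}^{ab})^{**}=p_{ab}^{ab}$. The condition becomes $\lambda_{ab}=\lambda_a\lambda_b$, so $\lambda$ is a character of $A$.

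Next I would treat the absorption vertices $p_{am}^m$ and $p_{ma}^m$. Rotating them passes through the $\alpha_{a,m,m}$, $\alpha_{m,m,a}$ and $\alpha_{m,a,m}$ moves. The scalars produced are ratios of $\chi(a,\cdot)$ and $T$ entries, and I expect them to cancel in the double dual, because $T_{1,1}=\tau$ appears both in the cap normalisation and in the recoupling. This would force $\lambda_a\rho=\rho$, i.e.\ $\lambda_a=1$. Finally, for the channel vertex $p_1:mm\to1$ the condition reads $\rho^2\,p_1=(p_1)^{**}$. Since $p_1$ is $\tau$ times the cap, and $\ev_m^{**}$ can be computed by a snake, $(p_1)^{**}=p_1$, and so $\rho^2=1$. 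The other channel vertices $p_a$ then follow by composing $p_1$ with absorption, using \autoref{eq:R-amm}.

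The main obstacle is the bookkeeping in computing $(p_{am}^m)^{**}$ and $(p_a)^{**}$. Each rotation introduces an associator, and the Fourier sum in $\alpha_{m,m,m}$ must collapse to a single entry. A cheaper route is to avoid the vertices altogether: use the fact that pivotal structures form a torsor over monoidal automorphisms of the identity functor, i.e.\ over $\operatorname{Hom}$ from the universal grading group. For TY this group is $\Z/2\Z$, with $A$ in degree $0$ and $m$ in degree $1$. It then suffices to exhibit one pivotal structure, namely $\rho=1$, $\lambda=1$, checked via the snake computation of \autoref{lem:snakes}. All others then have $\lambda=1$ and $\rho=\pm1$, which is one choice in characteristic $2$ and two otherwise.

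The dimensions come from closing strands. For an invertible $a$ with $j_a=\id$, the black proposition gives $d_a=1$. For $m$, $d_m=\ev_m(j_m\otimes\id)\coev_m=\rho\,\tau^{-1}p_1i_1=\rho\tau^{-1}$, and the right trace gives the same value by the symmetric snake computation, so the structure is spherical. Sphericality on sums follows because the left and right dimensions agree on simples. Over $\C$, $\tau^{-1}=\nu\sqrt N$, so $d_m=\rho\nu\sqrt N$, which is positive exactly when $\rho=\nu$.
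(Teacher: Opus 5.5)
Your uniqueness argument is correct and is the paper's: two pivotal structures differ by a monoidal automorphism $\gamma$ of the identity functor. The rule $a\otimes m=m$ and the unit summand of $m\otimes m$ then force $\gamma_a=1$ and $\gamma_m^2=1$. Your dimension, sphericality and positivity computations are also fine.

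The gap is existence. The torsor argument only classifies pivotal structures once one is known. Your base point, $j=\id$ on simples, is justified by the snake identities of \autoref{lem:snakes}. Those identities only show that \autoref{eq:duality} is a duality. They say nothing about whether identities on simples respect the monoidal structure of $(-)^{**}$, i.e.\ whether $(p_{rs}^t)^{**}=p_{rs}^t$ once $r^{**}s^{**}$ is identified with $(rs)^{**}$. This is a genuine condition that depends on the vertex and cap normalizations. Already for $\Vec_G^\omega$ with the duality \autoref{eq:pointed-duality}, the pivotal structures are $\varepsilon_g^{-1}$ times a character. So the identity is pivotal only when $g\mapsto\varepsilon_g$ is a character, which need not hold. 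Your first, vertex-by-vertex route would settle the question. However, it leaves the absorption vertices, where the content lies, at ``I expect them to cancel''.

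What is missing is the computation the paper carries out. Record the coefficient of the dual tensor comparison $s^*r^*\to(rs)^*$ on each channel $t^*$:
\begin{itemize}
\item it is $1$ for the black channels and for both absorption types;
\item it is $\tau^{-2}(F^{-1})_{1,a}=\tau^{-1}$ on the $(m,m;a)$ channel.
\end{itemize}
Each value agrees with the one for $(s^*,r^*;t^*)$. For the channel vertex this holds because $(F^{-1})_{1,a}=(F^{-1})_{1,a^{-1}}=\tau$. Hence the double-dual tensor comparison is the identity, and identities on simples form a pivotal structure. Only after this does your torsor argument yield exactly the family \autoref{eq:pivotal}.
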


\begin{proof}
For the duality \autoref{eq:duality}, the tensor comparison
$s^*r^*\to(rs)^*$ has the following coefficients on the $t^*$-channel:
\[
 \begin{array}{c|cccc}
 (r,s;t)&(a,b;ab)&(a,m;m)&(m,a;m)&(m,m;a)\\\hline
 \text{coefficient}&1&1&1&\tau^{-1}
 \end{array}.
\]
The last entry is $\tau^{-2}(F^{-1})_{1,a}=\tau^{-1}$;
the other entries come from the unit and permutation associators.
Each agrees with the coefficient for $(s^*,r^*;t^*)$, so the
double-dual tensor comparison is the identity. Thus identity maps
on simples extend to a pivotal structure. Any other differs by a tensor-compatible automorphism $\gamma$
of the identity functor. Its components on simples are scalars.
The rule $a\otimes m=m$ forces $\gamma_a=1$, and the unit summand
of $m\otimes m$ forces $\gamma_m^2=1$. Conversely, either allowed
value of $\gamma_m$ respects every fusion vertex.

For the right pivotal trace use
\begin{equation*}
 \tr_j(f)=\ev_{X^*}(j_Xf\otimes\id_{X^*})\coev_X.
\end{equation*}
Substituting \autoref{eq:duality} gives \autoref{eq:dimensions}.
The left dimensions are also $1$ on group labels and
$\rho\tau^{-1}$ on $m=m^*$. By semisimplicity it suffices to
compare traces on simple identities, so all these structures are spherical.
\end{proof}

On an $m$-loop, the pivotal factor is drawn as a $\rho$-coupon:
\begin{equation}\label{eq:loop-flag}
\pivloop{m}{\rho}=\rho\tau^{-1},\qquad\pivloop{a}{none}=1.
\end{equation}
Here the cap is $\ev_{m^*}$ and the cup is $\coev_m$.
The right evaluation and coevaluation are
\begin{equation*}
 \widetilde\ev_m=\rho\tau^{-1}p_1,
 \qquad \widetilde\coev_m=\rho i_1.
\end{equation*}
Over $\C$, for $\tau<0$ the raw loop is $-\sqrt N$; the canonical pivotal flag
changes its value to $\sqrt N$.

\begin{remark}[Global dimension and separability]\label{rem:separability}
The spherical dimensions give
\[
 \dim\big(\TY(A,\chi,\tau)\big)=\sum_{a\in A}d_a^2+d_m^2=2N\in k.
\]
A fusion category is called separable when its global dimension
is nonzero~\cite[Introduction]{Et-faithful-lifting}. Here this is
equivalent to $2N\ne0$ in $k$. For odd $N$ in
characteristic $2$, it is semisimple and rigid but the lifting
theorem \cite[Theorem 9.3]{EtNiOs-fusion} does not apply.
(Separability here concerns global dimension, independently of
whether a braiding is nondegenerate.)
\end{remark}

\begin{example}[The two loop normalizations for \(\Z/2\Z\)]
For the canonical pivotal choice \(\rho=\nu\), the raw and spherical
red loops are
\begin{equation*}
 \zloop{raw}=\nu\sqrt2,\qquad
 \zloop{\nu}=\sqrt2.
\end{equation*}
Thus changing the sign of \(\tau\) changes the raw cup--cap loop but
not the positive spherical dimension.
\end{example}

\subsection{Pivotal isotopy and vertex slides}\label{sec:pivotal-isotopy}

We straighten snakes and slide vertices around cups and caps, as in
\cite[(18)--(19)]{RoTu-symmetric-webs}. The chosen vertex bases account
for the scalar factors below.

\begin{lemma}[Straightening bends]\label{lem:snakes2}
All four snake diagrams straighten to the identity. For the right
duality of $m$,
\begin{equation*}
 \isopivsnake{1}
 =\rho^2\wire{m}
 =\wire{m}
 =\isopivsnake{-1}.
\end{equation*}
Straightening preserves the edge labels and whether the bend is left
or right.
\end{lemma}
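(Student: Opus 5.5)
The plan is to reduce every snake to a single channel coefficient of the three-red recoupling, exactly as in the proof of \autoref{lem:snakes}. There are four snakes: two for the left duality $(\coev_m,\ev_m)=(i_1,\tau^{-1}p_1)$ and two for the right duality $(\widetilde\coev_m,\widetilde\ev_m)=(\rho i_1,\rho\tau^{-1}p_1)$. For group labels, all four are already handled by \autoref{lem:pointed-duality} together with $j_a=\id_a$, so only the red strand needs work.

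First I treat the left snakes. These are the two identities of \autoref{eq:snake-picture}, already proved in \autoref{lem:snakes}. In each, the cup and cap select the unit channel on the two adjacent strands. Reassociating through $\alpha_{m,m,m}$ or its inverse picks out the $(1,1)$ entry of $F$ or $F^{-1}$, which equals $\tau$. This cancels the cap factor $\tau^{-1}$.

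Next I treat the right snakes, which are the ones displayed. Each contains one right cup $\widetilde\coev_m=\rho i_1$ and one right cap $\widetilde\ev_m=\rho\tau^{-1}p_1$. These are the $\rho$-coupons drawn on the bends. Stripping the scalars leaves the raw diagram $\tau^{-1}(\text{cap }p_1)\circ\alpha^{\pm1}\circ(\text{cup }i_1)$, now with the bends mirrored. Concretely, I expand the cup as a split into the unit channel and the cap as a fusion from the unit channel. I then apply the fusion relation \autoref{eq:R-mmm}, or its splitting form from \autoref{lem:splitting-relations}, to move between the left and right trees on $mmm$. Orthogonality \autoref{eq:resolution} then kills every channel except the unit one, leaving the overlap \autoref{eq:cl-tau-overlap} (or its mirror), which equals $\tau\,\id_m$. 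The total coefficient is therefore $\rho\cdot\rho\tau^{-1}\cdot\tau=\rho^2$, and \autoref{prop:pivotal} gives $\rho^2=1$. The mirrored snake uses the opposite overlap, $(F^{-1})_{1,1}=\tau$, which gives the same value.

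Finally I address the claim about labels. Each straightening move is a composite of vertex cancellations and one recoupling on a fixed channel. The boundary edge labels are therefore unchanged, and since left and right bends carry different scalars, no rescaling mixes them. Mixed words follow by doing the moves one strand at a time, using interchange and the absorption vertices to reduce to these cases. The main obstacle is the bookkeeping of which of $F$ or $F^{-1}$ appears in the mirrored right snakes, and which side of the diagram the local unit comparisons $\ell_m,r_m$ sit on. I would resolve this by writing both snakes in the skeletal model with explicit $\alpha$, as in the proof of \autoref{lem:snakes}, and checking that in both cases the selected entry is the $(1,1)$ entry, which equals $\tau$ for both matrices.
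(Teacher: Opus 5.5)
Your proposal is correct and follows the paper's proof. The left snakes are the identities of \autoref{lem:snakes}. Since $m^*=m$ and the right bends are $\rho i_1$ and $\rho\tau^{-1}p_1$, each right snake is the same raw unit-channel diagram as a left snake, so the two flags contribute only $\rho^2=1$. One small correction: the last clause of the statement is bookkeeping of which duality is used, not a consequence of left and right bends carrying different scalars, and indeed they carry the same scalars when $\rho=1$.
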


\begin{proof}
The left snakes are \autoref{eq:snake-picture}, and the corresponding
group-labelled snakes use $a^*=a^{-1}$. The right snakes use
$\widetilde\coev$ and $\widetilde\ev$; their two flags contribute
$\rho^2=1$.
\end{proof}

Put $\epsilon_a=1$ and $\epsilon_m=\rho$. Then
$\widetilde\coev_r=\epsilon_r i_{r^*,r}^1$ and
$\widetilde\ev_r=\epsilon_r\ev_{r^*}$ on simples. For a merge
$p=p_{rs}^t$, bending its left or right input gives
\[
 \mathcal L(p)
 =(\id_{r^*}\otimes p)
   (\widetilde\coev_r\otimes\id_s)
 :s\longrightarrow r^*t,
 \qquad
 \mathcal R(p)
 =(p\otimes\id_{s^*})
   (\id_r\otimes\coev_s)
 :r\longrightarrow ts^*.
\]

\begin{lemma}[Cup and cap slides]\label{prop:isotopy-slides}
The cup slides are
\begin{equation}\label{eq:isotopy-cup-slides}
\begin{aligned}
 \bentvertex{0}{r}{s}{t}
 =\kappa_L(r,s;t)\,\splitv{r^*}{t}{s},\quad
 \bentvertex{1}{r}{s}{t}
 =\kappa_R(r,s;t)\,\splitv{t}{s^*}{r},
\end{aligned}
\end{equation}
where
\begin{equation}\label{eq:isotopy-coefficients}
\begin{array}{c|c|c|c|c}
 r&s&t&\kappa_L(r,s;t)&\kappa_R(r,s;t)\\\hline
 a&b&ab&1&1\\
 a&m&m&1&1\\
 m&a&m&\rho&1\\
 m&m&a&\rho\tau&\tau
\end{array}
\qquad(a,b\in A).
\end{equation}
The inverse cap slides have reciprocal coefficients:
\begin{equation*}
\begin{aligned}
 \bentvertex{2}{r}{s}{t}
 =\kappa_L(r,s;t)^{-1}\,\fuse{r}{s}{t},\quad
 \bentvertex{3}{r}{s}{t}
 =\kappa_R(r,s;t)^{-1}\,\fuse{r}{s}{t}.
\end{aligned}
\end{equation*}
The lower splitting vertices are $i_{r^*,t}^s$ and
$i_{t,s^*}^r$, respectively.
\end{lemma}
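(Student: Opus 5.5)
The plan is to reduce each slide to a single scalar, using that the relevant Hom spaces are one-dimensional. By \autoref{thm:presentation} (applied in $\CA\simeq\Add(\D)$), the space $\Hom(s,r^*t)$ is spanned by the single splitting vertex $i^s_{r^*,t}$ whenever $s\in\Out(r^*,t)$. The same holds for $\Hom(r,ts^*)$ with $i^r_{t,s^*}$. So each bent vertex $\mathcal L(p^t_{rs})$ and $\mathcal R(p^t_{rs})$ is a scalar multiple of the displayed splitting vertex. To extract the scalar $\kappa_L$ (resp.\ $\kappa_R$), I will postcompose with the matching fusion vertex $p^s_{r^*,t}$ (resp.\ $p^r_{t,s^*}$) and use $p\,i=\id$ from \autoref{eq:resolution}. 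This composite is an endomorphism of a simple strand. Its value is obtained by rewriting the three-leaf tree formed by the cup and the vertex into the other bracketing, using the splitting relations of \autoref{lem:splitting-relations}. The cup selects the unit channel, and the outer vertex selects one channel, so exactly one associator entry survives.

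Concretely, I would first settle the purely group-labelled row $(a,b;ab)$. There $\epsilon_a=1$ and the black gauge is $\omega=1$, so by \autoref{lem:pointed-mate} (with $\kappa\equiv1$) both coefficients are $1$.

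For the row $(a,m;m)$, the left bend uses $\widetilde\coev_a=i^1_{a^{-1},a}$ and the triple $a^{-1}am$. The relevant entry is $u=1$. The right bend uses $\coev_m=i_1$ and the triple $amm$ with channel $1$. The relevant entry is $B=1$ in the normal basis, so both coefficients are $1$.

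For the row $(m,a;m)$, the left bend carries the flag $\epsilon_m=\rho$. The remaining reassociation is of type $mma$ (entry $E=1$) or $mam$. In the $mam$ case the coefficient is $d(a,x)=\chi(a,x)$, but evaluated at the unit channel $x=1$ it equals $1$. This gives $\rho$. The right bend has no flag and the same trivial entry, giving $1$.

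The only genuine Fourier contribution is in the row $(m,m;a)$. For $\mathcal L(p_a)$ we have the flag $\rho$ from $\widetilde\coev_m=\rho i_1$, and the three-red move from channel $1$ to channel $a$. In the splitting direction this is $F_{a,1}=\tau\chi(1,a)^{-1}=\tau$, so $\kappa_L=\rho\tau$. For $\mathcal R(p_a)$ the right cup $i_1$ enters the other bracketing. The entry is read from $\alpha^{-1}$, namely $(F^{-1})_{1,a}=\tau\chi(1,a)=\tau$, so $\kappa_R=\tau$.

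The cap slides then follow without further computation. Bend the cup-slide identity back with the opposite cap and straighten using \autoref{lem:snakes2}. This shows that each cap-bent vertex equals the reciprocal scalar times the original fusion vertex.

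The main obstacle is bookkeeping. I must determine correctly which of $\alpha$ or $\alpha^{-1}$ is used for a left versus a right bend, whether the splitting form ($F$) or the fusion form ($F^{-1}$) of the tree move applies, and on which side the flag $\rho$ sits for the left duality $\widetilde\coev$. I would check these conventions first against the known snake identity \autoref{eq:snake-picture}: the snake must reproduce the factor $\tau^{-1}\tau=1$. That check calibrates the convention, and the remaining rows then use the same one.
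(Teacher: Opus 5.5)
Your proposal is correct and is essentially the paper's proof. Each bend leaves exactly one associator entry, times the flag $\epsilon_r$ for the left cup: $\kappa_L=\epsilon_rF^{r^*rs;s}_{t,1}$ and $\kappa_R=\bigl((F^{rss^*;r})^{-1}\bigr)_{t,1}$. The cap slides then follow by straightening snakes. Your extra step through one-dimensionality of the Hom space and postcomposition with $p^s_{r^*,t}$ is the same computation.

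Only the bookkeeping needs tidying:
\begin{itemize}
\item In the row $(m,a;m)$ the right bend uses $\alpha^{-1}_{m,a,a^{-1}}$, with entry $v(a,a^{-1})=1$. No $mam$ triple can occur in any cup slide, because the cup always creates an adjacent pair $r^*r$ or $ss^*$ with unit channel.
\item The three-red right entry is $(F^{-1})_{a,1}$ rather than $(F^{-1})_{1,a}$; both equal $\tau$, so the value is unaffected.
\item The all-black row is simply $\omega=1$. It is not an instance of \autoref{lem:pointed-mate}, which concerns a full turn of the vertex rather than a single bend.
\end{itemize}
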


\begin{proof}
The two cup slides in \autoref{eq:isotopy-cup-slides} give
\[
 \kappa_L(r,s;t)=\epsilon_r F^{r^*rs;s}_{t,1},
 \qquad
 \kappa_R(r,s;t)
 =\bigl((F^{rss^*;r})^{-1}\bigr)_{t,1}.
\]
The scalar and permutation entries are $1$, while the three-$m$
entry is $\tau$. This gives \autoref{eq:isotopy-coefficients}.
In the first slide the factor $\epsilon_r$ is already part of the
right cup.

Undoing either bend produces a snake, so \autoref{lem:snakes2} gives
the reciprocal coefficients for the cap slides.
\end{proof}

\begin{example}[Bending a channel vertex]\label{ex:isotopy-channel}
The last row of \autoref{eq:isotopy-coefficients} gives, for every
$a\in A$,
\begin{equation*}
 \bentvertex{1}{m}{m}{a}
 =\tau\splitv{a}{m}{m},\qquad
 \bentvertex{0}{m}{m}{a}
 =\rho\tau\splitv{m}{a}{m}.
\end{equation*}
Over $\C$, for $A=G_2$ with $\tau=1/\sqrt2$ and $\rho=1$, both
coefficients are $1/\sqrt2$, and the inverse cap slides have
coefficient $\sqrt2$. With $\tau=-1/\sqrt2$ and the canonical choice
$\rho=-1$, the right and left cup slides instead have coefficients
$-1/\sqrt2$ and $1/\sqrt2$, respectively.
\end{example}

\begin{lemma}[Turning coupons and closing diagrams]
\label{lem:turning-coupons}
Turning a coupon twice gives its double dual. Under the pivotal
identifications,
\begin{equation*}
 \isopivotal{0}=\isopivotal{1},
 \qquad
 j_Yf=f^{**}j_X
\end{equation*}
for every $f:X\to Y$. Hence a full turn preserves the map.

Together with progressive isotopy, interchange,
\autoref{lem:snakes2} and \autoref{prop:isotopy-slides}, coupons may be
moved past cups and caps. Bent labels are dualized and the incident
edge order is preserved. Sphericality identifies left and right
closures.
\end{lemma}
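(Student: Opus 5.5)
The statement has two parts: the double-dual naturality $j_Yf=f^{**}j_X$ in picture form, and the consequence that coupons move freely past cups and caps. I would first reduce the first part to a check on simples and then build the general case from it.

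Write $f^{**}$ diagrammatically as $f$ turned twice. Turning once means bending the output down with a left cup and the input up with a left cap. Turning a second time uses the cups and caps for the duals, and the right duality of \autoref{lem:snakes2} makes these agree with the pivotal bends. With this model the equality $j_Yf=f^{**}j_X$ is naturality of $j$ as a transformation $\id\Rightarrow(-)^{**}$. Naturality is part of being a pivotal structure, which \autoref{prop:pivotal} provides.

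To make the argument self-contained I would still reduce to simples. By \autoref{thm:presentation} every $f$ is a sum of tree pairs $I_\mu P_\lambda$, so by additivity it suffices to treat single vertices and identities. Compatibility with composition and tensor product then follows from monoidality of $j$, which was checked in \autoref{prop:pivotal} through the tensor-comparison coefficients.

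For a single vertex $p=p_{rs}^t$, I would turn it twice using \autoref{prop:isotopy-slides}. The first turn produces a vertex with coefficient $\kappa_L$ or $\kappa_R$ from \autoref{eq:isotopy-coefficients}. The second turn produces the reciprocal or matching coefficient together with the flags $\epsilon_r\epsilon_s\epsilon_t$. The main obstacle is confirming that the product of coefficients over a full turn equals $\epsilon_t\epsilon_r^{-1}\epsilon_s^{-1}$, which is exactly the discrepancy between $j_t$ and $j_r\otimes j_s$. The worst row is $(m,m;a)$: there $\kappa_L\kappa_R^{-1}=\rho$, and this has to match $\rho^2/1=1$ up to the flags present on the right bends. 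I would check this row and the row $(m,a;m)$ explicitly, using $\rho^2=1$, and read the other rows from the table, where every coefficient is $1$.

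The remaining claims are assembly. Moving a coupon past a cup or cap means bending it. By the interchange law, progressive isotopy and the slide lemma applied vertex by vertex, a bend dualizes labels and preserves the edge order, and any snake created along the way is removed by \autoref{lem:snakes2}. Finally, left and right closures of an endomorphism of a simple agree by sphericality from \autoref{prop:pivotal}. Semisimplicity extends this to all objects, because traces are additive over the tree decomposition of \autoref{lem:trees}.
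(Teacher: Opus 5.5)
Your argument is the paper's: the identity $j_Yf=f^{**}j_X$ is naturality of the pivotal structure from \autoref{prop:pivotal}, and the remaining claims follow by decomposing turns and closures into snakes and the cup and cap slides of \autoref{prop:isotopy-slides}. The optional vertex-by-vertex check in your third paragraph is unnecessary, and its bookkeeping is loose: a half turn of a vertex bends all three legs rather than being a single slide, and the stray $\rho$ you find in the $(m,m;a)$ row is just the flag $\epsilon_m$ already built into $\kappa_L$ through the right cup. You can drop that paragraph without affecting the proof.
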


\begin{proof}
The first statement is the naturality of the pivotal structure.
Decomposing a turn or closure into snakes and cup or cap slides gives
the remaining claims. The scalar factors are those of
\autoref{eq:isotopy-coefficients}.
\end{proof}

Loop values are given in \autoref{eq:loop-flag}. Crossings require the
braiding and framed isotopies of \autoref{sec:braiding}.

Even though $m^*=m$, the pivotal factors remain. We can see this by
rotating a cup.

\begin{proposition}[Rotation of the unit cup]\label{prop:cup-rotation}
For $f:1\to mm$, bend its first output by
\[
 h_f=(\ev_m\otimes\id_m)\alpha^{-1}_{m^*,m,m}
       (\id_{m^*}\otimes f):m^*\longrightarrow m,
 \qquad
 \mathcal R_2(f)
 =(\id_m\otimes j_m^{-1})(h_f\otimes\id_{m^{**}})
       \coev_{m^*}.
\]
For the unit cup,
\begin{equation*}
 \mathcal R_2(i_1)=\rho i_1,
 \qquad
 \rotatedcup{\rho}
 =\rho\cupcap{cup}{m}{m}.
\end{equation*}
Thus rotation has trace $\rho$ on $\Hom(1,mm)$. Over $\C$, this is
the usual second Frobenius--Schur indicator, and for the canonical
pivotal structure $\nu_2(m)=\nu$.
\end{proposition}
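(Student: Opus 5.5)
The plan is to compute $\mathcal R_2(i_1)$ directly in the skeletal TY model $\CA$, using the duality \autoref{eq:duality} and the pivotal structure \autoref{eq:pivotal}. Two facts reduce the statement to one scalar computation. First, $\Hom(1,mm)=k\,i_1$ is one-dimensional: this is \autoref{thm:presentation}, or the fusion rule $m^2=\bigoplus_a a$ with its single unit channel. Second, $\mathcal R_2$ is linear. So it suffices to show $\mathcal R_2(i_1)=\rho\, i_1$, and then the trace of rotation on $\Hom(1,mm)$ is $\rho$.

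\emph{Step 1: the bent map $h_{i_1}$.} I would write $\id_{m^*}\otimes i_1$ as the right splitting tree $I^R_1$ on $mmm$ with root $m$. By \autoref{eq:alpha-reconstruction}, $\alpha^{-1}_{m,m,m}I^R_1=\sum_e (F^{-1})_{e,1}I^L_e$. Applying $\ev_m\otimes\id_m=\tau^{-1}p_1\otimes\id_m$ kills every left channel except $e=1$, by the orthogonality relations \autoref{eq:resolution}. What remains is $\tau^{-1}(F^{-1})_{1,1}\id_m$. By \autoref{lem:Fourier}, $(F^{-1})_{1,1}=\tau$, so $h_{i_1}=\id_m$. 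This is exactly the second snake of \autoref{lem:snakes}, and I would cite it rather than recompute.

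\emph{Step 2: reassembling.} With $m^*=m$ we also have $m^{**}=m$, and the left coevaluation is $\coev_{m^*}=\coev_m=i_1$. Hence
\[
 \mathcal R_2(i_1)=(\id_m\otimes j_m^{-1})\,i_1=\rho^{-1}i_1=\rho\, i_1,
\]
using $\rho^2=1$. The picture is this computation drawn with the $\rho$-coupon.

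\emph{Step 3: Frobenius--Schur indicator.} Over $\C$, I would identify $\mathcal R_2$ with the standard rotation operator whose trace defines $\nu_2(m)$, for example in the conventions of \cite{Shi-TY-indicators}, which the paper follows for duality. For the canonical pivotal structure, \autoref{prop:pivotal} gives $\rho=\nu$, so $\nu_2(m)=\nu$.

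The main obstacle is bookkeeping rather than computation. One has to check that the identifications $m^*=m$, $m^{**}=m$ and $\coev_{m^*}=i_1$ hold under the left-dual convention $\coev_X:1\to XX^*$. One also has to check that no hidden factor $\tau^{\pm1}$ enters from $\ev$ or $\coev$ on $m^*$, for instance a cap on the wrong side. I would verify this by comparing with the right duality $\widetilde\ev_m=\rho\tau^{-1}p_1$ and $\widetilde\coev_m=\rho i_1$, which already displays the single factor $\rho$.
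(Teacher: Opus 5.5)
Your proposal is correct and follows the paper's own proof: you get $h_{i_1}=\id_m$ from the second snake identity, then apply $j_m^{-1}=\rho\,\id_m$ to $\coev_{m^*}=i_1$, read off the trace from $\dim\Hom(1,mm)=1$, and identify the result with the Frobenius--Schur indicator via Shimizu's conventions. Your explicit recomputation of the snake through $(F^{-1})_{1,1}=\tau$ just unpacks the snake lemma the paper cites.
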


\begin{proof}
By the snake identity, $h_{i_1}=\id_m$. Since $m^*=m$ and
$j_m^{-1}=\rho\id_m$, the rotated cup is $\rho i_1$.
As $\Hom(1,mm)$ is one-dimensional with basis $i_1$, its trace is
$\rho$. Over $\C$, this agrees with the second indicator of
\cite[Section 2.2]{Shi-TY-indicators}.
\end{proof}

\begin{remark}
The rotation uses only cups, caps and the pivotal structure.
\end{remark}

Over $\C$, for $A=\Z/3\Z$, the two signs of $\tau$ leave the channel matrices
unchanged but give opposite canonical cup rotations.

\subsection{Closing red diagrams}

Restoring the local boundary $e$, closure with the chosen pivotal bends gives,
for $n\ge1$,
\[
 \operatorname{Tr}_j^A:\End_{\CS}(m^n)\longrightarrow
 \End_{\CS}(e)=k\id_e,\qquad f\longmapsto\tr_j(f)\id_e,
\]
where $\tr_j(f)$ denotes its scalar coefficient. The remaining
$e$-edge records that this is a local trace. In scalar calculations
inside $\CA$ we continue to write $e$ as $1$. For the identity on $m$,
\begin{equation*}
 \operatorname{Tr}_j^A(\id_m)
 =\rho\tau^{-1}\bubble{m}{m}{e}{e}=d_m\,\wire{e}.
\end{equation*}
Closing only some strands leaves an endomorphism of the remaining
positive red power. The channel evaluations and cyclicity therefore apply to
$B_n=\End_{\CS}(m^n)$, as does the normalized trace below.

\begin{lemma}\label{lem:channel-trace}
With any pivotal choice,
\begin{equation*}
 \tr_j(e_a)=1,\qquad
 \operatorname{ptr}_{2,j}(e_a)=d_m^{-1}\id_m,
 \qquad \tr_j(M_{a,c})=\delta_{a,c}d_m.
\end{equation*}
Here $\operatorname{ptr}_{2,j}$ closes the second strand of $mm$.
\end{lemma}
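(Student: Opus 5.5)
We compute all three traces in the skeletal model $\CA$, writing $e$ as $1$, with the duality \autoref{eq:duality} and pivotal structure $j_m=\rho\,\id_m$ from \autoref{prop:pivotal}.

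\textbf{First identity.} On $mm$ the pivotal trace is multiplicative in the sense $\tr_j(f)=\tr_j$ computed with $j_{mm}=j_m\otimes j_m$. Because $\rho^2=1$, the pivotal factor on $mm$ is $\rho^2=1$. We would then use cyclicity of the spherical trace on the semisimple object $mm\cong\bigoplus_{x}x$. Writing $e_a=i_ap_a$, cyclicity gives $\tr_j(i_ap_a)=\tr_j(p_ai_a)=\tr_j(\id_a)=d_a=1$. To justify cyclicity across the two objects $mm$ and $a$, we invoke \autoref{lem:turning-coupons} together with sphericality. This is the standard pivotal-category fact, already available in the paper.

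\textbf{Partial trace.} $\operatorname{ptr}_{2,j}(e_a)$ is an endomorphism of the simple object $m$, so it equals $\lambda\,\id_m$ for some scalar $\lambda$. Taking the full trace of both sides gives $\lambda d_m=\tr_j(e_a)=1$. The partial trace followed by closing the remaining strand is the full trace, by planar isotopy as in \autoref{lem:turning-coupons}. Since $d_m=\rho\tau^{-1}\ne 0$, we get $\lambda=d_m^{-1}$.

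\textbf{Matrix units.} We have $M_{a,c}=I_a^LP_c^L$, with $P_c^L:m^3\to m$ and $I_a^L:m\to m^3$. Cyclicity again gives $\tr_j(M_{a,c})=\tr_j(P_c^LI_a^L)$. By \autoref{lem:trees}, $P_c^LI_a^L=\delta_{a,c}\id_m$, hence $\tr_j(M_{a,c})=\delta_{a,c}d_m$.

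\textbf{Main obstacle.} The only delicate point is bookkeeping of the pivotal factors $\rho$ and of the cap normalization $\tau^{-1}$, to make sure the trace on $mm$ and $m^3$ is the one induced by the tensor-compatible $j$. As a cross-check, one can compute $\tr_j(e_1)$ directly. Closing both strands of $e_1=i_1p_1$ gives two nested loops, with coefficients read off from \autoref{prop:isotopy-slides} (the factors $\rho\tau$ and $\tau$). Their product $\rho^2\tau^{-2}\cdot\tau^2=1$ should agree with the cyclicity argument.
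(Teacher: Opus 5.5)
Your proposal is correct and is essentially the paper's proof. The paper also slides the splitting vertex around the closure, i.e.\ uses cyclicity, to get $\tr_j(e_a)=\tr_j(\id_a)=1$; it then fixes the scalar of $\operatorname{ptr}_{2,j}(e_a)$ by closing the remaining strand against $d_m\neq0$, and uses the same cyclic cancellation $P_c^LI_a^L=\delta_{a,c}\id_m$ for $M_{a,c}$. (Cyclicity of the right trace needs only naturality of $j$, so your appeal to sphericality is harmless but unnecessary.)
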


\begin{proof}
Close the two strands and move the cut past the splitting vertex.
The split--fusion bubble cancels, leaving the channel strand:
\begin{equation*}
 \closedchannel{a}{\rho}=\pivloop{a}{none}=1.
\end{equation*}
With only the second strand closed, the result belongs to
$\End(m)=k\id_m$. If its scalar is $z$, closing the remaining
strand gives $zd_m=1$; hence $z=d_m^{-1}$. For a three-strand
tree pair the same cyclic cancellation gives
\[
 \ztrace{3}{M_{a,c}}{\rho}=\delta_{a,c}\pivloop{m}{\rho}
 =\delta_{a,c}d_m.
\]
The coupons record the pivotal factors in each closure.
\end{proof}

The partial closure is
\begin{equation*}
\partialchannel{a}{\rho}=\frac1{d_m}\wire{m}.
\end{equation*}
Closing the channel resolution gives $\sum_a\tr_j(e_a)=N=d_m^2$.
Taking traces in \autoref{eq:adjacent} gives
\begin{equation*}
 \tr_j(L_aR_b)=\frac{d_m}{N},\qquad
 \tr_j(L_aR_bL_c)=\delta_{a,c}\frac{d_m}{N}.
\end{equation*}

\begin{example}[Closing an order-two matrix unit]
Work over $\C$ with the canonical pivotal structure.
Take $A=G_2$. For the matrix units, cyclicity
contracts the same two trees:
\begin{equation*}
 \ztrace{3}{M_{g,1}}{\nu}=0,\qquad
 \ztrace{3}{M_{1,1}}{\nu}=\zloop{\nu}=\sqrt2.
\end{equation*}
Thus \(\tr(M_{x,y})=\delta_{x,y}\sqrt2\). In particular a nonzero
off-diagonal matrix unit can have both square and closure zero.
\end{example}

The four-strand operator in \autoref{eq:Ising-four-product} has trace
$1/2$: its off-diagonal term contributes zero. On six strands, the
maps in \autoref{ex:z2-six-unit} satisfy
\[
 \tr(D_t)=1,\qquad \tr(V_t)=0,\qquad \tr(z_1)=\tr(z_a)=4.
\]

\begin{example}[The two loop normalizations]\label{ex:negative-loop}
Work over $\C$.
Take $A=\langle g\mid g^3=1\rangle$, $\tau=-1/\sqrt3$ and the canonical pivotal
choice $\rho=-1$. Then
\[
 \pivloop{m}{none}=-\sqrt3,\qquad
 \pivloop{m}{-1}=\sqrt3,\qquad
 \partialchannel{g}{-1}=\frac1{\sqrt3}\wire{m}.
\]
Closing the last equality gives $1$, the quantum dimension of the
$g$-channel. Resolving two straight strands gives
$d_m^2=\sum_{a\in A}d_a=3$. The unit-cup rotation is
$-i_1$, by \autoref{prop:cup-rotation}.
More generally, the canonical dimensions are
\[
 d(m^{2r})=N^r,\qquad d(m^{2r+1})=N^r\sqrt N,
\]
as one easily sees.
\end{example}

\subsection{Traces of Fourier operators}

Divide the trace on $m^n$ by $d_m^n$ so that the identity has value
$1$. In the ordered Fourier basis, this normalized trace is particularly
simple.

\begin{proposition}[Trace of the ordered basis]\label{prop:Markov}
For $n\ge1$ put $T_n(f)=d_m^{-n}\tr_j(f)$ for $f\in\End(m^n)$. Then
\begin{equation}\label{eq:monomial-trace}
 T_n\bigl(u_1(x_1)\cdots u_{n-1}(x_{n-1})\bigr)
 =\prod_{i=1}^{n-1}\delta_{x_i,1}.
\end{equation}
For $f\in\End(m^n)$ and $b\in A$,
\begin{equation}\label{eq:Markov}
 T_{n+1}\bigl((f\otimes\id_m)e_b^{(n)}\bigr)=N^{-1}T_n(f).
\end{equation}
\end{proposition}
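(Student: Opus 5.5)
The plan is to compute $\tr_j$ on the path basis of \autoref{lem:path-action}, where every ordered monomial acts as a monomial matrix. By \autoref{thm:ambient-red} we may work in $\CA$. Semisimplicity and sphericality (\autoref{prop:pivotal}) give, for $f\in\End(m^n)$,
\[
 \tr_j(f)=\sum_{t}d_t\operatorname{Tr}\bigl(f|_{L_t}\bigr),
\]
where $L_t=\Hom(t,m^n)$ is the path space of \autoref{cor:path-simples}, $d_t$ is $1$ for $t\in A$ and $d_m$ for $t=m$, and $\operatorname{Tr}$ is the ordinary matrix trace on the path basis. This follows from the tree resolution of \autoref{lem:trees} and cyclicity: closing $I_\lambda P_\mu$ gives $\delta_{\lambda,\mu}d_t$, as in \autoref{lem:channel-trace}.

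For \autoref{eq:monomial-trace}, I would use \autoref{eq:path-u-odd} and \autoref{eq:path-u-even}. Each odd generator $u_{2j-1}(x)$ acts diagonally by $\chi(x,a_{j-1}^{-1}a_j)$. Each even generator $u_{2j}(x)$ shifts $a_j\mapsto x^{-1}a_j$ and fixes all other coordinates, including the root. Hence the ordered monomial sends $|\mathbf a\rangle$ to a scalar multiple of the path obtained by applying all the even shifts. A diagonal entry can be nonzero only when every even $x_{2j}=1$.

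Once the even shifts are trivial, the monomial is diagonal. Its trace on each $L_t$ is a product of character sums over the free coordinates $a_1,\dots$. I would change variables to the consecutive differences $b_j=a_{j-1}^{-1}a_j$, which are independent free variables. For odd $n$ all of them are free. For even $n$ the last difference is constrained by the root $t$, and the sum over $t$ with weight $d_t=1$ frees it again. The trace then factorizes as $\prod_j\sum_{b}\chi(x_{2j-1},b)$. By \autoref{eq:orthogonality} each factor equals $N\delta_{x_{2j-1},1}$. The total trace at $x=1$ is $\dim$-count times $d_t$, which equals $d_m^n$ in both parities because $N=d_m^2$. This normalization is the identity case $\tr_j(\id)=d(m^n)$. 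So $T_n$ takes the value $1$ at the identity monomial and $0$ at every other monomial.

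For the Markov property \autoref{eq:Markov}, I would expand $f$ in the ordered basis \autoref{eq:u-basis} and use linearity, which reduces it to $f$ a monomial. By \autoref{lem:Fourier-inversion},
\[
 e_b^{(n)}=N^{-1}\sum_x\chi(x,b)^{-1}u_n(x).
\]
The product $(f\otimes\id_m)u_n(x)$ is again an ordered monomial on $n+1$ strands, since $u_n$ is the last factor. By \autoref{eq:monomial-trace} it has trace $\delta_{x,1}$ times the trace of the $n$-strand part. Only $x=1$ survives, giving $N^{-1}T_n(f)$.

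The step I expect to be the main obstacle is the bookkeeping in the even case. There the root fixes $a_r$, so the last odd generator's character sum must come from summing over roots rather than over a free path coordinate. The argument also needs the weights $d_t$ and the $d_m^{-n}$ normalization to match exactly; that is where I would be most careful.
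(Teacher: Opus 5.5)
Your proof is correct, but it is organized differently from the paper's.

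\textbf{The paper's route.} The paper never passes to the path basis. It Fourier-expands the last coupon $u_{n-1}(x_{n-1})$ and closes only the last strand. The partial-trace formula $\operatorname{ptr}_{2,j}(e_a)=d_m^{-1}\id_m$ of \autoref{lem:channel-trace} then produces the factor $d_m\delta_{x_{n-1},1}$. All earlier coupons of an ordered monomial act on the remaining strands, so they factor out, and strands are removed from right to left. The Markov identity is the same partial closure applied directly to $(f\otimes\id_m)e_b^{(n)}$. This gives $d_m^{-1}f$ for arbitrary $f$, with no expansion in a basis.

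\textbf{Your route.} You write $\tr_j$ as the $d_t$-weighted sum of ordinary matrix traces on the simple modules $L_t$. This needs only cyclicity of the pivotal trace, not sphericality. You then read off the clock/shift matrices from \autoref{lem:path-action}. The diagonal sum factors under the change of variables $b_j=a_{j-1}^{-1}a_j$, and in the even case the sum over roots $t\in A$ supplies the last free variable. Your Markov step follows from the monomial formula via Fourier inversion. It does rely on the ordered monomials spanning $\End(m^n)$, which is \autoref{thm:centralizers} together with the trivial case $n=1$.

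\textbf{Comparison.} Your route costs more bookkeeping. In exchange it shows explicitly that $T_n$ is the character of $B_n$ weighted by the dimension vector $(d_t)$, i.e.\ the standard Markov trace on this semisimple algebra. The paper's argument is shorter and purely local, and its Markov property is basis-free.
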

\begin{proof}
Fourier-expand the last two-strand coupon and close its last strand:
\begin{equation*}
\begin{aligned}
 \partialbox{u(x)}{\rho}
 &=\sum_b\chi(x,b)\,\partialchannel{b}{\rho}\\[1ex]
 &=\frac1{d_m}\sum_b\chi(x,b)\wire{m}
 =d_m\delta_{x,1}\wire{m}.
\end{aligned}
\end{equation*}
The penultimate equality uses \autoref{lem:channel-trace}; the
last uses character orthogonality and $d_m^2=N$. In an ordered
monomial, every earlier coupon acts on the remaining strands, so
it factors out of this partial closure. Remove strands from right
to left. Each removal contributes $d_m\delta_{x_i,1}$, and the
last loop contributes $d_m$. Division by $d_m^n$ proves
\autoref{eq:monomial-trace}, including $n=1$.

For \autoref{eq:Markov}, close the final strand of the last channel
projector instead. Its bubble contributes $d_m^{-1}$, so the
unnormalized closure is $d_m^{-1}\tr_j(f)$. Normalization gives
$d_m^{-2}T_n(f)=N^{-1}T_n(f)$.
\end{proof}

For example, the closure of $e_a^{(1)}e_b^{(2)}e_c^{(3)}$ is
$N^{-1}$: in \autoref{eq:four-sandwich}, only $t=ac$ contributes
to the trace, with coefficient $1/N$. For a general closed word
in the $u_i(x)$, order the factors using \autoref{eq:u-near} and then
apply \autoref{eq:monomial-trace}.

\begin{example}[Closing an absorption comparison]
For arbitrary $s\in S$, \autoref{lem:absorption} gives
\[
 \operatorname{Tr}_j^A\bigl(d_s^L(d_s^R)^{-1}\bigr)
 =\sum_{x\in A}\chi\big(\phi(s),x\big)^{-1}\id_e
 =N\delta_{\phi(s),e}\id_e.
\]
For $s=a\in A$, its scalar form in our local notation is
\[
 \tr\bigl(d_a^L(d_a^R)^{-1}\bigr)
 =\sum_x\chi(a,x)^{-1}=N\delta_{a,1}.
\]
Over $\C$, for $A=\langle g\mid g^3=1\rangle$ and
$\chi(g^j,g^k)=\zeta^{jk}$, where $\zeta=e^{2\pi i/3}$,
the comparison for $a=g$ is
$e_1+\zeta^2e_g+\zeta e_{g^2}$; its closure is zero.
\end{example}

\begin{example}[Closures of reduced words]
Work over $\C$ with the canonical pivotal structure.
The commutator in \autoref{eq:Z3-commutator} is $\zeta\id_{m^3}$,
so its closure is $3\sqrt3\,\zeta$.
For $A=\Z/4\Z$ with the datum \autoref{eq:Z4}, put
$s=u_1(1)$ and $t=u_2(1)$ in additive group notation. Then
\[
 st=-i\,ts,\qquad sts^{-1}t^{-1}=-i\id_{m^3},\qquad
 \tr(sts^{-1}t^{-1})=-8i.
\]
For the order-two group, \autoref{ex:z2-word} has closure $d_m^4=4$,
whereas $\tr(s_1s_2s_3)=0$ by \autoref{prop:Markov}.
\end{example}

\section{Braidings, twists and closures}\label{sec:braiding}

Black crossings were already classified in \autoref{sec:semigroup-structures}.
A crossing involving $m$ must also pass through the absorption and
channel vertices.

The local TY braiding classification is due to Siehler
\cite{Si-braided-near-group}; here we classify its extension through
$\phi$ to the ambient semigroup category.

\begin{example}[The idempotent black crossing]
For \(E_2=\{1,e\}\) with \(e^2=e\), the label \(e\) cannot bend, but
it can cross:
\begin{equation*}
 \crossing{e}{e}=\channel{e}{e}{e}=\wire{e}\,\wire{e}.
\end{equation*}
Indeed, \autoref{eq:semigroup-hexagon} gives
\(b(e,e)=b(e,e)^2\), hence \(b(e,e)=1\). Thus crossing and duality
already separate in the smallest idempotent example.
\end{example}

\subsection{Crossings and hexagons}

Our positive crossing has the lower-left strand passing over the
lower-right strand. Write
$R_{r,s}^u\id_u=p_{sr}^u c_{r,s}i_{rs}^u$ for its coefficient in
channel $u$.

An inverse crossing has reversed boundary order and reciprocal
channel scalars. A change of vertex bases as in \autoref{prop:gauge}
changes the crossing coefficients by
\begin{equation*}
 (R')_{r,s}^{u}=\frac{z_{sr}^{u}}{z_{rs}^{u}}R_{r,s}^{u},
\end{equation*}
as follows by substitution in $p_{sr}^u c_{r,s}i_{rs}^u$.

A crossing must pass through a fusion vertex in either direction.
In the strict diagram category, the two hexagons are the local moves
\begin{equation*}
\begin{aligned}
 \hexmove{0}{0}{r}{s}{t}{u}
 =\hexmove{0}{1}{r}{s}{t}{u}
 \bigl(u\in\Out(s,t)\bigr),\quad
 \hexmove{1}{0}{r}{s}{t}{u}
 =\hexmove{1}{1}{r}{s}{t}{u}
 \bigl(u\in\Out(r,s)\bigr).
\end{aligned}
\end{equation*}
In the categorical model, restoring the brackets gives
\begin{equation}\label{eq:hexagons-with-associators}
\begin{aligned}
 c_{r,s\otimes t}
 &=\alpha^{-1}_{s,t,r}(\id_s\otimes c_{r,t})\alpha_{s,r,t}
   (c_{r,s}\otimes\id_t)\alpha^{-1}_{r,s,t},\\
 c_{r\otimes s,t}
 &=\alpha_{t,r,s}(c_{r,t}\otimes\id_s)\alpha^{-1}_{r,t,s}
   (\id_r\otimes c_{s,t})\alpha_{r,s,t}.
\end{aligned}
\end{equation}

An invertible black crossing $st\to ts$ requires $st=ts$, since
distinct simples have zero Hom-space. Thus $S$ is commutative.
On $\CA$, substituting the associators into
\autoref{eq:hexagons-with-associators} gives
$R_{a,b}^{ab}=\chi(a,b)$ and $R_{b,a}^{ab}=\chi(a,b)^{-1}$.
Symmetry of $\chi$ therefore gives $\chi(a,b)^2=1$, whence
$\chi(a^2,b)=1$ for every $b$. Nondegeneracy implies $a^2=e$.

For $N>1$, this forces $A$ to be a nontrivial elementary abelian $2$-group. Since
$N\ne0$ in $k$, we have $\operatorname{char}k\ne2$,
$\chi(a,b)\in\{1,-1\}$ and $F=F^{-1}$.

\subsection{Quadratic refinements and crossings}

The mixed hexagons determine the two absorption crossings by the
same function $q$. Their compatibility with multiplication is the
quadratic refinement identity below. The three-red hexagon will
determine one further scalar $\beta$.

\begin{definition}
A \textbf{quadratic refinement} is a function
$q:A\to k^\times$ such that
\begin{equation}\label{eq:q}
 q(e)=1,\qquad q(ab)=q(a)q(b)\chi(a,b).
\end{equation}
Choose $\beta\in k^\times$ with
\begin{equation}\label{eq:beta}
 \beta^2=\tau G(q),\qquad G(q)=\sum_{a\in A}q(a).
\end{equation}
Putting $b=a$ in \autoref{eq:q} gives $q(a)^2=\chi(a,a)$.
\end{definition}

Refinements exist over $k$. Choose a basis $a_1,\ldots,a_\ell$ of
$A$ over $\mathbb F_2$ and scalars $z_i^2=\chi(a_i,a_i)$. Then
\[
 q\left(\prod_i a_i^{x_i}\right)
 =\prod_i z_i^{x_i}\prod_{i<j}\chi(a_i,a_j)^{x_ix_j},
 \qquad x_i\in\{0,1\},
\]
satisfies \autoref{eq:q}. The ratio of two refinements is a character,
so there are $|A|$ refinements for fixed $\chi$.

\begin{lemma}\label{lem:Gauss}
The refinement satisfies $q(a)^4=1$, and
\begin{equation*}
 \sum_b q(b)\chi(t,b)=\frac{G(q)}{q(t)},\qquad
 G(q^{-1})G(q)=N.
\end{equation*}
In particular \autoref{eq:beta} has two solutions if $\operatorname{char}k\ne2$, and one in characteristic $2$ (where $A=\{e\}$).
\end{lemma}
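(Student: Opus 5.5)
We work in the situation fixed just above: $A$ is an elementary abelian $2$-group, $\chi$ takes values in $\{\pm1\}$ and is symmetric and nondegenerate, $N\neq0$ in $k$, and $q$ satisfies \autoref{eq:q}. The plan has three steps: the fourth-power identity for $q$, the twisted Gauss sum by reindexing, and the product $G(q^{-1})G(q)$ by expanding a double sum and applying character orthogonality \autoref{eq:orthogonality}.

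\emph{Fourth powers.} Setting $b=a$ in \autoref{eq:q} gives $q(a^2)=q(a)^2\chi(a,a)$. Since $a^2=e$ and $q(e)=1$, this yields $q(a)^2=\chi(a,a)^{-1}$. The value $\chi(a,a)$ lies in $\{\pm1\}$, so $q(a)^2=\chi(a,a)$ and hence $q(a)^4=\chi(a,a)^2=1$.

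\emph{Twisted Gauss sum.} Apply \autoref{eq:q} to the pair $(t,b)$: $q(tb)=q(t)q(b)\chi(t,b)$, so $q(b)\chi(t,b)=q(tb)/q(t)$. As $b$ runs over $A$, so does $tb$. Summing over $b$ therefore gives $G(q)/q(t)$.

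\emph{Product formula.} First note that $q^{-1}$ is a quadratic refinement for $\chi^{-1}=\chi$, although we will not need this. Instead, expand directly:
\[
 G(q^{-1})G(q)=\sum_{a,b}q(a)^{-1}q(b).
\]
Substitute $b=ac$, so that $q(b)=q(a)q(c)\chi(a,c)$. This turns the sum into $\sum_c q(c)\sum_a\chi(a,c)$. By \autoref{eq:orthogonality} together with symmetry of $\chi$, the inner sum equals $N\delta_{c,e}$, so the whole sum collapses to $Nq(e)=N$.

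\emph{Counting solutions of \autoref{eq:beta}.} Since $N\neq0$, we get $G(q)\neq0$, and $\tau\neq0$. Hence $\tau G(q)$ is a nonzero element of the algebraically closed field $k$, and it has two square roots when $\operatorname{char}k\neq2$ and one when $\operatorname{char}k=2$. In characteristic $2$ the condition $N\neq0$, combined with $A$ being a $2$-group, forces $A=\{e\}$.

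The only step needing real care is the reindexing in the product formula, specifically checking that the orthogonality relation is applied in the correct variable; symmetry of $\chi$ makes this harmless. Everything else is direct.
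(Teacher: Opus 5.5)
Your proposal is correct and follows the paper's proof essentially step for step. You obtain $q(a)^2=\chi(a,a)$ from $b=a$ in the refinement identity, get the twisted Gauss sum by reindexing $b\mapsto tb$, prove $G(q^{-1})G(q)=N$ by substituting in the double sum and applying character orthogonality, and count the square roots using $N\neq0$ and algebraic closure (your spelling-out of $q(a)^2=\chi(a,a)^{-1}=\chi(a,a)$ via $a^2=e$ is slightly more explicit than the paper's one-line remark).
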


\begin{proof}
The values follow from $q(a)^2=\chi(a,a)$. For the first identity use
$q(tb)=q(t)q(b)\chi(t,b)$ and sum over $b$. For the second, substitute
$a=tb$ in $G(q)G(q^{-1})$ to obtain
\[
 \sum_{t,b}q(tb)q(b)^{-1}
 =\sum_t q(t)\sum_b\chi(t,b)=N.
\]
Since $N\ne0$, the Gauss sum is nonzero. Algebraic closure gives
two roots for $\beta$ outside characteristic $2$, and one in characteristic $2$.
Over $\C$, $q^{-1}=\overline q$ also gives $|G(q)|^2=N$.
\end{proof}

For commutative $S$ and $s,t\in S$, the candidate
crossings determined by $q,\beta$ are
\begin{align*}
 \crossing{s}{t}&=\chi\big(\phi(s),\phi(t)\big)\resolvedcross{s}{t}{st},\quad
 \crossing{s}{m}=q\big(\phi(s)\big)\resolvedcross{s}{m}{m},\\[1ex]
 \crossing{m}{s}&=q\big(\phi(s)\big)\resolvedcross{m}{s}{m},\quad
 \crossing{m}{m}=\sum_{a\in A}\beta q(a)^{-1}\channel{m}{m}{a}.
\end{align*}
\begin{theorem}[Braiding classification]\label{thm:braid}
The category $\CS$ admits a braiding precisely when $S$ is commutative
and $a^2=e$ for every $a\in A$. All braidings in the fixed normal
gauge are parametrized by \autoref{eq:q}--\autoref{eq:beta}, with
\begin{equation}\label{eq:R}
\begin{gathered}
 R_{s,t}^{st}=\chi\big(\phi(s),\phi(t)\big),\qquad
 R_{s,m}^{m}=R_{m,s}^{m}=q\big(\phi(s)\big)\quad(s,t\in S),\\
 R_{m,m}^{a}=r(a):=\beta q(a)^{-1}\quad(a\in A).
\end{gathered}
\end{equation}
When $S$ is commutative, restriction to $\CA$ is a bijection on braidings. For $N>1$ there
are $2|A|$ choices for fixed data, labels and normal basis, before equivalences.
\end{theorem}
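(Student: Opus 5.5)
We first reduce to the local category and then transport the local answer along $\phi$. Necessity of the two conditions is already essentially done in the discussion preceding the theorem. An invertible black crossing $st\to ts$ forces $st=ts$, so $S$ is commutative. Restricting any braiding of $\CS$ to the full braided subcategory $\CA$ gives a braiding there. The black--black hexagons on $\CA$ yield $R_{a,b}^{ab}=\chi(a,b)$ and, by symmetry of $\chi$, $\chi(a,b)^2=1$. Nondegeneracy then gives $a^2=e$. So the substantive work is threefold: solving the hexagons on $\CA$ completely; showing every local solution extends uniquely through $\phi$; and checking the extension satisfies all ambient hexagons.

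For the local classification, work in the normal gauge and write the unknowns $R_{a,m}^m$, $R_{m,a}^m$ and $R_{m,m}^a$. Substitute the associators of \autoref{def:CS} (with $S=A$) into \autoref{eq:hexagons-with-associators}, component by component, exactly as in \autoref{lem:pentagon-table}, and sort by the number of red inputs.
\begin{enumerate}
\item Patterns $(a,b,m)$ and $(m,a,b)$ make $a\mapsto R_{a,m}^m$ satisfy $R_{ab,m}=R_{a,m}R_{b,m}$ up to the mixed scalar $\chi(a,b)$. This is precisely the quadratic refinement identity \autoref{eq:q}, so we set $q(a):=R_{a,m}^m$.
\item The $(a,m,b)$ type hexagons force $R_{m,a}^m=R_{a,m}^m$.
\item The two-red patterns $(a,m,m)$ and $(m,a,m)$ relate $r(x):=R_{m,m}^x$ at different channels. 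I expect $r(ax)=r(x)\,q(a)^{-1}\chi(a,x)^{\pm1}$, and together with \autoref{eq:q} this gives $r(a)=\beta q(a)^{-1}$ for a single scalar $\beta=r(e)$.
\item The three-red hexagon $(m,m,m)$, with the Fourier matrix $F$ on both sides, reduces after \autoref{lem:Gauss} to a single scalar equation. Taking the $(e,e)$ entry should give $\beta^2=\tau\sum_b q(b)=\tau G(q)$; the other entries then follow from the Gauss-sum identity $\sum_b q(b)\chi(t,b)=G(q)/q(t)$.
\end{enumerate}
Conversely, these formulas satisfy all sixteen hexagon patterns by the same substitutions.

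The main obstacle is step (d): keeping track of $F$ versus $F^{-1}$ and of the orientation conventions, so that the Gauss sum appears with $q$ rather than $q^{-1}$. I would first check the $(e,e)$ entry, and then derive the general entry by translating channels using step (c).

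For the extension to $S$, use \autoref{prop:ambient-recoupling}. Define the ambient crossings by the displayed formulas via $\phi$. Applying the faithful functor $F_\phi$ maps every ambient hexagon to a local one; this uses $\phi(st)=\phi(s)\phi(t)$ and commutativity of $S$ for the black channels. Hence the hexagons hold, which proves existence. For uniqueness, an arbitrary ambient braiding has its coefficients forced by the mixed hexagons involving $s$ and an element of $A$. The $(s,a,m)$ absorption hexagon, for instance, gives $R_{s,m}^m=R_{\phi(s),m}^m$, using $sa=\phi(s)a$ together with the local values. For black--black crossings, the $(s,t,m)$ hexagon gives $R_{s,t}^{st}=\chi(\phi(s),\phi(t))$.

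So restriction to $\CA$ is a bijection on braidings. Counting, there are $|A|$ refinements $q$ for fixed $\chi$, and two roots $\beta$ for each, since $\operatorname{char}k\ne2$ when $N>1$. This gives $2|A|$ braidings.
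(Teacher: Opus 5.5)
Your architecture matches the paper's. You restrict to $\mathcal C_A$ and solve the hexagons there by number of red inputs; the three-red one becomes $FQF=RFR$, decided by its $(e,e)$ entry and the Gauss-sum identity. You then get existence on $S$ by pulling hexagons back along the faithful functor $F_\phi$, and uniqueness from mixed hexagons with one entry in $A$. (The paper sets $t=e$; your $(s,a,m)$ version is equivalent.)

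The problem is in the local derivation, where two hexagon families are given the wrong jobs. Write $b(a,b)=R^{ab}_{a,b}$, $L(a)=R^m_{a,m}$, $R(a)=R^m_{m,a}$ and $r(x)=R^x_{m,m}$. With $\omega=1$, the black--black hexagons only say that $b$ is a bicharacter, so they cannot produce $\chi$. It is exactly the $(a,m,b)$ hexagons that do this. The one for $c_{a,m\otimes b}$ reads $L(a)=\chi(a,b)^{-1}L(a)\,b(a,b)$, and the one for $c_{a\otimes m,b}$ reads $R(b)=\chi(a,b)R(b)\,b(a,b)$. Each contains only one of $L$ and $R$. So they give $b=\chi=\chi^{-1}$, hence $\chi^2=1$, but say nothing about $L=R$. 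Your step (b) as stated therefore fails.

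The equality $L=R$ comes from the two-red hexagons, so step (c) must be carried out without presupposing it.
The hexagon for $c_{a\otimes m,m}$ gives $r(x)=\chi(a,x)L(a)\,r(a^{-1}x)$.
The hexagon for $c_{m,a\otimes m}$ gives $r(x)=\chi(a,x)^{-1}R(a)\,r(a^{-1}x)$.
Since $\chi^2=1$, comparing the two forces $L=R$.
Next, $q(e)=1$ and the refinement identity at $b=a$ give $q(a)^2=\chi(a,a)$. This turns the first relation into $r(ax)=q(a)^{-1}\chi(a,x)\,r(x)$, hence $r=\beta/q$.
The paper's proof states without computation that the two absorptions share the scalar $q$, so this is where your missing argument belongs.

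Finally, the $F$ versus $F^{-1}$ bookkeeping you flag in (d) is harmless. Once $a^2=e$, $\chi$ is $\pm1$-valued and $F=F^{-1}$.
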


\begin{proof}
On $\CA$, the two absorptions have the same scalar $q$. The other mixed equations give
\autoref{eq:q} and
\begin{equation}\label{eq:mixed-hex}
 q(a)r(ab)=\chi(a,b)r(b).
\end{equation}
Setting $b=e$ gives $r(a)=\beta/q(a)$ with $\beta=r(e)$. Indeed,
$q(a)\beta/q(ab)=\beta\chi(a,b)/q(b)$.

For three $m$-strands, put $Q=\operatorname{diag}(q(a))$ and
$R=\operatorname{diag}(r(a))$. The hexagon becomes
\begin{equation}\label{eq:hex-mmm}
 FQF=RFR.
\end{equation}
Entry by entry, \autoref{lem:Gauss} gives
\begin{equation*}
 (FQF)_{a,c}=\tau^2\sum_b q(b)\chi(ac,b)
 =\frac{\tau^2G(q)}{q(ac)}
 =\frac{\tau\beta^2\chi(a,c)}{q(a)q(c)}=(RFR)_{a,c}.
\end{equation*}
Conversely, its $a=c=e$ entry forces \autoref{eq:beta}.
Substitution for the seven input types other than $mmm$ gives
\autoref{eq:q}, $q(a)^2=\chi(a,a)$ and \autoref{eq:mixed-hex}.
This gives the TY crossings of \cite[Theorem 1.2]{Si-braided-near-group}.

Write $b(s,t)=R_{s,t}^{st}$, $L(s)=R_{s,m}^m,R(s)=R_{m,s}^m$. The first hexagon on $(s,t,m)$, with $tm=m$, is
\begin{equation*}
 L(s)=L(s)b(s,t)\chi\big(\phi(t),\phi(s)\big)^{-1}.
\end{equation*}
Its crossing-through-fusion diagram is
\[
 \hexmove{0}{0}{s}{t}{m}{m}
 =\hexmove{0}{1}{s}{t}{m}{m}.
\]
Since $L(s)\ne0$, symmetry gives
$b(s,t)=\chi(\phi(s),\phi(t))$ for all $s,t\in S$.
The other mixed hexagons give
\begin{equation*}
\begin{aligned}
 L(st)&=L(s)L(t)\chi\big(\phi(s),\phi(t)\big)^{-1},\quad
 R(st)&=R(s)R(t)\chi\big(\phi(s),\phi(t)\big).
\end{aligned}
\end{equation*}
Set $s=t=e$ to obtain $L(e)=R(e)=1$, then set $t=e$ to obtain
$L(s)=L(\phi(s))$ and $R(s)=R(\phi(s))$.
Both local values are $q$, proving \autoref{eq:R}.

Conversely, for commutative $S$, formula \autoref{eq:R} gives natural
invertible crossings with the required ambient boundaries.
Under the faithful functor $F_\phi$ every hexagon is a local TY
hexagon, hence holds in $\CS$. For a monoid, $\phi(1)=e$ gives
the unit crossing identities.
\end{proof}

\begin{example}[A hexagon coefficient for $\Z/2\Z$]\label{ex:Ising-hexagon}
Work in $\CA$ over $\C$, with $A=G_2=\{1,g\}$ and $g^2=1$.
Let $\chi(g,g)=-1$, $\tau=1/\sqrt2$, $q(g)=-i$ and
$\beta=e^{-\pi i/8}$. Resolve the two routes through the unit
initial and final channels. Their coefficients are
\[
 \underbrace{\frac12q(1)+\frac12q(g)}_{\text{recouple, cross, recouple}}
 =\frac{1-i}{2}
 =\underbrace{\beta\,\frac1{\sqrt2}\,\beta}_{\text{cross, recouple, cross}}.
\]
These are the $(1,1)$ entries of \autoref{eq:hex-mmm};
both intermediate channels contribute.
\end{example}

\begin{example}[Crossing a label outside the group ideal]
Use the datum of \autoref{ex:proper-group-ideal}, with
$q(a)=-i$ and $\beta=e^{-\pi i/8}$. Since $\phi(s)=a$,
\[
 \crossing{s}{m}=-i\resolvedcross{s}{m}{m}.
\]
This crossing is defined in $\CS$ even though $m$ has no ambient dual.
\end{example}

\begin{example}[The one-channel red crossing]
Return to the restricted idempotent extension
\(e^2=e\), \(em=me=m\), \(m^2=e\).
If \(\alpha_{m,m,m}=\tau\id_m\), with \(\tau^2=1\), then
\[
 \crossing{m}{m}=\beta\channel{m}{m}{e},
 \qquad \beta^2=\tau.
\]
Crossings involving \(e\) or the global unit have coefficient \(1\).
The red strand is therefore braided, but its local cup still has source
\(e\), not the global unit \(1\).
\end{example}

The order-three data of \autoref{ex:Z3-recoupling} and the order-four
datum \autoref{eq:Z4} therefore give unbraided categories: their groups
contain elements whose square is not the identity.

\subsection{Braid matrices}

Next up:

\begin{proposition}[Three-strand braid matrices]\label{prop:braid-matrices}
In the splitting-tree basis $\{I_a^L:m\to m^3\}_{a\in A}$, let
$b_1=c_{m,m}\otimes\id_m$ and $b_2=\id_m\otimes c_{m,m}$.
Their matrices are
\begin{equation*}
 (b_1)_{a,c}=\delta_{a,c}\frac{\beta}{q(a)},\qquad
 b_2=F^{-1}RF,\qquad (b_2)_{a,c}=\frac{\tau}{\beta}q(ac).
\end{equation*}
They satisfy the braid relation.
\end{proposition}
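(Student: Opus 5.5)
We work in the left-comb splitting basis $I_a^L=(i_a\otimes\id_m)i_{am}^m$ of $\Hom(m,m^3)$ and compute each braid generator by postcomposition. The three matrix identities are then checked directly, and the braid relation follows from the hexagon \autoref{eq:hex-mmm}.

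For $b_1$, the crossing acts on the first two strands, which are exactly the pair fused through the channel $a$ in $I_a^L$. The crossing formula $c_{m,m}=\sum_b\beta q(b)^{-1}e_b$ then gives $b_1I_a^L=\beta q(a)^{-1}I_a^L$, because $e_bi_a=\delta_{a,b}i_a$ by \autoref{eq:resolution}. Hence $b_1=R$ is diagonal with entries $r(a)=\beta/q(a)$.

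For $b_2$, we first rewrite $I_a^L$ in the right-comb basis using the splitting relations of \autoref{lem:splitting-relations}: $I_a^L=\sum_f F_{f,a}I_f^R$. On $I_f^R$ the crossing on strands two and three acts by $r(f)$. We then return to the left basis with $I_f^R=\sum_c (F^{-1})_{c,f}I_c^L$. Together this gives $b_2=F^{-1}RF$.

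The entries of $b_2$ come from Gauss sums. Since every $a^2=e$, the bicharacter takes values $\pm1$ and $F=F^{-1}$. The entry is
\[
 (b_2)_{c,a}=\tau^2\beta\sum_f\chi(c,f)q(f)^{-1}\chi(a,f).
\]
Applying \autoref{lem:Gauss} to the refinement $q^{-1}$ (whose associated bicharacter is $\chi^{-1}=\chi$) gives
\[
 (b_2)_{c,a}=\tau^2\beta\,G(q^{-1})\,q(ac).
\]
Using $G(q^{-1})=N/G(q)$ together with $N\tau^2=1$ and $\beta^2=\tau G(q)$, the scalar becomes
\[
 \tau^2\beta G(q^{-1})=\frac{\beta}{G(q)}=\frac{\tau}{\beta},
\]
which is the stated formula. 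The formula is symmetric in $a,c$, so the index convention does not matter.

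For the braid relation $b_1b_2b_1=b_2b_1b_2$, substitute $b_1=R$ and $b_2=F^{-1}RF$. The relation becomes $RF^{-1}RFR=F^{-1}RFRF^{-1}RF$. Multiply on the left by $F$ and on the right by $F^{-1}$, and use \autoref{eq:hex-mmm} in the form $RFR=FQF$ on both sides. This reduces the relation to $F^2Q=QF^2$, up to moving $F^{\pm1}$ factors.

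The hexagon manipulation is the step I expect to need the most care. The cleanest route is to note that $F=F^{-1}$ here, and that $F^2=J$ is the identity because $a^{-1}=a$. So after substituting $RFR=FQF$ each side collapses to $FQ\cdots$. Alternatively, one can verify the relation entrywise with the explicit formulas above and the Gauss sum identity again.
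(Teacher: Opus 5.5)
Your proposal is correct. The computation of $b_1$, the change of basis giving $b_2=F^{-1}RF$, and the Gauss-sum evaluation of its entries are essentially the paper's argument. That evaluation applies \autoref{lem:Gauss} to $q^{-1}$ and then uses $G(q^{-1})=N/G(q)$, $N\tau^2=1$ and $\beta^2=\tau G(q)$.

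You depart from the paper only in the braid relation. The paper checks it entrywise: it computes $(b_1b_2b_1)_{a,c}$ and $(b_2b_1b_2)_{a,c}$ separately and uses the Gauss sum a second time to show that both equal $\tau\beta\chi(a,c)$. You instead derive it from the matrix hexagon \autoref{eq:hex-mmm}.

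Your sketch of that step is imprecise: the relation does not reduce to ``$F^2Q=QF^2$'', and ``collapses to $FQ\cdots$'' depends on how you group. The clean version is one line and uses neither $F^2=I$ nor any Gauss sum. Substitute $RFR=FQF$ into the middle triple of each side:
\[
 b_1b_2b_1=RF^{-1}(RFR)=RF^{-1}FQF=RQF,\qquad
 b_2b_1b_2=F^{-1}(RFR)F^{-1}RF=QRF.
\]
These agree because $Q$ and $R$ are diagonal. Since moreover $QR=\beta\,\id$, both sides equal $\beta F$, which recovers the paper's common entries $\tau\beta\chi(a,c)$. The matrix identity gives the relation in $\End(m^3)$ itself, because $\End(m^3)\cong\Mat_N(k)$ acts faithfully on the basis $\{I_a^L\}$.

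The two routes buy different things. Yours shows the braid relation as the matrix form of hexagon plus naturality and avoids a second Gauss-sum computation, but it depends on \autoref{eq:hex-mmm} from the proof of \autoref{thm:braid}. The paper's entrywise check is self-contained and also serves as an independent consistency test of the explicit formulas for $b_1$ and $b_2$.
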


\begin{proof}
The first crossing acts on the left pair and contributes
$r(a)$. For the second, apply the local projector reduction
\autoref{eq:graphical-local-projector} to each crossing channel:
\[
 (b_2)_{a,c}=\frac\beta N\sum_bq(b)^{-1}\chi(ac,b)
 =\frac\beta N G(q^{-1})q(ac)=\frac\tau\beta q(ac).
\]
For the three-crossing words, resolve their trees. The first
has coefficient $r(a)(\tau/\beta)q(ac)r(c)\allowbreak=\tau\beta\chi(a,c)$.
The second has coefficient
\[
 \frac{\tau^2}{\beta}\sum_b\frac{q(ab)q(bc)}{q(b)}
 =\frac{\tau^2}{\beta}q(a)q(c)\frac{G(q)}{q(ac)}
 =\tau\beta\chi(a,c).
\]
Both expansions therefore give
\begin{equation*}
 \braidthree{1,2,1}
 =\sum_{a,c}\tau\beta\chi(a,c)\matrixunit{a}{c}
 =\braidthree{2,1,2}.
\end{equation*}
The proof is complete.
\end{proof}

\autoref{lem:path-action} and \autoref{lem:Gauss} give
\begin{align*}
 b_{2j-1}|\mathbf a\rangle
 =\frac{\beta}{q(a_{j-1}^{-1}a_j)}|\mathbf a\rangle,
 \quad
 b_{2j}|\mathbf a\rangle
 =\frac{\tau}{\beta}\sum_c q(ca_j)
 |a_1,\ldots,a_{j-1},c,a_{j+1},\ldots\rangle.
\end{align*}
Here every group element has order at most two.

For the order-two examples work over $\C$ with $A=G_2=\{1,g\}$,
$g^2=1$, $\chi(g,g)=-1$, $\tau=1/\sqrt2$, and fix
\[
 \nu=1,\qquad q(g)=-i,\qquad \beta=e^{-\pi i/8}.
\]
Thus $\delta=\sqrt2$. The four nontrivial crossing types resolve as
\begin{equation*}
\begin{gathered}
 \crossing{g}{g}=-\resolvedcross{g}{g}{1},\qquad
 \crossing{g}{m}=-i\resolvedcross{g}{m}{m},\qquad
 \crossing{m}{g}=-i\resolvedcross{m}{g}{m},\\[1ex]
 \crossing{m}{m}=\beta\bigg(
   \channel{m}{m}{1}+i\channel{m}{m}{g}\bigg).
\end{gathered}
\end{equation*}
The two mixed resolutions are maps between different words; their
vertices record the required absorption maps.

\begin{example}[Three-strand braid matrices]
Let \(b_i\) be the positive crossing on positions \(i,i+1\).
In the splitting basis \((I_1^L,I_g^L)\), \(b_1=R\) and
\begin{equation*}
 R=\begin{pmatrix}e^{-\pi i/8}&0\\0&e^{3\pi i/8}\end{pmatrix},\qquad
 b_2=\frac{e^{-\pi i/8}}2
 \begin{pmatrix}1+i&1-i\\1-i&1+i\end{pmatrix}.
\end{equation*}
For instance,
 \(b_2I_1^L=\frac{e^{-\pi i/8}}2((1+i)I_1^L+(1-i)I_g^L)\).
\end{example}

\begin{lemma}[A skein calculation]\label{lem:z2-skein}
The positive and negative $m$-crossings have the cup--cap
expansions
\begin{equation*}
\begin{aligned}
 \crossing{m}{m}
 =\beta\left(i\ztlid{2}+\frac{1-i}{\sqrt2}\ztl{2}{1}\right),\quad
 \zinversecross
 =\beta^{-1}\left(-i\ztlid{2}+\frac{1+i}{\sqrt2}\ztl{2}{1}\right).
\end{aligned}
\end{equation*}
The two crossings cancel:
\begin{equation*}
 \zsignedbraid{1,-1}=\ztlid{2}.
\end{equation*}
\end{lemma}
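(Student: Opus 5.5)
The statement is about $\End(mm)\cong k^2$ with basis the channel idempotents $e_1,e_g$, so I will compare coefficients in that basis. The crossing resolution already gives $c_{m,m}=\beta(e_1+ie_g)$, and \autoref{eq:z2-resolution} expresses the channel idempotents through the identity and the cup--cap diagram $U=\ztl{2}{1}$. I take the standing normalization ($\nu=1$, $\delta=\sqrt2$, $q(g)=-i$, $\beta=e^{-\pi i/8}$). Then
\[
 e_1=\delta^{-1}U=\tfrac1{\sqrt2}U,\qquad e_g=\id-\tfrac1{\sqrt2}U .
\]

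Substituting these into $\beta(e_1+ie_g)$ gives
\[
 \beta\Bigl(i\,\id+\tfrac{1-i}{\sqrt2}U\Bigr),
\]
which is the first displayed expansion. For the negative crossing I use the remark after the definition of $R^u_{r,s}$: the inverse crossing has reciprocal channel scalars. On $mm\to mm$ this gives $c_{m,m}^{-1}=\beta^{-1}(e_1-ie_g)$. Substituting in the same way produces
\[
 \beta^{-1}\Bigl(-i\,\id+\tfrac{1+i}{\sqrt2}U\Bigr).
\]
The one thing to check is that the drawn inverse crossing really is $c^{-1}$, with the same boundary word $mm$, so that the channel scalars $r(a)^{-1}=\beta^{-1}q(a)$ apply.

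For the cancellation I simply multiply in the commutative algebra spanned by $e_1,e_g$:
\[
 \beta(e_1+ie_g)\,\beta^{-1}(e_1-ie_g)=e_1+e_g=\id_{mm}.
\]
As a cross-check I can also do it in the cup--cap form, using $U^2=\delta U=\sqrt2\,U$ from \autoref{prop:TL}. The constant term is $(i)(-i)=1$. The $U$-coefficient is
\[
 \frac{i(1+i)-i(1-i)}{\sqrt2}+\frac{(1-i)(1+i)}{2}\sqrt2=\frac{-2}{\sqrt2}+\sqrt2=0 .
\]

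The main obstacle is only bookkeeping. I must use the fixed cup--cap normalization, with cap $\delta p_1$ and cup $i_1$, so that $U=\delta e_1$. I must also correctly identify which drawn crossing is positive. Everything else reduces to $2\times2$ diagonal arithmetic.
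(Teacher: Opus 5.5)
Your proposal is correct and follows essentially the same route as the paper: substitute $e_1=U/\sqrt2$ and $e_g=1-U/\sqrt2$ into $c_{m,m}=\beta(e_1+ie_g)$ and $c_{m,m}^{-1}=\beta^{-1}(e_1-ie_g)$, then check that the $U$-coefficient of the product vanishes using $U^2=\sqrt2\,U$. Your direct multiplication in the channel basis is a tautological shortcut for the cancellation; the cup--cap computation you include as a cross-check is the one the paper actually uses, and it is what confirms that the two displayed expansions are mutually inverse.
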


\begin{proof}
Substitute $e_1=U/\sqrt2$ and $e_{g}=1-U/\sqrt2$ into
$c_{m,m}=\beta(e_1+ie_{g})$ and
$c_{m,m}^{-1}=\beta^{-1}(e_1-ie_{g})$.
Multiplying the two expressions, the coefficient of $1$ is $1$
and the coefficient of $U$, after using $U^2=\sqrt2U$, is
 $\frac{i(1+i)-i(1-i)}{\sqrt2}
 +\frac{(1-i)(1+i)}2\sqrt2=0$.
\end{proof}

\begin{example}[Two crossings change a channel]\label{ex:z2-braid-square}
Let \(b_i\) be the positive crossing on positions \(i,i+1\).
Since the square of its \(g\)-channel scalar is \(-\beta^2\),
\begin{equation*}
 \zsignedbraid{1,1}
 =\beta^2\bigg(\channel{m}{m}{1}-\channel{m}{m}{g}\bigg),
 \qquad b_i^2=\beta^2s_i.
\end{equation*}
On three strands, \(s_2\) exchanges the two left-comb channels.
Thus
\begin{equation*}
 \zbraidedstate{1}=\beta^2\leftsplit{m}{m}{m}{g}{m},\qquad
 \zbraidedstate{g}=\beta^2\leftsplit{m}{m}{m}{1}{m}.
\end{equation*}
A further pair returns to the original channel, with scalar
\(\beta^4=-i\).
\end{example}

\subsection{Balancing and ribbon twists}

The twist on a tensor product must agree with twisting its factors
and then crossing them twice. A \emph{semigroupal balancing} is a
natural automorphism
$\theta$ of $\mathrm{Id}_{\CS}$ satisfying
\begin{equation}\label{eq:balance}
 \theta_{X\otimes Y}
 =c_{Y,X}c_{X,Y}(\theta_X\otimes\theta_Y).
\end{equation}
For a monoid require $\theta_1=\id_1$. Balancing also forces
$\theta_e=\id_e$ without a global unit.
Draw a positive full twist as a $\theta$-coupon. On a simple
channel $u\subset r\otimes s$, naturality and \autoref{eq:balance} give
\begin{equation*}
\twistfusionpic
 =
\balancedfusionpic.
\end{equation*}
Its scalar form is $\theta_u=\theta_r\theta_sR_{r,s}^uR_{s,r}^u$.

\begin{proposition}[Balancing and ribbon structures]\label{prop:twists}
For $N>1$, each ambient braiding $(q,\beta)$ admits exactly two
balancing twists:
\begin{equation*}
 \theta_s=\chi\big(\phi(s),\phi(s)\big)\id_s\quad(s\in S),\qquad
 \theta_m=\rho\beta^{-1}\id_m,\qquad\rho^2=1.
\end{equation*}
Their restrictions to $\CA$ are ribbon and induce the local pivotal
structures \autoref{eq:pivotal}. If $S$ is a monoid, $\CS$ is ribbon exactly when $S=A$.
For nonunital $S$, the ribbon assertion concerns only $\CA$.
Over $\C$, the local pivotal choice with positive dimensions has
\begin{equation}\label{eq:canonical-twist}
 \theta_m^{\mathrm{can}}=\nu\beta^{-1}\id_m.
\end{equation}
\end{proposition}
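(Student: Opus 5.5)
Since $\theta$ is a natural automorphism of the identity functor on a semisimple category, it is a scalar $\theta_r$ on each simple. The channel formula $\theta_u=\theta_r\theta_sR_{r,s}^uR_{s,r}^u$ for every admissible vertex $u\subset r\otimes s$ is then equivalent to \autoref{eq:balance}. Indeed, both sides of \autoref{eq:balance} are natural in $X,Y$ and additive, and $\id_{r\otimes s}=\sum_u i_{rs}^up_{rs}^u$ by \autoref{eq:resolution}. So the plan is to solve these scalar equations using the crossing coefficients \autoref{eq:R} of \autoref{thm:braid}, with the input cases $ss$, $sm$, $ms$, $mm$.

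\emph{Black labels.} The vertex $s\otimes t\to st$ gives $\theta_{st}=\theta_s\theta_t\chi(\phi(s),\phi(t))^2=\theta_s\theta_t$, since $\chi$ is $\pm1$-valued. \emph{Absorption.} The vertex $s\otimes m\to m$ gives $\theta_m=\theta_s\theta_m q(\phi(s))^2$, so $\theta_s=q(\phi(s))^{-2}=\chi(\phi(s),\phi(s))^{-1}=\chi(\phi(s),\phi(s))$, using $q(a)^2=\chi(a,a)$. This determines $\theta_s$ and is consistent with the black equation, because $\chi(\phi(st),\phi(st))=\chi(\phi(s),\phi(s))\chi(\phi(t),\phi(t))\chi(\phi(s),\phi(t))^2$. \emph{Channels.} The vertex $m\otimes m\to a$ gives $\theta_a=\theta_m^2r(a)^2=\theta_m^2\beta^2q(a)^{-2}$. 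Taking $a=e$ (where $\theta_e=1$, as forced by the absorption formula) gives $\theta_m^2=\beta^{-2}$, i.e.\ $\theta_m=\rho\beta^{-1}$ with $\rho^2=1$. For general $a$ the equation then reads $\chi(a,a)=q(a)^{-2}$, which holds. Since $N>1$ forces $\operatorname{char}k\ne2$, there are exactly two twists. For a monoid, $\theta_1=\chi(e,e)=1$ automatically, because $\phi(1)=e$.

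\emph{Ribbon and pivotal structure.} On $\CA$ I would use the standard correspondence that, for a braided rigid category, balancings compatible with duals ($\theta_{X^*}=\theta_X^*$) correspond to pivotal structures via the Drinfeld isomorphism $u$, with $j=u\theta$. I would verify the ribbon condition only on simples: $\theta_a=\theta_{a^{-1}}$ holds since $\chi(a,a)=\chi(a^{-1},a^{-1})$, and $m^*=m$. I would then compute $u_m$ by resolving the curl (cap $\tau^{-1}p_1$, channel $e$ with crossing coefficient $r(e)=\beta$). I expect $u_m=\beta\,\id_m$, up to the normalization conventions of \autoref{eq:duality}, and $u_a=\chi(a,a)$. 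Then $j_m=\rho$ and $j_a=1$, matching \autoref{eq:pivotal}. This curl computation is the main obstacle: I must get its scalar exactly right, including the Fourier entry and the direction of the crossing. I would calibrate it against \autoref{ex:Ising-hexagon}, expecting the positive-dimension choice $\rho=\nu$ from \autoref{prop:pivotal}, which gives \autoref{eq:canonical-twist}.

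\emph{Ambient ribbon.} For a monoid $S$, a ribbon structure requires rigidity. By \autoref{prop:ambient-duals}, $\CS$ is rigid exactly when $S=A$, and for $S=A$ the local statement applies. In the nonunital case rigidity is undefined, so the ribbon statement concerns only $\CA$, as stated.
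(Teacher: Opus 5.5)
Your proposal is correct and follows the paper's proof essentially step for step. Absorption forces $\theta_s=\chi(\phi(s),\phi(s))$ and $\theta_e=1$, the $e$-channel of $mm$ forces $\theta_m^2\beta^2=1$, and the remaining black and channel equations hold via $\chi^2=1$ and $q(a)^{-2}=\chi(a,a)$. Ribbonness is checked on the self-dual simples, the pivotal structure is read off from $j=u\theta$, and the ambient ribbon claim is reduced to \autoref{prop:ambient-duals}. The curl you flagged as the main obstacle comes out exactly as you expected: the cup and cap select the unit channel, giving $u_m=\tau^{-1}r(e)(F^{-1})_{e,e}\id_m=\tau^{-1}\beta\tau\,\id_m=\beta\id_m$. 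Hence $j_m=\rho$, and the canonical choice is $\rho=\nu$.
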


\begin{proof}
On $s\otimes m=m$, balancing forces
$\theta_m=\theta_s\theta_mq(\phi(s))^2$, hence
$\theta_s=\chi(\phi(s),\phi(s))$. In particular $\theta_e=1$.
The $e$-channel of $mm$ gives $1=\theta_m^2\beta^2$.
Conversely the diagonal bicharacter is multiplicative on $S$:
its extra factor is $\chi(\phi(s),\phi(t))^2=1$.
The black double crossing is $1$, absorption obeys the displayed equation,
and every red-red channel satisfies
\[
 \theta_m^2r(a)^2=q(a)^{-2}=\chi(a,a)=\theta_a.
\]
Thus all balancing equations hold; $\phi(1)=e$ gives the global-unit
condition when needed.

Every simple of $\CA$ is self-dual. Its duality is
$k$-linear, so $(\lambda\id_X)^*=\lambda\id_{X^*}$ and
$\theta_{X^*}=(\theta_X)^*$. These restrictions are ribbon.
\autoref{prop:ambient-duals} gives the ambient rigidity
obstruction.

Using the notation of \autoref{sec:duality}, we identify
the pivotal structure via the Drinfeld map
\[
 u_X=(\ev_X\otimes\id_{X^{**}})
 (c_{X,X^*}\otimes\id_{X^{**}})
 \alpha^{-1}_{X,X^*,X^{**}}
 (\id_X\otimes\coev_{X^*}).
\]
The cup and cap select the unit channel, giving
\begin{equation*}
 u_m=\tau^{-1}r(1)(F^{-1})_{1,1}\id_m=\beta\id_m.
\end{equation*}
For $a$, we obtain $u_a=\chi(a,a)\id_a$. Since
$j_X=u_X\theta_X$, this gives $j_a=1,j_m=\rho$.
The canonical choice follows from \autoref{prop:pivotal}.
\end{proof}

The twist relations are
\begin{equation*}
 \twistpic{s}=\chi\big(\phi(s),\phi(s)\big)\wire{s},\qquad
 \twistpic{m}=\rho\beta^{-1}\wire{m}.
\end{equation*}
The Drinfeld map in $\CA$ has output $m^{**}=m$:
\begin{equation*}
 \drinfeldpic
 =\beta\wire{m}.
\end{equation*}
Closing the second strand of a red crossing gives the twist:

\begin{lemma}[Positive curl]\label{lem:positive-curl}
For either local pivotal choice, the partial closure lies in
$\End_{\CS}(m)$ and satisfies
\begin{equation*}
 \operatorname{ptr}_{2,j}(c_{m,m})=\theta_m
 =\rho\beta^{-1}\id_m.
\end{equation*}
\end{lemma}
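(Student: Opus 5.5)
The plan is to compute the partial closure directly in the channel basis of $mm$, using the expansion of the crossing from \autoref{thm:braid} and the partial channel traces of \autoref{lem:channel-trace}. The partial closure lands in $\End(m)=k\id_m$ by the same observation used in \autoref{lem:channel-trace}. All strands are red, so every ingredient lives in $\CA$. By \autoref{thm:ambient-red} the computation may be done there and then read back in $\CS$.

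First write
\[
 c_{m,m}=\sum_{a\in A}r(a)\,e_a,\qquad r(a)=\beta q(a)^{-1}.
\]
By linearity of $\operatorname{ptr}_{2,j}$ and \autoref{lem:channel-trace}, this gives
\[
 \operatorname{ptr}_{2,j}(c_{m,m})=d_m^{-1}\sum_a\beta q(a)^{-1}\id_m=d_m^{-1}\beta\,G(q^{-1})\id_m .
\]
Next, \autoref{lem:Gauss} gives $G(q^{-1})=N/G(q)$, and \autoref{eq:beta} gives $G(q)=\beta^2\tau^{-1}$. Hence $\beta G(q^{-1})=N\tau\beta^{-1}$. Substituting $d_m=\rho\tau^{-1}$ from \autoref{eq:dimensions}, and using $N\tau^2=1$ and $\rho^{-1}=\rho$, the scalar becomes
\[
 \rho\tau\cdot N\tau\beta^{-1}=\rho\beta^{-1}.
\]
This is $\theta_m$ by \autoref{prop:twists}.

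The main obstacle is a matching of conventions rather than the computation itself. The closure in \autoref{lem:channel-trace} is the right pivotal closure $\ev_{m^*}(j_m\otimes\id)\coev_m$ on the second strand, and the claim needs the closure of the crossing in exactly that orientation. I would check that \autoref{lem:channel-trace} is applied to the same partial closure as in the statement; by sphericality and $m^*=m$, the left and right partial traces agree on $\End(m)$ anyway.

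As a cross-check I would rederive the result from the Drinfeld map. That computation gives $u_m=\beta\id_m$, and the closure of the crossing with the pivotal flag is $j_m u_m^{-1}=\rho\beta^{-1}$. This agrees with $\theta_m=u_m^{-1}j_m$. The check confirms both the sign conventions and the independence from the choice of $\rho$.
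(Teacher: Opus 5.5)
Your proposal is correct and matches the paper's proof: both expand $c_{m,m}=\sum_a\beta q(a)^{-1}e_a$, apply $\operatorname{ptr}_{2,j}(e_a)=d_m^{-1}\id_m$ from \autoref{lem:channel-trace}, and reduce $d_m^{-1}\beta G(q^{-1})$ to $\rho\beta^{-1}$ using $G(q)G(q^{-1})=N$, $\beta^2=\tau G(q)$ and $N\tau^2=1$. The Drinfeld-map cross-check is not needed for the proof, but it agrees with how \autoref{prop:twists} identifies $\theta_m=u_m^{-1}j_m$.
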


\begin{proof}
Resolve the crossing and apply \autoref{lem:channel-trace}:
\[
 \operatorname{ptr}_{2,j}(c_{m,m})
 =\frac{\beta}{d_m}\sum_a q(a)^{-1}\id_m
 =\rho\tau\beta\frac{N}{G(q)}\id_m
 =\rho\beta^{-1}\id_m.
\]
We used $G(q)G(q^{-1})=N$, $\beta^2=\tau G(q)$ and $\tau^2N=1$.
The diagram, with top cap $\ev_{m^*}$ preceded by $j_m$, is
\begin{equation}\label{eq:positive-curl-picture}
\begin{aligned}
 \positivecurl
 &=\sum_a\frac\beta{q(a)}\,\partialchannel{a}{\rho}\\[1ex]
 &=\frac\beta{d_m}G(q^{-1})\wire{m}
 =\rho\beta^{-1}\wire{m}=\twistpic{m}.
\end{aligned}
\end{equation}
The proof is complete.
\end{proof}

Over $\C$, for $A=G_2=\{1,g\}$, $g^2=1$, and $\tau=\nu/\sqrt2$,
the braidings are specified by
\[
 q(g)\in\{i,-i\},\qquad \beta^2=\tau(1+q(g)).
\]
For either choice of $\beta$, the crossing and canonical twist
on two $m$-strands and on a simple strand are
\begin{equation*}
 c_{m,m}=\beta(e_1+q(g)^{-1}e_g),\qquad
 \theta_g=-1,\qquad\theta_m=\nu\beta^{-1}.
\end{equation*}
The choices of $\nu$ and $q(g)$ give the following canonical twists.
Each row also has the simultaneous sign change of $\beta$ and
$\theta_m^{\mathrm{can}}$.
\begin{center}
\begin{tabular}{cccc}
\toprule
$\nu$ & $q(g)$ & one choice of $\beta$ & $\theta_m^{\mathrm{can}}$\\
\midrule
$1$ & $-i$ & $e^{-\pi i/8}$ & $e^{\pi i/8}$\\
$1$ & $i$ & $e^{\pi i/8}$ & $e^{-\pi i/8}$\\
$-1$ & $-i$ & $e^{3\pi i/8}$ & $e^{5\pi i/8}$\\
$-1$ & $i$ & $e^{5\pi i/8}$ & $e^{3\pi i/8}$\\
\bottomrule
\end{tabular}
\end{center}
Changing the pivotal structure reverses both $d_m$ and $\theta_m$.
Changing $\beta$, on the other hand, changes the braiding.

For the choice $\nu=\rho=1$, $q(g)=-i$ and
$\beta=e^{-\pi i/8}$ used above, the twists are
\[
 \twistpic{g}=-\wire{g},\qquad
 \twistpic{m}=\beta^{-1}\wire{m}.
\]

\begin{example}[The three-strand full twist]\label{ex:z2-full-twist}
Put $h=(1+i)/2$ and $k=(1-i)/2$, so that
$b_i=\beta(h+ks_i)$. Using
$s_1s_2=-s_2s_1$ gives
\[
 \beta^{-3}b_1b_2b_1
 =h(h^2+k^2)+2h^2k\,s_1+k(h^2-k^2)s_2.
\]
Now $h^2+k^2=0$ and $2h^2k=k(h^2-k^2)=h$. Interchanging
$s_1$ and $s_2$ therefore gives
\begin{equation}\label{eq:z2-braid-triple}
 b_1b_2b_1=\frac{(1+i)\beta^3}{2}(s_1+s_2)
          =b_2b_1b_2.
\end{equation}
Since $(s_1+s_2)^2=2$, squaring \autoref{eq:z2-braid-triple} gives
$i\beta^6=\beta^2$, because $\beta^4=-i$. Diagrammatically,
\begin{equation*}
 \braidthree{2,1,2,1,2,1}=\beta^2\ztlid{3}.
\end{equation*}
Balancing gives the same answer: every summand of $m^3$ is $m$,
so the full twist acts by $\theta_m/\theta_m^3=\beta^2$, with the
denominator accounting for the three individual ribbon twists.
\end{example}

\begin{example}[The one-channel case]
For $A=\{e\}$, a braiding still requires $S$ commutative.
The hexagons give $b(s,t)=L(s)=R(s)=1$, $q(e)=1$, $\beta^2=\tau$,
$\tau^2=1$ and $\theta_m=\rho\beta^{-1}$ with $\rho^2=1$.
There are two choices of $\beta$ and of $\rho$ outside characteristic
$2$; in characteristic $2$, $\tau=\beta=\rho=1$.
For \(E_2=\{1,e\}\) with \(e^2=e\) and \(mm=e\), the category is braided
and balanced; its red cup has source \(e\), so it is not ribbon.
\end{example}

\begin{example}[The trivial group]
Work over $\C$.
If $A=\{1\}$, the same formulas still make sense, but $m^2=1$ and $m$
is invertible. Here $\tau=\nu=\pm1$, $F=(\tau)$, $q(1)=1$ and
$\beta^2=\tau$. The canonical twist is
$\theta_m=\tau\beta^{-1}=\beta$. Thus $\tau=1$ gives
$\beta=\pm1$, whereas $\tau=-1$ gives $\beta=\pm i$. The latter
choices have $c_{m,m}^2=-1$. In particular, the factor $\nu$ in
\autoref{eq:canonical-twist} is necessary even in this case.
\end{example}

\subsection{Local Hopf links}

Closing a double crossing gives a Hopf link. We evaluate it with
the local trace $\operatorname{Tr}_j^A$. The double crossings,
also called monodromies, resolve as
\begin{equation}\label{eq:monodromy-pictures}
 \doublecross{a}{b}=\wire{a}\,\wire{b},\qquad
 \doublecross{a}{m}=\chi(a,a)\wire{a}\,\wire{m},\qquad
 \doublecross{m}{m}=\beta^2\sum_a\chi(a,a)\channel{m}{m}{a}.
\end{equation}

An object is \emph{transparent} if its double crossing with every
object is the identity.

\begin{proposition}\label{prop:Hopf}
Choose $\rho^2=1$ and let
$S_{X,Y}=\tr(c_{Y,X}c_{X,Y})$, the unnormalized Hopf link value. Then
\begin{equation*}
 S_{a,b}=1,\qquad S_{a,m}=S_{m,a}=d_m\chi(a,a),\qquad
 S_{m,m}=\beta^2\sum_{a\in A}\chi(a,a).
\end{equation*}
The last sum is $N$ if $\chi$ is alternating, and is $0$ otherwise.
An invertible label $a$ is transparent precisely when $\chi(a,a)=1$.
\end{proposition}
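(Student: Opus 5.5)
The plan is to compute each Hopf link value by first resolving the double crossing with \autoref{eq:monodromy-pictures}, and then closing both strands with the local trace $\operatorname{Tr}_j^A$. Since traces are multiplicative on tensor products of scalars times identities, and the dimensions are $d_a=1$ and $d_m=\rho\tau^{-1}$ by \autoref{prop:pivotal}, the first two cases are immediate.

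For $a,b\in A$ the monodromy is the identity, so $S_{a,b}=d_ad_b=1$. For a mixed pair the monodromy is $\chi(a,a)\id_{a\otimes m}$, so $S_{a,m}=\chi(a,a)d_ad_m=d_m\chi(a,a)$. The case $S_{m,a}$ is the same: the two crossings in $c_{a,m}c_{m,a}$ are $q(a)$ each by \autoref{eq:R}, and $q(a)^2=\chi(a,a)$. Sphericality, from \autoref{prop:pivotal}, lets me close in either order.

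For $S_{m,m}$ I will expand the monodromy as $\beta^2\sum_a\chi(a,a)e_a$, using $r(a)^2=\beta^2q(a)^{-2}=\beta^2\chi(a,a)^{-1}=\beta^2\chi(a,a)$, since $\chi$ takes values $\pm1$ here. Then I will apply $\tr_j(e_a)=1$ from \autoref{lem:channel-trace}. This gives $\beta^2\sum_a\chi(a,a)$.

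For the evaluation of the sum, note that $a\mapsto\chi(a,a)$ is a character of $A$. Indeed, $\chi(ab,ab)=\chi(a,a)\chi(b,b)\chi(a,b)^2$, and $\chi(a,b)^2=1$ because every element has order two. Character orthogonality, \autoref{eq:orthogonality}, then gives $N$ when this character is trivial, that is, when $\chi$ is alternating, and $0$ otherwise.

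Transparency is the remaining claim. The label $a$ is transparent exactly when its double crossings with all simples are identities; this suffices by naturality and additivity. With black labels the double crossing is always trivial, and with $m$ it is $\chi(a,a)$. So $a$ is transparent iff $\chi(a,a)=1$.

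The main point to get right is the sign bookkeeping in $r(a)^2$ and the verification that the diagonal of $\chi$ is a character. Both rely on the earlier deduction that $A$ is an elementary abelian $2$-group whenever a braiding exists.
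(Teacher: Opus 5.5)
Your proposal is correct and follows the paper's proof. Both resolve the three monodromies in \autoref{eq:monodromy-pictures}, close them using $\tr_j(e_a)=1$ and $\tr(\id_{a\otimes m})=d_m$, sum the character $a\mapsto\chi(a,a)$, and read off transparency from the monodromy with $m$. Your extra checks, that $r(a)^2=\beta^2\chi(a,a)$ and that the diagonal of $\chi$ is a character, only make explicit what the paper leaves implicit. For the sum, the translation argument behind \autoref{eq:orthogonality} applies directly to any nontrivial character, so you need not first write it in the form $\chi(x,-)$.
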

\begin{proof}
Close the three relations \autoref{eq:monodromy-pictures}. The trace of
each $e_a$ is $1$, whereas the trace of $\id_{am}$ is $d_m$.
The character $a\mapsto\chi(a,a)$ sums to $N$ if trivial and to
zero otherwise. An invertible object has trivial monodromy with
all invertibles. Its transparency is therefore determined by
monodromy with $m$, as in the middle relation of
\autoref{eq:monodromy-pictures}.
\end{proof}

Writing $D_{X,Y}=c_{Y,X}c_{X,Y}$, the closure is
\begin{equation*}
 \hopfclosurepic
 =S_{X,Y}.
\end{equation*}
The return strands are duals; the coupons are $j_X,j_Y$.

For $A=G_2=\{1,g\}$ with $\tau=1/\sqrt2$, take $\rho=1$.

\begin{example}[The Hopf link matrix]
The Hopf values in the order $(1,g,m)$ are
\begin{equation*}
 S=\begin{pmatrix}1&1&\sqrt2\\1&1&-\sqrt2\\\sqrt2&-\sqrt2&0\end{pmatrix}.
\end{equation*}
The monodromy of $g$ with $m$ is $-\id_m$, excluding both from
the transparent simples. Thus the category is nondegenerate; also
$\det S=-8$. 
\end{example}

\subsection{Local two-strand braid closures}

Let $H_k$ be the scalar obtained by closing $c_{m,m}^k$ with pivotal bends.
For $k>0$, it closes the positive two-strand braid with the framing shown:
\begin{equation*}
 H_1=\closedtwobraid{1},\qquad
 H_2=\closedtwobraid{2},\qquad
 H_3=\closedtwobraid{3}.
\end{equation*}

\begin{proposition}\label{prop:torus}
For every integer $k$,
\begin{equation}\label{eq:torus-values}
 H_k=\beta^k\sum_a q(a)^{-k}
 =\beta^k
 \begin{cases}
 N,&k\equiv0\pmod4,\\
 G(q^{-1}),&k\equiv1\pmod4,\\
 \sum_a\chi(a,a),&k\equiv2\pmod4,\\
 G(q),&k\equiv3\pmod4.
 \end{cases}
\end{equation}
For either local ribbon structure, $H_1=d_m\theta_m$. The zero-framed
positive trefoil, coloured by $m$, has value
\begin{equation*}
 \theta_m^{-3}H_3=d_m\,\beta^8.
\end{equation*}
These are unnormalized values: the zero-framed unknot has value
$d_m$. Over $\C$, the canonical choice gives $d_m=\sqrt N$.
\end{proposition}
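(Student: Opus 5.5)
The plan is to compute $H_k$ by resolving $c_{m,m}^k$ into channel projectors and then closing. Since $c_{m,m}=\sum_a r(a)e_a$ with $r(a)=\beta q(a)^{-1}$, and the $e_a$ are orthogonal idempotents, we get $c_{m,m}^k=\sum_a\beta^kq(a)^{-k}e_a$ for every integer $k$. For negative $k$ this uses the inverse crossing, which has reciprocal channel scalars. Closing both strands with pivotal bends and applying \autoref{lem:channel-trace}, which gives $\tr_j(e_a)=1$ for either pivotal choice, yields $H_k=\beta^k\sum_aq(a)^{-k}$.

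The case split modulo $4$ then follows from the values of $q$. By \autoref{lem:Gauss}, $q(a)^4=1$, so $q^{-k}$ depends only on $k$ modulo $4$. The four cases are as follows:
\begin{itemize}
\item $k\equiv0$ gives $N$.
\item $k\equiv1$ gives $\sum_a q(a)^{-1}=G(q^{-1})$.
\item $k\equiv2$ gives $\sum_aq(a)^{-2}=\sum_a\chi(a,a)^{-1}=\sum_a\chi(a,a)$, since $\chi(a,a)=\pm1$.
\item $k\equiv3$ gives $\sum_aq(a)^{-3}=\sum_aq(a)=G(q)$.
\end{itemize}

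For $H_1=d_m\theta_m$, there are two routes. The first is to use \autoref{lem:positive-curl}: closing the second strand gives $\theta_m$, and closing the remaining strand gives $d_m\theta_m$. The second is a direct check, $\beta G(q^{-1})=\beta N/G(q)=\beta N\tau/\beta^2=N\tau\beta^{-1}$. This equals $\tau^{-1}\beta^{-1}$, and we compare it with $d_m\theta_m=\rho\tau^{-1}\rho\beta^{-1}$, using $\rho^2=1$.

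For the trefoil, $H_3=\beta^3G(q)=\beta^3\cdot\beta^2/\tau=\beta^5\tau^{-1}$, and $\theta_m^{-3}=\rho^{-3}\beta^3=\rho\beta^3$. The product is therefore $\rho\tau^{-1}\beta^8=d_m\beta^8$.

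The main point to justify is the framing correction. The closure of $\sigma_1^3$ with blackboard framing has writhe $3$, so removing the framing multiplies by $\theta_m^{-3}$. The diagrammatic reason is that each positive curl equals a $\theta_m$-coupon by \autoref{lem:positive-curl}. Finally, note that the zero-framed unknot is the closed strand, whose value is $d_m$ by \autoref{eq:loop-flag}. Over $\C$, the canonical choice $\rho=\nu$ gives $d_m=\sqrt N$ by \autoref{prop:pivotal}.
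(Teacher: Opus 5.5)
Your proposal is correct and follows the paper's proof essentially step for step: resolve $c_{m,m}^k=\sum_a(\beta/q(a))^ke_a$ by channel orthogonality (with reciprocal scalars for negative $k$), close using $\tr_j(e_a)=1$, reduce modulo $4$ via $q(a)^4=1$, and verify $H_1=d_m\theta_m$ and $\theta_m^{-3}H_3=d_m\beta^8$ from $G(q)G(q^{-1})=N$, $\beta^2=\tau G(q)$ and $N\tau^2=1$. Your second route to $H_1$, closing the positive curl strand by strand, is a valid consistency check but not needed.
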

\begin{proof}
By orthogonality, $c_{m,m}^k=\sum_a(\beta/q(a))^k e_a$,
including negative $k$. Taking the trace uses $\tr(e_a)=1$;
the four cases then follow from $q(a)^4=1$.
The identities $G(q^{-1})=N/G(q)$ and $\beta^2=\tau G(q)$ give
$H_1=\tau N/\beta=\tau^{-1}/\beta=d_m\theta_m$.
The three-crossing closure is a trefoil with framing $3$.
Multiplication by $\theta_m^{-3}$ removes the framing:
\[
 \theta_m^{-3}H_3
 =\rho\beta^3\,\beta^3G(q)
 =\frac{\rho}{\tau}\beta^8=d_m\,\beta^8.
\]
The proof is complete.
\end{proof}

For even $k$, the two components have no self-crossings, so no
framing correction is needed: $H_2$ is the zero-framed Hopf value,
and $H_4=N\beta^4$ is the value at linking number $2$.

\begin{example}[From a crossing to the zero-framed unknot]\label{ex:unknot-reduction}
Work over $\C$.
For the Ising choice in \autoref{ex:Ising-hexagon}, use the
canonical pivotal structure $\rho=1$. Close a single positive
crossing and resolve its two channels:
\begin{equation*}
\begin{aligned}
 \closedtwobraid{1}
 &=\beta\left(\closedchannel{1}{1}+i\closedchannel{g}{1}\right)\\[1ex]
 &=\beta(1+i)=\sqrt2e^{\pi i/8}.
\end{aligned}
\end{equation*}
This local ribbon closure is an unknot with framing $+1$. Remove that framing by the
inverse twist scalar:
\[
 e^{-\pi i/8}\closedtwobraid{1}=\sqrt2=\pivloop{m}{1}.
\]
The zero-framed unknot has value $d_m=\sqrt2$, or $1$ after
division by $d_m$.
\end{example}

For the order-two datum $q(g)=-i$, $\beta=e^{-\pi i/8}$, take
$\rho=1$. The pivotal coupons are then identities.

\begin{lemma}\label{lem:z2-closure}
For every integer $r$,
\begin{equation*}
 H_r=\beta^r(1+i^r).
\end{equation*}
\end{lemma}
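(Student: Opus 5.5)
This is a direct specialization of \autoref{prop:torus}, so I would simply substitute the order-two data into \autoref{eq:torus-values}. That proposition gives, for every integer $r$,
\[
 H_r=\beta^r\sum_{a\in A}q(a)^{-r}.
\]
Its proof rests on the channel resolution $c_{m,m}^r=\sum_a(\beta/q(a))^r e_a$ together with $\tr(e_a)=1$ from \autoref{lem:channel-trace}. Both hold for negative $r$ as well, so it suffices to evaluate the sum for $A=G_2=\{1,g\}$.

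The datum fixes $q(1)=1$ and $q(g)=-i$. Hence $q(g)^{-1}=(-i)^{-1}=i$, and the sum becomes $1+i^r$, which gives $H_r=\beta^r(1+i^r)$ immediately. The pivotal choice $\rho=1$ makes the closure coupons identities, so the trace conventions of \autoref{lem:channel-trace} apply without extra factors.

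As a consistency check, I would compare with the four cases of \autoref{eq:torus-values}:
\begin{itemize}
\item For $r\equiv0\pmod4$, $1+i^r=2=N$.
\item For $r\equiv1\pmod 4$, $1+i=G(q^{-1})$.
\item For $r\equiv2\pmod4$, $1+i^r=0=\sum_a\chi(a,a)=1+(-1)$.
\item For $r\equiv3\pmod4$, $1-i=G(q)$.
\end{itemize}
All four agree. The case $r=1$ also reproduces $\beta(1+i)$ from \autoref{ex:unknot-reduction}.

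There is no real obstacle here. The only point needing care is negative $r$: I would note that the inverse crossing resolves with reciprocal channel scalars, as stated in the crossing subsection, and that \autoref{prop:torus} already covers every integer $k$.
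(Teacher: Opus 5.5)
Your proposal is correct and matches the paper's argument. The paper resolves $c_{m,m}^r=\beta^r(e_1+i^re_g)$ by channel orthogonality, which is valid for negative $r$ because both channel scalars are nonzero, and then uses that each channel has trace $1$. Citing \autoref{prop:torus} just packages that same computation, and your consistency checks against the four cases are a harmless extra.
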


\begin{proof}
Orthogonality gives $c_{m,m}^r=\beta^r(e_1+i^re_g)$, also for
negative $r$, since both eigenvalues are nonzero. Each channel
has trace $1$.
\end{proof}

\begin{example}[Hopf and trefoil closures]\label{ex:Ising-trefoil}
For two crossings, the two channels cancel:
\begin{equation*}
 \closedtwobraid{2}
 =\beta^2\left(\ztrace{2}{e_1}{1}-\ztrace{2}{e_{g}}{1}\right)
 =\beta^2(1-1)=0.
\end{equation*}
This is the Hopf link with both components labelled $m$.
The one- and three-crossing closures give
\begin{equation*}
 \closedtwobraid{1}=\sqrt2\,e^{\pi i/8},\qquad
 \closedtwobraid{3}=\sqrt2\,e^{-5\pi i/8}.
\end{equation*}
The first is a framing-$1$ unknot, with value
$\theta_m d_m=\sqrt2e^{\pi i/8}$. The second is a framing-$3$
trefoil; removing the framing gives
\begin{equation*}
 \theta_m^{-3}\closedtwobraid{3}
 =e^{-3\pi i/8}\sqrt2e^{-5\pi i/8}=-\sqrt2.
\end{equation*}
Dividing by the unknot value gives $-1$.

Four crossings give two components with linking number $2$ and
zero self-framing. The two channels add:
\begin{equation*}
 \closedtwobraid{4}
 =\beta^4\left(\ztrace{2}{e_1}{1}+\ztrace{2}{e_{g}}{1}\right)
 =-2i.
\end{equation*}
More generally, the two channels cancel whenever $r\equiv2\pmod4$.
Since $\beta^8=-1$ and $i^8=1$, the framed closure values satisfy
\[
 H_{4k+2}=0,\qquad H_{r+8}=-H_r,\qquad H_{r+16}=H_r
 \qquad(k,r\in\Z),
\]
as one easily checks.
\end{example}

\subsection{Quadratic refinements in coordinates}

Here \(k=\C\).
Let $A=(\Z/2\Z)^n$ and let $B$ be an invertible symmetric binary
matrix with $\chi(x,y)=(-1)^{x^{\mathsf T}By}$. The following formula
lists every quadratic refinement. Binary coordinates are represented
by the integers $0,1$.

\begin{lemma}\label{lem:binary-q}
All refinements of $\chi$ are
\begin{equation*}
 q_\ell(x)=
 i^{\sum_j B_{jj}x_j}
 (-1)^{\sum_{j<k}B_{jk}x_jx_k+\ell^{\mathsf T}x},
 \qquad\ell\in(\Z/2\Z)^n.
\end{equation*}
For each refinement there are two choices of $\beta$ in the fixed
normal gauge.
\end{lemma}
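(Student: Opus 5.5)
The plan is to check directly that each $q_\ell$ satisfies \autoref{eq:q}, and then count. Since any two refinements differ by a character (as noted after \autoref{eq:beta}), this gives all of them.

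First, verify \autoref{eq:q} for $q_0$. Write binary coordinates as integers in $\{0,1\}$. Then $x+y$ in $(\Z/2\Z)^n$ has coordinates $x_j+y_j-2x_jy_j$.

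\textbf{Diagonal factor.} Since $i^{-2x_jy_j}=(-1)^{x_jy_j}$, we get
\[
i^{\sum_jB_{jj}(x_j+y_j-2x_jy_j)}=i^{\sum_jB_{jj}x_j}\,i^{\sum_jB_{jj}y_j}\,(-1)^{\sum_jB_{jj}x_jy_j}.
\]
Note that the exponent of $i$ is only well defined because it is computed on integer representatives. This is exactly why the $i$-power must be evaluated on $0,1$ coordinates and not reduced mod $2$.

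\textbf{Off-diagonal factor.} Reduce mod $2$: $(x_j+y_j)(x_k+y_k)\equiv x_jx_k+y_jy_k+x_jy_k+y_jx_k$. Summing over $j<k$ and using the symmetry $B_{jk}=B_{kj}$, the cross terms give $\sum_{j\ne k}B_{jk}x_jy_k$.

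\textbf{Combining.} Together with the diagonal contribution, the total extra sign is $(-1)^{x^{\mathsf T}By}=\chi(x,y)$. Also $q_0(0)=1$, so $q_0$ is a quadratic refinement.

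Next, the factor $(-1)^{\ell^{\mathsf T}x}$ is a character of $A$, and every character has this form for a unique $\ell$. Hence each $q_\ell$ is also a refinement. Conversely, if $q$ is any refinement, then $q/q_0$ satisfies $(q/q_0)(xy)=(q/q_0)(x)(q/q_0)(y)$ and is $1$ at $e$, so it is a character. So the $q_\ell$ exhaust the refinements, and they are pairwise distinct, giving exactly $2^n=|A|$ of them.

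Finally, for fixed $q_\ell$, \autoref{lem:Gauss} gives $G(q)\ne0$. Since $\tau\neq0$ and we work over $\C$ (characteristic $\ne2$), the equation $\beta^2=\tau G(q)$ has exactly two solutions. By \autoref{thm:braid} these are the two braidings attached to $q_\ell$ in the fixed normal gauge.

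The main obstacle is the bookkeeping in the diagonal term: one must see that the integer-lift carry $-2x_jy_j$ turns $i^{\cdot}$ into the sign $(-1)^{x_jy_j}$, which is what supplies the diagonal $B_{jj}$ part of $\chi$.
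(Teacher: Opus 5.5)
Your proposal is correct and follows essentially the same route as the paper's proof. Both arguments use the carry identity $x_j\oplus y_j=x_j+y_j-2x_jy_j$ for the diagonal $i$-powers and the symmetric cross terms for the off-diagonal signs, treat the $\ell$-term as a character, obtain exhaustiveness from the fact that the ratio of two refinements is a character of the form $(-1)^{\ell^{\mathsf T}x}$, and get the two square roots $\beta$ from the nonvanishing Gauss sum.
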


\begin{proof}
For a diagonal term use
$x_j\mathbin{\oplus}y_j=x_j+y_j-2x_jy_j$ to see that its contribution
to $q(x+y)/(q(x)q(y))$ is $(-1)^{B_{jj}x_jy_j}$. An off-diagonal
term contributes $(-1)^{B_{jk}(x_jy_k+x_ky_j)}$. Their product is
$(-1)^{x^{\mathsf T}By}$, and the $\ell$-term is a character. Thus
every displayed function is a refinement. Conversely the ratio of two
refinements is a character of $A$, hence has the form
$(-1)^{\ell^{\mathsf T}x}$. \autoref{lem:Gauss} gives the two square
roots for $\beta$.
\end{proof}
This counts refinements with fixed labels and normal basis.
Isometries of $(A,\chi)$ may identify braided equivalence classes.

For \(A=(\Z/2\Z)^2\), order the labels as
\((0,e_1,e_2,e_1+e_2)\).

\begin{example}[Alternating bicharacter]\label{ex:V4-alt}
Set $\chi(x,y)=(-1)^{x_1y_2+x_2y_1}$. Then
\begin{equation*}
 F=\frac{\nu}{2}
 \begin{pmatrix}1&1&1&1\\1&1&-1&-1\\1&-1&1&-1\\1&-1&-1&1\end{pmatrix}.
\end{equation*}
For $\nu=1$, the refinement $q(x)=(-1)^{x_1x_2}$ has values
$(1,1,1,-1)$ and Gauss sum $2$. Choose $\beta=1$. The crossing and
canonical twist are
\[
 c_{m,m}=e_0+e_{e_1}+e_{e_2}-e_{e_1+e_2},\qquad\theta_m=1.
\]
Every double crossing is the identity: the $mm$-channel scalars square
to $1$, and $\chi(a,a)=1$ for every $a$. Hence $S_{m,m}=4$.

With the same $\chi$ and $\tau$, instead take
$q'(x)=(-1)^{x_1x_2+x_1+x_2}$, with values $(1,-1,-1,-1)$.
Now $G(q')=-2$, so we may choose $\beta=i$. Then
\[
 c_{m,m}=i(e_0-e_{e_1}-e_{e_2}-e_{e_1+e_2}),\qquad
 \theta_m=-i,\qquad c_{m,m}^2=-\id_{mm}.
\]
Now $S_{m,m}=-4$. The invertible objects are transparent and $m$ is
not. Thus this planar category admits both symmetric and nonsymmetric
braidings.
\end{example}

\begin{example}[Nonalternating bicharacter]\label{ex:V4-dot}
Set $\chi(x,y)=(-1)^{x_1y_1+x_2y_2}$. Then
\begin{equation*}
 F=\frac{\nu}{2}
 \begin{pmatrix}1&1&1&1\\1&-1&1&-1\\1&1&-1&-1\\1&-1&-1&1\end{pmatrix}.
\end{equation*}
For $\nu=1$ choose $q(x)=i^{x_1+x_2}$, with values $(1,i,i,-1)$.
Its Gauss sum is $2i$, so take $\beta=e^{\pi i/4}$. Thus
\[
 R=e^{\pi i/4}\operatorname{diag}(1,-i,-i,-1),\qquad
 \theta_m=e^{-\pi i/4},\qquad
 (\theta_0,\theta_{e_1},\theta_{e_2},\theta_{e_1+e_2})=(1,-1,-1,1).
\]
We have $S_{m,m}=0$. The nontrivial invertible $e_1+e_2$ is
transparent, so the category is degenerate. The mixed recoupling
also differs from \autoref{ex:V4-alt}:
$\chi(e_1,e_1)=-1$, whereas $\chi(a,a)=1$ for every $a$
in the alternating case.
\end{example}

The Hopf link distinguishes these three crossing choices, while
the trefoil does not. With $\nu=1$, \autoref{eq:torus-values}
gives:
\begin{center}
\begin{tabular}{lcccc}
\toprule
$q$ in the order $(0,e_1,e_2,e_1+e_2)$ & $\beta$ & $H_2$ & $H_4$
 & zero-framed trefoil\\
\midrule
$(1,1,1,-1)$ & $1$ & $4$ & $4$ & $2$\\
$(1,-1,-1,-1)$ & $i$ & $-4$ & $4$ & $2$\\
$(1,i,i,-1)$ & $e^{\pi i/4}$ & $0$ & $-4$ & $2$\\
\bottomrule
\end{tabular}
\end{center}
For example, in the second row $H_3=i^3(1-1-1-1)=2i$ and
$\theta_m=-i$, so $\theta_m^{-3}H_3=2$. In the last row,
$H_4=(e^{\pi i/4})^4\sum_a1=-4$. Since $\beta^8=1$ in all
three rows, the trefoil value is $2$ throughout.

\end{document}